\documentclass[reqno,10pt]{amsart}
\usepackage{amssymb,amsmath,amsthm,amsfonts,color}

\usepackage{mathrsfs,dsfont, comment,mathscinet}

\usepackage{enumerate,esint}
\usepackage[left=2.8cm,right=2.8cm,top=2.8cm,bottom=2.8cm]{geometry}
\parskip1mm
\usepackage{mathtools}
\usepackage{graphicx}
\usepackage{bbm}
 \usepackage[abs]{overpic}
 \usepackage{tikz} 
 \usetikzlibrary{arrows, arrows.meta, patterns}
\usepackage{xcolor} 
\usepackage{caption,subcaption,pgfplots} 

\usepackage{epstopdf}
\DeclareGraphicsRule{.tif}{png}{.png}{`convert #1 `dirname #1`/`basename #1 .tif`.png}

\usepackage[colorlinks=true, pdfstartview=FitV, linkcolor=blue, 
            citecolor=blue, urlcolor=blue]{hyperref}

\tolerance=10000
\allowdisplaybreaks
\usetikzlibrary{shapes,snakes,spy,calc,shapes.geometric} 

\numberwithin{equation}{section}
\theoremstyle{plain}
\newtheorem{theorem}{Theorem}[section]
\newtheorem{proposition}[theorem]{Proposition}
\newtheorem{lemma}[theorem]{Lemma}
\newtheorem{example}[theorem]{Example}

\theoremstyle{definition}
\newtheorem{definition}[theorem]{Definition}
\newtheorem{remark}[theorem]{Remark}


\newcommand\R{\mathbb R}

\newcommand\M{\mathbb M}
\newcommand\N{\mathbb N}

\newcommand\ep{\varepsilon}
\newcommand\eps{\varepsilon}

\newcommand\dist{{\rm dist}}

\newcommand\wk{\rightharpoonup}


\newcommand\EEE{\color{black}}

\title[Two-well linearization for solid-solid phase transitions]
{Two-well linearization for solid-solid phase transitions}
\author[E. Davoli] {Elisa Davoli} 
\address{Institute of Analysis and Scientific Computing, Vienna University of Technology, 
Wiedner Hauptstrasse 8-10, 1040 Vienna, Austria}
\email{elisa.davoli@tuwien.ac.at}
\author[M. Friedrich] {Manuel Friedrich} 
\address{Institute for Computational and Applied Mathematics, University of M\"unster, 
Einsteinstrasse 62, D-48149 M\"unster, Germany}
\email{manuel.friedrich@uni-muenster.de}

\begin{document} 
\vskip .2truecm
\begin{abstract}

In this paper we consider nonlinearly elastic, frame-indifferent, and singularly perturbed two-well models for materials undergoing solid-solid phase transitions in any space dimensions, and we  perform a simultaneous passage to sharp-interface and small-strain limits. Sequences of deformations with equibounded energies are decomposed  via suitable Caccioppoli partitions  into the sum of piecewise constant rigid movements and suitably rescaled displacements. These  converge to limiting  partitions, deformations, and displacements, respectively. Whereas limiting deformations are simple laminates whose gradients only attain two values, the limiting displacements belong to the class of special functions with bounded variation ($SBV$). The latter  feature elastic contributions measuring the distance to simple laminates, as well as jumps associated to two consecutive phase transitions  having  vanishing distance, and thus not being detected by the limiting deformations.  By $\Gamma$-convergence we identify an  effective limiting model given by the sum of a quadratic linearized elastic energy in terms of displacements along with two surface terms. The first one is proportional to the total length of interfaces created by jumps in the gradient of the limiting deformation. The second one is proportional  to twice the total length of interfaces created by jumps in the limiting displacement,  as well as by the boundaries of limiting partitions.  A main tool of our analysis is a novel two-well rigidity estimate which has been derived in  \cite{davoli.friedrich} for  a model with anisotropic second-order perturbation.

%

\end{abstract}
\maketitle

\section{Introduction}

Solid-solid phase changes are the physical phenomena for which, by strong temperature or pressure variations, a  solid can modify its crystalline structure without undergoing any intermediate liquid phase. 
Well-known examples are temperature-induced phase transitions between martensite and austenite in shape-memory alloys (see, e.g., \cite{bhattacharya.kohn, cheng}), the nucleation of different ice forms at elevated pressure, or the mechanisms behind the evolution of graphite into diamond in carbon composites.

In this paper we  focus on materials exhibiting exactly two different phases by considering nonlinearly elastic, frame-indifferent, and singularly perturbed two-well models in any space dimensions. Our goal is to perform a simultaneous passage from nonlinear-to-linearized elastic energies and from diffuse-to-sharp interface descriptions of solid-solid phase transitions. We start by introducing the modeling assumptions and discussing the background. Afterwards, we describe our main results.  

%

In the setting of nonlinear elasticity, the coexistence of two phases can be mathematically described by variational two-well problems, based on the study of energy functionals of the form

\begin{align}\label{eq: basic energy-intro}
y\in H^1(\Omega;\mathbb{R}^d)\to \int_{\Omega}W(\nabla y)\,{\rm d}x.
\end{align}
In the expression above, $\Omega\subset \mathbb{R}^d$, $d\in \mathbb{N}$,  is a bounded Lipschitz domain, representing the reference configuration of a material undergoing a solid-solid phase transition between phases $A,B\in \mathbb{M}^{d\times d}$. (Here, $\mathbb{M}^{d\times d}$ is the set of real $d\times d$ matrices.) The stored energy density $W\colon\mathbb{M}^{d\times d}\to [0,+\infty)$ in \eqref{eq: basic energy-intro} is a nonlinear, frame-indifferent function whose zero set has the two-well structure
\begin{equation*}
\{F\in \mathbb{M}^{d\times d}\colon\, W(F)=0\}=SO(d)A\cup SO(d)B,
\end{equation*}
with $SO(d)$ denoting the set of proper rotations in $\mathbb{M}^{d\times d}$. The model in \eqref{eq: basic energy-intro} is disadvantaged by a quite unphysical drawback. In fact, whenever $A$ and $B$ are rank-one connected, low energy sequences for generic boundary value problems are known to possibly exhibit highly oscillatory behaviors. 
In order to prevent this effect, `phenomenological' higher order regularizations are often incorporated in the energy functional.  These may be interpreted as surface  energies  penalizing the transition between different energy wells. A concrete example is provided by the following \emph{diffuse-interface model}, where transitions between the two wells $SO(d)A$ and $SO(d)B$ are controlled by augmenting \eqref{eq: basic energy-intro} via a second-order singular perturbation:
\begin{equation}
\label{eq:sol-sol-intro}
 y\in H^2(\Omega;\R^d) \to I_{\ep}(y):=\frac{1}{\ep^2}\int_{\Omega}W(\nabla y)\,{\rm d}x+\ep^2\int_{\Omega}|\nabla^2 y|^2\,{\rm d}x.
\end{equation}
 The competition between the two energy contributions in \eqref{eq:sol-sol-intro} is tailored by the smallness parameter $\ep>0$, which introduces a length scale into the problem. (We adopt it with exponent $2$ since this will have notational advantages in the following.) As $\ep$ tends to zero, the higher-order perturbation becomes more singular, and $I_{\ep}$ behaves more similarly to a \emph{sharp-interface model}. Roughly speaking, in fact, low-energy sequences for $I_{\ep}$ exhibit transition layers between different phases of width $\eps^2$ (see, e.g., \cite{ball.james,bhattacharya, capella.otto2, kohn.muller2,muller}).  

Energy functionals as \eqref{eq:sol-sol-intro} are naturally linked to the study of classical Cahn-Hilliard-Modica-Mortola energies, cf.\ \cite{gurtin, modica, modica.mortola}, which in turn are strongly connected to the theory of minimal surfaces and to the modeling of liquid-liquid phase transitions. As the width $\ep$ of transition layers tends to zero, the behavior of Modica-Mortola energies has been shown to approach, in the sense of  $\Gamma$-convergence (see \cite{Braides:02, DalMaso:93} for an overview), that of a surface energy being proportional to the length of the interfaces between the different phases. Amidst the extensive literature, we single out the seminal contributions \cite{barroso.fonseca, bouchitte,  fonseca.tartar, owen.sternberg, sternberg,sternberg2} for a characterization of both scalar and vectorial Modica-Mortola energies, the results \cite{kohn.sternberg} for an analysis of local minimizers, \cite{ambrosio, baldo} for extensions to the multiwell scenario, and the recent contribution \cite{cristoferi.gravina} for the case of spatially dependent wells.  We finally mention \cite{stinson} for related models for Lithium-Ion batteries.

The study of analogous sharp-interface limits in the solid-solid setting has been initiated by {\sc S.~Conti}, {\sc I.~Fonseca}, and {\sc G.~Leoni} in \cite{conti.fonseca.leoni}, neglecting the effects of frame indifference. In dimension two, the frame-indifferent purview has been characterized by {\sc S.~Conti} and {\sc B.~Schweizer} for two rank-one connected wells $A$ and $B$, first in a linearized setting in \cite{conti.schweizer2}, and then in the the fully nonlinear framework of \eqref{eq:sol-sol-intro} in \cite{conti.schweizer, conti.schweizer3}. We also mention the contributions \cite{kytavsev-ruland-luckhaus, kytavsev-ruland-luckhaus2} for related microscopic models for two-dimensional martensitic transformations. 

The first analysis of sharp-interface limits for singularly perturbed frame-indifferent energies in higher dimensions $d>2$ has been obtained in our previous work \cite{davoli.friedrich}, for a slightly modified version of the model \eqref{eq:sol-sol-intro} where the energy contains a further anisotropic perturbation. More specifically, when the two wells have exactly one rank-one connection, after rotation, we can assume without loss of generality that $B-A = \kappa {\rm e}_d \otimes {\rm e}_d $ for $\kappa >0$. Then, our model reads  as follows:
 \begin{align}\label{eq: nonlinear energy-intro2}
 y\in H^2(\Omega; \R^d) \to E_{\ep,\eta}(y):=I_{\ep}(y)+ \eta^2 \int_{\Omega}\Big(  |\nabla^2 y|^2 - |\partial^2_{dd}y|^2 \Big)  \, {\rm d}x
\end{align}
for $\eta>0$. Owing to the additional anisotropic perturbation, our analysis is restricted to the case of exactly one rank-one connection. We stress that this additional energy term does not affect frame indifference, and penalizes only transitions in the direction orthogonal to the rank-one connection ${\rm e}_d \otimes {\rm e}_d$, while still allowing for phase transitions between the two different energy wells. We refer to \cite{golovaty1, golovaty2, kohn.muller, Zwicknagl} for studies of related models involving anisotropic perturbations.

 In \cite{davoli.friedrich}  we have shown that, for a suitable choice of $\eta$ (dependent on $\ep$), the functionals in \eqref{eq: nonlinear energy-intro2}  $\Gamma$-converge as $\ep\to 0$ (in the $L^1$-topology) to the sharp-interface limit  
\begin{align}
\label{eq: limiting energy-intro}
\mathcal{E}_0(y):=\begin{cases}
 K \,  \mathcal{H}^{d-1}(J_{\nabla y})&
\text{if }\nabla y\in BV(\Omega; R \lbrace A,B\rbrace) \ \text{ for some } R\in SO(d),\\
+\infty&\text{otherwise in }L^1(\Omega;\mathbb{R}^d),\end{cases}
 \end{align}
where $K$ corresponds to the energy of optimal transitions between the two phases (see \eqref{eq: our-k1} for the exact expression).   Roughly speaking, limiting deformations are necessarily piecewise affine with $J_{\nabla y}$ consisting of hyperplanes orthogonal to $e_d$  intersected with $\Omega$ (see \cite{dolzmann.muller} and Figure \ref{fig:limiting-def}).  We point out that, up to a possibly different constant $K$, the model in \eqref{eq: limiting energy-intro} is the same as the one identified in \cite{conti.schweizer}.  An essential ingredient in \cite{davoli.friedrich} is a novel \emph{two-well rigidity estimate} (see Theorem \ref{thm:rigiditythm} below). It provides stronger estimates with respect to previous results in the literature (see e.g.\  \cite{Chermisi-Conti, conti.schweizer, Jerrard-Lorent, Lorent}) by introducing a \emph{phase indicator}, which allows  to identify the predominant phase in each point of $\Omega$.

In this paper we further build upon this new rigidity estimate to combine the perspective of deriving sharp-interface limits for phase transitions with the passage from nonlinear-to-linearized elastic energies. In fact, triggered by the availability of rigidity estimates (mainly \cite{FrieseckeJamesMueller:02}) the derivation of effective linearized models has attained a great deal of attention over the last years. Their interest originates from the observation that they generally provide good approximations of the behavior of nonlinear models for deformations that are `close'  to rigid movements in a suitable sense. In fact, under the assumption that $A$ is the identity matrix ${\rm Id}$, a formal asymptotic expansion shows that, by considering deformations $y$ of the form $y={\rm id}+\ep u$ for a smooth displacement $u$, there holds 
$$\int_{\Omega} W (\nabla y)\,{\rm d}x=\int_{\Omega}W({\rm Id}+\ep\nabla u)\,{\rm d}x\sim \frac{\ep^2}{2}\int_{\Omega}D^2W({\rm Id}) \, \nabla u:\nabla u\,{\rm d}x+o(\ep^2),$$
where $D^2 W$ denotes the second-order differential of $W$ and $o(\ep^2)/\ep^2\to 0$ as $\ep$ tends to zero. In other words, the leading order behavior of the energy $W$ is completely encoded by the quadratic form of linearized elasticity $\frac{1}{2}\int_{\Omega}D^2W({\rm Id})\nabla u:\nabla u\,{\rm d}x$. Whereas $\eps^2$ is related to the width of transition layers, as explained above, the parameter $\eps$ represents the typical order of elastic strains. This heuristic argument has been made rigorous by {\sc G.~Dal Maso}, {\sc M.~Negri}, and {\sc D.~Percivale} in the seminal paper \cite{dalmaso.negri.percivale} for single-well energies under standard growth conditions.  
An extension  to the case of weaker growth conditions has been the subject of \cite{agostiniani.dalmaso.desimone}. We further refer to related studies on atomistic systems \cite{Braides-Solci-Vitali:07, Schmidt:2009}, homogenization \cite{gloria-neukamm, muller-neukamm}, viscoelasticity \cite{friedrich-kruzik}, plasticity \cite{Ulisse}, or fracture \cite{Friedrich-ARMA, Engineer, NegriToader:2013}.

Some of the aforementioned linearization results have been generalized to the multiwell setting for wells approaching the identity as $\eps \to 0$,  see e.g.\ \cite{agostiniani.blass.koumatos,  jesenko, Schmidt:08}. For fixed wells (independent of $\eps$), results are limited to  \cite{alicandro.dalmaso.lazzaroni.palombaro} (see \cite{alicandro.lazzaroni.palombaro} for an atomistic counterpart). There,  \EEE the authors consider a stronger higher-order perturbation compared to the ones in \eqref{eq:sol-sol-intro} and \eqref{eq: nonlinear energy-intro2}. In particular, they characterize, under appropriate boundary conditions, linearization around one of the two wells, i.e.,\ a crucial feature  is that \emph{only one phase} (say, the identity) is present in the limiting model. This is an effect of the stronger higher-order perturbation that, roughly speaking, \emph{prevents the occurrence of  macroscopic phase transitions} in the effective functional.  In mathematical terms, their penalization is chosen in a specific  way to ensure compactness and convergence of rescaled displacements $u= (y - {\rm id})/\eps$ in suitable Sobolev norms.

The main novelty of this work consists in providing a new perspective on solid-solid phase transitions, allowing simultaneously to have phase changes present in the limit, as well as to perform a `pointwise dependent' linearization that keeps track of the different `predominant phases' in each region of the body. We consider here sequences of energies of the form \eqref{eq: nonlinear energy-intro2} for suitable $\eps$-dependent $\eta$ (see Remark \ref{rk:seq-eta} below for details), denoted by $\mathcal{E}_{\ep}$ in the following.  We point out that $\eta$ is chosen to be `big enough' to guarantee that our quantitative rigidity estimate in Theorem \ref{thm:rigiditythm} provides enough compactness properties, but also `small enough' so that the limiting behavior of the energies is not affected by the anisotropic perturbation and no second-order derivatives of the deformations are involved in the limiting description. We refer to \cite[Remark 4.5 and paragraph before Theorem 1.1]{davoli.friedrich} for a discussion of this point.

Our first result consists in showing that to every sequence of deformations $\{y^\ep\}_{\ep}\subset H^2(\Omega;\R^d)$ with equibounded $\mathcal{E}_{\ep}$-energies we can associate a limiting deformation $y\in H^1(\Omega;\R^d)$, with $\nabla y\in BV(\Omega; R\{A,B\})$ for some $R\in SO(d)$, a limiting displacement $u\in SBV^2_{\rm loc}(\Omega;\R^d)$ (see Appendix \ref{sec:appendix}),  and a limiting Caccioppoli partition $\mathcal{P} = \lbrace P_j \rbrace_j$. The   jump set of $u$ is the (at most) countable union of hyperplanes orthogonal to $e_d$ and intersected with $\Omega$,  and the components of  $\mathcal{P}$  are given by the intersection of $\Omega$ with $d$-dimensional stripes having sides orthogonal to $e_d$.

The full statement of our result is quite technical: for this reason we present here a simplified version and refer to Theorem \ref{thm:compactness} for the precise formulation.

\begin{theorem}[Simplified compactness result]
\label{thm:compactness-meta}
Let $\Omega$ be a bounded Lipschitz domain in $\R^d$, $d\geq 2$, such that all its slices orthogonal to the $e_d$-direction are connected (see {\rm H8.}). Let $W$ satisfy {\rm H1.-H4.} Let $\{y^\ep\}_{\ep}\subset H^2(\Omega;\R^d)$ be such that $\sup_{\ep>0}\mathcal{E}_{\ep}(y^\ep)<+\infty$. Then, to every deformation $y^\ep$ we can associate a rotation $R^\ep\in SO(d)$, a Caccioppoli partition $\mathcal{P}^\ep=\{P^\ep_j\}_{j}$, phase indicators $\mathcal{M}^\ep=\{M^\ep_j\}_j\subset \{A,B\}$, and translations $\mathcal{T}^\ep=\{t^\ep_j\}_{j}\subset \R^d$,  as well as a limiting triple $(y,u,\mathcal{P})$ with $\nabla y\in BV(\Omega;R\{A,B\})$ such that
\begin{align*}
&R^{\ep}\to R,\\
&P_j^{\ep}\to P_j\quad\text{in measure for all $j$},\\
& y^{\ep}- \frac{1}{ \mathcal{L}^d(\Omega)} \int_{\Omega}y^{\ep}(x)\,{\rm d}x\to y \quad\text{strongly in }H^1(\Omega;\R^d),\\
&u^{\ep}\to u\quad\text{in measure in }\Omega,\text{ and } \ 
\nabla u^{\ep}\wk \nabla u\ \text{weakly in } L^2_{\rm loc}(\Omega;\M^{d\times d}), 
\end{align*}
where $u^\eps$ denote rescaled displacement fields associated to $\mathcal{P}^\eps, \mathcal{M}^\eps$, $\mathcal{T}^\eps$, and $R^\eps$, defined by 
\begin{align}\label{eq: rescali-def}
u^{\ep}:=\frac{y^{\ep}-\sum\nolimits_j ( R^{\ep} M_j^{\ep}\, x+t_j^{\ep})\chi_{P^{\ep}_j}}{\ep}.
\end{align} 
\end{theorem}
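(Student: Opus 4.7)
The plan is to base the entire argument on a single application of the two-well rigidity estimate (Theorem \ref{thm:rigiditythm}) to each $y^\ep$. Under the uniform energy bound $\sup_\ep \mathcal{E}_\ep(y^\ep) < +\infty$, this estimate produces for every $\ep$ a Caccioppoli partition $\mathcal{P}^\ep = \{P_j^\ep\}_j$, phase indicators $\mathcal{M}^\ep = \{M_j^\ep\}_j \subset \{A,B\}$, a common rotation $R^\ep \in SO(d)$, and translations $\mathcal{T}^\ep = \{t_j^\ep\}_j$, together with the quantitative controls
\begin{equation*}
\sum_j \int_{P_j^\ep} |\nabla y^\ep - R^\ep M_j^\ep|^2\,{\rm d}x \,\leq\, C\ep^2\, \mathcal{E}_\ep(y^\ep), \qquad \sum_j \mathcal{H}^{d-1}(\partial^* P_j^\ep \cap \Omega) \,\leq\, C\, \mathcal{E}_\ep(y^\ep).
\end{equation*}
I would fix the translations by minimizing the $L^2$-distance on each piece, namely $t_j^\ep := \frac{1}{\mathcal{L}^d(P_j^\ep)} \int_{P_j^\ep} (y^\ep - R^\ep M_j^\ep x)\,{\rm d}x$, which enforces $\int_{P_j^\ep} u^\ep\,{\rm d}x = 0$ on every piece and, in combination with the first bound above, yields $\|\nabla u^\ep\|_{L^2(\Omega)}^2 \leq C\, \mathcal{E}_\ep(y^\ep)$ for the approximate gradient of $u^\ep$.

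The compactness statements for rotations, partitions, and deformations then follow in three stages. By compactness of $SO(d)$, up to a subsequence $R^\ep \to R \in SO(d)$, and after relabeling the pieces we may assume that $M_j^\ep = M_j \in \{A,B\}$ is stationary for each fixed $j$. The uniform perimeter bound permits an application of the Ambrosio-Braides compactness theorem for Caccioppoli partitions, yielding a limit partition $\mathcal{P} = \{P_j\}_j$ with $P_j^\ep \to P_j$ in measure. The vanishing of $\|\nabla y^\ep - \sum_j R^\ep M_j^\ep \chi_{P_j^\ep}\|_{L^2(\Omega)}$, combined with the convergence of $\mathcal{P}^\ep$ and $R^\ep$, gives $\nabla y^\ep \to \sum_j R M_j \chi_{P_j}$ strongly in $L^2(\Omega;\M^{d\times d})$; applying Poincaré's inequality to the zero-mean renormalization of $y^\ep$ then upgrades this to $H^1$-convergence to a limit $y$ with $\nabla y \in BV(\Omega; R\{A,B\})$. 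Hypothesis H8 on the connectedness of $e_d$-slices of $\Omega$, combined with the fact that $A$ and $B$ are rank-one connected only in the direction $e_d$, forces $\nabla y$ to be a simple laminate (cf.\ \cite{dolzmann.muller}), so that the reduced boundaries $\partial^* P_j$ must lie in hyperplanes orthogonal to $e_d$.

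For the displacement, the $L^2$-bound on $\nabla u^\ep$ together with local Poincaré inequalities on compact subsets of the limit pieces $P_j$ yields an $L^2_{\rm loc}$-bound on $u^\ep$ itself. I would then view each $u^\ep$ as a function in $SBV^2_{\rm loc}(\Omega;\R^d)$ whose jump set is contained in $\bigcup_j \partial^* P_j^\ep$ and whose jump heights across interfaces are controlled by the difference of neighboring piecewise-affine rigid motions; Ambrosio's $SBV$-compactness and closure theorems then deliver, along a further subsequence, $u^\ep \to u$ in measure and $\nabla u^\ep \wk \nabla u$ weakly in $L^2_{\rm loc}(\Omega;\M^{d\times d})$, with $u \in SBV^2_{\rm loc}(\Omega;\R^d)$ and $J_u \subseteq \bigcup_j \partial^* P_j$, hence a (countable) union of hyperplanes orthogonal to $e_d$.

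The hard part will be the displacement compactness. The indices $j$ and the pieces themselves depend non-trivially on $\ep$, and there can in principle be countably many of them, so the bookkeeping required to reconcile the various relabelings and to pass to the limit simultaneously in $\mathcal{P}^\ep$, $\mathcal{M}^\ep$, $\mathcal{T}^\ep$, and in the rescaled displacement is delicate. More substantially, the jumps $[u^\ep]$ across interfaces of $\mathcal{P}^\ep$ may, a priori, be of order $1/\ep$: two consecutive phase boundaries whose distance is much smaller than $\ep$ collapse in the limit deformation $y$ but leave behind a macroscopic jump in $u$, which is precisely the mechanism anticipated in the abstract. Isolating this phenomenon and verifying that $J_u$ consists only of hyperplanes perpendicular to $e_d$ seems most naturally handled by a slicing argument in the direction $e_d$, reducing everywhere to the one-dimensional two-well problem on each slice, where the sequential ordering of phases along a segment and a one-dimensional rigidity estimate make the passage to the limit tractable.
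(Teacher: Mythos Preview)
Your outline captures the broad architecture, but there is a genuine gap in the displacement step, and a few misattributions that matter.

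First, Theorem~\ref{thm:rigiditythm} does not hand you a Caccioppoli partition with translations: it produces only a rotation $R^\ep$ and a \emph{phase indicator} $\Phi^\ep\in BV(\Omega;\{A,B\})$ with $\|\nabla y^\ep-R^\ep\Phi^\ep\|_{L^2(\Omega')}\le C_{\Omega'}\ep$ on compact subsets $\Omega'\subset\subset\Omega$. Turning the two sets $\{\Phi^\ep=A\}$, $\{\Phi^\ep=B\}$ into a useful partition is a separate, nontrivial construction (in the paper, a slicing of the function $t\mapsto\mathcal{H}^{d-1}(\{x_d=t\}\cap\{\Phi^\ep=A\})$ at level $\sim\ep^p$); and the estimate is only $L^2_{\rm loc}$, not $L^2(\Omega)$, unless $\Omega$ is a cuboid. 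Your displayed global bounds are therefore not available as stated.

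The more serious issue is your route to compactness of $u^\ep$ via Ambrosio's $SBV$ theorem. You correctly note that $[u^\ep]$ across $\partial^*P_j^\ep$ can be of order $1/\ep$; but this means $u^\ep$ is \emph{not} bounded in $BV$ (or even $GBV$), so the $SBV$ closure theorem does not apply. Your fallback---an $L^2_{\rm loc}$ bound on $u^\ep$ via Poincar\'e on compact $K\subset P_j$---also fails as written. On the set $K\setminus P_j^\ep$ the function $u^\ep$ is built from a \emph{different} translation $t_i^\ep$ (and possibly a different phase), so $|u^\ep|$ may be of order $1/\ep$ there. Mere convergence $P_j^\ep\to P_j$ in measure gives only $\mathcal{L}^d(K\setminus P_j^\ep)\to 0$, which is not enough to control $\|u^\ep\|_{L^p(K)}$. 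What is actually needed is a \emph{quantitative} smallness $\mathcal{L}^d(K\setminus P_j^\ep)\le C\ep^p$ for some $p>1$; the paper obtains this from a ``layer'' property of the partition (each $P_j^\ep$ either has small volume or nearly fills a horizontal slab of $\Omega'$), which in turn relies on the anisotropic second-order penalization through Proposition~\ref{lprop: phases}(iii) and a slice-by-slice isoperimetric argument. With that estimate in hand, the paper defines $f_j^\ep:=\ep^{-1}(y^\ep-R^\ep M_j^\ep x-t_j^\ep)$ on all of $K$, bounds $\nabla f_j^\ep$ in $L^p$ for some $p\in(1,2)$ by splitting $K$ into $K\cap P_j^\ep$ (where the rigidity estimate applies) and $K\setminus P_j^\ep$ (small measure compensating the $1/\ep$ blow-up), and only then applies Poincar\'e on $K$. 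The limit $u$ is assembled piece by piece on each $P_j$, not by an $SBV$ compactness on $\Omega$.

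In short: your partition/rotation/deformation stages are morally right (modulo the local-vs-global caveat), but the displacement argument needs the layer estimate \eqref{eq: partition property-new} and a per-component $W^{1,p}$ compactness in place of the $SBV$ route.
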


The assumptions on $W$ are classical regularity and coercivity conditions for two-well nonlinear elastic energies, cf.\  Subsection \ref{sec: nonlinear energy}. In particular, the statement shows that sequences of deformations with equibounded energies can be decomposed into the sum of piecewise constant rigid movements $\sum\nolimits_j ( R^{\ep} M_j^{\ep}\, x+t_j^{\ep})\chi_{P^{\ep}_j}$ and scaled   displacements $u^\eps$. The limiting quantities   $(y,u,\mathcal{P})$  play different roles in the description of the effective model: roughly speaking,  the limiting deformation $y$ encodes  the two different phases,  which are in general still present in the limit, and correspondingly indicates the surfaces where  phase transitions occur. The limiting displacement $u$ and the partition $\mathcal{P}$, instead, keep track of the situation in which in the limiting model two neighboring areas are in the same phase but at level $\ep$ they were separated by small  intermediate regions in the opposite phase having asymptotically vanishing  width as $\eps \to 0$, see Figure \ref{fig2} below for an illustration. More specifically, intermediate layers of width comparable to $\eps$ (i.e., the order of elastic strains) are encoded by the jump set of $u$ and widths asymptotically larger than $\eps$ are associated to the boundary of the partition $\partial P_j \cap \Omega$, $P_j \in \mathcal{P}$.  Finally, $u$  features also elastic displacements. 

 In particular, Theorem \ref{thm:compactness-meta} motivates the notion of \emph{admissible triples} as the collection of triples $(y,u,\mathcal{P})$ that are attained in the sense of the convergences in Theorem \ref{thm:compactness-meta}, starting from a sequence of deformations $\{y^\ep\}_{\ep}$. In what follows, we will refer to the convergence properties in Theorem \ref{thm:compactness-meta} as \emph{tripling of the variables}. See also \cite{Friedrich-ARMA} for a related notion of convergence.

The second step of our analysis consists in providing a characterization of admissible limiting triples $(y,u,\mathcal{P})$. For ease of presentation, we collect our findings in a simplified statement and refer to Subsection \ref{sec: limiting triples} for the precise formulation of the results.

\begin{theorem}[Simplified characterization of limiting triples]
\label{thm:characterization-meta}
Let $(y,u,\mathcal{P})$ be an admissible triple for the sequence $\{y^\ep\}_\eps$. Then,
\begin{itemize}
\item $y$ and $\mathcal{P}$ are uniquely defined; 
\item $u$ is uniquely defined up to piecewise translations of the form $\sum_j t_j \chi_{P_j}$, $\lbrace t_j\rbrace_j \subset \R^d$, and  global (infinitesimal) rotations; 
\item $J_{\nabla y} \subset \bigcup\nolimits_{j=1}^\infty \partial P_j \cap \Omega$;
\item the jump  of $u$ is constant on every connected component of its jump set.
\end{itemize}
\end{theorem}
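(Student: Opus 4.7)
The plan is to trace each of the four properties back to the decomposition \eqref{eq: rescali-def}, the convergences granted by Theorem \ref{thm:compactness-meta}, and the structural fact that $J_u$ and $\partial\mathcal{P}$ are supported on hyperplanes orthogonal to $e_d$. First, uniqueness of $y$ is immediate from the strong $H^1$-convergence of $y^\eps$ after subtraction of the mean, and the limiting rotation $R$ (hence the set $R\{A,B\}$) is determined once $\nabla y$ attains at least two distinct values, since $B-A=\kappa\,e_d\otimes e_d$ is a fixed non-trivial rank-one perturbation which rigidly constrains $R$. For uniqueness of $\mathcal{P}$, the convergence in measure $P_j^\eps\to P_j$ pins down the limiting partition (up to relabeling) once the sequence $\{y^\eps\}$ and the associated $\{\mathcal{P}^\eps\}$ produced by Theorem \ref{thm:compactness-meta} are fixed.

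Uniqueness of $u$ up to the stated ambiguities is obtained by comparing two admissible decompositions $(R^\eps,\mathcal{M}^\eps,\mathcal{T}^\eps)$ and $(\tilde R^\eps,\mathcal{M}^\eps,\tilde{\mathcal T}^\eps)$; note that the discrete indicators $M_j^\eps\in\{A,B\}$ are pinned down by the partition and by the value of $\nabla y$ on $P_j$. For the difference $u^\eps-\tilde u^\eps$ to possess a limit in measure, one must have $(\tilde R^\eps)^T R^\eps=I+\eps W^\eps+o(\eps)$ with $W^\eps\to W$ antisymmetric (the symmetric part vanishing because both matrices lie in $SO(d)$), and $\tilde t_j^\eps-t_j^\eps=\eps s_j+o(\eps)$. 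Passing to the limit, the ambiguity takes the form of a global infinitesimal rotation induced by $W$ plus a piecewise constant translation $\sum_j s_j\chi_{P_j}$, as claimed.

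For the inclusion $J_{\nabla y}\subset\bigcup_j\partial P_j\cap\Omega$, one notices that on each $P_j$ the piecewise rigid part of \eqref{eq: rescali-def} contributes the constant gradient $RM_j\in R\{A,B\}$, while $\eps\nabla u^\eps\to 0$ in the interior of each $P_j$ as a consequence of the weak $L^2_{\mathrm{loc}}$-convergence of $\nabla u^\eps$ combined with the factor $\eps$. Hence $\nabla y$ is a.e.\ equal to the constant $RM_j$ on $P_j$, so any jump of $\nabla y$ must lie on the union of partition boundaries.

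Finally, for constancy of $[u]$ on connected components of $J_u$: since $J_u$ is contained in a countable union of hyperplanes orthogonal to $e_d$ and $\mathcal{P}$ consists of stripes with sides orthogonal to $e_d$, each connected component $\Sigma$ of $J_u$ lies in a single hyperplane $\{x_d=c\}$ and, $\mathcal{H}^{d-1}$-a.e.\ on $\Sigma$, separates two fixed partition elements $P_j,P_{j'}$ with fixed phase indicators $M_j,M_{j'}$ and translations $t_j,t_{j'}$. The jump then reads
\[
[u]=R(M_j-M_{j'})\,x+(t_j-t_{j'}).
\]
If $M_j=M_{j'}$ this is obviously constant; otherwise $M_j-M_{j'}=\pm\kappa\,e_d\otimes e_d$ and $(M_j-M_{j'})x=\pm\kappa c\,e_d$ on $\Sigma$, again a constant. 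I expect the main technical obstacle to lie in the second bullet: carefully quantifying that any competing decomposition of $y^\eps$ into piecewise rigid motions plus rescaled displacements differs only at the $O(\eps)$ level from the chosen one, which is precisely where the finer content of the two-well rigidity estimate (Theorem \ref{thm:rigiditythm}), and in particular its phase-indicator refinement, becomes indispensable.
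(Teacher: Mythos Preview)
Your argument for the fourth bullet contains a fundamental error. You assume that each connected component $\Sigma$ of $J_u$ separates two fixed partition elements $P_j,P_{j'}$, and then compute $[u]$ from the piecewise rigid data. But $J_u$ is \emph{not} contained in $\bigcup_j\partial P_j$: jumps of $u$ can occur in the interior of a single component $P_j$, precisely when the sequence $\{y^\eps\}$ has an intermediate layer in the opposite phase of width $\sim\eps$ (see Example~\ref{ex}, Case~(1) with $l=1$, where $[u]=2\kappa e_2\neq 0$ on an interface interior to the trivial partition $\mathcal P=\{\Omega\}$). In that situation your formula gives $[u]=0$, a contradiction. More generally, the limiting $u$ is not piecewise affine with data $(M_j,t_j)$; it is an $SBV^2_{\rm loc}$ function whose jumps encode the asymptotic widths of vanishing $\eps$-level layers. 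The paper's proof (Proposition~\ref{lemma: admissible-u}) proceeds by a genuine slicing argument: one selects good one-dimensional fibers via Fatou, applies the fundamental theorem of calculus to $y^\eps$ along them, and shows that $[u](x',0)$ is determined (up to an error $O(\sqrt{\rho'})$) by the one-dimensional measure of the $B$-phase region on the slice. The key estimate~\eqref{eq: partition property-new} then forces this phase-measure to be asymptotically independent of $x'$, which is what yields constancy of the jump.

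Your first bullet also has a gap: uniqueness of $\mathcal P$ does \emph{not} follow from the convergence $P_j^\eps\to P_j$ alone, because two admissible triples come with two a priori different sequences of quadruples $(R^\eps,\mathcal P^\eps,\mathcal M^\eps,\mathcal T^\eps)$ and $(\tilde R^\eps,\tilde{\mathcal P}^\eps,\tilde{\mathcal M}^\eps,\tilde{\mathcal T}^\eps)$; nothing in Definition~\ref{def:convergence1} forces $\mathcal P^\eps=\tilde{\mathcal P}^\eps$. The paper's argument (Proposition~\ref{prop:ex-coarsest-part}(i)) is by contradiction: if the two limiting partitions differed, one component of $\tilde{\mathcal P}$ would intersect two components $P_1,P_2$ of $\mathcal P$ in positive measure; comparing the two rescaled displacements then forces $|t_1^\eps-t_2^\eps|\le C\eps$, which violates the selection principle~\eqref{eq: toinfty}. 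You entirely miss this use of~\eqref{eq: toinfty}, which is the mechanism singling out a unique partition. Your arguments for the second and third bullets are in the right spirit and close to the paper's, but they implicitly rely on the uniqueness of $\mathcal P$ that you have not established.
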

The non-uniqueness of the displacement field is simply  a consequence of the possible  non-uniqueness in the definition of $u^\eps$, see \eqref{eq: rescali-def}.  The last point of the statement represents a `laminate structure' of limiting displacement fields. This regularity of $u$ is achieved thanks to the anisotropic penalization in \eqref{eq: nonlinear energy-intro2} and neglects branching phenomena, see also Remark \ref{rem:internal jumps} for more details.

Denoting by $\mathcal{A}$ the class of all admissible limiting triples $(y,u,\mathcal{P})$, our main contribution consists in showing that the asymptotic behavior of the energies $\mathcal{E}_\ep$ is described by the functional
\begin{align}\label{eq: limiting energy-intro3}
 \mathcal{E}_0^{\mathcal{A}} (y,u,\mathcal{P}):=
\frac12\int_{\Omega}D^2W (\nabla y(x))\nabla u(x){:}\nabla u(x)\,{\rm d}x{+}K   \mathcal{H}^{d-1}(J_{\nabla y}){+}2  K \mathcal{H}^{d-1}\Big(\big(J_u\cup\big( \bigcup\nolimits_j \partial P_j\cap \Omega\big)\big)\setminus J_{\nabla y}\Big)
\end{align}
for every $(y,u,\mathcal{P})\in \mathcal{A}$. We point out that the constant $K$ in \eqref{eq: limiting energy-intro3} is the same one as in \eqref{eq: limiting energy-intro}. We observe that $\mathcal{E}_0^{\mathcal{A}}$ reduces to \eqref{eq: limiting energy-intro} for $u=0$ and $\mathcal{P}$ coinciding with the collection of connected components of the two sets $\{x\in \Omega\colon\,\nabla y(x)\in SO(d)A\}$ and $\{x\in \Omega\colon\, \nabla y(x)\in SO(d)B\}$. Analogously, $\mathcal{E}_0^{\mathcal{A}}$ coincides with the quadratic form of linearized elasticity, and hence with the limiting model in \cite{alicandro.dalmaso.lazzaroni.palombaro} for $u \in H^1(\Omega;\R^d)$, for the trivial partition $\mathcal{P}$ consisting only of $\Omega$, and for a deformation $y$ with $\nabla y = {\rm Id}$ in $\Omega$. In this sense, our limiting description combines both the effects of the sharp-interface characterizations \cite{conti.schweizer, davoli.friedrich} and those of the multiwell linearization \cite{alicandro.dalmaso.lazzaroni.palombaro}. In contrast to these results, it features an additional surface term: as described above, the jump of $u$ and the boundary of the partition encode small intermediate layers in the opposite phase at  level $\eps$  with width bigger than or comparable to $\ep$ which induce two `consecutive phase transitions',  see Figure \ref{fig2}. Our $\Gamma$-convergence result is proven under the compatibility condition that this additional term   enters the energy with double cost with respect to  single phase transitions, i.e., we suppose that  
\begin{equation}
\label{eq:comp-intro}
K^A_{\rm dp}=K^B_{\rm dp}=2K,
\end{equation}
where  $K^A_{\rm dp}$ and $K^B_{\rm dp}$ represent, roughly speaking, the energy necessary for performing these double-phase transitions at level $\ep$. (The subscript `dp' stands for `double profile'. We refer to \eqref{eq: our-k2} for their precise expression.)  Our main result reads as follows: 
\begin{theorem}
\label{thm:gamma-meta}
Let $\Omega$ be a bounded strictly star-shaped domain (see \eqref{eq: starshape})  satisfying the further connectedness assumption in {\rm H8.} Let $W$ satisfy {\rm H1.-H7.} and assume that the compatibility condition in \eqref{eq:comp-intro} holds true. Then, $\mathcal{E}_\ep$ $\Gamma$-converges to $\mathcal{E}^{\mathcal{A}}_0$ in the topology provided by the tripling of the variables in Theorem \ref{thm:compactness-meta}.
\end{theorem}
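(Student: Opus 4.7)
As is standard for a $\Gamma$-convergence result, the plan is to prove the liminf inequality and to construct recovery sequences for the limsup inequality separately. Both parts will crucially rely on the quantitative two-well rigidity estimate of Theorem \ref{thm:rigiditythm} (in particular on the phase indicator it provides), on the compactness/tripling of Theorem \ref{thm:compactness-meta}, and on the structural information on admissible triples from Theorem \ref{thm:characterization-meta}, which guarantees that $J_{\nabla y}$, $J_u$ and $\bigcup_j \partial P_j\cap\Omega$ are countable unions of hyperplanes orthogonal to $e_d$. The compatibility condition \eqref{eq:comp-intro} will only enter in order to match the interface constant on double-phase transitions.

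\textbf{Liminf inequality.} Given $\{y^\eps\}_\eps$ converging to $(y,u,\mathcal{P})\in\mathcal{A}$ in the tripling sense with $\sup_\eps \mathcal{E}_\eps(y^\eps)<+\infty$, I would localize the energies on small $e_d$-oriented cylinders and perform blow-ups both at $\mathcal{H}^{d-1}$-a.e.\ point of the hyperplanar jump/boundary set and at Lebesgue-a.e.\ bulk point. Three scenarios then arise: at bulk points, both sides of the blow-up lie in one well and the standard nonlinear-to-linearized argument of \cite{dalmaso.negri.percivale,alicandro.dalmaso.lazzaroni.palombaro}, combined with the weak convergence $\nabla u^\eps\wk\nabla u$ in $L^2_{\rm loc}$ and a Taylor expansion of $W$ about the relevant well, yields the bulk density $\tfrac12 D^2W(\nabla y)\nabla u{:}\nabla u$; at points of $J_{\nabla y}$ the two sides are in different wells and the Conti--Schweizer/Davoli--Friedrich sharp-interface analysis \cite{conti.schweizer,davoli.friedrich} produces the transition constant $K$; at points of $(J_u\cup\bigcup_j\partial P_j)\setminus J_{\nabla y}$ the two sides carry the same well $M\in\{A,B\}$, but the phase indicator from Theorem \ref{thm:rigiditythm} detects a macroscopic intermediate layer of the opposite phase at level $\eps$, so the transition cost is bounded below by $K^M_{\rm dp}$, which equals $2K$ by \eqref{eq:comp-intro}. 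Summation over these disjoint hyperplanar pieces via standard Radon-measure arguments gives the full lower bound.

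\textbf{Limsup inequality.} By a density procedure I would first restrict attention to a dense subclass of $\mathcal{A}$ in which $\nabla y$ is piecewise constant on a polyhedral refinement of $\mathcal{P}$, the displacement $u$ is smooth on each component of the refinement with $J_u$ a finite union of polyhedra orthogonal to $e_d$, and $\mathcal{P}$ itself has polyhedral boundary. For such nice triples the recovery sequence is built by splicing: on each bulk region I set $y^\eps:=\sum_j(R M_j x+t_j)\chi_{P_j}+\eps\bigl(R u+\text{corrector}\bigr)$, tuned so that $\nabla y^\eps$ stays close to $R M_j$ on $P_j$ and the elastic integral recovers $\tfrac12\int D^2W(\nabla y)\nabla u{:}\nabla u$ through Taylor expansion; across each hyperplane of $J_{\nabla y}$ I insert the optimal single-well profile saturating $K$; across each hyperplane of $(J_u\cup\bigcup_j\partial P_j)\setminus J_{\nabla y}$ I insert the optimal double profile saturating $K^M_{\rm dp}=2K$. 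All transition layers are chosen of width $O(\eps^2)$, so that the anisotropic penalization in \eqref{eq: nonlinear energy-intro2} contributes negligibly and the interface cost matches the prescribed values. A diagonal argument in the density approximation then concludes.

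\textbf{Main obstacle.} The most delicate point is the liminf analysis at the double-type interfaces and its matching upper bound: one must use the phase indicator to certify, at level $\eps$, the existence of a thin layer of the opposite well (of width either $\sim\eps$, which is eventually encoded by $J_u$, or $\gg\eps$, which is encoded by $\partial\mathcal{P}\setminus J_{\nabla y}$) and to exclude that elastic cancellations between the two sub-profiles reduce the cost below $K^M_{\rm dp}$. The corresponding careful bookkeeping of scales is also needed in order to separate bulk, single-transition and double-transition contributions without double-counting, and to guarantee that no elastic energy is absorbed into the interface measure nor vice versa.
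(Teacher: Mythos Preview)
Your overall strategy---localization, Taylor expansion for the bulk, optimal single-transition profiles on $J_{\nabla y}$, and optimal double-transition profiles on $(J_u\cup\bigcup_j\partial P_j)\setminus J_{\nabla y}$---is exactly the one the paper follows. Two points, however, need correction.

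\emph{Liminf at double-type interfaces.} You write that the blow-up cost is ``bounded below by $K^M_{\rm dp}$, which equals $2K$ by \eqref{eq:comp-intro}''. This is backwards in two ways. First, $K^M_{\rm dp}$ is defined in \eqref{eq: our-k2} via a $\sup$-$\sup$-$\inf$; it is the \emph{worst-case upper bound} on the cost of a double transition over all heights $h$ and all widths $\{w_\eps\}_\eps$, not the lower bound that a blow-up delivers. What the localization actually gives is $\ge \mathcal{F}^M_{\rm dp}(\omega;h)$ (the cell formula \eqref{eq: k2-intro} with $\inf$ over $u$ and $\{w_\eps\}_\eps$), and the real work is the inequality $\mathcal{F}^M_{\rm dp}(\omega;h)\ge 2K\,\mathcal H^{d-1}(\omega)$, proved in Proposition \ref{eq: KundKdo-new}. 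Second, the compatibility condition \eqref{eq:comp-intro} is \emph{not} used anywhere in the liminf; the inequality $\ge 2K$ holds unconditionally. The paper obtains it via a ``zooming to two interfaces'' lemma (Lemma \ref{lemma: intefacefind2}): using the phase indicator $\Phi$ from Theorem \ref{thm:rigiditythm} and the fine structure of $\partial^*\{\Phi=A\}$ in Proposition \ref{lprop: phases}, one locates two \emph{disjoint} thin cylindrical layers (of height $\sim w_\eps$) in each of which $\nabla y^\eps$ is close to $\nabla y_0^\pm$; the single-transition lower bound (Lemma \ref{lemma: lower energy bound}) then gives $\ge K$ in each layer. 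This is precisely the mechanism you describe as the ``main obstacle'' but leave unresolved; it is the heart of the liminf proof.

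\emph{Limsup and gluing.} Here the compatibility condition does enter, and is the only place it is needed: it guarantees that the double-profile cell problem is saturated at exactly $2K$ for \emph{every} width sequence $\{w_\eps\}_\eps\in\mathcal W_d$ (Proposition \ref{prop:kMdo}), which is what the recovery sequence must realize. The paper's construction differs from yours in one technical but essential respect: rather than directly defining a global $y^\eps$ by a formula, it first builds \emph{local} recovery sequences on cylinders around each interface (Propositions \ref{lemma: local1} and \ref{lemma: local2}) which are engineered to coincide with \emph{isometries} away from the interface. This is what makes the gluing possible in $H^2$ across consecutive interfaces; your splicing recipe does not address how to join an optimal profile to the bulk piece $\sum_j(RM_jx+t_j)\chi_{P_j}+\eps(\cdots)$ without creating spurious $H^2$-energy. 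The displacement $u$ is then recovered a posteriori by perturbing the glued sequence by $\eps u'$ (with $u'$ the Sobolev part of $u$), and a separate argument (Step 7 of the proof of Theorem \ref{thm:limsup-new}) verifies that the selection principle \eqref{eq: toinfty} forces the limiting partition to be $\mathcal P$ and not a strict refinement.
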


We refer to Subsection \ref{sec: nonlinear energy} and Subsection \ref{sec: refined model} for the formulation of H1.-H7. The difference between our result and the $\Gamma$-convergence analyses in \cite{conti.schweizer, davoli.friedrich} and  \cite{alicandro.dalmaso.lazzaroni.palombaro} is mostly in the adopted topology. In \cite{conti.schweizer, davoli.friedrich} an effective energy is identified in the strong $L^1$-topology for deformations $y$. The result in \cite{alicandro.dalmaso.lazzaroni.palombaro}, instead, is derived in the weak $H^1$-topology for rescaled displacements $(y-{\rm id})/\eps$. Our model combines this `global' point of view with a `local' one: the limiting Caccioppoli partition plays the role of identifying subdomains where the small-strains approximation of linearized elasticity, encoded by the limiting displacement $u$, is well posed. Finally, the surface-energy term associated to the jump set of $u$ and to $\mathcal{P}$  keeps track of  the multiple phase changes that the material had to undergo at level $\ep$ on regions with vanishing widths. 

We stress here that the focus of our study is not on minimization problems and their convergence but rather on the identification of the limiting energy functional. For completeness, we also mention that the case of incompatible wells, i.e., the setting where $A$ and $B$ have no rank-one connections, is not included in our analysis but would be much simpler to handle. Indeed, the limiting model would linearize around just one of the two phases, leading to a limiting description analogous to  \cite{alicandro.dalmaso.lazzaroni.palombaro}.

We point out that the lower bound in Theorem \ref{thm:gamma-meta} holds under no further assumptions on the two profile energies, i.e., the compatibility condition \eqref{eq:comp-intro} is only needed for the construction of recovery sequences. In Subsection \ref{subs:1d} we present a self-contained discussion showing that, under an additional assumption on  the energy density (see \eqref{eq: isotropy} below)  optimal profiles are one-dimensional and the compatibility condition in \eqref{eq:comp-intro} is indeed satisfied. This assumption is  fulfilled, e.g., when the energy only depends on the distance of the deformation gradient from the two wells, see \eqref{eq: modelli}. 

%

We close the introduction with some comments on the proof structure. The proof of Theorem \ref{thm:compactness-meta} relies on a series of intermediate results: all statements involving limiting rotations, partitions, and deformations are essentially proven in Proposition \ref{lemma: intermediate step1}. The sequence of translations and the  limiting displacements  are  first exhibited on subsets of $\Omega$ and eventually on $\Omega$ itself in Propositions \ref{lemma: intermediate step2} and \ref{lemma: intermediate step3}, respectively. Finally, a further delicate construction is needed to ensure uniqueness of  the limiting Caccioppoli partition. This  is based on a certain  \emph{selection principle}, see \eqref{eq: toinfty} below. Indeed, without such a requirement,  there might be different possible choices for the limiting partition, see the discussion in Example \ref{ex} for an in-depth analysis of this point. Key ingredients for the compactness analysis are the two-well rigidity estimate recalled in Theorem \ref{thm:rigiditythm} and a characterization of the two phase regions established in \cite[Proposition 3.7]{davoli.friedrich}, see also  Proposition \ref{lprop: phases}.

 The statements collected in Theorem \ref{thm:characterization-meta} are the subject of three different propositions. In particular, the uniqueness properties of limiting deformation,  displacement, and partition  are proven in Proposition \ref{prop:ex-coarsest-part}.  This latter one is shown to be a consequence of the selection principle described above. The characterization of the jump set of $\nabla y$ is contained in Proposition \ref{lemma: admissible-u-y-jump}, whereas that of the jump set of $u$ is the subject of Proposition \ref{lemma: admissible-u}.

As customary in $\Gamma$-convergence analysis, the proof of Theorem \ref{thm:gamma-meta} consists in first showing that $\mathcal{E}_0^{\mathcal{A}}$ provides a lower bound for the limiting behavior of the energies $\mathcal{E}_\ep$ (see Theorem \ref{thm:liminf}), and then in showing that this lower bound is indeed optimal (see Theorem \ref{thm:limsup-new}). The proof of the liminf inequality essentially relies on providing a characterization of the double-profile energies $K^M_{\rm dp}$, $M\in \{A,B\}$. An important point is to show that optimal double phase transitions are, a priori, energetically more expensive than gluing together two optimal profiles performing each a single phase transition in an energetically optimal way (in other words, $K^M_{\rm dp} \ge 2K$), see Proposition \ref{eq: KundKdo-new}. The key ingredients for proving the upper bound are explicit constructions of local recovery sequences performing energetically optimal single and double phase transitions, see Propositions \ref{lemma: local1} and \ref{lemma: local2}. Both sequences are constructed starting from a delicate slicing argument introduced in \cite{davoli.friedrich} and recalled in Proposition \ref{lemma: optimal profile} below. In addition, they are chosen  so  that they coincide with isometries far from the interfaces,  and  they can then be `glued together' in the proof of Theorem \ref{thm:limsup-new}.

The paper is organized as follows: in Section \ref{sec:state} we review the state-of-the-art and perform an overview of the main mathematical difficulties. In Section \ref{sec:main-thm} we describe our model and state the main results. Sections \ref{sec:compactness} and \ref{sec:limiting-triple} are devoted to the proofs of the compactness theorem and to the characterization of limiting triples, respectively. The proof of Theorem \ref{thm:gamma-meta} is the subject of Section \ref{sec:gamma}.

\subsection{Notation}
In what follows, we  fix $d\in \N$, $d\geq 2$, and we  consider  a bounded Lipschitz domain $\Omega \subset \R^d$. We will denote points $x\in \R^d$ as $x=(x',x_d)$, with $x'\in \R^{d-1}$ and $x_d\in \R$. In the whole paper we use standard notations for Sobolev spaces, as well as for $BV(\Omega)$ and  $SBV(\Omega)$. We refer the reader to \cite{Ambrosio-Fusco-Pallara:2000}  for definitions and main results. Some basic properties of special functions of bounded variation and Caccioppoli partitions are recalled in Appendix \ref{sec:appendix}.  We will omit the target space of our functions whenever this is clear from the context. The identity map on $\R^d$ will be denoted by  ${\rm id}$ or, with a slight abuse of notation, simply by $x$.  For $m \in \N$, the  $m$-dimensional Lebesgue and Hausdorff measures of a set will be indicated by $\mathcal{L}^m$ and $\mathcal{H}^m$, respectively. By $\fint_{\Omega}$ we denote the average integral $\frac{1}{\mathcal{L}^d(\Omega)}\int_\Omega$. 

We denote by $e_1,\dots, e_d$  and $e_{ij}$, $i,j=1,\dots,d$, the standard basis in $\R^d$ and $\mathbb{M}^{d\times d}$, respectively. We will use the notation ${\rm Id}$ for the identity matrix in $\M^{d \times d}$ and denote by $SO(d) \subset \mathbb{M}^{d\times d}$ the set of proper rotations. The sets of symmetric and skew-symmetric matrices  are  indicated by $\M^{d \times d}_{\rm sym}$ and $\M^{d \times d}_{\rm skew}$, respectively. In what follows, we will adopt the Frobenius scalar product between matrices $F:G:={\rm Tr}(F^TG)$ for every $F,G\in \mathbb{M}^{d\times d}$, and we will use the symbol $|\cdot|$ for the associated Frobenius norm.  For every set $S\subset \R^d$, we indicate by $\chi_S$ its characteristic function, defined as $\chi_S(x)=1$ if $x\in S$ and  $\chi_S(x)=0$ otherwise. Given two sets $S_1,S_2 \in \R^d$, we denote by $S_1 \triangle S_2$ their symmetric difference. Inclusions of sets  $S_1\subset S_2$ are always intended up to sets of negligible measure, i.e., $\mathcal{L}^d(S_1 \setminus S_2) = 0$.  By $B_\rho(x)$ we denote the $d$-dimensional ball of radius $\rho>0$ and center $x \in \R^d$.

\section{State-of-the-art, heuristics, and challenges}
\label{sec:state}

In this section we recall the state-of-the-art for sharp-interface limits in the theory of solid-solid phase transitions, and for derivations of linearized models from nonlinear elastic energies. We additionally highlight the main open questions and difficulties.

\subsection{Models in nonlinear elasticity for two-well energies}\label{sec: nonlinear energy}
To  every \emph{deformation}  $y\in H^1(\Omega;\R^d)$ we associate the elastic energy 
$$\int_{\Omega}W(\nabla y)\,{\rm d}x,$$
where $W\colon\M^{d\times d}\to [0,+\infty)$ is a map representing the \emph{stored-energy density}, and satisfying the following properties:
\begin{enumerate}
\item[H1.](Regularity) $W$ is continuous; \label{H1}
\item[H2.](Frame indifference) $W(RF)=W(F)$ for every $R\in SO(d)$ and $F\in \M^{d\times d}$;
\item[H3.](Two-well structure) $W(A)=W(B)=0$, where $A={\rm Id}$, and $B=\textrm{diag }(1,1,\dots,1,1+\kappa)$, for   $\kappa>0$;
\item[H4.](Coercivity) there exists a constant $c_1>0$ such that  
\begin{align*}
 W(F) \ge c_1 \dist^2(F,SO(d)\lbrace A,B \rbrace) \ \ \ \text{for every $F\in \M^{d\times d}$;}
 \end{align*}
\item[H5.](Quadratic behavior around the two wells) there exists $\delta_W>0$ such that $W$ is of class $C^2$ in 
$$ \{F\in \M^{d\times d}:\, {\rm dist}(F, SO(d) \lbrace A,B\rbrace)<\delta_W\}.$$
\item[H6.] (Growth condition from above) there exists a constant $c_2>0$ such that 
$$  W(F) \le c_2 \dist^2(F,SO(d)\lbrace A,B \rbrace)  \ \ \ \text{for every $F\in \M^{d\times d}$.}$$ 
 \end{enumerate}

Assumptions H1.-H5.\ are standard requirements on stored-energy densities in nonlinear elasticity. We note that after an affine change of variables one can always assume that the two wells have the form given in H3., see \cite[Discussion before Proposition 5.1 and Proposition 5.2]{dolzmann.muller}. Specifically, the choice $\kappa>0$ amounts to the case of exactly one rank-one connection between $A$ and $B$, namely to the setting in which the only solution of $B-R A  = a \otimes \nu$ with $R \in SO(d)$, $a,\nu \in \R^d$, and  $|\nu|=1$  is given by $R= {\rm Id}$, $\nu= e_d$, and $a = \kappa e_d$. 
 
 We point out that assumption H6.\ is not compatible with the impenetrability condition
\begin{align}\label{eq: impenetrability}
W(F)\to +\infty\quad\text{as}\quad{\rm det}\,F\to +0, 
\end{align}
 which is usually enforced to model a blow-up of the elastic energy under strong compressions. In the derivation of sharp-interface limits for solid-solid phase transitions \cite{conti.schweizer, conti.schweizer2, davoli.friedrich}, however,  condition H6.  is instrumental for the construction of recovery sequences. (Note that, in dimension two, by means of a more elaborated construction  performed in \cite{conti.schweizer3}, assumption H6.\ may be dropped.)

In order to model solid-solid phase transitions, we analyze a nonlinear energy given by the sum of a
suitable rescaling of the elastic energy and  a singular perturbation. For every $\ep>0$,  we consider the functional   ${E}^P_{\ep}\colon H^2(\Omega;\R^d)\to [0,+\infty)$ defined by 
\begin{align}\label{eq: basic energy} 
{E}^P_{\ep}(y):=\frac{1}{\ep^2}\int_{\Omega}W(\nabla y)\,{\rm d}x+\int_{\Omega} P_\eps(\nabla^2 y)\,{\rm d}x,
\end{align}
where $P_\eps: \R^{d \times d \times d} \to [0,+\infty)$ is a function which depends on the small parameter $\eps$. In the following subsections, we will specify the choice of $P_\eps$ according to different modeling assumptions.

%

The parameter $\eps$ in the definition above represents the typical order of the strain, whereas $\eps^2$ is
related to the size of transition layers \cite{ball.james,bhattacharya, capella.otto2, kohn.muller2,muller}. The first term in the right-hand side of \eqref{eq: basic energy} favors deformations $y$ whose gradient is close to the two {wells} of $W$, whereas the second term penalizes transitions between two different values of the gradient.

In the following, we will call $A$ and $B$  the \emph{phases}. Regions of the domain where $\nabla y$ is in a neighborhood of $SO(d)A$ will be called $A$-\emph{phase regions} \EEE of $y$  and accordingly we will speak of  $B$-\emph{phase regions}. \EEE

\subsection{Review of existing results}\label{sec: existing results}

We now continue by recalling some  results about sharp-interface limits  and derivation of linearized models. The exact setting of the paper and our main results can be found in Section \ref{sec:main-thm}.  There, we will also recall a more recent result on sharp-interface limits which   we proved in \cite{davoli.friedrich}, and which   represents the departure point of our analysis.

\noindent \textbf{A sharp-interface limit for a model of solid-solid phase transitions.} Classical singularly perturbed two-well problems are described by energies of the form
\begin{align}\label{eq: I functional}
I_{\ep}(y):=\frac{1}{\ep^2}\int_{\Omega}W(\nabla y)\,{\rm d}x+\ep^2\int_{\Omega}|\nabla^2 y|^2\,{\rm d}x
\end{align}
for every $y\in H^2(\Omega;\R^d)$, corresponding to the choice $P_\eps(G) = \eps^2 |G|^2$, $G \in \R^{d \times d \times d}$, in \eqref{eq: basic energy}. This subsection is devoted to a presentation of the analysis performed by {\sc S. Conti} and {\sc B. Schweizer}  \cite{conti.schweizer} which addresses the sharp-interface limit of this model in dimension two, as $\eps$ tends to zero. Although in \cite{conti.schweizer} also the case of two rank-one connections is considered, we focus here on compatible wells having exactly one rank-one connection (see assumption H3.).

 Denote by $\mathscr{Y}(\Omega)$ the class of admissible limiting deformations, defined as 
\begin{align}\label{eq: limiting deformations}
 \mathscr{Y}(\Omega):= \bigcup_{R\in SO(d)}\mathcal{Y}_R(\Omega),\ \ \ \text{where}\ \  \mathcal{Y}_R(\Omega):=\big\{& y\in H^1_{\#}(\Omega;\R^d)\colon\,\nabla y\in BV(\Omega;R\{A,B\})\big\} \ \  \text{for } R \in SO(d),
\end{align}
 where $H^1_{\#}(\Omega;\R^d) := \lbrace y \in H^1(\Omega;\R^d)\colon \, \fint_\Omega y\, {\rm d}x = 0 \rbrace$. For every open subset $\Omega'\subset \Omega$, we will adopt the notation   $\mathscr{Y}(\Omega')$   to indicate the corresponding admissible deformations.
 In \cite[Proposition 3.2]{conti.schweizer} the authors established the following compactness result.

\begin{lemma}[Compactness]
\label{lemma:comp-def}
Let $d\in \mathbb{N}$, $d\geq 2$, and let $\Omega\subset \R^d$ be a bounded Lipschitz domain. Let $W$ satisfy assumptions {\rm H1}.--{\rm H4}. Then, for all sequences $\{y^{\ep}\}_\eps\subset H^2(\Omega;\R^d)$ for which 
$$\sup_{\ep>0} I_{\ep}(y^{\ep})<+\infty,$$ 
there exists a map $y\in \mathscr{Y}(\Omega)$ such that, up to the extraction of a subsequence (not relabeled), there holds
$$y^{\ep}-\fint_{\Omega}y^{\ep}(x)\,{\rm d}x \to y\quad\text{strongly in }H^1(\Omega;\R^d).$$
\end{lemma}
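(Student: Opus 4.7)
The plan is to combine a Modica--Mortola-type interpolation with the quantitative geometric rigidity of Friesecke--James--M\"uller, so as to obtain simultaneously an $L^2$-control of $\nabla y^\ep$ near $SO(d)\lbrace A, B\rbrace$ and a $BV$-control of a suitable phase indicator. First, from $\sup_\ep I_\ep(y^\ep) \le C$ together with H4., I would deduce
$$\int_\Omega \dist^2(\nabla y^\ep, SO(d)\lbrace A, B\rbrace)\,{\rm d}x \le C\ep^2, \qquad \int_\Omega |\nabla^2 y^\ep|^2 \,{\rm d}x \le C\ep^{-2}.$$
In particular, $\nabla y^\ep$ is close, in measure, to the disjoint union of the two wells, and I would define a phase indicator $\chi^\ep\colon\Omega \to \lbrace A,B\rbrace$ by selecting at each point the well closest to $\nabla y^\ep$ (with an arbitrary convention on the negligible equidistance set).

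Next, to turn this $L^2$-proximity into a $BV$-bound on $\chi^\ep$, I would construct an auxiliary smooth function $\phi\colon \M^{d\times d}\to \R$ taking two distinct values on $SO(d)A$ and $SO(d)B$, respectively, and satisfying $|\nabla \phi(F)| \le C\sqrt{W(F)}$ on a neighborhood of $SO(d)\lbrace A, B\rbrace$. Young's inequality would then yield
$$\int_\Omega |\nabla (\phi\circ \nabla y^\ep)|\,{\rm d}x \le C\int_\Omega \sqrt{W(\nabla y^\ep)}\, |\nabla^2 y^\ep|\,{\rm d}x \le \tfrac{C}{2}\, I_\ep(y^\ep) \le C'.$$
Via coarea, this provides an equibounded perimeter for each phase region of $\chi^\ep$. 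Hence $\lbrace \chi^\ep\rbrace_\ep$ is equibounded in $BV(\Omega;\M^{d\times d})$ and, up to a subsequence, converges in $L^1$ to some $\chi \in BV(\Omega; \lbrace A, B\rbrace)$.

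The crucial step, which I expect to be the hardest, is to identify a single rotation $R\in SO(d)$ describing the limiting gradient on all of $\Omega$. Applying the Friesecke--James--M\"uller rigidity estimate on subdomains where $\chi^\ep$ is approximately constant produces local rotations $R^\ep\in SO(d)$ with $\|\nabla y^\ep - R^\ep \chi^\ep\|_{L^2} \to 0$. Since assumption H3. forces the only rank-one connection between $A$ and $B$ to be $\kappa\, e_d \otimes e_d$ (with rotation ${\rm Id}$), and since $\nabla y^\ep$ is a gradient field, two distinct rotations cannot coexist in adjacent limit phase regions without violating the Hadamard rank-one compatibility along their common interface. Combining this rigidity constraint with the Dolzmann--M\"uller classification of two-well gradient fields \cite{dolzmann.muller}, I would conclude that the $R^\ep$ stabilize to a single $R\in SO(d)$, so that $\nabla y^\ep \to R\chi$ strongly in $L^2(\Omega;\M^{d\times d})$ and $\nabla y \in BV(\Omega; R\lbrace A,B\rbrace)$.

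Finally, subtracting the average $\fint_\Omega y^\ep\,{\rm d}x$ and using Poincar\'e--Wirtinger together with the strong $L^2$-convergence of the gradients promotes this to strong convergence of $y^\ep - \fint_\Omega y^\ep\,{\rm d}x$ to $y$ in $H^1(\Omega;\R^d)$, with $y \in \mathcal{Y}_R(\Omega)\subset \mathscr{Y}(\Omega)$. The principal technical difficulty lies in the rotation-selection step: ruling out distinct limiting rotations on different subregions requires a uniform version of geometric rigidity on an $\ep$-dependent partition of $\Omega$, together with a careful accounting of how interface energy controls `jumps' in $R^\ep$; in dimensions $d\ge 3$ this is more delicate than in the two-dimensional setting of \cite{conti.schweizer}, and ultimately relies on the fact that $\Omega$ is connected.
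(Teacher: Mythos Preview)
The paper does not prove this lemma; it is quoted from \cite[Proposition~3.2]{conti.schweizer} as an established result and used as a black box throughout. There is therefore no in-paper argument to compare against beyond that citation.

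Your plan is the standard one and matches the route taken in \cite{conti.schweizer}: Modica--Mortola interpolation for a $BV$-bound on a phase indicator, single-well FJM rigidity to approximate $\nabla y^\ep$ by piecewise rotations of $A$ or $B$, and a compatibility argument to select one global rotation. Two places where the sketch needs tightening. First, the auxiliary $\phi$ with $|\nabla\phi|\le C\sqrt{W}$ cannot be built directly from the (non-smooth) distance to the wells; one uses a frame-indifferent surrogate such as a function of $F^TF$, or truncates $\nabla y^\ep$ beforehand. Second --- and this is the substantive point --- applying FJM ``on subdomains where $\chi^\ep$ is approximately constant'' is not legitimate as written: those sets are $\ep$-dependent and not uniformly Lipschitz, so the rigidity constant is uncontrolled. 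The usual remedy is to work on a \emph{fixed} grid of cubes, use the perimeter bound to show that all but a small-measure collection of cubes are dominated by a single phase, apply FJM with a uniform constant on each good cube, and then chain the rotations across neighbouring cubes; it is precisely in this chaining, across a phase interface, that the single rank-one connection from H3.\ and connectedness of $\Omega$ enter. You have correctly located the difficulty; turning the heuristic Hadamard-compatibility remark into a quantitative estimate is where the real work sits. Incidentally, the compactness argument is no harder for $d\ge 3$ than for $d=2$ --- it is only the $\Gamma$-limsup construction in \cite{conti.schweizer} that is genuinely two-dimensional.
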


The limiting deformations $y$ have the structure of a simple laminate. Indeed, {\sc G.~Dolzmann} and {\sc S.~M\"uller} \cite{dolzmann.muller} have shown that for $y \in \mathcal{Y}_R(\Omega)$  the  essential boundary of the set $T:=\{x\in \Omega\colon\,\nabla y(x)\in RA\}$ consists of subsets of hyperplanes that intersect $\partial \Omega$ and are orthogonal to $e_d$, and  that  $y$ is affine on balls whose intersection with $\partial T$ has zero $\mathcal{H}^{d-1}$-measure,  cf.\ Figure \ref{fig:limiting-def}  (see also Appendix \ref{sec:appendix} for the definition of essential boundary for a set of finite perimeter).

\begin{figure}[h]
\centering
\begin{tikzpicture}
\coordinate (A) at (0,3);
 \coordinate (B) at (2,0);
 \coordinate (C) at (-2,0);
 \coordinate (D) at (0,-3);
 \coordinate (E) at (1,1.5);
 \coordinate (F) at (1.33,1);
  \coordinate (G) at (1.33,-1);
  \coordinate (H) at (0.66,-2);
  \coordinate (I) at (0.33,-2.5);
  \coordinate (L) at (0.2,-2.7);
  \coordinate (M) at (0.1,-2.85);
   \coordinate (N) at (-1,1.5);
 \coordinate (O) at (-1.33,1);
  \coordinate (P) at (-1.33,-1);
  \coordinate (Q) at (-0.66,-2);
  \coordinate (R) at (-0.33,-2.5);
  \coordinate (S) at (-0.2,-2.7);
  \coordinate (T) at (-0.1,-2.85);
  \draw (A)--(B)--(D)--(C)--cycle;
  \draw (E)--(N);
  \draw (F)--(O);
  \draw (G)--(P);
  \draw[fill=orange!20] (A)--(N)--(E);
  \draw[fill=blue!30] (N)--(E)--(F)--(O)--cycle;
 \draw[fill=orange!20] (O)--(F)--(B)--(G)--(P)--(C)--cycle;
 \draw[fill=blue!30] (P)--(G)--(H)--(Q)--cycle;
  \draw[dashed,fill=orange!20] (Q)--(H)--(I)--(R);
   \draw[dashed,fill=blue!20] (R)--(I)--(L)--(S)--cycle;
 \draw[dashed,fill=orange!10] (S)--(L)--(M)--(T);
 \node at (0,0) {$B$};
 \node at (0,-1.5) {$A$};
  \node at (0,1.25) {$A$};
 \node at (0,2.25) {$B$};

\end{tikzpicture}
\caption{The gradient of a limiting deformation $y\in \mathcal{Y}_{\rm Id}(\Omega)$, in the case in which $B-A=\kappa e_d\otimes e_d$.}
\label{fig:limiting-def}
\end{figure}
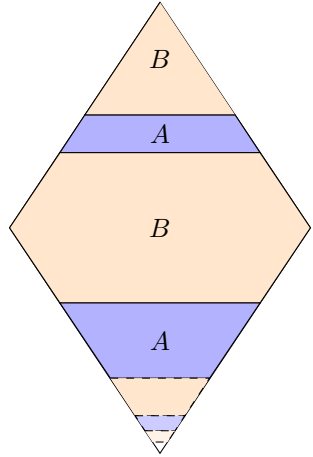

We now introduce the limiting sharp-interface energy. Denoting by $Q:=(-\tfrac12,\tfrac12)^d$ the $d$-dimensional unit cube centered in the origin and with sides parallel to the coordinate axes, we consider the \emph{optimal-profile energy} 
\begin{align}\label{eq: conti-schweizer-k}
K_{0} :=\inf\Big\{&\liminf_{\ep\to 0} I_{\ep}(y^{\ep},Q)\colon \,   \lim_{\eps \to 0}  \Vert  y^\eps -  y_0^+ \Vert_{L^1(Q)}= 0\Big\},
\end{align}
where $y_0^+\in H^1_{\rm loc}(\R^d;\R^d)$ is the continuous function with $\nabla y_0^+=A\chi_{\{x_d>0\}}+B\chi_{\{x_d<0\}}$ and $y_0^+(0)=0$. (Here, $\chi_{\{x_d>0\}}$ and $\chi_{\{x_d<0\}}$ denote the characteristic functions of the two halfplanes $\{x_d>0\}$ and $\{x_d<0\}$, respectively.) Note that $K_0$ corresponds to the energy of an optimal phase transition from $A$ to $B$, and that it is invariant under changing the roles of the two phases, i.e., invariant by replacing $y_0^+$ with the function $y^-_0\in H^1_{\rm loc}(\R^d;\R^d)$ satisfying $y^-_0(0)=0$ and $\nabla y^-_0=B\chi_{\{x_d>0\}}+A\chi_{\{x_d<0\}}$.

The sharp-interface limiting functional $I_0\colon L^1(\Omega;\R^d)\to [0,+\infty]$ is defined as
\begin{align}\label{eq: first sharp-interface}
I_0(y):=\begin{cases} K_0 \mathcal{H}^{d-1}(J_{\nabla y})&\text{if }y\in \mathscr{Y}(\Omega)\\
+\infty&\text{otherwise}.\end{cases}
\end{align}
In \cite[Theorem 3.1]{conti.schweizer} it was proved that, in the two-dimensional setting, $I_0$ is the variational limit of the sequence $\{I_{\ep}\}_\ep$ in the sense of $\Gamma$-convergence. (For an exhaustive treatment of $\Gamma$-convergence we refer the reader to \cite{Braides:02, DalMaso:93}.) 
\begin{theorem}[$\Gamma$-convergence in dimension $d=2$]\label{theorem: conti.schweizer}
Let $d=2$, let $\Omega\subset \R^2$ be a bounded, strictly star-shaped Lipschitz domain, and let $W$ satisfy {\rm H1}.--{\rm H4}.\ and {\rm H6}. Then
$$\Gamma-\lim_{\ep \to 0} I_{\ep}=I_0$$ 
with respect to the strong $L^1$-topology.
 \end{theorem}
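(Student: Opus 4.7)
The plan is to split the proof into the usual two parts of a $\Gamma$-convergence statement and to reduce as much as possible to the one-dimensional optimal-profile problem defining $K_0$.

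For the \textbf{liminf inequality}, I would consider a sequence $\{y^\ep\}_\ep \subset H^2(\Omega;\R^2)$ with $y^\ep \to y$ in $L^1(\Omega;\R^2)$. If $\liminf_\ep I_\ep(y^\ep) = +\infty$ there is nothing to prove, so I may pass to a subsequence along which the energies are equibounded; the compactness result (Lemma \ref{lemma:comp-def}) then forces $y \in \mathscr{Y}(\Omega)$, i.e., $\nabla y \in BV(\Omega;R\{A,B\})$ for some $R \in SO(2)$. Next, I would localize: defining the nonnegative Radon measures $\mu^\ep := \frac{1}{\ep^2} W(\nabla y^\ep)\mathcal{L}^2 \llcorner \Omega + \ep^2 |\nabla^2 y^\ep|^2 \mathcal{L}^2 \llcorner \Omega$, I may extract a weak-$*$ limit $\mu$ with $\mu(\Omega) \le \liminf_\ep I_\ep(y^\ep)$. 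It then suffices to prove $\frac{\mathrm{d}\mu}{\mathrm{d}\mathcal{H}^1\llcorner J_{\nabla y}}(x_0) \ge K_0$ at $\mathcal{H}^1$-a.e.\ $x_0 \in J_{\nabla y}$. Performing a Fonseca--M\"uller style blow-up at such points (exploiting the structure theorem of \cite{dolzmann.muller} that $J_{\nabla y}$ is locally a hyperplane segment orthogonal to $R e_2$), I would rescale on cubes $Q_r(x_0)$ of side $r$, introduce $z^\ep(x) = \frac{1}{r}(y^\ep(x_0 + rx) - c^\ep)$ for appropriate constants $c^\ep$, and use the two-well rigidity available in 2D to show that, after a suitable rotation-correction, $z^\ep$ approaches the canonical transition profile on each blow-up cube. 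The infimum in the definition \eqref{eq: conti-schweizer-k} of $K_0$ then yields the lower bound.

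For the \textbf{limsup inequality}, given $y \in \mathcal{Y}_R(\Omega)$, I would first reduce by frame indifference to $R = {\rm Id}$, and by the density-type arguments in $\mathscr{Y}(\Omega)$ to the case where $J_{\nabla y}$ consists of finitely many straight segments $\{x_2 = s_k\} \cap \Omega$, $k=1,\dots,N$. Around each interface $\{x_2 = s_k\}$ I would insert an optimal one-dimensional profile: by definition of $K_0$ there exists a sequence $\{\psi^\ep_k\}_\ep$ realizing the infimum in \eqref{eq: conti-schweizer-k} with $\psi^\ep_k \to y_0^\pm$ in $L^1(Q)$ (with the orientation fixed by the direction of the phase jump at $s_k$). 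After rescaling vertically by $\ep^2$ (the natural transition scale) and a horizontal extension, I would glue these profiles to $y$ in thin horizontal strips of width $\sim \ep^2$ around each $s_k$. Strict star-shapedness of $\Omega$ is exactly what allows a small radial dilation to create space for the transition strips to terminate against $\partial\Omega$ without losing the $L^1$-convergence, while assumption H6 controls the elastic contribution on the narrow mismatch zones where the profile is glued to $y$.

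The \textbf{main obstacle} is the liminf inequality, specifically the rigidity step needed to reduce the blow-up limit to a truly one-dimensional transition in the strongly nonconvex, frame-indifferent setting: the $SO(2)$-invariance of $W$ means $\nabla y^\ep$ is only close to $SO(2)\{A,B\}$ rather than to $\{A,B\}$ itself, and one must control the spatial oscillations of the underlying rotation field using the second-order term $\ep^2\int |\nabla^2 y^\ep|^2$. In 2D this is feasible because, for the single rank-one connection in H3, one can slice parallel to $e_1$ and argue that the gradient is essentially constant along horizontal slices away from transition layers, effectively collapsing the problem into a scalar one-dimensional Modica--Mortola-type problem on each vertical fiber and then integrating; this is the step that does not generalize to $d \ge 3$ and the reason the paper under review requires the additional anisotropic perturbation.
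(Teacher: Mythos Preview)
This theorem is not proved in the present paper; it is quoted from \cite[Theorem~3.1]{conti.schweizer} as background in Section~\ref{sec: existing results}. So there is no ``paper's own proof'' to compare against. What the paper does say, however, directly contradicts your assessment of where the difficulty lies: immediately after stating the theorem, the authors write that the result ``is limited to the two-dimensional setting due to the limsup inequality: the definition of sequences with optimal energy approximating a limit that has multiple flat interfaces relies on a deep technical construction. This so-called \emph{$H^{1/2}$-rigidity on lines} \dots\ is only available in dimension $d=2$.'' You identify the liminf as the main obstacle; in fact the liminf goes through by a fairly standard localization and cell-formula argument (as in \cite{conti.fonseca.leoni, conti.schweizer2}), and it is your limsup sketch that has the genuine gap.

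The issue is this: the competitors in the definition of $K_0$ in \eqref{eq: conti-schweizer-k} are only $L^1$-close to $y_0^+$ on the unit cube. They carry no information on their trace at the top and bottom faces, and they are not one-dimensional in general (cf.\ \cite[Remark~6.2]{conti.schweizer2}). Your plan ``insert an optimal one-dimensional profile \dots\ after rescaling vertically by $\ep^2$ \dots\ glue these profiles to $y$'' implicitly treats this like scalar Modica--Mortola, where a fixed ODE profile exists and is rescaled. Here there is no such object: one only has a \emph{sequence} $\{y^\eps\}_\eps$ on a fixed cube whose energy approaches $K_0$, and to glue it to the affine piece $Ax$ or $Bx$ above and below the interface one must modify $y^\eps$ near the top and bottom of the cube to be \emph{exactly} a rigid motion of $Ax$ or $Bx$, without increasing the energy by more than $o(1)$. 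This is precisely the content of the $H^{1/2}$-rigidity argument in \cite[Section~3.3]{conti.schweizer}: one shows that on a well-chosen horizontal slice $\{x_2=s\}$ the trace of $\nabla y^\eps$ is $H^{1/2}$-close to a single rotation, and then builds a short transition to that rigid motion. Your sketch does not address this step; the phrase ``H6 controls the elastic contribution on the narrow mismatch zones'' is not enough, because the mismatch is not just in the elastic density but in the \emph{deformation itself}, and a naive cut-off would cost $O(\eps^{-2})$ in the first term of $I_\eps$.
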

  We recall that an open set $\Omega$ is strictly star-shaped if there exists a point $x_0\in \Omega$ such that 
  \begin{align}\label{eq: starshape}
  \{tx+(1-t)x_0:\,t\in (0,1)\}\subset \Omega \ \ \ \ \text{for every $x\in \partial \Omega$}.
  \end{align}
 Here and in the sequel, we follow the usual convention that convergence of the continuous parameter $\eps \to 0$ stands for convergence of arbitrary sequences $\lbrace \eps_i \rbrace_i$ with $\eps_i \to 0$ as $i \to \infty$, see \cite[Definition 1.45]{Braides:02}.   In \cite{conti.schweizer3}, the same $\Gamma$-convergence result as in Theorem \ref{theorem: conti.schweizer} has been obtained by dropping H6.\ via a more elaborate construction allowing to incorporate an impenetrability condition of the form \eqref{eq: impenetrability}. 

The result  in Theorem \ref{theorem: conti.schweizer}  is limited to the two-dimensional setting due to the limsup inequality: the definition
of sequences with optimal energy approximating a limit that has multiple flat interfaces relies on a deep technical construction. This so-called \emph{$H^{1/2}$-rigidity on lines} (see \cite[Section 3.3]{conti.schweizer}) is only available in dimension $d=2$. We also refer to a recent related study for microscopic models of two-dimensional martensitic transformations  \cite{kytavsev-ruland-luckhaus2}. The issue of dimensionality has been overcome in \cite{davoli.friedrich} by considering a slightly modified model, see Subsection \ref{sec: refined model} for details.


\noindent \textbf{Linearization around the identity for multiwell energies.} In the context of multiwell linearization, {\sc R. Alicandro, G. Dal Maso, G. Lazzaroni}, and  {\sc M. Palombaro} \cite{alicandro.dalmaso.lazzaroni.palombaro} investigated a multiwell energy $F_\eps \colon H^2(\Omega;\R^d) \to [0,+\infty) $  of the form 
\begin{align}\label{eq: F functional}
F_{\ep}(y):=\frac{1}{\ep^2}\int_{\Omega}W(\nabla y)\,{\rm d}x+\ep^{2- \gamma_d(r) }\int_{\Omega}|\nabla^2 y|^2\,{\rm d}x
\end{align}
for $r \in [1,2]$ and  a suitable function $\gamma_d\colon[1,2] \to (0,+\infty)$, where  for $d=2$ it holds $\gamma_2(r) = r$, cf.~\cite[Equation (1.9)]{alicandro.dalmaso.lazzaroni.palombaro}. Here, the singular higher order term penalizes transitions between different wells in a stronger way with respect to \eqref{eq: I functional}. This corresponds to the choice $P_\eps(G) = \eps^{2- \gamma_d(r)} |G|^2$, $G \in \R^{d \times d \times d}$, in \eqref{eq: basic energy}. In \cite{alicandro.dalmaso.lazzaroni.palombaro}, the problem is studied in arbitrary dimension for a finite number of different wells  and under very general growth conditions for the elastic energy and the second-order penalization. There, also the influence of external forces, under different scalings of the singular perturbation, is thoroughly discussed.  For a simple exposition, however, we present only the basic case here   and we specify the result to our two phases $A$ and $B$.  

First, \cite[Theorem 2.3]{alicandro.dalmaso.lazzaroni.palombaro} along with the well-known rigidity estimate in \cite{FrieseckeJamesMueller:02} yields the following compactness result.

\begin{lemma}[Compactness]\label{lemma:comp-giuliano}
 Let $d\in \N$, $d\geq 2$, and $r \in (1,2]$. Let $\Omega\subset \R^d$ be a bounded Lipschitz domain. Let $W$ satisfy assumptions {\rm H1}.--{\rm H4}. Then, for all sequences   $\{y^{\ep}\}_\eps\subset H^2(\Omega;\R^d)$  satisfying $\sup_{\ep>0} F_{\ep}(y^{\ep})<+\infty$ we find rotations $R^\eps \in SO(d)$, translations $t^\eps \in \R^d$, and  phases $M^\eps \in \lbrace A, B \rbrace$ such that
$$\sup_{\eps>0} \     \Big\| \frac{ y^\eps - (R^\eps M^\eps\, x + t^\eps)}{\eps} \Big\|_{W^{1,r}(\Omega)} < + \infty.$$
\end{lemma}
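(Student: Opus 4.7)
The plan is to follow the multiwell linearization strategy of \cite{alicandro.dalmaso.lazzaroni.palombaro}, which combines the geometric rigidity estimate of \cite{FrieseckeJamesMueller:02} with a \emph{phase selection} exploiting the stronger singular perturbation entering $F_\eps$. The starting point consists of the two a priori bounds inherited from $\sup_\eps F_\eps(y^\eps)<+\infty$ together with assumption H4:
\[
\int_\Omega {\rm dist}^2(\nabla y^\eps, SO(d)\{A,B\})\, {\rm d}x \le C \eps^2, \qquad \int_\Omega |\nabla^2 y^\eps|^2\, {\rm d}x \le C \eps^{\gamma_d(r)-2}.
\]
The first bound enables the use of FJM-type rigidity around a single well, while the second, whose scaling is stronger than the one appearing in \eqref{eq: I functional}, is what eventually forbids phase transitions from surviving in the limit.

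I would cover $\Omega$ by balls $B_{h_\eps}(x_i)$ at a scale $h_\eps\to 0$ to be tuned, and on each ball apply Friesecke--James--M\"uller separately near each of the two wells. This yields local rotations $R^\eps_i\in SO(d)$ and phases $M^\eps_i\in\{A,B\}$ with
\[
\int_{B_{h_\eps}(x_i)}\!\! |\nabla y^\eps - R^\eps_i M^\eps_i|^2\, {\rm d}x \le C\int_{B_{h_\eps}(x_i)}\!\! {\rm dist}^2(\nabla y^\eps, SO(d)M^\eps_i)\, {\rm d}x.
\]
A Poincar\'e--Wirtinger comparison along chains of neighboring balls, together with the $H^2$-control above, propagates the rotations across balls sharing the same phase. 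Crucially, a co-area/isoperimetric estimate applied to the interface between $A$-phase and $B$-phase balls shows that the region occupied by the minority phase has Lebesgue measure $o(1)$; this is the decisive effect of the enhanced scaling $\eps^{2-\gamma_d(r)}$. I then declare $M^\eps$ the predominant phase and extract a global $R^\eps$ from a reference ball.

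With this selection in hand I would split $\Omega$ into a good set $G^\eps:=\{x:\, {\rm dist}(\nabla y^\eps(x), SO(d)M^\eps)<\delta\}$ and a bad set $B^\eps:=\Omega\setminus G^\eps$, observing $\mathcal{L}^d(B^\eps)\to 0$. On $G^\eps$, FJM delivers a uniform $L^2$-bound on $(\nabla y^\eps-R^\eps M^\eps)/\eps$, hence an $L^r$-bound for any $r\le 2$. On $B^\eps$, H\"older's inequality combined with Sobolev embedding applied to $\nabla y^\eps$ (whose $H^1$-norm is controlled by $\eps^{(\gamma_d(r)-2)/2}$) upgrades this to a global $L^r$-bound provided $r<2$; the function $\gamma_d$ is precisely calibrated so that these powers balance. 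The translation $t^\eps$ is finally fixed by enforcing $\int_\Omega (y^\eps-R^\eps M^\eps x-t^\eps)\, {\rm d}x =0$ and invoking Poincar\'e, which upgrades the gradient estimate to the full $W^{1,r}$-norm. The main obstacle is exactly the phase selection step: converting purely local rigidity information into a \emph{globally consistent} choice of well and rotation requires quantitatively bounding the Hausdorff measure of hypothetical $A$--$B$ interfaces in terms of the $H^2$-seminorm, a bound that closes only because of the strong scaling $\eps^{2-\gamma_d(r)}$ of the perturbation.
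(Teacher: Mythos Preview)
The paper does not give its own proof of this lemma: it is recalled as a known result, attributed to \cite[Theorem~2.3]{alicandro.dalmaso.lazzaroni.palombaro} combined with the Friesecke--James--M\"uller rigidity estimate \cite{FrieseckeJamesMueller:02}. Your sketch is a faithful outline of precisely that strategy --- local FJM rigidity on a fine covering, phase selection forced by the strong second-order penalization, good/bad set splitting with H\"older and Sobolev on the bad set, and a Poincar\'e step for the translation --- so you are recovering the argument of the cited reference rather than proposing an alternative.

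One small point of clarification: in \cite{alicandro.dalmaso.lazzaroni.palombaro} the phase selection and the bound on the minority region are obtained by a direct level-set argument on $\dist(\nabla y^\eps, SO(d)M)$ together with the coarea formula applied to $\nabla y^\eps$ (using the $H^2$ bound), rather than by literally propagating rotations along chains of balls; the outcome is the same, but the ``chain'' picture you describe is more heuristic than the actual mechanism. Also, the borderline case $r=2$ (where $\gamma_d(2)=2$ in $d=2$) requires a bit more care than the H\"older/Sobolev balance you indicate, since there is no room to trade integrability against the measure of the bad set; the reference handles this separately.
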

Additionally imposing Dirichlet boundary conditions of the form $y^\eps = {\rm id} + \eps g $ on a part of the boundary with $g \in W^{1,\infty}(\Omega;\R^d) \cap H^2(\Omega;\R^d)$, one can choose  $R^\eps = {\rm Id}$, $t^\eps = 0$, and $M^\eps = A = {\rm Id}$ in the above result, see \cite[Theorem 1.8]{alicandro.dalmaso.lazzaroni.palombaro}. This implies the uniform bound  $\sup_{\eps>0} \Vert  u^\eps \Vert_{W^{1,r}(\Omega)} < + \infty$ for the \emph{rescaled displacement fields} 
\begin{align}\label{eq: simple u-def}
u^\eps := \frac{  y^\eps  - {\rm id}}{\eps}.
\end{align}
In other words,  for sequences with bounded $F_\eps$-energy, Lemma \ref{lemma:comp-giuliano} together with prescribed boundary conditions ensures compactness in $W^{1,r}$ for rescaled displacement fields. We write the nonlinear energy in terms of the displacement fields by setting $\hat{F}_\eps (u) = F_\eps({\rm id} + \eps u)$ for $u \in H^2(\Omega;\R^d)$.

Formally, the effective linearized energy $F_0\colon W^{1,r}(\Omega;  \R^d)\to [0,+\infty]$  can be calculated by a Taylor expansion, and has the structure 
\begin{align}\label{eq: first-linearization}
F_0(u):= \begin{cases}\int_\Omega  \mathcal{Q}_{\rm lin}  \big({\rm Id},e(u)\big) \, {\rm d}x  &\text{if } u\in H^1(\Omega;\R^d),\\
+\infty & \text{otherwise}.\end{cases}
\end{align}
where $\mathcal{Q}_{\rm lin} \colon SO(d)\{A,B\}\times \M^{d\times d}\in [0,+\infty)$ is the quadratic form
\begin{equation}
\label{eq:def-Q}
\mathcal{Q}_{\rm lin} (RM,F):=\frac12 D^2W(RM)F:F
\end{equation}
for every $R \in SO(d)$, $M\in \{A,B\}$, and $F\in \M^{d\times d}$. Note that frame indifference (see H2.) implies that the energy only depends on the symmetric part $e(u) := \frac{1}{2}((\nabla u)^T + \nabla u)$ of the strain, see \eqref{eq: first-linearization}. More generally, in view of H4., one can check that (cf.\ \eqref{eq: linearization formula} below)
\begin{align}\label{eq: only symmetric}
 \mathcal{Q}_{\rm lin}  (RM,SRM) = 0  \ \ \text{ if and only if  \ \  $R \in SO(d)$, $M\in \{A,B\}$, and $S \in \mathbb{M}^{d\times d}_{\rm skew}.$}
\end{align}
 The  relation  of $\hat{F}_\eps$  and  $F_0$ has been made rigorous by $\Gamma$-convergence (see \cite[Theorem 1.9]{alicandro.dalmaso.lazzaroni.palombaro}).

\begin{theorem}[Passage from nonlinear to linearized energies by $\Gamma$-convergence]\label{theorem: gamma-giuliano}
 Let $d\in \N$, $d\geq 2$, and $r \in (1,2]$. Let $\Omega\subset \R^d$ be a bounded Lipschitz domain. Let $W$ satisfy assumptions {\rm H1}.--{\rm H5}. Then
$$\Gamma-\lim_{\ep \to 0} \hat{F}_{\ep}= F_0$$ 
with respect to the weak $W^{1,r}$-topology.
 \end{theorem}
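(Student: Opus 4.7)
The statement splits into the $\Gamma$-$\liminf$ inequality and the construction of a recovery sequence. The singular perturbation $\eps^{2-\gamma_d(r)}|\nabla^2 y|^2$ plays no essential role in either: being nonnegative it can be discarded for the lower bound, and it vanishes automatically in the upper bound construction since $\eps^{4-\gamma_d(r)} \to 0$ whenever $\gamma_d(r) \in (0,4)$. Both inequalities therefore reduce to a Taylor expansion of $W$ about the single well ${\rm Id}$, which is accessible because the ansatz $\nabla y^\eps = {\rm Id} + \eps\nabla u^\eps$ forces $\nabla y^\eps$ close to ${\rm Id}$ once $u^\eps$ is bounded in $W^{1,r}$; in particular, the second well $B$ never appears in the limit.

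For the $\Gamma$-$\liminf$, I would take $u^\eps \wk u$ in $W^{1,r}(\Omega;\R^d)$ with $\liminf_\eps \hat F_\eps(u^\eps) < +\infty$ and extract a subsequence for which $u^\eps \in H^2(\Omega;\R^d)$ and the energies are uniformly bounded. Since $\eps\nabla u^\eps \to 0$ strongly in $L^r$, Chebyshev gives that the set where $\nabla y^\eps$ is closer to $SO(d)B$ than to $SO(d) = SO(d)A$ has vanishing measure. The key technical step is a Lipschitz truncation: for a level $\lambda_\eps \to +\infty$ chosen slowly so that $\eps\lambda_\eps \to 0$, construct $\bar u^\eps \in W^{1,\infty}(\Omega;\R^d)$ with $\Vert \nabla \bar u^\eps \Vert_{L^\infty} \le C\lambda_\eps$, coinciding with $u^\eps$ on a set $G^\eps$ whose complement has measure $\mathcal{O}(\lambda_\eps^{-r})$. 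On $G^\eps$, the polar decomposition $\nabla y^\eps = R^\eps S^\eps$ combined with frame indifference H2.\ and $C^2$-regularity H5.\ yields $W(\nabla y^\eps) = W(S^\eps)$ with $S^\eps - {\rm Id} = \eps\, e(\bar u^\eps) + \mathcal{O}(\eps^2\lambda_\eps^2)$, and hence the pointwise Taylor bound
\begin{align*}
\frac{1}{\eps^2} W(\nabla y^\eps) \ge \mathcal{Q}_{\rm lin}\big({\rm Id}, e(\bar u^\eps)\big) - \omega(\eps\lambda_\eps)|e(\bar u^\eps)|^2,
\end{align*}
with $\omega(t) \to 0$ as $t \to 0$. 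Integrating and tuning $\lambda_\eps$ to make the remainder $\omega(\eps\lambda_\eps)\int_\Omega|e(\bar u^\eps)|^2\,{\rm d}x$ vanish, weak lower semicontinuity of the convex quadratic functional $G \mapsto \int_\Omega \mathcal{Q}_{\rm lin}({\rm Id},G)\,{\rm d}x$ applied to $e(\bar u^\eps)$ (after a diagonal extraction) produces $F_0(u) \le \liminf_\eps \hat F_\eps(u^\eps)$. Frame indifference in the form \eqref{eq: only symmetric} guarantees that only the symmetric part of the strain contributes.

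For the $\Gamma$-$\limsup$, I would restrict to $u \in H^1(\Omega;\R^d)$ (otherwise $F_0(u) = +\infty$ and the statement is trivial) and approximate by smooth $u_n \in C^\infty(\overline\Omega;\R^d)$ with $u_n \to u$ in $H^1$. For each fixed $u_n$, take the constant recovery sequence $u^\eps \equiv u_n \in H^2$. The singular perturbation contributes $\eps^{4-\gamma_d(r)}\Vert\nabla^2 u_n\Vert_{L^2}^2 \to 0$, and the upper growth condition H6.\ provides the $\eps$-independent dominating function
\begin{align*}
\frac{1}{\eps^2}W({\rm Id}+\eps\nabla u_n) \le c_2|\nabla u_n|^2 \in L^1(\Omega).
\end{align*}
A pointwise Taylor expansion together with dominated convergence then gives $\eps^{-2}\int_\Omega W({\rm Id}+\eps\nabla u_n)\,{\rm d}x \to F_0(u_n)$, and a standard Attouch diagonalization in $n$ yields the desired recovery sequence for $u$.

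The hard part will be the truncation step in the $\liminf$ for $r \in (1,2)$: the level $\lambda_\eps$ must diverge fast enough to make the exceptional set $\Omega \setminus G^\eps$ negligible and to ensure that the truncated symmetric gradient approximates $e(u)$ in the weak sense needed for lower semicontinuity, yet slowly enough that $\eps\lambda_\eps \to 0$ keeps the Taylor remainder controlled. For $r = 2$ the scheme reduces to the classical Friesecke-James-Müller/Dal Maso-Negri-Percivale strategy, while for $r < 2$ the calibration is genuinely delicate and depends on the modulus of continuity of $D^2 W$ near ${\rm Id}$; this is essentially where the specific choice of $\gamma_d(r)$ in the ADLP model makes the whole compactness-plus-$\Gamma$-convergence package self-consistent.
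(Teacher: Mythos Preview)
The paper does not contain a proof of this theorem: it is quoted in Subsection~2.2 as a result from \cite[Theorem 1.9]{alicandro.dalmaso.lazzaroni.palombaro}, with no argument supplied here. Your proposal therefore cannot be compared against a proof in the present paper, but it does follow the standard strategy of that reference (which in turn extends the single-well argument of \cite{dalmaso.negri.percivale}).

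Two remarks on your outline. First, in the $\Gamma$-$\limsup$ you invoke H6.\ to produce a dominating function, but the statement assumes only H1.--H5. This is easily repaired: since $u_n \in C^\infty(\overline\Omega;\R^d)$ has bounded gradient, for $\eps$ small enough (depending on $n$) one has $\eps|\nabla u_n| < \delta_W$ pointwise, and the $C^2$-regularity in H5.\ then yields a uniform second-order Taylor expansion on all of $\Omega$, without any global upper growth condition. Second, and more substantively, in the $\Gamma$-$\liminf$ you appeal to weak lower semicontinuity of $G \mapsto \int_\Omega \mathcal{Q}_{\rm lin}({\rm Id},G)\,{\rm d}x$ applied to $e(\bar u^\eps)$, but for $r<2$ you have not yet established an $L^2$ bound on $e(\bar u^\eps)$, and a quadratic functional is not weakly lower semicontinuous along merely $L^r$-bounded sequences. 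The missing observation is that the pointwise Taylor inequality you derive, together with the energy bound $\hat F_\eps(u^\eps)\le C$ and the positive-definiteness of $\mathcal{Q}_{\rm lin}({\rm Id},\cdot)$ on $\M^{d\times d}_{\rm sym}$, itself furnishes $\int_{G^\eps}|e(\bar u^\eps)|^2 \le C$; one then extracts a weak $L^2$ limit of $e(\bar u^\eps)\chi_{G^\eps}$ and identifies it with $e(u)$ via the assumed weak $W^{1,r}$ convergence and $\mathcal{L}^d(\Omega\setminus G^\eps)\to 0$. With this point made explicit your scheme is correct.
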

 
 \subsection{Phase transitions and linearization: Heuristics and challenges}\label{sec: heuristics/challenges}

 Our goal is to  combine the above two approaches and to identify a model which allows both for phase transitions and for the passage to linearized energies in terms of rescaled displacement fields. As a first observation, we note that the setting in \eqref{eq: F functional} is more specific than the one considered in \eqref{eq: I functional} in the sense that deformations with finite  energy are essentially in \emph{one} phase, $A$ or $B$, see Lemma \ref{lemma:comp-giuliano}. Imposing certain boundary conditions, one can always infer that the same phase, e.g.\ $A = {\rm Id}$, is  predominant.  Then it is indeed meaningful to perform a linearization around the identity. This differs significantly from the laminate structure of the limiting configurations obtained in Lemma \ref{lemma:comp-def}, where  different phases may be active and phase transitions between the different phase regions occur, see  Figure \ref{fig:limiting-def}. In \eqref{eq: F functional}, the second-order penalization is so strong that basically phase transitions  in the limit $\ep\to 0$  are forbidden. In the following, we discuss some of the challenges in more detail (we concentrate on the planar case $d=2$ for simplicity), and then describe the approach adopted in this work. 

\begin{figure}[h]
\centering
\begin{tikzpicture}
\coordinate (A) at (-2,3);
 \coordinate (B) at (0,0);
 \coordinate (C) at (-4,0);
 \coordinate (D) at (-2,-3);
 \coordinate (E) at (-1,1.5);
 \coordinate (F) at (-0.67,1);
  \coordinate (G) at (-0.67,-1);
  \coordinate (H) at (-1.34,-2);
  \coordinate (I) at (-1.67,-2.5);
  \coordinate (L) at (-1.8,-2.7);
  \coordinate (M) at (-1.9,-2.85);
   \coordinate (N) at (-3,1.5);
 \coordinate (O) at (-3.33,1);
  \coordinate (P) at (-3.33,-1);
  \coordinate (Q) at (-2.66,-2);
  \coordinate (R) at (-2.33,-2.5);
  \coordinate (S) at (-2.2,-2.7);
  \coordinate (T) at (-2.1,-2.85);
  \draw (A)--(B)--(D)--(C)--cycle;
  \draw (H)--(Q);
  \draw[fill=orange!20] (A)--(B)--(C)--cycle;
  \draw[pattern=north west lines, pattern color=blue] (-3.84,0.2)--(-0.16,0.2)--(B)--(C)--cycle;
  
  \draw[fill=blue!30] (B)--(H)--(Q)--(C)--cycle;
  \draw[fill=orange!20] (Q)--(H)--(D);
   \draw[pattern=north west lines, pattern color=blue] (-2.55,-2.2)--(-1.44,-2.2)--(H)--(Q)--cycle;
 \node at (-2,-1) {$A$};
 \node at (-2,1.25) {$B$};
  \node at (-2,-2.5) {$B$};
  \draw[>=triangle 45, <->] (-2.68,-1.8) -- (-1.32,-1.8);
   \node at (-1.9,-1.5) {$\eps^{r/2}$};
   \draw[>=triangle 45, <->] (-2.9,-2) -- (-2.9,-3);
    \node at (-3.4,-2.4) {$\eps^{r/2}$};
    \node at (-4,3){$(a)$};
 

 \coordinate (A2) at (4,3);
 \coordinate (B2) at (6,0);
 \coordinate (C2) at (2,0);
 \coordinate (D2) at (4,-3);
 \coordinate (E2) at (5,1.5);
 \coordinate (F2) at (5.33,1);
  \coordinate (G2) at (5.33,-1);
  \coordinate (H2) at (4.66,-2);
  \coordinate (I2) at (4.33,-2.5);
  \coordinate (L2) at (4.2,-2.7);
  \coordinate (M2) at (4.1,-2.85);
   \coordinate (N2) at (3,1.5);
 \coordinate (O2) at (2.67,1);
  \coordinate (P2) at (2.67,-1);
  \coordinate (Q2) at (3.34,-2);
  \coordinate (R2) at (3.67,-2.5);
  \coordinate (S2) at (3.8,-2.7);
  \coordinate (T2) at (3.9,-2.85);
  \draw[fill=blue!30] (A2)--(B2)--(G2)--(P2)--(C2)--cycle;
  \node[shape=ellipse, minimum height=0.8cm, minimum width=1.7cm, draw=black, fill=orange!20] at (4,1.5) {};
  \node[shape=ellipse, minimum height=0.8cm, minimum width=1.7 cm, draw=black, pattern=north west lines, pattern color=blue] at (4,1.5){};
  \node[shape=ellipse, minimum height=0.1cm, minimum width=1.4cm, draw=black, fill=orange!20] at (4,1.5) {$B$};
  \draw[dashed] (3.3,1.5)--(4.7,1.5);
  \node at (4.42,1.65) {$\Gamma$};
 \draw[fill=orange!20] (P2)--(G2)--(H2)--(Q2)--cycle;
  \draw[pattern=north west lines, pattern color=blue] (P2)--(G2)--(5.2,-1.2)--(2.82,-1.2)--cycle;
   \draw[pattern=north west lines, pattern color=blue] (Q2)--(H2)--(4.8,-1.8)--(3.2,-1.8)--cycle;
  \draw[fill=blue!30] (Q2)--(H2)--(D2)--cycle;
\draw[>=triangle 45, <->] (2.4,-0.9) -- (2.4,-2.1);
\draw [->] (7,-1) to [out=150,in=30] (4,-1);
\node at (7.3,-1) {$u^+$};
\draw [->] (7,-2) to [out=-150,in=-30] (4,-2);
\node at (7.3,-2) {$u^-$};
 \node at (2,-1.5) {$w_{\ep}$};
 \node at (4,-1.5) {$B$};
\node at (4,0.5) {$A$};
\node at (4,-2.5) {$A$};
\node at (2,3) {$(b)$};

\end{tikzpicture}
\caption{(a) Illustration of the $A$ and $B$ phase regions  of a deformation $y^\eps$ with finite energy \eqref{eq: I functional} in dimension $d=2$. The shadowed regions, where a transition of the gradient between $SO(2)A$ and $SO(2)B$ occurs, are horizontal reflecting the laminate structure of configurations with bounded energy. For the energy \eqref{eq: F functional}, the phase transition at the lower boundary is possible, whereas the transition in the upper part would lead to unbounded energies as $\eps \to 0$,  cf.\ \eqref{eq: minority phase scaling}.  (b)   In the upper part of the figure, we depict a minority island  centered around a segment $\Gamma$, which may have length $\sim 1$ in the $e_1$-direction, but width at most $\sim \eps$, cf.\ \eqref{eq: conti estimate}. Such a set necessarily has curved boundaries and is also penalized by the elastic energy in a neighborhood of the island. In the lower part, the phenomenon described in \eqref{eq: jump height} is illustrated.}
\label{fig1}
\end{figure}
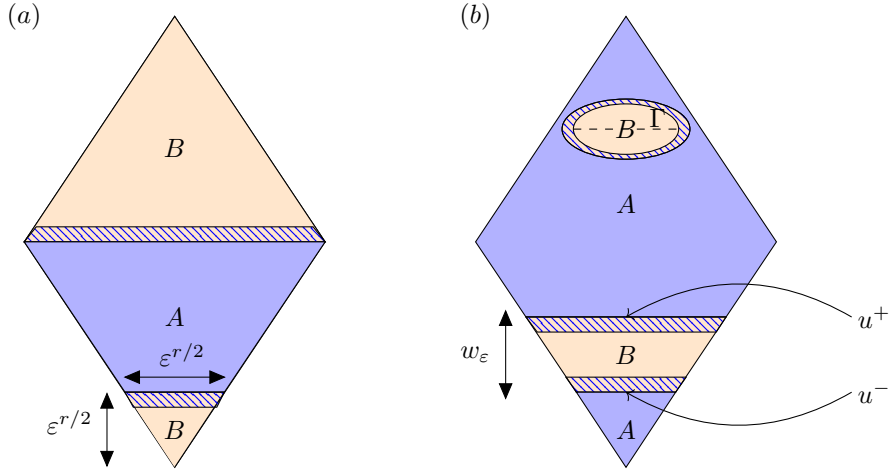

\textbf{(a) Volume of the minority phase.} In the model \eqref{eq: F functional}, the $B$-phase region, i.e., the set where the deformation gradient $\nabla y^\eps$ takes values in a neighborhood of $SO(d)B$, denoted by $T^\eps_B$ in the following,   has small $\mathcal{L}^2$-measure. Heuristically, this property can be seen as follows.  From the boundedness of the energy and H4.\ one can deduce, for a suitable definition of $T_B^{\ep}$, that      
\begin{align}\label{eq: minority phase scaling}
\mathcal{H}^{1} (\partial T^\eps_B \cap \Omega ) \le  C \Vert \dist(\nabla y^\eps,SO(2)) \Vert_{L^2(\Omega)} \Vert \nabla^2 y^\eps \Vert_{L^2(\Omega)} \le C\eps \eps^{\frac{\gamma_2(r) }{2}-1} = \eps^{\frac{r}{2}},
\end{align}
 where in the last step we used $\gamma_2(r) = r$, see below \eqref{eq: F functional}. (We refer to \cite[Proof of Proposition 3.7, Step 1]{davoli.friedrich} for details on the first inequality.) By the (relative) isoperimetric inequality  we obtain 
$$\min \lbrace  \mathcal{L}^2(T_B^{\ep}), \mathcal{L}^2(\Omega \setminus T_B^{\ep}) \rbrace \le C\eps^{r}. $$ 
Assuming that $T^\eps_B$ is the \emph{minority phase}, i.e., the minimum is attained for $T^\eps_B$, we get
\begin{align}\label{eq: sharp bound}
\mathcal{L}^2(T^\eps_B) \le C\eps^r.
\end{align}
This scaling of the area of the minority phase excludes phase transitions of the form given in Figure \ref{fig1}(a) where both $\mathcal{L}^2(T^\eps_B)$ and $\mathcal{L}^2(\Omega \setminus T^\eps_B)$ are bounded uniformly from below. It is worth mentioning that the calculation \eqref{eq: minority phase scaling} for the model \eqref{eq: I functional} (corresponding to $r=0$) would give $\mathcal{H}^{1}(\partial T_B^{\ep}) \le C$. This reflects the fact that  phase transitions in the limit $\ep\to 0$  are possible in that framework, see Lemma~\ref{lemma:comp-def}, Figure~\ref{fig:limiting-def}, and Figure~\ref{fig1}(a).

\textbf{(b) Criticality of the scaling.} For compactness of rescaled displacement fields $u^\eps = (y^\eps -{\rm id})/\eps$, see \eqref{eq: simple u-def}, we necessarily need  $\mathcal{L}^2(T^\eps_B) \to 0$ as otherwise $|\nabla u^\eps| \to +\infty$ on a set of positive measure. More precisely, since   $|\nabla u^\eps | \sim 1/\eps$ on $T^\eps_B$,  it turns out that the bound in \eqref{eq: sharp bound} is sharp in order to derive the uniform estimate $\Vert \nabla u^\eps \Vert_{L^r(\Omega)} \le C$ of Lemma \ref{lemma:comp-giuliano}. 

Recall that \eqref{eq: sharp bound} was derived from \eqref{eq: minority phase scaling} via the isoperimetric inequality. One may ask if this estimate is sharp, i.e.,  if the scaling $ \eps^{2-\gamma_2(r)} = \eps^{2-r}$ of the  penalization in   \eqref{eq: F functional} is really necessary to obtain \eqref{eq: sharp bound}. For a small region near the boundary of $\Omega$ whose boundary in $\Omega$ is a short straight line of length $\sim \eps^{\frac{r}{2}}$ (see  Figure \ref{fig1}(a)) the scaling is indeed critical. (We also refer to Example 3.2 in \cite{alicandro.dalmaso.lazzaroni.palombaro}.)  As  the interface between the two phases is horizontal, such a transition is only realizable close to the boundary. For  small inclusions of the $B$-phase in the interior, so-called \emph{minority islands}, this is impossible, see Figure \ref{fig1}(b).

\textbf{(c) Minority islands.} The situation for such  minority islands is indeed quite different. In dimension two and without a strong second-order penalization, merely under the assumption that in a neighborhood $N$ of the island the quantity $\int_N |\nabla^2 y^\eps|\, {\rm d}x$ is smaller than a universal constant independent of $\eps$,  {\sc S. Conti} and {\sc B. Schweizer} \cite[Proposition 2.1]{conti.schweizer} derived the remarkable bound
\begin{align}\label{eq: conti estimate}
\mathcal{L}^2(T_B^{\ep}) \le C \int_\Omega \dist(\nabla y^\eps, SO(2)\lbrace A,B\rbrace) \, {\rm d}x  \le C\eps, 
\end{align}
where the last step follows from the boundedness of the elastic energy. Roughly speaking, they showed that minority islands, although possibly being long in the $e_1$-direction (the direction orthogonal to the rank-one connection), have width at most $\sim \eps$ in the $e_2$-direction, cf.\  Figure \ref{fig1}(b).   Their result is indeed sharp in the sense that they provide a configuration with a minority island of length $\sim 1$ and width $\sim \eps$  such that the energy \eqref{eq: I functional} is bounded uniformly in $\eps$, see \cite[Remark 6.1] {conti.schweizer2}. A $d$-dimensional analogue has been provided in \cite[Remark 3.9]{davoli.friedrich}.

\textbf{(d) Internal jumps.} This phenomenon excludes compactness in $W^{1,r}$ for every $r>1$, even if for a sequence $\lbrace y^\eps \rbrace_\eps$ there is only a single minority island of width $\eps$ in the $e_2$-direction around a $1$-dimensional horizontal set $\Gamma$. Indeed, in that scenario  the strain  $|\nabla u^\eps |$ of the rescaled displacement fields $u^\eps$ (see \eqref{eq: simple u-def}) would scale  like $1/\eps$ on a set of $\mathcal{L}^2$-measure $\sim \eps$, and one could expect no Sobolev compactness. On the contrary, it would be natural for $u^\eps$ to converge to an $SBV$ function which jumps on $\Gamma$. In the following, we will refer to the setting   described  above   as that of \emph{internal jumps}. We again recall that this issue is excluded in the model \eqref{eq: F functional} by the bound \eqref{eq: sharp bound}. 

 \textbf{(e) Double phase transitions.} A similar phenomenon may occur in the presence of a $B$-phase layer with width $w_\eps \sim \eps$ as indicated in the lower part of Figure \ref{fig1}(b) which corresponds to  two  `consecutive phase transitions'. Heuristically, denoting by $y^\eps_+(x'),\, y^\eps_-(x')$,  $u^\eps_+(x')$, and $u^\eps_-(x')$ the traces of $y^\ep$ and $u^\eps$ on the upper and lower boundary (with respect to the $e_2$-direction) of such a layer, one expects that $y^\ep_+(x')\approx y^\ep_-(x')+w_\ep Be_2 $, and thus
\begin{align}\label{eq: jump height}
\lim_{\eps \to 0} \big( u^\eps_+(x')  - u^\eps_-(x')\big) = \lim_{\eps \to 0} \frac{y^\eps_+(x')  - y^\eps_-(x') - w_\eps e_2   } {\eps}       = const. = \kappa  \lim_{\eps \to 0} \frac{w_\eps}{\eps} e_2,
\end{align}
where we recall  \eqref{eq: simple u-def} and the fact that $(B-A)e_2 =  \kappa  e_2 $, see H3.  Consequently,  the limiting  function would jump with constant jump height $\kappa  \lim_{\eps \to 0} \frac{w_\eps}{\eps} e_2$. Interestingly, the jump height is essentially determined by $w_\eps$, i.e., by the width of the $B$-phase layer. Let us also mention an additional problem occurring if $w_\eps \gg \eps$: in this latter setting the sequence of rescaled displacement fields would not even converge to an $SBV$ function, cf. \eqref{eq: jump height}.

\textbf{The perspective of the present work.} The goal of the present contribution is to overcome the above mentioned issues. In particular, building upon a novel two-well rigidity estimate proved in \cite{davoli.friedrich} for a model augmented by a suitable anisotropic second-order penalization (see Subsection \ref{sec: refined model}), we will introduce a \emph{generalized definition of the rescaled displacement fields} which takes the presence of the two phases $A$ and $B$ in different parts of the domain into account. Roughly speaking, these displacement fields will measure the distance of the deformations $y^\eps$ from suitable rigid movements which may be different on the components of a partition of $\Omega$ induced by the $A$ and $B$ phase regions. This more flexible definition will allow us  to carry out the following tasks in any dimension $d\ge 2$:
\begin{itemize}
\item derive a linearization result for configurations where both phases are present, in particular where   phase transitions occur;
\item  obtain compactness results in a piecewise Sobolev setting for generalized rescaled displacements, despite the presence of  minority islands with macroscopic length;
\item  identify an effective limiting model comprising linearized elastic energies and contributions for single and double phase transitions. 
\end{itemize}
In our investigation, however, we do not take the presence of internal jumps into account  for this would lead to a considerably more involved limiting energy, see Remark \ref{rem:internal jumps} for a discussion in that direction. From a modeling point of view, this amounts to excluding the presence of minority islands of width $\sim \eps$ (see Figure \ref{fig1}(b)), whereas minority islands of width $\ll \eps$ are allowed. In our model, this will be achieved by considering  a suitable anisotropic second-order penalization.

\section{The model and main results} 
\label{sec:main-thm}

In this section we introduce our model with a refined singular perturbation,  state  the rigidity estimate proved in \cite{davoli.friedrich}, and present our main results.

\subsection{A model with a refined singular perturbation and its sharp-interface limit}\label{sec: refined model}

In this subsection we present the exact  mathematical setting of this paper  and recall our previous work \cite{davoli.friedrich}. We analyze a nonlinear energy given by the sum of  the non-convex elastic energy, a singular perturbation, and a higher-order penalization in the direction orthogonal to the rank-one connection. To be precise, for each $\eps,\eta>0$, we consider the functional
 \begin{align}\label{eq: nonlinear energy}
E_{\ep,\eta}(y):=\frac{1}{\ep^2}\int_{\Omega}W(\nabla y)\,{\rm d}x+\ep^2\int_{\Omega}|\nabla^2 y|^2\,{\rm d}x+\eta^2 \int_{\Omega}(|\nabla^2 y|^2-|\partial^2_{dd} y|^2)\,{\rm d}x
\end{align}
for every $y\in H^2(\Omega;\R^d)$. This corresponds to the choice 
$$P_\eps(G) = \eps^2 |G|^2 + \eta^2 \sum_{i=1}^d\sum_{\substack{(j,k)\in \{1,\dots,d\}^2,\\ (j,k)\neq (d,d)}}|G_{ijk}|^2,\quad G \in \R^{d \times d \times d},$$ in \eqref{eq: basic energy}. Note that \eqref{eq: nonlinear energy} coincides  with the energy functional in \eqref{eq: I functional} when $\eta = 0$.   In what follows, we will study the asymptotic behavior of the energies 
\begin{align}\label{eq: specific energy choice}
\mathcal{E}_{\ep}:=E_{\ep,\bar{\eta}_{\ep,d}},
\end{align}
 where  $\lbrace \bar{\eta}_{\ep,d} \rbrace_\eps$ is defined by  
\begin{equation}\label{eq:alphad}
\bar{\eta}_{\ep,d}:=\eps^{-1+\alpha(d)},\quad\text{ with }\quad \alpha(d): =1/(2d).
\end{equation}
 We refer to Remark \ref{rk:seq-eta} below for details on the choice of the parameter. We denote the restriction of $\mathcal{E}_{\ep}$ to a subset $\Omega' \subset \Omega$ by $\mathcal{E}_{\eps}(y,\Omega')$. 
In \cite[Proposition 4.3, Theorem 4.4, and Remark 4.5]{davoli.friedrich} we have shown that the asymptotic behavior of the energies $\mathcal{E}_{\ep}$ is described (via $\Gamma$-convergence in the strong $L^1$-topology) by the sharp-interface model $\mathcal{E}_0\colon L^1(\Omega;\R^d)\to   [0,+\infty]$, given by 
\begin{align*}
\mathcal{E}_0(y):=\begin{cases}
 K \,  \mathcal{H}^{d-1}(J_{\nabla y})&
\text{if }y\in  \mathscr{Y}(\Omega),\\
+\infty&\text{otherwise},\end{cases}
 \end{align*}
where the \emph{optimal-profile energy} is defined  by
\begin{align}\label{eq: our-k1}
K :=\inf\Big\{&\liminf_{\ep\to 0} \mathcal{E}_{\ep}(y^{\ep},Q): \   \lim_{\eps \to 0}  \Vert  y^\eps -  y_0^+ \Vert_{L^1(Q)}  = 0\Big\}.
\end{align}
Here,  $Q=(-\tfrac12,\tfrac12)^d$ again denotes  the $d$-dimensional unit cube centered in the origin, $y_0^+$ was defined below \eqref{eq: conti-schweizer-k},  and for the definition of $\mathscr{Y}(\Omega)$ we refer to \eqref {eq: limiting deformations}. Note that \eqref{eq: our-k1} is the counterpart to \eqref{eq: conti-schweizer-k} for the model in \eqref{eq: nonlinear energy}. From the definition of the optimal-profile energy  and the fact that the penalization in  \eqref{eq: nonlinear energy} (with $\eta=  \bar{\eta}_{\ep,d}$) is stronger than the one in \eqref{eq: I functional}, we deduce the  inequality $K \ge K_0$.   
As pointed out in \cite[Remark 4.5]{davoli.friedrich},  the additional penalization term in \eqref{eq: nonlinear energy} with respect to \eqref{eq: I functional} does not affect the qualitative behavior of the sharp-interface limit, only the constant in \eqref{eq: our-k1} may change. Moreover, the fact that $ \bar{\eta}_{\ep,d}  \ll  \ep^{-1}$ guarantees that, asymptotically when passing to a linearized strain regime, the resulting model does not feature second-order derivatives, see \cite[Introduction]{davoli.friedrich} and Remark~\ref{rk:qualitative} below.

We mention that anisotropic singular perturbations have already been used in related problems, see  e.g.\ \cite{kohn.muller, Zwicknagl}. In the present context, the role of the perturbation is twofold: (1) It allows us to use the two-well rigidity estimate proved in  \cite{davoli.friedrich}, see Theorem \ref{thm:rigiditythm} below.  (2) As discussed at the end of Subsection \ref{sec: heuristics/challenges},  the penalization simplifies the analysis  by excluding the formation of \emph{internal jumps}  for limiting displacement fields, see Remark \ref{rem:internal jumps} below for more details. We remark that this anisotropy is the reason why we study the case of exactly one rank-one connection.

\begin{remark}[Choice of the penalization constant]
\label{rk:seq-eta}
We briefly mention that the result in \cite{davoli.friedrich} is slightly more general in the sense that it holds also for penalization constants $\{\eta_{\ep,d}\}_\ep$ with $\eta_{\ep,d} \ll \bar{\eta}_{\ep,d}$, see \cite[(4.5)]{davoli.friedrich}, i.e.,  our choice of the penalization constant here is `less sharp'. For the sake of simplicity rather than generality, we prefer to work with \eqref{eq:alphad} since it simplifies many estimates in the following. (In particular, the statement of the rigidity estimate in Theorem \ref{thm:rigiditythm} below becomes simpler.)   
\end{remark}

 Let us now recall the two-well rigidity result which is the fundamental ingredient for the proof of the aforementioned $\Gamma$-convergence result and, at the same time, is instrumental for our work. More precisely, in the present paper, besides yielding properties on optimal sequences in \eqref{eq: our-k1} necessary for deriving   the sharp-interface limit, this estimate plays additionally a pivotal role for showing compactness of sequences with equibounded energies and for providing an optimal lower bound for the asymptotic behavior of the sequence $\{\mathcal{E}_\ep\}_\eps$.  We present here a slightly simplified version of  \cite[Theorem 3.1]{davoli.friedrich} with  $p=2$ and  $\bar{\eta}_{\ep,d}$ in place of $\eta$.

\begin{theorem}[Two-well rigidity estimate]\label{thm:rigiditythm}
Let $\Omega$ be a bounded  Lipschitz  domain in $\R^d$ with $d\geq 2$, and let   $\lbrace \bar{\eta}_{\ep,d} \rbrace_\eps$  be  as in \eqref{eq:alphad}. Suppose that $W$ satisfies  {\rm H1}.--{\rm H4}.  Let $E>0$. Then for each $\Omega' \subset \subset \Omega$ there exists a constant  $C=C(\Omega,\Omega', \kappa, c_1, E)>0$  such that for every $y\in H^2(\Omega;\R^d)$ with $\mathcal{E}_\eps(y)\leq E$ there exist  a rotation $R\in SO(d)$ and a phase indicator $\Phi \in BV(\Omega;\lbrace A,B \rbrace)$ satisfying
\begin{align}\label{eq: rigidity-new}
\Vert\nabla y-R\Phi \Vert_{L^2(\Omega')}\leq C\ep  \ \ \ \text{and} \ \ \  |D \Phi |(\Omega) \le C. 
\end{align}  
Additionally, the choice of the rotation $R$ and the phase indicator $\Phi$ is independent of the set $\Omega'\subset \subset\Omega$. If $\Omega$  is a paraxial cuboid, \eqref{eq: rigidity-new} holds on the entire domain $\Omega$ for a constant $C=C(\Omega,\kappa, c_1, E)> 0$. 
\end{theorem}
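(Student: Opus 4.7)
The plan is to combine single-well quantitative rigidity (Friesecke--James--M\"uller) with a phase-identification argument powered by the anisotropic penalization $\bar{\eta}_{\eps,d}^2 \int_\Omega (|\nabla^2 y|^2 - |\partial^2_{dd} y|^2)\, {\rm d}x$. The strategy is threefold: extract a phase indicator $\Phi$, produce local rotations on a mesoscopic covering via FJM, and chain these local rotations into a single global rotation $R$.

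\textbf{Step 1 (phase indicator).} From $\mathcal{E}_\eps(y)\le E$ and H4, one has $\int_\Omega \dist^2(\nabla y, SO(d)\{A,B\})\,{\rm d}x \le C\eps^2$. Because $SO(d)A$ and $SO(d)B$ are separated by a positive distance $c_0$ (H3 together with compactness of $SO(d)$), for $\mathcal{L}^d$-a.e.\ $x$ I unambiguously set $\Phi(x)=A$ or $B$ by choosing the nearest well, the ambiguous ``transition'' set having volume $O(\eps^2)$. To bound $|D\Phi|(\Omega)$, I would apply a coarea argument to the map $\nabla y$: the essential boundary of $\{\Phi=B\}$ lies in the preimage of the equidistant set between the two wells, and its $\mathcal{H}^{d-1}$-measure can be controlled by $\int |\nabla^2 y|$ restricted to the transition region; Cauchy--Schwarz against $\|\nabla^2 y\|_{L^2}\le C\eps^{-1}$ and transition volume $O(\eps^2)$ then yields a uniform bound $|D\Phi|(\Omega)\le C$.

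\textbf{Step 2 (mesoscopic rigidity).} Cover $\Omega'$ by cuboids $Q_\alpha$ of a judiciously chosen side $\delta_\eps$, and apply FJM on each $Q_\alpha$ whose phase is unambiguously $M_\alpha\in\{A,B\}$ to obtain a rotation $R_\alpha$ with $\|\nabla y - R_\alpha M_\alpha\|_{L^2(Q_\alpha)}\le C\eps$. The scale $\delta_\eps$ must be tuned so that most cuboids are of a single phase (using the Step~1 volume estimate on the transition region) while the constant in FJM stays uniform; this is a standard balancing act.

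\textbf{Step 3 (chaining to a single $R$).} The decisive step is passing from many local rotations $R_\alpha$ to a single global $R$. Here the anisotropic penalization is crucial: the bound $\bar\eta_{\eps,d}^2 \int |\nabla\partial_j\nabla y|^2\,{\rm d}x \le E$ for every $j<d$ forces $\nabla y$ to vary slowly in the $e'$-directions. Neighboring cuboids sharing the same dominant phase therefore have comparable local rotations; a Poincar\'e estimate on slabs orthogonal to $e_d$ upgrades this to a uniform bound. For cuboids with different dominant phases, the rank-one connection $B-A = \kappa e_d \otimes e_d$ together with $H^1$-continuity of $\nabla y$ across the interface between them forces $R_\alpha$ and $R_\beta$ to agree up to an $O(\eps)$ error. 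Combining these comparisons produces a single $R\in SO(d)$, independent of the subdomain $\Omega'$ (the construction uses only local information), yielding $\|\nabla y - R\Phi\|_{L^2(\Omega')}\le C\eps$. For $\Omega$ a paraxial cuboid, the covering in Step~2 can be chosen so as to tile $\Omega$ exactly, which extends the estimate up to $\partial\Omega$.

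The principal obstacle is Step~3: naively propagating rotation errors across the $\sim \eps^{-d\alpha(d)}$ mesoscopic cuboids loses a power of $\eps$ at each step. Avoiding this demands genuinely exploiting the laminar structure enforced by the anisotropic penalization---phase transitions may only occur on hyperplanes orthogonal to $e_d$---so that the chaining proceeds only along the $e_d$-direction on finitely many phase layers, rather than on the full $d$-dimensional covering.
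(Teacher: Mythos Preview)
The paper does not prove this theorem. It is quoted from the authors' earlier work \cite{davoli.friedrich} (their Theorem~3.1), and the present paper only states a simplified version as a black box; see the sentence immediately preceding the statement and the paragraph following it. So there is no ``paper's own proof'' here to compare your proposal against.

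That said, let me comment on the proposal itself, since the broad architecture you outline is indeed the one used in \cite{davoli.friedrich}, but two of your steps contain genuine gaps.

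\emph{Step~1.} Defining $\Phi$ pointwise as the nearest well does not automatically yield a $BV$ function, and your coarea sketch is not quite right: the essential boundary of $\{\Phi=B\}$ need not sit on the preimage of the equidistant set, and ``$\int|\nabla^2 y|$ restricted to the transition region'' is not how the coarea formula works for a vector-valued map $\nabla y$. The clean route (and the one reflected in Proposition~\ref{lprop: phases}, also imported from \cite{davoli.friedrich}) is to apply the coarea formula to a \emph{scalar} proxy such as $g(x)=\dist(\nabla y(x),SO(d)A)$ and select a good level $s\in(c_0/3,2c_0/3)$ with $\mathcal{H}^{d-1}(\{g=s\})\le C\|\nabla g\|_{L^1}$; then $T:=\{g<s\}$ is the $A$-phase set and $\Phi:=A\chi_T+B\chi_{\Omega\setminus T}$. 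The bound $|D\Phi|(\Omega)\le C$ then follows from $\|\nabla g\|_{L^1}\le \|\nabla^2 y\|_{L^2}\,\|\dist(\nabla y,SO(d)\{A,B\})\|_{L^2}^{1/2}\cdot(\text{vol})^{1/2}$ on the transition set, which is essentially your Cauchy--Schwarz idea but applied to the right object.

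\emph{Step~3.} You correctly flag the propagation-of-errors obstacle, but ``chaining proceeds only along the $e_d$-direction on finitely many phase layers'' is an intuition, not an argument. Two concrete issues: (a) the number of phase layers is not a priori finite---it is only bounded by the perimeter estimate from Step~1, which gives $O(1)$ layers; (b) even within a single layer, the horizontal ($e'$) variation of the local rotations must be killed, and this is exactly where the anisotropic term with the \emph{specific} exponent $\alpha(d)=1/(2d)$ enters quantitatively. A Poincar\'e estimate on slabs needs the slab to be compactly contained (hence the passage to $\Omega'\subset\subset\Omega$ in the statement), and the constant in that Poincar\'e estimate interacts with $\bar\eta_{\eps,d}$ in a way that must be tracked. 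The actual proof in \cite{davoli.friedrich} is substantially longer than your sketch and carries out precisely this bookkeeping; the paraxial-cuboid case works because the slabs extend to $\partial\Omega$ without boundary losses.
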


 We point out that  the result in \cite[Theorem 3.1]{davoli.friedrich} is more general. Indeed, it is stated for any $\eta\ge\eps$ and for a range of integrability exponents. The present version for the choice $\eta = \bar{\eta}_{\ep,d}$  is the counterpart  to the simplified version \cite[Theorem 1.1]{davoli.friedrich} on general bounded Lipschitz domains,  and for a non-sharp choice of $\alpha(d)$. We refer to \cite[Section 3]{davoli.friedrich} for additional motivation on this estimate, in particular for a comparison with other   quantitative rigidity estimates for multiwell energies.

 The focus of this contribution is on a $\Gamma$-convergence analysis of the energies $\mathcal{E}_{\ep}$ in a  topology different from the one specified above. It will lead to a limiting model simultaneously keeping track both of sharp interfaces between the two phases and of linearization effects. The precise topology for our $\Gamma$-convergence result  is detailed in Subsection \ref{sec: compactness} below, and the $\Gamma$-limit is presented in Subsection \ref{sec: effective model}. Due to the necessity of linearizing nonlinear elastic energies, we additionally need a      local    Lipschitz condition  for the construction of recovery sequences: besides the assumptions H1.-H6.\ stated in Subsection \ref{sec: nonlinear energy}, we also require
 \begin{itemize}
\item[H7.] (Local Lipschitz condition) there exists a constant $c_3>0$ such that
$$|W(F_1) - W(F_2)| \le c_3(1+|F_1| + |F_2|)|F_1-F_2|  \ \ \ \ \text{for all} \ \ F_1,F_2 \in \M^{d \times d}.$$  
 \end{itemize}
Moreover, for simplicity we will assume that 
 \begin{itemize}
\item[H8.] (Geometric condition) for all $t \in \R$ the set $\Omega \cap \lbrace x_d = t \rbrace$ is connected (whenever nonempty). 
 \end{itemize} 
 The latter condition is only needed for the compactness result in Theorem \ref{thm:compactness} and could be dropped at the expense of more elaborated arguments, see Remark \ref{rem: Geometric condition} for details.

\subsection{Compactness}\label{sec: compactness}

This subsection is devoted to our main compactness result. Our approach consists in decomposing sequences of deformations $\lbrace y^\eps \rbrace_\eps$ with equibounded $\mathcal{E}_\ep$-energies into  the sum of two parts:
  \begin{itemize}
  \item[(a)] Piecewise rigid movements, where `piecewise' refers to associated Caccioppoli partitions induced by the $A$ and $B$ phase regions. These converge to the limit $y$ of the deformations $\lbrace y^\eps \rbrace_\eps$. 
  \item[(b)] Displacements, rescaled by $\ep$, whose strain is equibounded in $L^2$. These converge to a limiting displacement field, which is piecewise Sobolev, with possible jumps with normal in $e_d$-direction.   
\end{itemize}
In order to formulate the main result of this subsection, we need to introduce some notation. Denote by $\mathscr{P}(\Omega)$ the following collection of \emph{Caccioppoli partitions} of $\Omega$: 
\begin{align}
\label{eq:def-P}
\mathscr{P}(\Omega):=\Big\{\mathcal{P}=\{ {P}_j\}_j \text{ partition of $\Omega$}\colon &\   \bigcup\nolimits_j  \partial P_j  \cap \Omega \subset \bigcup\nolimits_{i\in \N} (\R^{d-1} \times \lbrace s_i \rbrace) \cap \Omega \text{ for  } \lbrace s_i \rbrace_i \subset \R\Big\}.
\end{align}
We point out that the partitions can be finite or may consist of countably many sets. (For simplicity, we do not specify the index set corresponding to the indices $j$.) The definition above implies that $\bigcup_j \partial P_j \cap \Omega$ consists of subsets of hyperplanes orthogonal to $e_{d}$, which extend up to the boundary of $\Omega$.  Note that   every Caccioppoli partition on the bounded domain $\Omega$ induces an ordered one just by a permutation of the indices. For this reason, throughout the paper we always tacitly assume that partitions are ordered. We will  say that $P^\eps \to P$ in measure as $\eps \to 0$ if $\chi_{P^\eps} \to \chi_P$ in $L^1$.  Let $\mathscr{U}(\Omega)$ be the set of \emph{displacements} whose jump sets are the union of countably many subsets of hyperplanes orthogonal to $e_{d}$, i.e.,
\begin{align}\label{eq: def of U}
 \mathscr{U}(\Omega)  :=\Big\{u\in SBV^2_{\rm loc}(\Omega;\R^d)\colon \  \,  J_u \subset \bigcup\nolimits_{i\in \N} (\R^{d-1} \times \lbrace s_i \rbrace) \cap \Omega \text{ for  } \lbrace s_i \rbrace_i \subset \R \Big\}.
\end{align}
For basic properties of Caccioppoli partitions and $SBV$ functions we refer to Appendix \ref{sec:appendix}. In particular, the essential boundary of a set is indicated by $\partial^*$. For sets $\Omega' \subset \Omega$ and $S \subset \Omega$, we denote by $\pi_d(S)$ the orthogonal projection of $S$ onto the $e_d$-axis, and define the  \emph{layer set} 
\begin{align}\label{eq: layer set}
L_{\Omega'}(S) = \Omega' \cap \big(\R^{d-1} \times \pi_d(S) \big).
\end{align}

We now state our main compactness result. Recall the definition of $\mathcal{Y}_R(\Omega)$ in \eqref{eq: limiting deformations}. 
\begin{theorem}[Compactness]
\label{thm:compactness}
 Let $\Omega\subset \R^d$ be a bounded Lipschitz domain satisfying {\rm H8}. Assume that $W$ satisfies assumptions {\rm H1}.--{\rm H4}.,  and  let $\{y^{\ep}\}_{\ep}\subset H^{2}(\Omega;\R^d)$ be a sequence of deformations satisfying the uniform energy estimate
\begin{equation}
\label{eq:unif-en-estimate}
\sup_{\ep>0} \mathcal{E}_{\ep}(y^{\ep}) \le C_0  < + \infty. 
\end{equation}
Then, up to the extraction of a  subsequence (not relabeled), the following holds:

(a)  (Piecewise rigidity)  There exist Caccioppoli partitions $\mathcal{P}^{\ep}:=\{{P}^{\ep}_j\}_{j}$ of $\Omega$ such that 
\begin{align}
&\label{eq: partition property}  \mathcal{H}^{d-1}\big(\bigcup\nolimits_j \partial^* P^\eps_j \big) \le C, \\
& \label{eq: partition property-new}  \sum\nolimits_j \min\big\{ \mathcal{L}^d(\Omega' \cap P^\eps_j), \mathcal{L}^d( L_{\Omega'}(P^\eps_j) \setminus P^\eps_j  )  \big\} \le C_{\Omega'} \, \eps^p   \ \ \ \ \text{for every } \Omega' \subset \subset \Omega, 
\end{align}
for some $p=p(d) \in (1,2)$, where $C$ depends only on $C_0$ and $\Omega$, and $C_{\Omega'}$ additionally on $\Omega'$. There exist associated rotations $R^{\ep}\in SO(d)$, as well as collections of phase indicators $\mathcal{M}^\ep:=\{M_j^{\ep}\}_{j}$, with $M^{\ep}_{j}\in \{A,B\}$ for every $j$ and $\ep$, such that 
\begin{equation}
\label{eq:rigidity-compactness}
\sup_{\eps >0} \  \|\nabla y^{\ep}-\sum\nolimits_j R^{\ep} M^{\ep}_j\chi_{P^{\ep}_j}\|_{L^2(\Omega')} \le   C_{\Omega'} \, \eps  \ \ \ \  \text{for every } \Omega' \subset \subset \Omega. 
\end{equation}

(b) (Limiting deformation and partition) There exist a limiting rotation $R\in SO(d)$, a limiting deformation $y \in \mathcal{Y}_R(\Omega)$, and a limiting partition $\mathcal{P}= \{{P}_j\}_{j} \in \mathscr{P}(\Omega)$ such that 
\begin{align}
&\label{eq:comp-R} R^{\ep}\to R,\\
&\label{eq:comp-sets} P_j^{\ep}\to P_j\quad\text{in measure for all $j$},\\
&\label{eq:comp-y} y^{\ep}-\fint_{\Omega}y^{\ep}(x)\,{\rm d}x\to y \quad\text{strongly in }H^1(\Omega;\R^d),\\
&\label{eq:comp-M}  \sum\nolimits_j R^\eps M^\eps_j\chi_{P^\eps_j} \rightharpoonup^* \nabla y   \quad\text{weakly* in } BV(\Omega;\M^{d \times d}). 
\end{align}

(c) (Displacements) We find collections of constants $\mathcal{T}^{\ep}:=\{t_j^{\ep}\}_{j} \subset \R^d $, associated to $\mathcal{P}^{\ep}$, satisfying
\begin{align}\label{eq: toinfty}
  \frac{|t_i^{\ep}-t_j^{\ep}|}{\ep}\to +\infty \ \ \ \ \text{for all $i \neq j$ with $\mathcal{L}^d(P_i), \mathcal{L}^d(P_j)>0$, and $\lim_{\eps \to 0} M^\eps_i = \lim_{\eps \to 0} M^\eps_j$,}
\end{align} 
 and defining the rescaled displacement fields associated to $\mathcal{P}^\eps, \mathcal{M}^\eps$, $\mathcal{T}^\eps$, and $R^\eps$ by 
\begin{align}\label{eq: rescaled disp}
u^{\ep}:=\frac{y^{\ep}-\sum\nolimits_j ( R^{\ep} M_j^{\ep}\, x+t_j^{\ep})\chi_{P^{\ep}_j}}{\ep},
\end{align} 
there exists   $u\in \mathscr{U}(\Omega)$ such that 
\begin{align}
&\label{eq:comp-u} u^{\ep}\to u\quad\text{in measure in }\Omega,\\
&\label{eq:comp-grad-u}\nabla u^{\ep}\wk \nabla u\quad\text{weakly in } L^2_{\rm loc}(\Omega;\M^{d\times d}). 
\end{align}
\end{theorem}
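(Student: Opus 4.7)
The plan is to carry out the argument in three stages matching parts (a)--(c), essentially along the lines of the three intermediate Propositions \ref{lemma: intermediate step1}--\ref{lemma: intermediate step3} announced in the introduction. The main tools are the two-well rigidity estimate of Theorem \ref{thm:rigiditythm}, the phase-characterization Proposition \ref{lprop: phases} (from \cite[Proposition 3.7]{davoli.friedrich}), standard $BV$/Caccioppoli-partition/$SBV$ compactness theorems, and localized Poincar\'e estimates.

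\textbf{Stage 1 (partition at scale $\eps$).} First I would apply Theorem \ref{thm:rigiditythm} on an exhaustion $\Omega_n \subset \subset \Omega_{n+1} \subset \subset \Omega$ by paraxial cuboids. Thanks to the independence of $(R,\Phi)$ from the subdomain, a diagonal extraction produces a single rotation $R^{\eps} \in SO(d)$ and a phase indicator $\Phi^{\eps} \in BV(\Omega;\{A,B\})$ with $\|\nabla y^{\eps} - R^{\eps} \Phi^{\eps}\|_{L^2(\Omega')} \le C_{\Omega'}\eps$ and $|D\Phi^{\eps}|(\Omega) \le C$. Invoking Proposition \ref{lprop: phases}, I group the essential connected components of $\{\Phi^{\eps}=A\}$ and $\{\Phi^{\eps}=B\}$ into $\mathcal{P}^{\eps}$ and read off the labels $M^{\eps}_j$; this delivers \eqref{eq: partition property} and \eqref{eq:rigidity-compactness}. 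The sharper estimate \eqref{eq: partition property-new} is the most delicate point and would use the $d$-dimensional version of the Conti--Schweizer minority-island bound (sketched in \cite[Remark 3.9]{davoli.friedrich}): the anisotropic penalization at scale $\bar{\eta}_{\eps,d}$ forces interfaces to be essentially horizontal, so every component $P^{\eps}_j$ is either of volume $O(\eps^p)$ in $\Omega'$ or fills its horizontal slab up to an error of the same order.

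\textbf{Stage 2 (limits of $R^{\eps}$, $\mathcal{P}^{\eps}$, $y^{\eps}$).} Compactness of $SO(d)$ yields $R^{\eps}\to R$ along a subsequence; $BV$ compactness gives $\Phi^{\eps}\to \Phi$ in $L^1(\Omega)$ with $\Phi\in BV(\Omega;\{A,B\})$, and Caccioppoli-partition compactness (\cite[Theorem 4.19]{Ambrosio-Fusco-Pallara:2000}) delivers $P^{\eps}_j\to P_j$ in measure for each $j$. Applying Poincar\'e's inequality to $y^{\eps}-\fint_\Omega y^{\eps}$ and exploiting $\nabla y^{\eps}\to R\Phi$ in $L^2_{\rm loc}$, one obtains the strong $H^1$-limit $y$ with $\nabla y=R\Phi\in BV(\Omega;R\{A,B\})$, i.e.\ $y\in \mathcal{Y}_R(\Omega)$. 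The Dolzmann--M\"uller structure theorem places $\partial^*\{\Phi=A\}$ in hyperplanes orthogonal to $e_d$; together with H8.\ (which ensures connectedness of horizontal slices of $\Omega$), this forces the limit partition $\mathcal{P}$ into $\mathscr{P}(\Omega)$. The weak-star $BV$-convergence \eqref{eq:comp-M} is then immediate from the uniform total variation bound on $\Phi^{\eps}$.

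\textbf{Stage 3 (displacements and main obstacle).} On each $P^{\eps}_j$ of positive limit measure I would normalize $y^{\eps}-R^{\eps}M^{\eps}_j x$ by a translation $t^{\eps}_j$ (e.g.\ to have zero mean on an interior subdomain of $P^{\eps}_j$), so that \eqref{eq:rigidity-compactness} and a localized Poincar\'e estimate on horizontal slabs yield $\|\nabla u^{\eps}\|_{L^2(\Omega')}\le C_{\Omega'}$. The chief obstacle is the ambiguity in $\mathcal{P}^{\eps}$ and in $\{t^{\eps}_j\}_j$: distinct admissible choices may lead to distinct limit partitions (see Example \ref{ex} and the preamble of the theorem). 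This is resolved by an iterative coarsening step --- whenever two components in the same asymptotic phase have $|t^{\eps}_i-t^{\eps}_j|/\eps$ bounded, they are merged --- producing the coarsest partition compatible with \eqref{eq:rigidity-compactness}; the resulting $\mathcal{P}$ automatically satisfies the selection principle \eqref{eq: toinfty} and is uniquely identified, as formalized in Proposition \ref{prop:ex-coarsest-part}. Finally, extracting a weak $L^2_{\rm loc}$ limit $\nabla u^{\eps}\rightharpoonup G$ and applying $SBV$ compactness to $u^{\eps}$ --- whose jumps lie in $\bigcup_j \partial^* P^{\eps}_j$ and thus carry uniformly bounded $\mathcal{H}^{d-1}$-measure by \eqref{eq: partition property} --- identifies $G=\nabla u$ with $J_u$ contained in the horizontal hyperplanes supporting $\bigcup_j \partial P_j$, whence $u\in \mathscr{U}(\Omega)$. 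The convergence in measure \eqref{eq:comp-u} then follows from $SBV$ compactness combined with \eqref{eq: partition property-new}, which prevents loss of translation information on asymptotically vanishing components.
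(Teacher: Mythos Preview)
Your high-level architecture (rigidity $\to$ partitions/limits $\to$ displacements with a coarsening step) matches the paper's, but two of your key steps have genuine gaps.

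First, defining $\mathcal{P}^{\eps}$ as the connected components of $\{\Phi^{\eps}=A\}$ and $\{\Phi^{\eps}=B\}$ is not the paper's construction, and your proposed route to \eqref{eq: partition property-new} via ``the Conti--Schweizer minority-island bound'' does not go through: that bound (cf.\ \eqref{eq: conti estimate}) gives volume $\lesssim\eps$, not $\lesssim\eps^{p}$ with $p>1$, and it is stated for a single island under a smallness assumption, not summed over all components. The paper instead builds $\mathcal{P}^{\eps}$ by slicing: it introduces $f^{\eps}(t)=\mathcal{H}^{d-1}(\{x_d=t\}\cap T^{\eps}\cap\Omega^{\eps})$, picks a level $\sigma_\eps\in(\eps^p/2,\eps^p)$ via the coarea formula, and cuts at the jump points of $\{f^{\eps}\le\sigma_\eps\}$. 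Estimate \eqref{eq: partition property-new} is then obtained from the relative isoperimetric inequality on slices (this is where H8.\ enters and where the $\eps^{-\alpha(d)}$ blow-up of the constant is absorbed) combined with Proposition~\ref{lprop: phases}(iii); the exponent $p=1+\tfrac{3}{2d(2d-3)}$ is tuned precisely to make this balance work. None of this is captured by a connected-components construction.

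Second, your appeal to ``$SBV$ compactness for $u^{\eps}$'' in Stage~3 fails as stated: Ambrosio's theorem requires an a priori bound on the functions themselves (e.g.\ $L^\infty$), but the jump heights of $u^{\eps}$ across $\partial^*P^{\eps}_j$ are $|t^{\eps}_i-t^{\eps}_j|/\eps$ and can diverge (this is exactly the regime enforced by \eqref{eq: toinfty}). The paper avoids this by working componentwise: on a fixed limiting component $P_j$, it applies a Poincar\'e inequality in $W^{1,p}$ with $p<2$ to $f^{\eps}_j=\eps^{-1}(y^{\eps}-R^{\eps}M^{\eps}_j x-t^{\eps}_j)$ on increasing $K_n\subset\subset P_j$. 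The point of using $p<2$ is that on the ``wrong-phase'' set $K_n\setminus P^{\eps}_j$ one has $|\nabla f^{\eps}_j|\sim \eps^{-1}$ but, by \eqref{eq: partition property-new}, measure $\le C\eps^{p}$, so $\|\nabla f^{\eps}_j\|_{L^p}$ stays bounded; this would fail for $p=2$. One then passes to a $W^{1,p}$ limit on each $P_j$, glues to obtain $u$, and the $L^2_{\rm loc}$ weak convergence of $\nabla u^{\eps}$ comes directly from \eqref{eq:rigidity-compactness} rather than from $SBV$ compactness.
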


In view of our compactness result, sequences of deformations having equibounded energies decompose into the sum of piecewise  rigid movements  with gradients $\sum\nolimits_j R^{\ep} M^{\ep}_j\chi_{P^{\ep}_j}$,  reflecting also the different phases $A$ and $B$, and scaled $SBV$-displacements $u^\eps$ whose gradients are uniformly bounded in $L^2_{\rm loc}(\Omega; \M^{d\times d})$. Let us comment on the compactness result and on some of the proof ideas. 

The definition of the piecewise rigid movements, as well as \eqref{eq: partition property}-\eqref{eq:rigidity-compactness}, follow from the geometric two-well rigidity result recalled in Theorem \ref{thm:rigiditythm}. In particular, \eqref{eq: partition property-new} shows that each component has either small volume or coincides  (up to a small set)  with a `layer' of $\Omega'$. (We also refer to Figure \ref{fig:small-trans} below for a 2d illustration.) At this point, the passage to subdomains is necessary and in \eqref{eq:rigidity-compactness} we control the quantities only in $L^2_{\rm loc}$,  cf.\ \eqref{eq: rigidity-new}.  If $\Omega$ is a paraxial cuboid, this passage can be avoided, see Remark \ref{rem: Geometric condition} for details  in that direction.  Let us also emphasize that the rotation $R^\eps$ is defined \emph{globally}, i.e., it is independent of the components of the partition $\mathcal{P}^\eps$. 

Standard compactness results (see Theorem \ref{th: comp cacciop})  imply \eqref{eq:comp-R}-\eqref{eq:comp-sets}, whereas \eqref{eq:comp-y} follows from  Lemma \ref{lemma:comp-def}, and for \eqref{eq:comp-M}  we also take \eqref{eq:rigidity-compactness} into account. The global point of view for phase transitions given in Lemma \ref{lemma:comp-def} is combined  with a local one in \eqref{eq: toinfty}-\eqref{eq:comp-grad-u}:   the Caccioppoli partitions play the role of identifying subdomains where the small-strain displacement fields defined in \eqref{eq: rescaled disp} satisfy good compactness properties \eqref{eq:comp-u}-\eqref{eq:comp-grad-u}.  

In this context, condition \eqref{eq: toinfty} represents a selection principle for the Caccioppoli partitions.  (Note that $\lim_{\eps \to 0} M^\eps_k$ for $k=i,j$ is well defined by \eqref{eq:comp-R}, \eqref{eq:comp-sets}, and \eqref{eq:comp-M}.)   Loosely speaking, it implies that two regions of the domain in the same phase, say phase $A$, are represented in the limit by two different sets $P_i$ and $P_j$ if and only if along the sequence $\{\mathcal{P}^{\ep}\}_\eps$ there is a layer contained in the $B$-phase region lying between $P_i^{\ep}$ and $P_j^{\ep}$ whose width  is asymptotically (as $\eps \to 0$) much larger than $\eps$,   cf.\ the discussion below \eqref{eq: jump height}.    We emphasize that, without the selection principle \eqref{eq: toinfty}, there might be different possible choices for the limiting partition, as the following example shows.

\begin{example}[Non-uniqueness of limiting partition]\label{ex}
{\normalfont The choice of different partitions at level $\ep$  is  not equivalent. In particular, different  $\ep$-decompositions determine different limiting displacements and Caccioppoli partitions, which may contain a different `amount of information'. To clarify this, consider the following two-dimensional example.  (For related examples, we refer to \cite[Example 2.5]{Engineer} or \cite[Example 2.4]{Friedrich-ARMA}). Let
$$\Omega= (0,1) \times (0,2),\quad \Omega_1 = (0,1) \times (0,1), \quad \Omega_2 = (0,1) \times (1,2)$$
and for $\eps >0$ and $l \in \lbrace 1/2,1,2 \rbrace$ consider the sets
$$\Omega^{\eps,l}_3 = (0,1)  \times  (1-\eps^{l},1+\eps^{l}), \quad \Omega^{\eps,l}_1 = \Omega_1 \setminus \Omega^{\eps,l}_3, \quad \Omega^{\eps,l}_2 = \Omega_2 \setminus \Omega^{\eps,l}_3.$$ 
We define three different example sequences according to the value of $l$: first, define $\tilde{y}^{\eps,l} \in H^1(\Omega;\R^2)$ by  
\begin{align*}
\tilde{y}^{\eps,l}(x) := \begin{cases}   x & x \in \Omega^{\eps,l}_1\\  Bx - \kappa (1-\eps^l)e_2 & x \in \Omega^{\eps,l}_3 \\ x + 2\kappa \eps^{l} e_2 & x \in \Omega_2^{\eps,l} \end{cases}
\end{align*}
for every $x\in \Omega$,  where $\kappa$ is given in H3.,  and then 
$$y^{\eps,l} := \tilde{y}^{\eps,l} * \frac{1}{\eps^4}\varphi(\cdot/\eps^2),$$ where $\varphi\colon \R^2 \to \R^2$ is a standard mollifier with ${\rm supp}(\varphi) \subset B_1(0)$.  One can check that \EEE $\sup_{\ep>0} \mathcal{E}_{\ep}(y^{\eps,l})< \infty$. There are two natural alternative decompositions of the maps $y^{\ep,l}$, namely  
 \begin{align*}
(1)& \ \ y^{\ep,l}= (R^{\eps,l}M^{\eps,l}_1\,x + t_1^{\eps,l}) \chi_{P^{\eps,l}_1} + \ep u^{\eps,l},\quad\quad\text{and}\quad\quad
(2) \ \ y^{\ep,l}=\sum\nolimits_{j=1}^3 (\hat{R}^{\eps,l}\hat{M}^{\eps,l}_j\,x + \hat{t}^{\eps,l}_j)\chi_{\hat{P}^{\eps,l}_j} + \ep \hat{u}^{\eps,l},
\end{align*}
where $R^{\eps,l} = \hat{R}^{\eps,l} = {\rm Id}$ and the Caccioppoli partitions, phases, and constant translations are defined as 
\begin{align*}
(1)& \ \ P^{\eps,l}_1 = \Omega, \ \ \ \ M_1^{\eps,l} = A, \ \ \ \  t_1^{\eps,l} = 0,\\
(2)& \ \ \hat{P}^{\eps,l}_j = \Omega^{\eps,l}_j, \ \  \ \  \hat{M}^{\eps,l}_1= \hat{M}^{\eps,l}_2  = A, \  \hat{M}^{\eps,l}_3 = B,  \ \ \ \ \   \ \ \hat{t}_1^{\eps,l} =  0,  \   \hat{t}^{\eps,l}_2  = 2\kappa\eps^l  e_2 - b\eps, \  \hat{t}^{\eps,l}_3 = -\kappa (1-\eps^l)e_2,
  \end{align*}
respectively, where $b \in \R^2$ is some arbitrary translation. This leads to the different limiting displacement fields and Caccioppoli partitions
\begin{align*}
&(1) \ \  u^l = 0 \cdot \chi_{\Omega_1} + s^le_2\chi_{\Omega_2}, \ \ \ \  \ \ \ \ \ \ \ \  \ \ P^l_1 = \Omega,\\
 &(2) \ \ \hat{u}^l =  0 \cdot \chi_{\Omega_1} +  b\chi_{\Omega_2}, \ \ \ \ \ \ \ \ \ \  \  \hat{P}^l_1 = \Omega_1, \ \hat{P}^l_2 = \Omega_2, \ \hat{P}^l_3 = \emptyset, 
  \end{align*}
  where $s^l:= 2\kappa  \lim_{\eps \to 0} \eps^{l-1}$ for $l \in \lbrace 1/2,1,2 \rbrace$.

In case (2), where the sets $\Omega_1$ and $\Omega_2$ are split in the limiting partition, the limiting displacement does not provide any information  on the behavior of the deformations at the $\ep$-level.  Note that the translation $b \in \R^2$ just expresses the non-uniqueness of the limiting configuration and does not have any physically reasonable interpretation, see Proposition  \ref{prop:ex-coarsest-part} below. On the contrary, in case (1) the jump height of the limiting displacement on $\partial \Omega_1 \cap \partial \Omega_2$ provides information on the width of the intermediate layer $\Omega_3^{\eps,l}$ where the deformation is in phase $B$: The jump heights $s^2 = 0$ and $s^1 = 2\kappa$ express that the width is of order $\ll \eps$ and $\sim \eps$, respectively. As $s^{1/2} = \infty$, we observe that $u^{1/2} \notin \mathscr{U}(\Omega)$. Thus, alternative (1) is not allowed  in the case $l=1/2$ and the sets $\Omega_1$ and $\Omega_2$ have to be split in the limiting partition. The observation that coarser partitions provide more information suggests to define the partition `as coarse as possible'. This intuition is exactly reflected in the selection principle \eqref{eq: toinfty}: for $l=1,2$ we apply Case (1) and only for $l=1/2$ we apply Case (2).} \nopagebreak\hspace*{\fill}$\Box$
\end{example}

 As a consequence of Theorem \ref{thm:compactness}, we introduce the following notion of convergence.

\begin{definition}
\label{def:convergence1}
(i) We say that a sequence of deformations $\{y^{\ep}\}_\eps$ is \emph{asymptotically represented} by a limiting triple $(y,u,\mathcal{P})\in \mathscr{Y}(\Omega)\times \mathscr{U}(\Omega)\times \mathscr{P}(\Omega)$, and write
$$y^{\ep}\to (y,u,\mathcal{P}),$$
if there are sequences $\lbrace R^\eps \rbrace_\eps$, $\lbrace \mathcal{P}^\eps\rbrace_\eps$, $\lbrace \mathcal{M}^\eps\rbrace_\eps$, and $\lbrace \mathcal{T}^\eps\rbrace_\eps$ such that  \eqref{eq: partition property}--\eqref{eq:comp-grad-u} hold.\\
(ii) We call a sequence of quadruples $(R^\eps, \mathcal{P}^\eps, \mathcal{M}^\eps, \mathcal{T}^\eps)$ \emph{admissible for $\lbrace y^\eps \rbrace_\eps$} if \eqref{eq: partition property}--\eqref{eq:comp-grad-u} are satisfied.\\ 
(iii)  We call a triple $(y,u,\mathcal{P})\in \mathscr{Y}(\Omega)\times \mathscr{U}(\Omega)\times \mathscr{P}(\Omega)$ \emph{admissible for $\lbrace y^\eps \rbrace_\eps$} if $\{y^{\ep}\}_\eps$ is asymptotically represented by $(y,u,\mathcal{P})$.
\end{definition}

Although we use the notation $\to$  and call $(y,u,\mathcal{P})$ a limiting triple, it is clear
that Definition \ref{def:convergence1} cannot be understood as a convergence in the usual sense. In
particular, a specific feature of our limiting model is that in the limit $\eps \to 0$ a \emph{tripling of the variables} occurs. Another crucial aspect is given by the fact that along the sequence a characterization  in terms of quadruples is needed. Let us highlight the relation between the quadruples and the limiting triples:  the  deformation $y \in \mathscr{Y}(\Omega)$ is determined by the rotation $R^\eps$, the partitions $\mathcal{P}^\eps$, and  the phases $\mathcal{M}^\eps$, see \eqref{eq:comp-M}. For the displacement field $u$ we additionally need the translations $\mathcal{T}^\eps$, see \eqref{eq: rescaled disp}-\eqref{eq:comp-u}.  Finally, the limiting partition $\mathcal{P}$ is directly related to $\mathcal{P}^\eps$ by \eqref{eq:comp-sets}.

 We will now proceed  with a more specific characterization of the admissible limiting triples for a sequence $\lbrace y^\eps \rbrace_\eps$.

\subsection{Characterization of admissible limiting triples}\label{sec: limiting triples}

In this subsection, our aim is to give a complete characterization of all limiting  triples  $(y,u,\mathcal{P})$ which are admissible for a sequence $\{y^{\ep}\}_\eps$   considered in Theorem \ref{thm:compactness}.  This, in turn, specifies the domain of our effective energy discussed in the next subsection. Below we will see that the choice of the deformation $y$ and the partition $\mathcal{P}$ is unique. On the other hand, however, we see that $u$ is not determined uniquely: 

Consider admissible quadruples $\{(R^\eps, \mathcal{P}^\eps, \mathcal{M}^\eps, \mathcal{T}^\eps)\}_\ep$ for a sequence $\lbrace y^\eps\rbrace_\eps$ which is asymptotically represented by a triple $(y,u,\mathcal{P})$, where $\mathcal{T}^\eps = \lbrace t^\eps_j \rbrace_j$. Then, we find another sequence of admissible quadruples  $\{(\hat{R}^\eps, \hat{\mathcal{P}}^\eps, \hat{\mathcal{M}}^\eps, \hat{\mathcal{T}}^\eps)\}_\ep$ by setting $\hat{R}^\eps =  \exp(-\eps S) {R}^\eps$  for $S \in \M^{d\times d}_{\rm skew}$, $\hat{\mathcal{P}}^\eps = {\mathcal{P}}^\eps$, $\hat{\mathcal{M}}^\eps= {\mathcal{M}}^\eps$, and $\hat{\mathcal{T}}^\eps = \lbrace \hat{t}^\eps_j \rbrace_j$ with $\hat{t}^\eps_j=t^\eps_j -  \eps  t_j $ for some $t_j\in \R^d$  for all  $j$. (Here, $\exp$ denotes the matrix exponential.) In view of \eqref{eq:comp-M} and \eqref{eq: rescaled disp}--\eqref{eq:comp-u}, a short computation yields that this sequence of quadruples will give the limiting triple $(y,\hat{u},\mathcal{P})$  with 
\begin{align}\label{eq: uhat}
\hat{u}(x) = u(x) + \sum\nolimits_j t_j \chi_{P_j}(x) + S\, \nabla y(x) \,  x \quad \quad \text{for all $x \in \Omega$.}
\end{align}
To take this ambiguity of the limiting description into account, for a given deformation $y \in \mathscr{Y}(\Omega)$ and a given Caccioppoli partition $\mathcal{P} = \lbrace P_j \rbrace_j \in \mathscr{P}(\Omega)$, we introduce  the set 
\begin{align}\label{eq: infini rigid}
\mathscr{T}(y,\mathcal{P})=  \Big\{& T\colon \Omega \to \R^d\,\,\text{such that}\,\, T(x)=\sum\nolimits_j t_j\chi_{P_j}(x) + S\,  \nabla y(x) \, x, \ \  \  t_j \in \R^d, \  S \in \M^{d \times d}_{\rm skew} \Big\}
\end{align}
of corresponding piecewise translations combined with global infinitesimal rotations. We obtain the following characterization.

 \begin{proposition}[Characterization of admissible limiting triples] \label{prop:ex-coarsest-part}
 Let $\lbrace y^\eps \rbrace_\eps$ be a   sequence as in Theorem~\ref{thm:compactness}. Let $(y^1,u^1,\mathcal{P}^1)$ and  $(y^2,u^2,\mathcal{P}^2)$ be two admissible triples. Then the following assertions hold:

 \begin{itemize}
\item[(i)] $ y^1 =   y^2$ and $\mathcal{P}^1 = \mathcal{P}^2$ (up to possible reorderings  of the sets).
\item[(ii)] There exists $T\in \mathscr{T}(y^1,{\mathcal{P}}^1) = \mathscr{T}(y^2,{\mathcal{P}}^2)$ such that $u^1-u^2=T$.
\item[(iii)] For each $\tilde{T}\in \mathscr{T}(y^1,{\mathcal{P}}^1)$, the triple $(y^1,u^1 + \tilde{T},\mathcal{P}^1)$ is admissible. 
\end{itemize} 
 \end{proposition}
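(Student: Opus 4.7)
The plan is to fix two admissible triples $(y^k, u^k, \mathcal{P}^k)$, $k=1,2$, represented by admissible quadruples $(R^{k,\eps}, \mathcal{P}^{k,\eps}, \mathcal{M}^{k,\eps}, \mathcal{T}^{k,\eps})$ for the common sequence $\{y^\eps\}_\eps$, and to verify the three assertions in order. The equality $y^1 = y^2 =: y$ in (i) is immediate from the uniqueness of the strong $H^1$-limit in \eqref{eq:comp-y}. Moreover, \eqref{eq:comp-M} pins down the weak${}^*$-$BV$ limit of the piecewise rigid gradients, and since the two wells $A = \mathrm{Id}$ and $B$ lie on disjoint $SO(d)$-orbits (due to H3.), both rotations $R^{1,\eps}, R^{2,\eps}$ necessarily converge to a common $R \in SO(d)$.

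The hard part is the uniqueness of the partition $\mathcal{P}$, where the selection principle \eqref{eq: toinfty} plays an essential role. I will argue by contradiction: taking the common refinement of the two countable families of hyperplanes defining $\mathcal{P}^1, \mathcal{P}^2 \in \mathscr{P}(\Omega)$, if the partitions do not coincide one finds a positive-measure component $P^1_i$ of $\mathcal{P}^1$ that strictly contains at least two positive-measure components $P^2_{k_1}, P^2_{k_2}$ of $\mathcal{P}^2$. On the subregions where both partitions agree in the limit, \eqref{eq:rigidity-compactness} together with \eqref{eq:comp-M} forces equality of the phases for $\eps$ small (after a subsequence making $\mathcal{M}^{k,\eps}$ constant on the indices of interest), so $M^{1,\eps}_i = M^{2,\eps}_{k_1} = M^{2,\eps}_{k_2}$. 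Writing \eqref{eq: rescaled disp} for $u^{1,\eps}$ on $P^{1,\eps}_i$ and for $u^{2,\eps}$ on $P^{2,\eps}_{k_\ell}$, $\ell = 1,2$, their difference on $P^{2,\eps}_{k_\ell}$ reads
$$u^{1,\eps} - u^{2,\eps} = \frac{(R^{2,\eps} - R^{1,\eps})\, M^\eps\, x + (t^{2,\eps}_{k_\ell} - t^{1,\eps}_i)}{\eps}.$$
Since both $u^{1,\eps}$ and $u^{2,\eps}$ converge in measure to finite limits by \eqref{eq:comp-u}, the sequences $(t^{2,\eps}_{k_\ell} - t^{1,\eps}_i)/\eps$ are bounded for $\ell = 1,2$, and hence so is $|t^{2,\eps}_{k_1} - t^{2,\eps}_{k_2}|/\eps$. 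This contradicts \eqref{eq: toinfty} for $\mathcal{P}^2$ and concludes (i).

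For part (ii), once $\mathcal{P}^1 = \mathcal{P}^2 =: \mathcal{P} = \{P_j\}_j$, the same computation applied component-wise on $P^\eps_j$ yields
$$u^{1,\eps} - u^{2,\eps} = \frac{(R^{2,\eps} - R^{1,\eps})\, M^\eps_j\, x + (t^{2,\eps}_j - t^{1,\eps}_j)}{\eps}.$$
Writing $R^{2,\eps} = \exp(\eps S^\eps)\, R^{1,\eps}$ with $S^\eps \in \M^{d\times d}_{\rm skew}$ (valid for $\eps$ small since both rotations tend to $R$), I will pass to the limit: the convergence of $u^{k,\eps}$ in measure forces $S^\eps \to S \in \M^{d\times d}_{\rm skew}$ and $(t^{2,\eps}_j - t^{1,\eps}_j)/\eps \to t_j \in \R^d$, so that
$$u^1(x) - u^2(x) = S\, \nabla y(x)\, x + \sum\nolimits_j t_j\, \chi_{P_j}(x),$$
which is precisely an element of $\mathscr{T}(y, \mathcal{P})$ in the sense of \eqref{eq: infini rigid}. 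The identity $\mathscr{T}(y^1,\mathcal{P}^1) = \mathscr{T}(y^2,\mathcal{P}^2)$ follows from (i).

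Part (iii) will be a direct construction. Given an admissible quadruple $(R^\eps, \mathcal{P}^\eps, \mathcal{M}^\eps, \mathcal{T}^\eps)$ representing $(y, u^1, \mathcal{P})$ and $\tilde{T}(x) = \tilde{S}\, \nabla y(x)\, x + \sum_j \tilde{t}_j\, \chi_{P_j}(x) \in \mathscr{T}(y,\mathcal{P})$, I will set
$$\hat{R}^\eps := \exp(-\eps \tilde{S})\, R^\eps,\qquad \hat{t}^\eps_j := t^\eps_j - \eps\, \tilde{t}_j,$$
leaving the partitions and phases unchanged. Each of the conditions \eqref{eq: partition property}--\eqref{eq:comp-grad-u} is easily verified: $\hat{R}^\eps \in SO(d)$ with $\hat{R}^\eps \to R$; the rigidity bound \eqref{eq:rigidity-compactness} holds since $|\hat{R}^\eps - R^\eps| = O(\eps)$; the selection principle \eqref{eq: toinfty} is preserved since the modifications of the translations are of order $\eps$; and an argument dual to (ii) shows that the new rescaled displacements $\hat{u}^\eps$ converge in measure and in the weak $L^2_{\rm loc}$-sense of gradients to $u^1 + \tilde{T}$, completing the proof.
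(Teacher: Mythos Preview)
Your argument is correct and matches the paper's approach closely: uniqueness of $y$ from \eqref{eq:comp-y}, contradiction with the selection principle \eqref{eq: toinfty} via the affine difference $u^{1,\eps}-u^{2,\eps}$, and explicit perturbation of the quadruples for (iii). One minor imprecision in (i): your claim that some $P^1_i$ \emph{strictly contains} two components of $\mathcal{P}^2$ is not guaranteed (the containment could go the other way, or the components could merely overlap); the paper phrases this correctly as one component of one partition intersecting two components of the other in positive measure, which after a harmless WLOG is exactly what your computation needs. Your use of the exact rotation $\exp(-\eps\tilde S)\in SO(d)$ in (ii)--(iii) is a slightly cleaner alternative to the paper's nearest-point projection onto $SO(d)$ via \eqref{eq: linearization formula}, but the two agree to order $O(\eps^2)$ and lead to the same limit.
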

 
Property (i) states  that the limiting deformation is uniquely determined. It follows from \eqref{eq:comp-y}. The corresponding property for the partition is a consequence of the selection principle in \eqref{eq: toinfty}. Without such a condition other choices are possible, see Example \ref{ex}  for more details.     Property (ii) states that the admissible displacement fields for a sequence $\{y^{\ep}\}_{\ep}$   are determined uniquely up to piecewise translations and a global (infinitesimal) rotation. This non-uniqueness   has been illustrated in \eqref{eq: uhat}.

 The next result characterizes  the jump sets involved in admissible limiting triples.

 \begin{proposition}[Admissible limiting triples; jump set  and partition]\label{lemma: admissible-u-y-jump}
  Let $\lbrace y^\eps \rbrace_\eps$ be a   sequence as in Theorem \ref{thm:compactness}. Then for each admissible triple  $(y,u,\mathcal{P})$ in the sense of Definition \ref{def:convergence1} there holds 
  $$J_{\nabla y} \subset \bigcup\nolimits_{j} \partial P_j \cap \Omega. $$
 There are examples of  sequences $\lbrace y^\eps \rbrace_\eps$ such that the inclusion is strict.  
\end{proposition}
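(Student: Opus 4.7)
The plan is to give an explicit formula for $\nabla y$ in terms of the limit data supplied by Theorem \ref{thm:compactness}, and then to read off the inclusion from this representation.

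First, I would fix admissible quadruples $\{(R^\eps, \mathcal{P}^\eps, \mathcal{M}^\eps, \mathcal{T}^\eps)\}_\eps$ asymptotically representing $\{y^\eps\}_\eps$ via the triple $(y,u,\mathcal{P})$. Since $M^\eps_j \in \{A,B\}$, a diagonal extraction (on a subsequence, not relabeled) yields $M_j \in \{A,B\}$ with $M^\eps_j = M_j$ eventually for every index $j$ with $\mathcal{L}^d(P_j)>0$. Combining this extraction with $R^\eps \to R$ from \eqref{eq:comp-R} and $\chi_{P^\eps_j}\to \chi_{P_j}$ in $L^1$ via \eqref{eq:comp-sets}, and using the uniform smallness estimate \eqref{eq: partition property-new} to reduce to finitely many components, I obtain
\[
\sum\nolimits_j R^\eps M^\eps_j \chi_{P^\eps_j} \to \sum\nolimits_j R\, M_j\, \chi_{P_j} \qquad \text{in } L^1(\Omega;\M^{d\times d}).
\]
Since the weak* convergence \eqref{eq:comp-M} in particular entails strong $L^1$ convergence of these piecewise constant maps to $\nabla y$, uniqueness of limits yields
\[
\nabla y = \sum\nolimits_j R\, M_j\, \chi_{P_j} \qquad \text{a.e.\ in } \Omega.
\]
From this representation and the fact that $\nabla y$ attains only the two values $RA$ and $RB$, standard properties of Caccioppoli partitions (cf.\ Appendix \ref{sec:appendix}) give $J_{\nabla y} \subset \bigcup_j \partial^* P_j \cap \Omega \subset \bigcup_j \partial P_j \cap \Omega$, which is the required inclusion.

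For the strict inclusion, I would recycle Example \ref{ex} in the regime $l=1/2$: there the intermediate $B$-layer of width $\eps^{1/2}\gg \eps$ activates the selection principle \eqref{eq: toinfty} and forces the splitting $\mathcal{P}=\{\Omega_1,\Omega_2\}$ at the limit. However, both components carry the phase $A$, so $\nabla y \equiv {\rm Id}$ on $\Omega$ and $J_{\nabla y}=\emptyset$, while the common interface $\Omega \cap \{x_d=1\}$ is a nontrivial subset of $\bigcup_j \partial P_j \cap \Omega$, yielding strict inclusion.

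The main technical obstacle I foresee is the careful handling of the countable sum $\sum_j R^\eps M^\eps_j \chi_{P^\eps_j}$ when passing to the limit, namely extracting phases $M_j$ simultaneously for all components via a diagonal procedure and controlling the tail uniformly in $\eps$ through \eqref{eq: partition property-new}. Once the identification of $\nabla y$ as a piecewise constant function on $\mathcal{P}$ is in place, the remainder of the argument is routine bookkeeping about sets of finite perimeter, and the example just illustrates that the selection principle may split a region into two components of the same phase.
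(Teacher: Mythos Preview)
Your approach is correct and differs from the paper's only in presentation: the paper argues by contradiction (if $J_{\nabla y}$ left $\bigcup_j\partial P_j$, one could find a stripe $D\subset P_j$ on which $\nabla y$ takes both values $RA$ and $RB$, and this is incompatible with \eqref{eq:comp-R}--\eqref{eq:comp-sets} and \eqref{eq:comp-M}), whereas you go for the direct identification $\nabla y=\sum_j R M_j\chi_{P_j}$ and read off the inclusion. Your route yields slightly more information (the explicit constancy of $\nabla y$ on each $P_j$), at the cost of having to handle the countable sum carefully.

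One small correction: the tail of the sum is not controlled by \eqref{eq: partition property-new}, which is a statement about the local layer geometry of the $\eps$-partitions, not about the total mass of high-index components. The right tool is simply that $\sum_j\mathcal{L}^d(P^\eps_j\triangle P_j)\to 0$ (the remark after Theorem~\ref{th: comp cacciop}), together with the uniform bound $|R^\eps M^\eps_j|\le C$. Alternatively, and more cleanly, you can bypass the tail issue entirely by arguing componentwise: fix $j$ with $\mathcal{L}^d(P_j)>0$, multiply the $L^1$-convergence coming from \eqref{eq:comp-M} by $\chi_{P^\eps_j}$, use \eqref{eq:comp-sets} to pass to the limit, and conclude $\nabla y=RM_j$ a.e.\ on $P_j$; no diagonal extraction or global tail estimate is then needed. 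Your treatment of the strict-inclusion example via $l=1/2$ in Example~\ref{ex} is exactly what the paper does.
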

The fact that the inclusion  may be  strict can be seen in Case (2) of Example \ref{ex} (corresponding to $l = 1/2$). We also note by Proposition \ref{prop:ex-coarsest-part}(iii) that there is always an admissible displacement field $u$ with $ \bigcup\nolimits_{j} \partial P_j \cap \Omega \subset J_u$. This inclusion might be strict, see Case (1) in Example \ref{ex} with $l=1$. We proceed  with a result which specifies the jump heights of admissible limiting displacement fields.  For $u \in \mathscr{U}(\Omega)$,  the normal on $J_u $ is given by $\nu_u = e_d$. We denote by $u^+$ and $u^-$ the corresponding one-sided limits of $u$ and  we let  $[u]:= u^+-u^-$.

\begin{proposition}[Admissible limiting displacement fields; jump heights]\label{lemma: admissible-u}
Let $(y,u,\mathcal{P})$ be an admissible triple in the sense of Definition \ref{def:convergence1} and let $R \in SO(d)$ be such that $y \in \mathcal{Y}_R(\Omega)$. We have
\begin{align*}
{\rm (i)} &  \ \ [u](x) \ \text{constant for $\mathcal{H}^{d-1}$-a.e. } x \in (\R^{d-1} \times \lbrace t \rbrace) \cap \Omega   \ \  \text{for all } t \in \R \text{ with } J_u \cap (\R^{d-1} \times \lbrace t \rbrace) \neq \emptyset, \notag\\
{\rm (ii)} & \ \ [u](x) \in [0,+\infty) Re_d \text{ for $\mathcal{H}^{d-1}$-a.e. $x \in \big(J_u \setminus \bigcup\nolimits_j \partial P_j \big) \cap  \lbrace \nabla y = RA \rbrace$}, \notag\\
{\rm (iii)} & \ \ -[u](x) \in [0,+\infty) Re_d \text{ for $\mathcal{H}^{d-1}$-a.e. $x \in \big(J_u \setminus \bigcup\nolimits_j \partial P_j \big) \cap  \lbrace \nabla y = RB \rbrace$}.
\end{align*}
\end{proposition}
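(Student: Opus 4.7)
The plan is to combine a vertical slicing argument with the heuristic around \eqref{eq: jump height}. Fix $x_0=(x_0',t)\in J_u\setminus\bigcup_j\partial P_j$ at which $\nabla y=RM_{j_0}$ for some $M_{j_0}\in\{A,B\}$. By Proposition~\ref{prop:ex-coarsest-part}(i), $x_0$ lies in the interior of a unique component $P_{j_0}$ of $\mathcal{P}$, so I can choose a small cube $V=B'_\delta(x_0')\times(t-\delta,t+\delta)\subset\subset P_{j_0}$ with $V\cap J_u\subset\R^{d-1}\times\{t\}$. Passing to a (not relabeled) subsequence, I assume $M^\eps_{j_0}=M_{j_0}$ for all $\eps$, and I use the admissible quadruple $(R^\eps,\mathcal{P}^\eps,\mathcal{M}^\eps,\mathcal{T}^\eps)$ provided by Theorem~\ref{thm:compactness}.

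For a.e.\ $x'\in B'_\delta(x_0')$ I integrate $\partial_d y^\eps(x',\cdot)$ across the vertical segment. By Theorem~\ref{thm:rigiditythm}, in $L^2(V)$ the gradient $\nabla y^\eps$ is $O(\eps)$-close to $R^\eps A$ (resp.\ $R^\eps B$) on the $A$-phase (resp.\ $B$-phase) region of $y^\eps$. Denoting by $w^\eps(x')$ the $1$-measure of the set of $s\in(-\delta,\delta)$ for which $y^\eps(x',t+s)$ lies in the opposite phase of $M_{j_0}$, and using $(B-A)e_d=\kappa e_d$ from H3., a direct computation gives
\begin{equation*}
y^\eps(x',t+\delta)-y^\eps(x',t-\delta)=2\delta R^\eps M_{j_0}e_d \pm \kappa w^\eps(x') R^\eps e_d + r^\eps(x'),
\end{equation*}
with sign $+$ if $M_{j_0}=A$ and $-$ if $M_{j_0}=B$, and with $\|r^\eps\|_{L^2(B'_\delta(x_0'))}=o(\eps)$. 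Since $V\subset\subset P_{j_0}$, applying \eqref{eq: rescaled disp} on $P^\eps_{j_0}$ yields
\begin{equation*}
u^\eps(x',t+\delta)-u^\eps(x',t-\delta)=\pm\kappa\,\frac{w^\eps(x')}{\eps}\,R^\eps e_d + \eps^{-1}r^\eps(x').
\end{equation*}
Standard $SBV$ trace arguments, available thanks to \eqref{eq:comp-u}--\eqref{eq:comp-grad-u}, imply that for a.e.\ sufficiently small $\delta>0$ the left-hand side converges in measure to $[u](x',t)$. Hence $w^\eps(x')/\eps$ converges to some nonnegative function $w(x')$, and $[u](x',t)=\pm\kappa\,w(x')\,Re_d$. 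The sign, determined by $M_{j_0}$, together with the nonnegativity of $w$, directly establishes parts (ii) and (iii).

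For (i), it remains to prove that $w(x')$ is constant on $B'_\delta(x_0')$. The opposite-phase region of $y^\eps$ inside $V$ is encoded, via the phase indicator $\Phi^\eps$ provided by Theorem~\ref{thm:rigiditythm}, by its level sets. The anisotropic term $\bar\eta_{\eps,d}^2\int_\Omega(|\nabla^2y^\eps|^2-|\partial^2_{dd}y^\eps|^2)\,\mathrm{d}x$ in \eqref{eq: nonlinear energy}, coupled with the scaling $\bar\eta_{\eps,d}\gg\eps$ from \eqref{eq:alphad}, strongly controls all mixed second derivatives of $y^\eps$ except $\partial^2_{dd}y^\eps$. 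Through Proposition~\ref{lprop: phases} (i.e., \cite[Proposition 3.7]{davoli.friedrich}), this forces the phase boundaries of $y^\eps$ inside $V$ to be, in an appropriate norm, $o(\eps)$-close to a finite family of hyperplanes orthogonal to $e_d$. Consequently, along a further subsequence, $w^\eps(x')$ agrees with an $x'$-independent value $w^\eps$ up to an error $o(\eps)$ uniform in $x'$, so $w(x')\equiv\text{const.}$, yielding (i).

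The principal obstacle is precisely this last step: turning the qualitative fact that the anisotropic penalization favors horizontal phase interfaces into a quantitative, $\eps$-uniform bound showing that the thickness $w^\eps(x')$ of the opposite-phase layer asymptotically does not depend on the transverse coordinate $x'$. This requires combining the strong $L^2$-decay of the off-diagonal second derivatives of $y^\eps$ with the refined properties of $\Phi^\eps$ from Proposition~\ref{lprop: phases} and a careful Fubini/slicing argument across $B'_\delta(x_0')$.
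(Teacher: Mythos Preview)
Your overall strategy---slicing in the $e_d$-direction, using the fundamental theorem of calculus, and exploiting the layer structure of the opposite phase---matches the paper's. However, there are two genuine gaps.

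\textbf{The remainder is $O(\eps)$, not $o(\eps)$, so a second limit $\delta\to 0$ is essential.} From Theorem~\ref{thm:rigiditythm} (or \eqref{eq:rigidity-compactness}) you only get $\|\nabla y^\eps - R^\eps\Phi^\eps\|_{L^2(V)}\le C\eps$. Integrating over the vertical segment of length $2\delta$ and using Cauchy--Schwarz gives $\|r^\eps\|_{L^2(B'_\delta(x_0'))}\le C\sqrt{\delta}\,\eps$, which is $O(\eps)$ with a $\sqrt{\delta}$ prefactor, not $o(\eps)$. After dividing by $\eps$, the error $\eps^{-1}r^\eps$ is $O(\sqrt{\delta})$ and does \emph{not} vanish as $\eps\to 0$. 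Likewise, the trace limit of $u^\eps(x',t+\delta)-u^\eps(x',t-\delta)$ is $u(x',t+\delta)-u(x',t-\delta)$, not $[u](x',t)$; these differ by $\int_{t-\delta}^{t+\delta}\partial_d u(x',s)\,{\rm d}s$, again an $O(\sqrt{\delta})$ term. The paper handles this by fixing a height $\rho'$, deriving the bound $|[u](x',0)-\kappa\,\ell(x';\rho')\,Re_d|\le 2(2\bar C(x')\rho')^{1/2}$ (see \eqref{eq: jump height estimate}), and \emph{then} sending $\rho'\to 0$. Without this second limit your argument does not close.

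\textbf{Case (b) of part (i) is missing, and the mechanism for $x'$-independence is not the one you cite.} Part (i) must also be proved on hyperplanes that \emph{do} lie on $\bigcup_j\partial P_j$; there two components $P_{j_1},P_{j_2}$ meet, two different translations $t^\eps_{j_1},t^\eps_{j_2}$ enter \eqref{eq: rescaled disp}, and the computation changes (this is Step~3 in the paper's proof). For the $x'$-independence of $w$, Proposition~\ref{lprop: phases} alone gives bounds of order $\eps^{2-\alpha(d)}$ on the nonflatness of $\partial^*T^\eps$, which is not directly an $o(\eps)$ statement about layer widths. The precise input is \eqref{eq: partition property-new}, i.e.\ the $O(\eps^p)$ estimate with $p>1$, which after slicing (see \eqref{eq: slicing properties3}(ii)) yields $|\mathcal{L}^1(\mathcal{B}^\eps(x'_1;\rho'))-\mathcal{L}^1(\mathcal{B}^\eps(x'_2;\rho'))|=O(\eps^p)=o(\eps)$. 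The paper then argues by contradiction: a nonconstant jump would force $\liminf_\eps \eps^{-1}|\mathcal{L}^1(\mathcal{B}^\eps(x'_1))-\mathcal{L}^1(\mathcal{B}^\eps(x'_2))|>0$, contradicting this.
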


Roughly speaking, property (i) is a consequence of the geometry of the $A$ and $B$ phase regions induced by the rigidity estimate. We refer to \eqref{eq: partition property-new} and  to Figure \ref{fig:small-trans} for an illustration. We also refer to the discussion on the jump height in \eqref{eq: jump height}. In particular, (i) implies that the jump set  consists of subsets of hyperplanes orthogonal to $e_{d}$, which  extend up to the boundary of $\Omega$. Some intuition for point (ii) has been provided in \eqref{eq: jump height}, see also Case (1) in Example \ref{ex} with $l=1$. Point (iii) is similar by changing the roles of the phases $A$ and $B$. Note that (ii) and (iii) are well defined by Proposition \ref{lemma: admissible-u-y-jump}.

\begin{definition}
\label{def:convergence2}
In view of Theorem \ref{thm:compactness}, Proposition \ref{lemma: admissible-u-y-jump}, and Proposition \ref{lemma: admissible-u}, we introduce the set of admissible limiting triples 
$$\mathcal{A}:=\Big\{(y,u,\mathcal{P})\in \mathscr{Y}(\Omega)\times \mathscr{U}(\Omega)\times \mathscr{P}(\Omega)\colon\ \,  J_{\nabla y} \subset \bigcup\nolimits_{j=1}^\infty \partial P_j\cap\Omega,  \ \, u\text{ satisfies   (i)--(iii) in Proposition }\ref{lemma: admissible-u}\Big\}.$$
\end{definition}

\begin{remark}[Internal jumps]\label{rem:internal jumps}
{\normalfont

As discussed already heuristically in Subsection \ref{sec: heuristics/challenges}, the choice of the penalization factor \eqref{eq:alphad} simplifies the analysis  by excluding the formation of \emph{internal jumps}  for limiting displacement fields, see Proposition \ref{lemma: admissible-u}(i). This allows us to formulate our limiting model  for displacements in a piecewise Sobolev setting. Let us mention that without such a requirement the domain of the limiting model is expected to be the space of generalized functions of bounded variation $GSBD^2(\Omega)$ introduced in \cite{DM}, with an additional constraint on the jump sets of admissible functions. Note that this phenomenon is not just a technical mathematical issue, but is related to \emph{branching}, i.e., to the presence of microstructures near interfaces, see  e.g. \cite{Chan-Conti, conti-diermeier, Diermeier, kohn.muller, Zwicknagl}. Particularly, see  \cite{conti-diermeier} for a  simplified scalar model in  $SBV$ addressing the low volume-fraction of one phase, and dealing with the problem of internal jumps.  (We also refer to \cite{Diermeier} for some extensions to a vectorial model in the geometrically linear setting,  and to \cite{conti.diermeier.melching.zwicknagl} for a corresponding scaling law in the case of a martensitic  nucleus embedded in an austenitic matrix.\EEE)

} 
\end{remark} 
 
 \
\subsection{The effective limiting model and $\Gamma$-convergence}\label{sec: effective model}

 This subsection is devoted to the  identification of the effective limiting model.   We start by introducing the limiting energy functional. 
We preliminarily recall that, in view of assumption H5., the stored energy density $W$ is $C^2$ in a neighborhood of the set $SO(d)\{A,B\}$. We also recall  the quadratic form  $\mathcal{Q}_{\rm lin}$  defined in \eqref{eq:def-Q}, Definition \ref{def:convergence2}, and the asymptotic optimal-profile energy in \eqref{eq: our-k1}. We define the functional
\begin{align}\label{eq: limiting energy}
 \mathcal{E}_0^{\mathcal{A}} (y,u,\mathcal{P}):=
\int_{\Omega}\mathcal{Q}_{\rm lin} (\nabla y,\nabla u) \,{\rm d}x +   K   \mathcal{H}^{d-1}(J_{\nabla y})+ 2  K \mathcal{H}^{d-1}\Big(\big(J_u\cup\big( \bigcup\nolimits_j \partial P_j\cap \Omega\big)\big)\setminus J_{\nabla y}\Big)
\end{align}
for every $(y,u,\mathcal{P})\in \mathcal{A}$. Note that the elastic term is well defined as $\nabla y(x) \in SO(d)\{A,B\}$ for a.e.\ $x \in \Omega$.

 We briefly compare this energy to the limiting models in Subsection \ref{sec: existing results} and explain the relation to  $\mathcal{E}_\eps$  introduced in \eqref{eq: specific energy choice}. First, the elastic energy is more general than the one in \eqref{eq: first-linearization} as it accounts for the two different phases indicated by $\nabla y$. Moreover, in contrast to \eqref{eq: first sharp-interface},  the   functional contains two surface  terms: the jumps of $\nabla y$ represent the energy associated to single phase transitions between $A$- and $B$-phases,  already appearing in \eqref{eq: first sharp-interface}.  The second surface term corresponds to two  `consecutive phase transitions', i.e.,  two transitions  with a small intermediate layer  whose width vanishes as $\eps \to 0$, which remain undetected by $y$. More generally speaking,  by relaxation in the limit $\eps \to 0$,  the first term (single transition) and the second term (double transition)  effectively correspond to  an odd and an even number of consecutive phase transitions, respectively, cf.\ Figure \ref{fig2}. Note that the second surface term enters the energy with double cost with respect to  single phase transitions.  This term itself has two contributions:    recalling the selection principle for the partition in \eqref{eq: toinfty}, small intermediate layers of width $\sim \eps$ are associated to $J_u$  in the limit $\eps \to 0$  and layers with asymptotically much larger width are encoded by the partition $\mathcal{P}$. Layers of width $\ll \eps$ do not  affect  the limiting energy. This is illustrated in Example \ref{ex}. 
 
 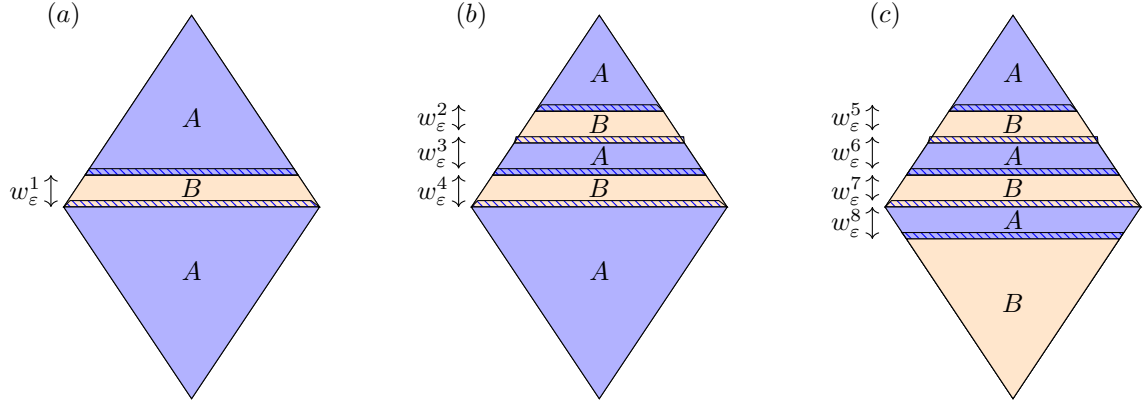
\begin{figure}[h]
\centering
\begin{tikzpicture}

\begin{scope}[shift={(-2.62,0)}, scale=.85]

\coordinate (A) at (-2,3);
 \coordinate (B) at (0,0);
 \coordinate (C) at (-4,0);
 \coordinate (D) at (-2,-3);
 \coordinate (E) at (-1,1.5);
 \coordinate (F) at (-0.67,1);
  \coordinate (G) at (-0.67,-1);
  \coordinate (H) at (-1.34,-2);
  \coordinate (I) at (-1.67,-2.5);
  \coordinate (L) at (-1.8,-2.7);
  \coordinate (M) at (-1.9,-2.85);
   \coordinate (N) at (-3,1.5);
 \coordinate (O) at (-3.33,1);
 \coordinate (U) at (-3.665, 0.5);
  \coordinate (V) at (-0.335, 0.5);
  \coordinate (P) at (-3.33,-1);
  \coordinate (Q) at (-2.66,-2);
  \coordinate (R) at (-2.33,-2.5);
  \coordinate (S) at (-2.2,-2.7);
  \coordinate (T) at (-2.1,-2.85);
  \draw (A)--(B)--(D)--(C)--cycle;
  \draw (H)--(Q);

   \draw[fill=orange!20] (U)--(V)--(B)--(C)--cycle;
  \draw[pattern=north west lines, pattern color=blue] (-3.9,0.1)--(-0.13,0.1)--(B)--(C)--cycle;
 
  \draw[fill=blue!30] (B)--(D)--(C)--cycle;
   \draw[fill=blue!30] (A)--(V)--(U)--cycle;

     \draw[pattern=north west lines, pattern color=blue] (-3.6, 0.6)--(-0.4, 0.6)--(V)--(U)--cycle;

  \node at (-2,1.4) {$A$};
   \node at (-2,0.3) {$B$};
    \node at (-2,-1) {$A$};
   
     \draw[<->] (-4.2,0) -- (-4.2,0.5);
      \node at (-4.6,0.25) {$w_\ep^1$};
 
 \node at (-4,3) {$(a)$};
 \end{scope}

\begin{scope}[shift={(2.8,0)}, scale=.85]

\coordinate (A) at (-2,3);
 \coordinate (B) at (0,0);
 \coordinate (C) at (-4,0);
 \coordinate (D) at (-2,-3);
 \coordinate (E) at (-1,1.5);
 \coordinate (F) at (-0.67,1);
  \coordinate (G) at (-0.67,-1);
  \coordinate (H) at (-1.34,-2);
  \coordinate (I) at (-1.67,-2.5);
  \coordinate (L) at (-1.8,-2.7);
  \coordinate (M) at (-1.9,-2.85);
   \coordinate (N) at (-3,1.5);
 \coordinate (O) at (-3.33,1);
 \coordinate (U) at (-3.665, 0.5);
  \coordinate (V) at (-0.335, 0.5);
  \coordinate (P) at (-3.33,-1);
  \coordinate (Q) at (-2.66,-2);
  \coordinate (R) at (-2.33,-2.5);
  \coordinate (S) at (-2.2,-2.7);
  \coordinate (T) at (-2.1,-2.85);
  \draw (A)--(B)--(D)--(C)--cycle;
  \draw (H)--(Q);
  \draw[fill=orange!20] (N)--(E)--(F)--(O)--cycle;
   \draw[pattern=north west lines, pattern color=blue] (-3.3,1.1)--(-0.68,1.1)--(F)--(O)--cycle;
   \draw[fill=orange!20] (U)--(V)--(B)--(C)--cycle;
  \draw[pattern=north west lines, pattern color=blue] (-3.9,0.1)--(-0.13,0.1)--(B)--(C)--cycle;
 
  \draw[fill=blue!30] (B)--(D)--(C)--cycle;
   \draw[fill=blue!30] (A)--(E)--(N)--cycle;
    \draw[pattern=north west lines, pattern color=blue] (-2.9,1.6)--(-1.1,1.6)--(E)--(N)--cycle;
    \draw[fill=blue!30] (F)--(V)--(U)--(O)--cycle;
     \draw[pattern=north west lines, pattern color=blue] (-3.6, 0.6)--(-0.4, 0.6)--(V)--(U)--cycle;
 \node at (-2,2.1) {$A$};
 \node at (-2,1.3) {$B$};
  \node at (-2,0.8) {$A$};
   \node at (-2,0.3) {$B$};
    \node at (-2,-1) {$A$};
    \draw[<->] (-4.2,1.6) -- (-4.2,1.2);
     \node at (-4.6,1.4) {$w_\ep^2$};
    \draw[<->] (-4.2,0.6) -- (-4.2,1.1);
    \node at (-4.6,0.85) {$w_\ep^3$};
     \draw[<->] (-4.2,0) -- (-4.2,0.5);
      \node at (-4.6,0.25) {$w_\ep^4$};
 
 \node at (-4,3) {$(b)$};
 \end{scope}

\begin{scope}[shift={(3.2,0)}, scale=.85]

\coordinate (A2) at (4,3);
 \coordinate (B2) at (6,0);
 \coordinate (C2) at (2,0);
 \coordinate (D2) at (4,-3);
 \coordinate (E2) at (5,1.5);
 \coordinate (F2) at (5.33,1);
  \coordinate (G2) at (5.33,-1);
  \coordinate (H2) at (4.66,-2);
  \coordinate (I2) at (4.33,-2.5);
  \coordinate (L2) at (4.2,-2.7);
  \coordinate (M2) at (4.1,-2.85);
   \coordinate (N2) at (3,1.5);
 \coordinate (O2) at (2.67,1);
 \coordinate (U2) at (2.335, 0.5);
  \coordinate (V2) at (5.665, 0.5);
  \coordinate (P2) at (8.67,-1);
  \coordinate (Q2) at (3.34,-2);
  \coordinate (R2) at (3.67,-2.5);
  \coordinate (S2) at (3.8,-2.7);
  \coordinate (T2) at (3.9,-2.85);
   \coordinate (X2) at (2.335, -0.5);
  \coordinate (Y2) at (5.665, -0.5);
  \draw (A2)--(B2)--(D2)--(C2)--cycle;
  \draw (H2)--(Q2);
  \draw[fill=orange!20] (N2)--(E2)--(F2)--(O2)--cycle;
   \draw[pattern=north west lines, pattern color=blue] (2.7,1.1)--(5.32,1.1)--(F2)--(O2)--cycle;
   \draw[fill=orange!20] (U2)--(V2)--(B2)--(C2)--cycle;
  \draw[pattern=north west lines, pattern color=blue] (2.1,0.1)--(5.87,0.1)--(B2)--(C2)--cycle;
 
  \draw[fill=blue!30] (B2)--(D2)--(C2)--cycle;
   \draw[fill=blue!30] (A2)--(E2)--(N2)--cycle;
    \draw[pattern=north west lines, pattern color=blue] (3.1,1.6)--(4.9,1.6)--(E2)--(N2)--cycle;
    \draw[fill=blue!30] (F2)--(V2)--(U2)--(O2)--cycle;
     \draw[pattern=north west lines, pattern color=blue] (2.4, 0.6)--(5.6, 0.6)--(V2)--(U2)--cycle;
     
       \draw[pattern=north west lines, pattern color=blue] (2.26, -0.4)--(5.73,-0.4)--(Y2)--(X2)--cycle;
        \draw[fill=orange!20] (X2)--(Y2)--(D2)--cycle;
 \node at (4,2.1) {$A$};
 \node at (4,1.3) {$B$};
  \node at (4,0.8) {$A$};
   \node at (4,0.3) {$B$};
    \node at (4,-0.2) {$A$};
    \node at (4,-1.5) {$B$};
 \draw[<->] (1.8,1.6) -- (1.8,1.2);
     \node at (1.4,1.4) {$w_\ep^5$};
    \draw[<->] (1.8,0.6) -- (1.8,1.1);
    \node at (1.4,0.85) {$w_\ep^6$};
     \draw[<->] (1.8,0.1) -- (1.8,0.5);
      \node at (1.4,0.25) {$w_\ep^7$};
       \draw[<->] (1.8,-0.5) -- (1.8,0);
 \node at (1.4,-0.25) {$w_\ep^8$};
\node at (2,3) {$(c)$};

\end{scope}

\end{tikzpicture}
\caption{Illustration of  situations corresponding to even and odd numbers of consecutive phase transitions. We assume that $w_\ep^i\to 0$ as $\ep\to 0$ and that $\liminf_{\ep\to 0}w^i_\ep/\ep>0$ for $i=1,\dots,8$. The shaded regions describe the areas in which the phase transitions occur. (a) We depict here the case of two phase transitions: the intermediate phase has infinitesimal width $w^1_\ep$ and thus disappears in the limit. Its presence at level $\ep$, though, still  affects  $\mathcal{E}_0^\mathcal{A}$. Indeed, in the second surface term, the length of the interface between the two limiting $A$-regions will enter the energy with density $2K$.  (b) The case of three intermediate phases is depicted. Although being different from (a) on the level $\eps$, this situation leads to the same effective energy. In this sense, two  intermediate phases  `compensate each other' in the limit. Note that the jump height of the limiting function is determined by $w^2_\ep$ and $w^4_\ep$ only. (c) We illustrate here the situation of five phase transitions: the energy contribution is accounted for in $\mathcal{E}_0^\mathcal{A}$ by the first surface term, i.e., the length of the interface between the limiting $A$- and $B$-regions, reflected by $J_{\nabla y}$, will enter the energy only with density $K$.}
\label{fig2}
\end{figure}

 \begin{remark}[Second-gradient terms]
 \label{rk:qualitative}
The effective model described in \eqref{eq: limiting energy} does not contain second-gradient terms neither in $y$ nor  in $u$. Indeed, the choice $\bar{\eta}_{\ep,d} \ll \ep^{-1}$ guarantees that the effects of higher-order contributions, in particular of their anisotropic part, enter the limiting energy only in terms of  the  value of the constant $K$, but no dependence on second-order derivatives persists in the model after the limiting passage.
 \end{remark}

The main contribution of this paper consists in showing  that the sequence $\{\mathcal{E}_\eps\}_\ep$  is asymptotically described by  $\mathcal{E}_0^{\mathcal{A}}$, in the sense of $\Gamma$-convergence in the topology introduced in Definition \ref{def:convergence1}. As a preliminary observation, we note that the limiting energy is invariant under changes of  the asymptotic representative.

\begin{remark}[Energy invariance for different asymptotic representatives]\label{rem: AR}
Suppose that a sequence $\lbrace y^\eps \rbrace_\eps$ is asymptotically represented by two triples  $(y^1,u^1,\mathcal{P}^1), (y^2,u^2,\mathcal{P}^2) \in \mathcal{A}$. Then,  $\mathcal{E}_0^{\mathcal{A}} (y^1,u^1,\mathcal{P}^1) = \mathcal{E}_0^{\mathcal{A}} (y^2,u^2,\mathcal{P}^2)$. This follows from \eqref{eq: only symmetric}, \eqref{eq: infini rigid},  Proposition \ref{prop:ex-coarsest-part}, and \eqref{eq: limiting energy}. 
 \end{remark}

 Our first result  shows that  $\mathcal{E}_0^{\mathcal{A}}$  provides a lower bound for the asymptotic behavior of the energy functionals $\lbrace \mathcal{E}_{\ep}\rbrace_\eps$. 

\begin{theorem}[$\Gamma$-liminf inequality]
\label{thm:liminf}
 Let $\Omega\subset \R^d$ be a bounded Lipschitz domain satisfying {\rm H8}.  Let $W$ satisfy assumptions {\rm H1}.--{\rm H5}., let $(y,u,\mathcal{P})\in \mathcal{A}$,  and let $\{y^{\ep}\}_\eps \subset H^2(\Omega;\R^d)$ be such that $y^{\ep}\to (y,u,\mathcal{P})$. Then
\begin{equation*}
\liminf_{\ep\to 0}\mathcal{E}_{\ep}(y^{\ep})\geq \mathcal{E}_0^{\mathcal{A}}(y,u,\mathcal{P}).
\end{equation*}
\end{theorem}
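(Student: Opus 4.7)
The strategy is to decompose $\mathcal{E}_\eps(y^\eps)$ into a bulk part (accounting for the linearized elastic energy) and surface contributions localized near the two interface types, and to estimate each piece separately. Fix a subsequence (not relabeled) achieving the liminf, along with admissible quadruples $(R^\eps, \mathcal{P}^\eps, \mathcal{M}^\eps, \mathcal{T}^\eps)$ supplied by Theorem \ref{thm:compactness}, and denote by $u^\eps$ the rescaled displacements in \eqref{eq: rescaled disp}.

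\textbf{Localization.} Since $J_{\nabla y}$ and $S := (J_u \cup \bigcup_j \partial P_j \cap \Omega) \setminus J_{\nabla y}$ are both contained in countable unions of hyperplanes orthogonal to $e_d$, I would cover $\mathcal{H}^{d-1}$-almost every point in these sets by small disjoint $d$-dimensional cubes $Q_{\rho_k}(x_k)$ with faces parallel to the coordinate axes and containing the relevant hyperplane slice in their mid-section. Refining the cover so that cubes around $J_{\nabla y}$ are disjoint from those around $S$, the complement $\Omega'$ of the cubes can be arranged so that $\nabla y$ has no jump there and $\mathcal{L}^d(\Omega \setminus \Omega')$ is as small as desired. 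By non-negativity of the energy density,
\begin{equation*}
\mathcal{E}_\eps(y^\eps) \ge \mathcal{E}_\eps(y^\eps, \Omega') + \sum_k \mathcal{E}_\eps(y^\eps, Q_{\rho_k}(x_k)),
\end{equation*}
and I estimate each summand separately.

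\textbf{Bulk term on $\Omega'$.} On each component $P_j^\eps$, \eqref{eq:rigidity-compactness} gives $\nabla y^\eps = R^\eps M_j^\eps + \eps \nabla u^\eps$ with $\|\eps \nabla u^\eps\|_{L^2(\Omega')} \to 0$. Introducing the truncation $\chi_\eps := \mathbf{1}_{\{\eps |\nabla u^\eps| < \delta_W\}}$ and using H5.\ together with frame indifference to Taylor-expand $W$ at $R^\eps M_j^\eps$, a standard linearization argument (as in \cite{dalmaso.negri.percivale, alicandro.dalmaso.lazzaroni.palombaro}) gives
\begin{equation*}
\frac{1}{\eps^2}\int_{\Omega'} W(\nabla y^\eps)\,\mathrm{d}x \ge \int_{\Omega'} \mathcal{Q}_{\rm lin}\Big(\sum\nolimits_j R^\eps M_j^\eps \chi_{P_j^\eps},\, \chi_\eps \nabla u^\eps\Big)\,\mathrm{d}x - o(1).
\end{equation*}
By \eqref{eq:comp-M} and \eqref{eq:comp-grad-u}, $\chi_\eps \nabla u^\eps \rightharpoonup \nabla u$ weakly in $L^2(\Omega'; \M^{d\times d})$, and the integrand is a nonnegative quadratic form continuous in the first variable; weak lower semicontinuity then yields $\liminf_\eps \frac{1}{\eps^2}\int_{\Omega'} W(\nabla y^\eps)\,\mathrm{d}x \ge \int_{\Omega'} \mathcal{Q}_{\rm lin}(\nabla y, \nabla u)\,\mathrm{d}x$. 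Letting the cube cover shrink, the right-hand side increases to $\int_\Omega \mathcal{Q}_{\rm lin}(\nabla y, \nabla u)\,\mathrm{d}x$ by monotone convergence. The singular perturbation terms only add a nonnegative contribution, so they may be dropped here.

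\textbf{Single-transition cubes.} Near a point $x_k \in J_{\nabla y}$, a blow-up argument combined with $\nabla y^\eps \to \nabla y$ in $L^1$ (via \eqref{eq:comp-y} and \eqref{eq:comp-M}) shows that, after rescaling the cube $Q_{\rho_k}(x_k)$ to the unit cube $Q$ and applying the change of variables, the rescaled deformations approach (up to rigid motions absorbed by frame indifference) an admissible competitor in \eqref{eq: our-k1}. The definition of the optimal-profile energy $K$ then yields
\begin{equation*}
\liminf_\eps \sum\nolimits_{k : x_k \in J_{\nabla y}} \mathcal{E}_\eps(y^\eps, Q_{\rho_k}(x_k)) \ge K\, \mathcal{H}^{d-1}(J_{\nabla y})
\end{equation*}
after summing over the cover and passing to a Vitali-type limit.

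\textbf{Double-transition cubes.} For $x_k \in S$, the limiting deformation is in the same phase on both sides of the interface, but at the $\eps$-level the selection principle \eqref{eq: toinfty} together with Proposition \ref{lemma: admissible-u-y-jump}--\ref{lemma: admissible-u} forces $y^\eps$ to traverse the opposite phase at least once inside $Q_{\rho_k}(x_k)$, producing two consecutive phase transitions. The blow-up competitor is thus admissible for the double-profile energy $K^{M}_{\rm dp}$ alluded to in \eqref{eq:comp-intro}. The crucial ingredient (referenced in the introduction as Proposition \ref{eq: KundKdo-new}) is the inequality $K^{M}_{\rm dp} \ge 2K$, proved by decomposing any admissible double profile into two single profiles via a judicious choice of slicing plane and using subadditivity together with the definition of $K$. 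Granted this, the contribution of the $S$-cubes is at least $2K\, \mathcal{H}^{d-1}(S)$.

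\textbf{Main obstacle.} The delicate step is the fourth one: showing that the double-profile cubes genuinely see two transitions and that their energy is bounded below by $2K$ per unit $(d-1)$-area. This requires (i) using the selection principle \eqref{eq: toinfty} to rule out cubes where the two transitions collapse to a single one at scale $\eps$, (ii) exploiting the anisotropic penalization $\bar{\eta}_{\eps,d}$ to guarantee that the relevant profiles are essentially one-dimensional in the $e_d$-direction, and (iii) invoking the slicing-plane construction to cut an admissible competitor into two single transitions whose energies each exceed $K - o(1)$. Summing the three contributions and letting the cube covering shrink completes the proof.
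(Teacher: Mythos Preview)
Your overall architecture matches the paper's: localize into cylindrical neighborhoods of the two interface types, treat the complement as the bulk via Taylor expansion and weak lower semicontinuity, and invoke the optimal-profile and double-profile energies for the surface pieces. The bulk step and the single-transition step are essentially as in the paper (the paper uses the functions $\mathcal{F}(\omega;h)$ from Proposition~\ref{prop:cell-form} rather than a Vitali blow-up, but given that the interfaces are flat hyperplanes this is only a cosmetic difference).

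The double-transition step, however, needs sharpening in two respects. First, the paper does \emph{not} argue that ``$y^\eps$ traverses the opposite phase'' to conclude admissibility; instead it verifies the defining convergence of $\mathcal{F}_{\rm dp}^M$, namely that $(y^\eps - M x)/w_\eps$ converges in measure to a function in $\mathcal{U}_{\rm dp}$ for a suitable $\{w_\eps\}\in\mathcal{W}_d$. The fact that two genuine transitions occur is then a \emph{consequence} proved inside Proposition~\ref{eq: KundKdo-new} (via the ``zooming'' Lemma~\ref{lemma: intefacefind2}), not an input. Second, the paper must distinguish two sub-cases that your sketch collapses: interfaces in $J_u\setminus\bigcup_j\partial P_j$ (the cube lies in a single $P_k$, and one takes $w_\eps=\eps$ so that $u^\eps\to u$ directly furnishes the jump) versus interfaces on $\bigcup_j\partial P_j\setminus J_{\nabla y}$ (the cube straddles two components $P_k,P_l$, and one must take $w_\eps=|t_k^\eps-t_l^\eps|$; here the selection principle \eqref{eq: toinfty} is used precisely to guarantee $w_\eps/\eps\to\infty$, and the limit is a pure step function). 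The selection principle plays no role in the first sub-case, contrary to what your sketch suggests. Once these two points are handled, the appeal to Proposition~\ref{eq: KundKdo-new} (which gives $\mathcal{F}_{\rm dp}^M(\omega;h)\ge 2K\,\mathcal{H}^{d-1}(\omega)$, not merely $K_{\rm dp}^M\ge 2K$) closes the argument exactly as you outline.
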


 Our second result is the proof  that the lower bound identified in Theorem \ref{thm:liminf} is optimal.  For the construction of recovery sequences we need slightly stronger assumptions: we require that the set is strictly star-shaped (see \eqref{eq: starshape}), we assume H6.\ and H7., and we need a specific condition for  the asymptotic optimal-profile energy.  In order to state our result, we need some additional notation.  Define the set of sequences 
\begin{align}\label{ew: W sequence}
\mathcal{W}_d := \big\{ \lbrace w_\eps \rbrace_\eps: \  w_\eps \in (0,\infty),  \  w_\eps \to 0, \ \liminf_{\eps \to 0} \,   (w_\eps/\eps) >0  \big\},
\end{align}
and define the functions 
\begin{align}\label{eq: step functions}
y_{\rm dp}^A := e_d \chi_{\lbrace x_d>0\rbrace}, \quad \quad \quad y_{\rm dp}^B := -e_d \chi_{\lbrace x_d>0\rbrace}.
\end{align}
For  $M \in \lbrace A,B\rbrace$, we introduce   the \emph{double-profile energy}
\begin{align}\label{eq: our-k2}
 K^M_{\rm dp} {:=}\sup_{h>0 }\sup_{\lbrace w_\eps \rbrace_\eps \in \mathcal{W}_d} \inf\Big\{&\limsup_{\ep\to 0}  \mathcal{E}_{\ep}\big(y^{\ep}, Q'\times (-h,h)\big)\colon \,             \frac{y^\eps -  M x}{w_\eps} \to   y_{\rm dp}^M   \text{ in measure in \EEE $Q'\times (-h,h)$}\Big\},
\end{align} 
where here and in the following $Q' :=(-\frac{1}{2},\frac{1}{2})^{d-1} \subset \R^{d-1}$. 
We defer a discussion about the definition of $K^M_{\rm dp}$, and proceed with the $\Gamma$-limsup inequality.

\begin{theorem}[$\Gamma$-limsup inequality]
\label{thm:limsup-new}
Let $\Omega\subset \R^d$ be a bounded, strictly star-shaped, Lipschitz domain in $\R^d$ satisfying {\rm H8}. Let $W$ satisfy assumptions {\rm H1}.--{\rm H7}., and suppose that  $K^A_{\rm dp} = K^B_{\rm dp} = 2K$. Let $(y,u,\mathcal{P})\in \mathcal{A}$. Then  there exists $\{y^{\ep}\}_\eps\subset H^2(\Omega;\R^d)$ such that 
$y^{\ep}\to (y,u,\mathcal{P})$ in $\mathcal{A}$, and
\begin{equation*}
\limsup_{\ep\to 0} \mathcal{E}_{\ep}(y^{\ep})\leq \mathcal{E}_0^{\mathcal{A}}(y,u,\mathcal{P}).
\end{equation*}
\end{theorem}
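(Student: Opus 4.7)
The plan is to construct recovery sequences layer-by-layer by combining local profile constructions near each hyperplane interface with a smooth elastic perturbation of piecewise rigid movements on the bulk components, and then to pass to the general case by a diagonal argument. A first step is to approximate, via a density argument, an arbitrary $(y,u,\mathcal{P}) \in \mathcal{A}$ by triples $(y^n,u^n,\mathcal{P}^n) \in \mathcal{A}$ for which the partition $\mathcal{P}^n$ consists of finitely many layers, the sets $J_{\nabla y^n}$, $J_{u^n}$, and $\bigcup_j \partial P^n_j \cap \Omega$ are finite unions of hyperplanes $(\R^{d-1}\times\{s^n_i\})\cap \Omega$, and $u^n$ is $C^\infty$ on each layer. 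Using the strict star-shapedness of $\Omega$ and H8., this approximation can be performed so that $\mathcal{E}_0^{\mathcal{A}}(y^n,u^n,\mathcal{P}^n) \to \mathcal{E}_0^{\mathcal{A}}(y,u,\mathcal{P})$ and $(y^n,u^n,\mathcal{P}^n) \to (y,u,\mathcal{P})$ in the topology of Definition \ref{def:convergence1}; a concluding diagonal extraction across these approximants will reduce the problem to the regularized setting.

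For a regularized triple, I would construct $y^\eps$ as follows. On each component $P_j$, away from $\eps^{1/2}$-neighborhoods of its boundary hyperplanes, set $y^\eps$ equal to a standard mollification of $R M_j\, x + t_j^\eps + \eps u$, where the translations $t_j^\eps \in \R^d$ are to be selected. Near each interface $\Sigma_i = (\R^{d-1}\times\{s_i\})\cap \Omega$, insert a transition profile matching the bulk construction on both sides: if $\Sigma_i \subset J_{\nabla y}$, use the optimal single-profile sequence from Proposition \ref{lemma: local1}, contributing asymptotic energy $K \mathcal{H}^{d-1}(\Sigma_i)$; otherwise, use the optimal double-profile sequence from Proposition \ref{lemma: local2}. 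For the double-profile insertion, the width $w_\eps \in \mathcal{W}_d$ of the intermediate opposite-phase layer is chosen so that $w_\eps/\eps \to |[u]|/\kappa$ when $\Sigma_i \subset J_u$ (producing the correct jump of $u$, as heuristically in \eqref{eq: jump height}), and so that $w_\eps/\eps \to \infty$ with $w_\eps \to 0$ when $\Sigma_i \subset \bigcup_j \partial P_j \setminus J_u$ (triggering the selection principle \eqref{eq: toinfty}, so that the limiting partition is resolved as $\mathcal{P}$ and not refined). The compatibility hypothesis $K^A_{\rm dp}=K^B_{\rm dp}=2K$ then guarantees that each double-profile insertion contributes asymptotic energy $2K \mathcal{H}^{d-1}(\Sigma_i)$.

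The key structural feature making the gluing work is that, by the slicing construction of Proposition \ref{lemma: optimal profile} feeding Propositions \ref{lemma: local1} and \ref{lemma: local2}, the local profile sequences coincide with affine isometries $R M\, x + c$ outside their thin transition layers; this is the property highlighted in the introduction as crucial for the final assembly. By selecting the translations $t_j^\eps$ in the bulk construction to match these isometric traces across every interface, one obtains a globally $H^2$-regular deformation $y^\eps$ without further cut-off. The elastic bulk contribution then converges to $\int_\Omega \mathcal{Q}_{\rm lin}(\nabla y, \nabla u)\,{\rm d}x$ via the Taylor expansion of $W$ around the wells provided by H5., H6., H7.; the isotropic and anisotropic singular perturbations on the bulk scale like $(\eps^2 + \bar{\eta}_{\eps,d}^2) \|\nabla^2 u\|_{L^2}^2 = o(1)$ on smooth bulk regions and hence vanish; and the transition contributions sum to exactly the two surface terms of $\mathcal{E}_0^{\mathcal{A}}$.

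The main obstacle is the simultaneous choice of translations $\{t_j^\eps\}_j$: they must (i) be compatible with the affine-isometry traces of the profile constructions on both sides of every $\Sigma_i$, (ii) encode the jumps of $u$ at $J_u$-interfaces, and (iii) satisfy the divergence condition \eqref{eq: toinfty} at $\partial P_j \setminus J_u$-interfaces so that the limiting partition is exactly $\mathcal{P}$, in line with Proposition \ref{prop:ex-coarsest-part}. These constraints can be met by propagating the translations layer-by-layer along $e_d$ starting from a fixed reference component, adjusting at each interface by the shift imposed by the intervening profile; the gauge freedom provided by $\mathscr{T}(y,\mathcal{P})$ in Proposition \ref{prop:ex-coarsest-part} is precisely what permits a consistent global choice, and the verification that the resulting $y^\eps$ is asymptotically represented by $(y,u,\mathcal{P})$ then reduces to checking the convergences of Theorem \ref{thm:compactness} against the chosen quadruple $(R, \mathcal{P}^\eps, \mathcal{M}^\eps, \mathcal{T}^\eps)$.
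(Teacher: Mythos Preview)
Your overall strategy is sound and shares its skeleton with the paper's proof: reduce to finitely many interfaces via star-shapedness, regularize $u$ so that its gradient is smooth, insert the local profiles of Propositions~\ref{lemma: local1} and~\ref{lemma: local2} near each interface, and glue. There is, however, a genuine gap in your gluing step, and the paper's route to verifying the convergence $y^\eps\to(y,u,\mathcal{P})$ is structurally different from yours.

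\textbf{The gluing gap.} You assert that by choosing the translations $t_j^\eps$ the bulk pieces $RM_j x + t_j^\eps + \eps u$ match the profile sequences---which coincide with affine isometries outside their transition layers---``without further cut-off''. This is false: the bulk carries the nonconstant perturbation $\eps u(x)$, while the profile trace is a rigid motion; no choice of constants $t_j^\eps$ can absorb a nonconstant $\eps u$. A cutoff could repair this, but over $\eps^{1/2}$-wide layers the anisotropic second-order term would blow up. The paper avoids the issue by a different decomposition: first glue the profiles alone, via the isometries in Propositions~\ref{lemma: local1} and~\ref{lemma: local2}, into an auxiliary sequence $\tilde{y}^\eps$ that satisfies $\nabla\tilde{y}^\eps\in SO(d)\{A,B\}$ outside the thin transition cylinders; \emph{then} set $y^\eps:=\tilde{y}^\eps+\eps u'$, where $u'=u-\sum_i b_i\chi_{B_i}\in C^\infty(\overline{\Omega})$. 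No matching is needed because $\eps u'$ is added globally, including over the profile regions. The energy increment is then controlled by Lemma~\ref{lemma: limsup-W}: the cross term $\eps^{-1}\int_\Omega\sqrt{W(\nabla\tilde{y}^\eps)}\,|\nabla u|$ is small because $W(\nabla\tilde{y}^\eps)=0$ outside the cylinders and the cylinders have small volume (see \eqref{eq: second Taylor}).

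\textbf{Verifying convergence to $(y,u,\mathcal{P})$.} Rather than checking \eqref{eq: partition property}--\eqref{eq:comp-grad-u} directly against an explicit quadruple, the paper applies Theorem~\ref{thm:compactness} to $\tilde{y}^\eps$ to obtain \emph{some} limiting triple $(y,\tilde{u},\tilde{\mathcal{P}})$, then uses Theorem~\ref{thm:liminf} and the energy bound \eqref{eq: energy-tilde} to force $\int_\Omega\mathcal{Q}_{\rm lin}(\nabla y,\nabla\tilde{u})=0$ and $\tilde{\mathcal{P}}=\mathcal{P}$ (the latter via the technical Step~7). This makes $\tilde{u}$ piecewise affine of the form~\eqref{eq: def u tilde}, and the identity $\bar{u}=\tilde{u}+u'$ in~\eqref{eq: uutilde} yields $y^\eps\to(y,\bar u,\mathcal{P})$ with $\bar u-u\in\mathscr{T}(y,\mathcal{P})$; Proposition~\ref{prop:ex-coarsest-part}(iii) then gives $y^\eps\to(y,u,\mathcal{P})$. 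Your direct approach could in principle be carried out but would require explicit tracking of the profile isometries (which are not the identity, only converge to it) and a separate verification of the selection principle~\eqref{eq: toinfty}; the paper's indirect route sidesteps both.

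\textbf{A scaling slip.} The anisotropic second-order contribution on the bulk is $\eps^2\bar{\eta}_{\eps,d}^{\,2}\|\nabla^2 u\|_{L^2}^2=\eps^{1/d}\|\nabla^2 u\|_{L^2}^2\to 0$, not $\bar{\eta}_{\eps,d}^{\,2}\|\nabla^2 u\|_{L^2}^2$ as you wrote (which diverges, since $\bar{\eta}_{\eps,d}=\eps^{-1+1/(2d)}\to\infty$); the missing factor $\eps^2$ comes from $\nabla^2 y^\eps=\eps\nabla^2 u'$.
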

 The notion of strictly star-shaped sets will allow us to reduce the constructions to the case of finitely many phase transitions, similarly to the investigation in \cite{conti.schweizer}. The additional assumptions H6.\ and H7.\ are instrumental to control the nonlinear elastic energies of the recovery sequence, whenever the gradient is away from the two wells. We now address definition \eqref{eq: our-k2} and explain the condition $K^A_{\rm dp} = K^B_{\rm dp} = 2K$.

First, in order to understand the role of the sequences $\mathcal{W}_d$ defined in \eqref{ew: W sequence}, recall the setting in Figure \ref{fig2}(a). The case in which, locally at level $\ep$, two portions of the domain in the same phase are separated by an intermediate region in the opposite phase, is reflected by an energy contribution in the limiting functional $\mathcal{E}_0^{\mathcal{A}}$ whenever the  width  of the `intermediate layer' behaves asymptotically as one of the sequences in $\mathcal{W}_d$. We recall that, if $\liminf_{\eps \to 0} \,   (w_\eps/\eps) \in (0,+\infty)$, this is encompassed by the jump set of the limiting displacement $u$, whereas the opposite scenario is captured by the limiting partition $\mathcal{P}$.

Intuitively, the value  $K_{\rm dp}^A$ in  \eqref{eq: our-k2} provides an upper bound for the energy of an optimal profile which contains two phase transitions, first from $A$ to $B$ and then from $B$ to $A$, with an  intermediate layer in the $B$-phase of width $\{w_\ep\}_\ep$,  see Figure \ref{fig2}(a). The interpretation of   $K_{\rm dp}^B$  is the same after interchanging the roles of the phases. The \emph{compatibility condition}  $K^A_{\rm dp} = K^B_{\rm dp} = 2K$  is needed in the construction of recovery sequences. On the one hand, it seems a natural condition as $K$ and $K^A_{\rm dp},  K_{\rm dp}^B$  correspond to the case of one and two phase transitions, respectively. On the other hand, for general densities $W$ we are able to prove only one inequality and the other inequality only under extra assumptions on $W$. More precisely, we have the following.

\begin{proposition}[Relation of $K$,  $K_{\rm dp}^A$, and $K^B_{\rm dp}$: inequality]
\label{prop:cell-form-chaper2}
The values $K$,  $K_{\rm dp}^A$, and $K_{\rm dp}^B$  introduced in   \eqref{eq: our-k1} and \eqref{eq: our-k2} satisfy $\min\lbrace  K^A_{\rm dp},K_{\rm dp}^B \EEE \rbrace \ge 2K$. 
\end{proposition}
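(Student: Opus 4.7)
The plan is to establish $K^A_{\rm dp} \ge 2K$ (the inequality for $K^B_{\rm dp}$ follows by exchanging the roles of the two phases) by applying the $\Gamma$-liminf inequality of Theorem~\ref{thm:liminf} to a near-optimal sequence in the infimum defining $K^A_{\rm dp}$, after verifying that the limiting triple carries a $(d{-}1)$-dimensional interface of $\mathcal{H}^{d-1}$-measure $\ge 1$ inside $(J_u \cup \bigcup_j (\partial P_j\cap \Omega)) \setminus J_{\nabla y}$, which is charged by the ``double'' factor $2K$ in $\mathcal{E}_0^{\mathcal{A}}$. Concretely, fix $\delta>0$, $h>0$, $\{w_\eps\}\in\mathcal{W}_d$ and choose $\{y^\eps\} \subset H^2(\Omega;\R^d)$ on $\Omega := Q'\times(-h,h)$ with $(y^\eps - Ax)/w_\eps \to e_d \chi_{\{x_d>0\}}$ in measure and $\limsup_{\eps\to 0}\mathcal{E}_\eps(y^\eps,\Omega) \le K^A_{\rm dp} + \delta$ (if $K^A_{\rm dp}=+\infty$ there is nothing to prove). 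The cuboid $\Omega$ satisfies {\rm H8}, so Theorem~\ref{thm:compactness} yields, along a non-relabeled subsequence, an admissible limiting triple $(y_0,u_0,\mathcal{P}_0)\in \mathcal{A}$. Since $w_\eps\to 0$, a quantitative reading of the measure convergence gives $y^\eps - Ax \to 0$ in measure; combined with the $H^1$-convergence \eqref{eq:comp-y} this forces $y_0 \equiv Ax$ modulo an additive constant, so $\nabla y_0 \equiv A$ and $J_{\nabla y_0}=\emptyset$.

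The heart of the argument is the geometric inclusion
\[
Q' \times \{0\}\subset J_{u_0} \cup \bigcup\nolimits_j (\partial P_j \cap \Omega).
\]
Using the rank-one relation $B - A = \kappa e_d\otimes e_d$, the prescribed target $y^A_{\rm dp}$ asymptotically produces a shift of $w_\eps e_d$ in $y^\eps - Ax$ across $x_d=0$, which forces the $B$-phase region of $y^\eps$ to concentrate in a thin slab around $x_d=0$ of total width $\sim w_\eps/\kappa$ (this is the point where the rigidity estimate Theorem~\ref{thm:rigiditythm} and the structural characterization of phase regions from \cite[Proposition 3.7]{davoli.friedrich} enter). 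In the decomposition \eqref{eq: rescaled disp} produced by Theorem~\ref{thm:compactness}, the $A$-phase component of $\mathcal{P}^\eps$ lying below this slab carries a translation $t^\eps_-\to 0$, while the $A$-phase component lying above carries $t^\eps_+$ with $t^\eps_+ - t^\eps_- = w_\eps e_d + o(w_\eps)$. Since $\mathcal{W}_d$ enforces $\liminf_{\eps\to 0} w_\eps/\eps>0$, along a further subsequence two complementary scenarios exhaust the possibilities: if $w_\eps/\eps\to+\infty$, the selection principle \eqref{eq: toinfty} forces the two $A$-components to remain as distinct sets of $\mathcal{P}_0$, placing $Q'\times\{0\}$ in $\bigcup_j(\partial P_j\cap\Omega)$; if instead $w_\eps/\eps$ is bounded, the two components must be merged in $\mathcal{P}_0$ and $u^\eps$ develops a nonzero asymptotic jump (of magnitude $\sim\lim w_\eps/\eps\cdot e_d$, up to a piecewise constant translation) along $Q'\times\{0\}$. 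In both cases $Q'\times\{0\}$ lies in $J_{u_0} \cup \bigcup_j(\partial P_j\cap\Omega)$, and Proposition~\ref{prop:ex-coarsest-part} ensures that no alternative admissible representation can erase this contribution.

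Granting the inclusion, the non-negativity of the elastic term, $J_{\nabla y_0}=\emptyset$, and $\mathcal{H}^{d-1}(Q'\times\{0\})=|Q'|=1$ give $\mathcal{E}_0^{\mathcal{A}}(y_0,u_0,\mathcal{P}_0)\ge 2K$, and an extraction-of-subsequence argument combined with Theorem~\ref{thm:liminf} yields
\[
K^A_{\rm dp} + \delta \ge \liminf_{\eps\to 0} \mathcal{E}_\eps(y^\eps,\Omega) \ge \mathcal{E}_0^{\mathcal{A}}(y_0,u_0,\mathcal{P}_0) \ge 2K.
\]
Sending $\delta\to 0$ gives the bound $K^A_{\rm dp}\ge 2K$. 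The main obstacle is the rigorous case analysis supporting the inclusion: it requires the quantitative localization of the $B$-phase slab from Theorem~\ref{thm:rigiditythm} and \cite[Proposition 3.7]{davoli.friedrich}, together with a careful tracking of the translations $t^\eps_j$ furnished by the compactness theorem, in order to rule out configurations in which the ``missing mass'' $w_\eps e_d$ could be redistributed so as to avoid the creation of a macroscopic interface at $x_d=0$.
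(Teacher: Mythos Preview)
Your proposed argument is circular. The $\Gamma$-liminf inequality of Theorem~\ref{thm:liminf} that you invoke is itself proved (see Step~2 of its proof, in particular \eqref{eq: lower-i1}--\eqref{eq: lower-i2}) by establishing, for each interface in $(J_u\cup\bigcup_j\partial P_j)\setminus J_{\nabla y}$, the lower bound $\mathcal{F}^M_{\rm dp}(\omega;h)\ge 2K\,\mathcal{H}^{d-1}(\omega)$ of Proposition~\ref{eq: KundKdo-new}; and Proposition~\ref{eq: KundKdo-new} already contains (and is explicitly stated as implying) the inequality $K^M_{\rm dp}\ge 2K$ that you are trying to prove. In other words, the coefficient $2K$ in front of the double-transition surface term in $\mathcal{E}_0^{\mathcal{A}}$ is not an independent piece of structure that you can simply read off once you have placed the interface at $Q'\times\{0\}$ --- it \emph{is} the inequality $\mathcal{F}^M_{\rm dp}\ge 2K$, which is the actual content of the present proposition.

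The paper's proof avoids Theorem~\ref{thm:liminf} entirely. One first notes that $K^M_{\rm dp}\ge\mathcal{F}^M_{\rm dp}(Q';1)$ is immediate from the definitions, so it suffices to bound $\mathcal{F}^M_{\rm dp}(Q';1)$ from below. Given any sequence admissible in \eqref{eq: k2-intro}, the phase decomposition of Proposition~\ref{lprop: phases} is used to locate two \emph{disjoint} thin cylinders $D^1_i,D^2_i=\alpha^j_i e_d+D_{Q',\mu\tau_i}$, in each of which $\nabla y^{\eps_i}$ is asymptotically close in a scale-invariant $L^2$ sense to a single-transition profile $\nabla y_0^\pm$; this is Lemma~\ref{lemma: intefacefind2}. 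Then a rescaling combined with a De~Giorgi averaging argument (Lemma~\ref{lemma: lower energy bound}) shows that each such cylinder carries energy at least $K\,\mathcal{H}^{d-1}(Q')=K$, and summing gives $2K$. Your case analysis on $w_\eps/\eps$ and the localization of the $B$-slab are, in spirit, steps toward Lemma~\ref{lemma: intefacefind2}; but they do not by themselves yield an energy lower bound --- you would still need the direct argument of Lemma~\ref{lemma: lower energy bound}, not Theorem~\ref{thm:liminf}, to close the estimate.
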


We now discuss an additional assumption on $W$ which implies equality.  Assume that the energy density additionally satisfies 
\begin{align}\label{eq: isotropy}
W(F) \ge W\big({\rm Id} + (|F e_d| - 1)e_{dd}\big) \quad \quad \quad \text{for all } F \in \mathbb{M}^{d \times d}.
\end{align}
As we will show in Lemma \ref{lemma: 1d}, this condition ensures that optimal profiles are one-dimensional.  It can be understood as a generalization of condition (${\rm H}_3$) in \cite{conti.fonseca.leoni} where one-dimensionality of profiles has been discussed for a two-well problem without frame indifference. Note that this condition is compatible with frame indifference. A  model case is a situation where the energy only depends on the distance of the two wells, i.e.,
\begin{align}\label{eq: modelli}
W(F) =  \phi\big(  {\rm dist}(F, SO(d)A),   {\rm dist}(F, SO(d)B) \big) \quad \quad \quad \text{for all } F \in \mathbb{M}^{d \times d},
\end{align} 
where $\phi\colon ([0,\infty))^2 \to [0,\infty)$ is a smooth function with  $c_1 (\min \lbrace t_1,t_2 \rbrace)^2 \le \phi(t_1,t_2)  \le c_2 (\min \lbrace t_1,t_2 \rbrace)^2$ for all $t_1,t_2 \in [0,\infty)$ which is increasing in both entries. We refer to  \eqref{eq: model case for details} below for details.

Given condition \eqref{eq: isotropy}, we are able to show the following.  

\begin{proposition}[Relation of $K$,  $K_{\rm dp}^A$, and $K^B_{\rm dp}$: equality]\label{prop: compat}
Suppose that \eqref{eq: isotropy} holds.  The values $K$,  $K_{\rm dp}^A$, and $K_{\rm dp}^B$  introduced in   \eqref{eq: our-k1} and \eqref{eq: our-k2} satisfy $K^A_{\rm dp} = K_{\rm dp}^B  = 2K$. 
\end{proposition}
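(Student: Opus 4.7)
Since Proposition~\ref{prop:cell-form-chaper2} already gives $K^A_{\rm dp}, K^B_{\rm dp}\ge 2K$, my task is to prove the opposite inequalities. The two are symmetric upon swapping $A$ and $B$, so I will focus on $K^A_{\rm dp}\le 2K$. My plan is: fix $h>0$ and $\{w_\eps\}_\eps\in\mathcal{W}_d$, then construct a competitor $\{y^\eps\}_\eps \subset H^2(Q'\times(-h,h);\R^d)$ for \eqref{eq: our-k2} by using assumption \eqref{eq: isotropy} to replace arbitrary single-transition profiles with one-dimensional ones and gluing two of them into an $A\to B\to A$ double transition separated by a thin $B$-layer of width $w_\eps/\kappa$.

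\textbf{Step 1: 1D profiles with affine tails.} First I will invoke the announced Lemma~\ref{lemma: 1d} (the reduction to one-dimensional profiles under \eqref{eq: isotropy}) to obtain, for every $\delta>0$, scalar functions $\chi_1^\eps, \chi_2^\eps\in H^2_{\rm loc}(\R)$ such that the one-dimensional deformations $(x',\chi_i^\eps(x_d))$ are nearly optimal single-transition competitors in \eqref{eq: our-k1} for the $A\to B$ and $B\to A$ walls respectively, with energy on their reference cube at most $K+\delta$. Next I will truncate each $\chi_i^\eps$ so that it agrees exactly with its affine limits (equal to $t$ or $(1+\kappa)t$, up to suitable constants) outside a transition window $[-a_\eps,a_\eps]$ with $a_\eps\to 0$ and $a_\eps\ll w_\eps/\kappa$. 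Thanks to the quadratic coercivity H4 and the continuity H1 of $W$, combined with the $L^1$-convergence of the profiles to the affine-on-halfspaces target, this cutoff costs only $o(1)$ extra energy.

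\textbf{Step 2: Gluing.} Since $\liminf_\eps w_\eps/\eps>0$, the windows $[-a_\eps,a_\eps]$ and $[w_\eps/\kappa-a_\eps,w_\eps/\kappa+a_\eps]$ are disjoint subsets of $(-h,h)$ for $\eps$ small. I will then set
\[
\phi^\eps(t) := \begin{cases}\chi_1^\eps(t) & t\le w_\eps/(2\kappa),\\ \chi_2^\eps(t-w_\eps/\kappa) + (1+\kappa)\,w_\eps/\kappa & t>w_\eps/(2\kappa),\end{cases}
\]
where the additive constant is forced by requiring that both formulas equal $(1+\kappa)t$ on the middle affine strip $[a_\eps,w_\eps/\kappa-a_\eps]$; this in particular gives a smooth match at $t=w_\eps/(2\kappa)$ because that point lies in the middle strip where both pieces are identical affine functions. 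Defining $y^\eps(x):=(x',\phi^\eps(x_d))$ one obtains $y^\eps\in H^2(Q'\times(-h,h);\R^d)$ with $\nabla y^\eps = A$ for $x_d\le -a_\eps$ or $x_d\ge w_\eps/\kappa+a_\eps$, $\nabla y^\eps = B$ for $a_\eps\le x_d\le w_\eps/\kappa-a_\eps$, and $\phi^\eps(t)=t$ for $t\le -a_\eps$, $\phi^\eps(t)=t+w_\eps$ for $t\ge w_\eps/\kappa+a_\eps$. The convergence $(y^\eps-Ax)/w_\eps\to y_{\rm dp}^A$ in measure is then immediate from $a_\eps,w_\eps\to 0$. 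Since the integrand of $\mathcal{E}_\eps(y^\eps,\cdot)$ vanishes identically on the three affine regions, the total energy decomposes as the sum of the two one-dimensional transition contributions, each at most $K+\delta+o(1)$ by Step 1; hence $\limsup_\eps \mathcal{E}_\eps(y^\eps,Q'\times(-h,h))\le 2K+2\delta$, and letting $\delta\to 0$ in \eqref{eq: our-k2} gives $K^A_{\rm dp}\le 2K$. Swapping the roles of $A$ and $B$ throughout yields $K^B_{\rm dp}\le 2K$.

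\textbf{Main obstacle.} The decisive ingredient is Step 1, in particular the 1D reduction Lemma~\ref{lemma: 1d}. The inequality \eqref{eq: isotropy} bounds $W(\nabla y)$ from below by a scalar density depending only on $|\nabla y\,e_d|$, which via a Fubini/slicing argument would allow one to replace a generic competitor by a one-dimensional profile $(x',\chi(x_d))$ without increasing the elastic part of the energy. The subtle point is that the anisotropic second-order term $|\nabla^2 y|^2-|\partial_{dd}^2 y|^2$ in $\mathcal{E}_\eps$ penalizes only the non-$d$-directional curvature, so one must check that the symmetrization also does not increase the full second-gradient contribution, and not merely the elastic part. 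Once this reduction is in hand, the truncation in Step 1 and the gluing in Step 2 are constructions of the same flavor as the recovery-sequence arguments for the single-transition $\Gamma$-limit in \cite{davoli.friedrich}, and present only routine technical issues.
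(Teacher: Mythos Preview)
Your proposal is correct and follows essentially the same route as the paper's proof: invoke Lemma~\ref{lemma: 1d} to reduce to one-dimensional competitors, use a cut-off argument to obtain profiles that are exactly affine outside a small transition window, and then glue two single-transition profiles with a $B$-layer of width $w_\eps/\kappa$ in between to build a competitor for $K^A_{\rm dp}$ (and symmetrically for $K^B_{\rm dp}$).

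The only notable technical difference is how the affine-tail profiles are obtained. The paper first records the consequence of the cut-off argument once and for all as the reformulation \eqref{eq: strangeKdef} of $K$, then picks a \emph{single fixed} profile $\psi\in H^2((-\tfrac12,\tfrac12);\R)$ at some level $\eps_0$ with energy at most $K+\delta/2$ and affine tails, and rescales it by $\epsilon=\eps/\eps_0$ to produce the transition layers of width $\epsilon^2$ for every $\eps$. Your version instead keeps $\eps$-dependent profiles $\chi_i^\eps$ and truncates each one to a window $[-a_\eps,a_\eps]$. Both are valid; the paper's fixed-profile-plus-rescaling device is slightly cleaner because the truncation is done once rather than uniformly in $\eps$, but your justification via the standard Modica--Mortola cut-off (the same argument the paper cites from \cite{conti.fonseca.leoni}) goes through as well, since the transition layers concentrate on scale $\eps^2\ll w_\eps$. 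Your ``Main obstacle'' paragraph correctly identifies that Lemma~\ref{lemma: 1d} is the heart of the matter, and your observation that the anisotropic second-gradient term vanishes identically on one-dimensional competitors $(x',\chi(x_d))$ is exactly why the symmetrization controls the full energy and not just the elastic part.
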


We do not have an explicit example, but we conjecture that for certain energy densities one might indeed have $\min\lbrace K^A_{\rm dp},  K_{\rm dp}^B  \rbrace > 2K$.  Moreover, in contrast to  \eqref{eq: first sharp-interface} and  \eqref{eq: our-k1}, we cannot apply a symmetry argument to show that   $K_{\rm dp}^B$  equals   $K_{\rm dp}^A$.   In general,   $K_{\rm dp}^A$   and  $K_{\rm dp}^B$  might be different.
  
 Intuitively, $\min\lbrace  K^A_{\rm dp}, K_{\rm dp}^B  \rbrace > 2K$ means that two optimal profiles in \eqref{eq: our-k1} cannot  be combined  to a competitor in \eqref{eq: our-k2} without essentially increasing the energy. In any case, if e.g. $K_{\rm dp}^A > 2K$, the energy would probably depend  on the width of the intermediate $B$-layer and the limiting energy \eqref{eq: limiting energy} would necessarily also depend on the jump height of $u$. We do not pursue this more complicated case here, but only provide a result under the aforementioned compatibility condition. In this case, the cost of a double phase transition always equals $2K$, independently of the width of the intermediate layer.

This concludes the presentation of our results. The remainder of the paper is devoted to the proofs. The proof of Theorem \ref{thm:compactness} is the subject of Section \ref{sec:compactness}. In particular, the limiting deformations, rotations, and partitions are identified in Proposition \ref{lemma: intermediate step1}, whereas the limiting displacement fields are exhibited in Proposition \ref{lemma: intermediate step2} and Proposition \ref{lemma: intermediate step3}. The remaining part of the proof of Theorem \ref{thm:compactness} consists in showing that partitions and translations at the $\ep$-level can be chosen so that the selection principle in \eqref{eq: toinfty} holds true. The characterization of limiting triples described in Subsection \ref{sec: limiting triples} is provided in Section \ref{sec:limiting-triple}. Theorems \ref{thm:liminf} and \ref{thm:limsup-new} are proven in Subsections \ref{subs:liminf} and \ref{subs:limsup}.

The main step of the proof of the lower bound in Theorem \ref{thm:liminf} consists in showing that in the `bulk part' of the domain and around the different limiting interfaces the asymptotic behavior of the energies can be bounded from below by the elastic energy and by the two surface terms, respectively. Key ingredients are the notions of optimal-profile and double-profile energy functions (see \eqref{eq: k-intro} and \eqref{eq: k2-intro}), as well as Propositions \ref{prop:cell-form}--\ref{eq: KundKdo-new}, providing a characterization of the local behavior of the energy around the different limiting interfaces. The former was proven in \cite[Propostion 4.6]{davoli.friedrich}.  The proof of the latter is carried out in  Subsection \ref{sec: cell-formula}. 

The proof of Theorem \ref{thm:limsup-new} relies on two main intermediate results, which are proven in Subsection \ref{subs:local}: (1) in Proposition \ref{lemma: local1} we generalize \cite[Proposition 4.7]{davoli.friedrich} to construct local recovery sequences around single phase transitions; (2) in Proposition \ref{lemma: local2} we prove the corresponding result for double phase transitions. Eventually, in Subsection \ref{subs:1d} we show that under \eqref{eq: isotropy} optimal profiles for single phase transitions are one-dimensional (see Lemma \ref{lemma: 1d}), and that $K^{A}_{\rm dp}=K^{B}_{\rm dp}=2K$ (see Proposition \ref{prop: compat}).
\EEE

 \section{Compactness  analysis}
 \label{sec:compactness}

 This section is devoted to the proof of our compactness result in Theorem \ref{thm:compactness}.   We proceed in several steps: we first identify sequences of rotations, phase indicators, and partitions, as well as a limiting deformation and partition such that  \eqref{eq: partition property}-\eqref{eq:comp-M} hold, see Proposition \ref{lemma: intermediate step1}. Then, Proposition \ref{lemma: intermediate step2} and Proposition  \ref{lemma: intermediate step3} are devoted to the construction of (sequences of) translations and the definition of displacement fields, see \eqref{eq: rescaled disp}--\eqref{eq:comp-grad-u}, first on subsets of $\Omega$ and eventually on $\Omega$ itself. Finally, a further delicate construction is needed to show that by a suitable choice of the partitions and the translations also  the  selection principle \eqref{eq: toinfty} can be guaranteed.

 In what follows, we will use the notion of sets of finite perimeter and Caccioppoli partitions. We refer to Appendix \ref{sec:appendix} for basic properties. Before we start, we recall the two-well rigidity estimate in Theorem~\ref{thm:rigiditythm} and point out that the result hinges on the following characterization of the two phase regions (see \cite[Proposition 3.7 and Remark 3.8]{davoli.friedrich}). 
We refer to Figure \ref{fig:small-trans} for a two-dimensional visualization.

\begin{proposition}[Decomposition into phases]\label{lprop: phases}
 Let $\Phi$ be the phase indicator identified in Theorem \ref{thm:rigiditythm}, and define $T :=\{ \Phi =A\}$. Then
\begin{align}\label{eq: propertiesT}
{\rm (i)}& \ \ \mathcal{H}^{d-1}(\partial^* T \cap \Omega) \le c  \mathcal{E}_\ep(y), \notag\\
{\rm (ii)} & \ \  \int_{\partial^* T \cap \Omega} |\langle \nu_T , e_i \rangle| \, {\rm d}\mathcal{H}^{d-1} \le c \ep^{2-\alpha(d)}\, \mathcal{E}_\ep (y) \ \  \text{for} \ \ i=1,\ldots, d-1, \notag \\  
{\rm (iii)} & \ \ \int_{-\infty}^{+\infty}\mathcal{H}^{d-2}\Big( \big(\R^{d-1} \times \lbrace t \rbrace\big) \cap \partial^* T \cap \Omega\Big)  \, {\rm d}t \le c \ep^{2-\alpha(d)}\,  \mathcal{E}_\ep (y),  
 \end{align}
where $\nu_T$ denotes the outer normal to $T$, $\partial^* T$ its essential boundary, $\alpha(d)$ is the quantity introduced in \eqref{eq:alphad}, and $ \mathcal{E}_\ep $ is the energy functional defined in \eqref{eq: nonlinear energy}--\eqref{eq: specific energy choice}. \EEE  
\end{proposition}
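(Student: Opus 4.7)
The plan is to combine the rigidity estimate of Theorem~\ref{thm:rigiditythm} with a Modica--Mortola-type argument adapted to the anisotropic splitting of the second-order perturbation in $\mathcal{E}_\eps$. Concretely, I would fix a smooth potential $\psi\colon\M^{d\times d}\to[0,1]$ with $\psi\equiv 0$ on a $\delta_W$-neighborhood of $SO(d)A$, $\psi\equiv 1$ on a $\delta_W$-neighborhood of $SO(d)B$, and satisfying the pointwise calibration $|\nabla\psi(F)|^2\le CW(F)$ for all $F\in\M^{d\times d}$; such a $\psi$ exists thanks to H4.\ and the disjointness of the wells. By H4.\ together with $\int_\Omega W(\nabla y)\le\eps^2\mathcal{E}_\eps(y)$ and Chebyshev's inequality, the `bad set' where $\nabla y$ lies outside the $\delta_W$-neighborhoods of the wells has Lebesgue measure at most $c\eps^2\mathcal{E}_\eps(y)$, so that $\psi(\nabla y)$ coincides with $\chi_{\{\Phi=B\}}$ up to this small error. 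Once directional bounds on $D\psi(\nabla y)$ are available, they will transfer into the analogous bounds on $D\chi_T$ appearing in (i)--(ii).

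\textbf{Proofs of (i) and (ii).} By the chain rule and the calibration, for each $i=1,\dots,d$ one has $|\partial_i\psi(\nabla y)|\le|\nabla\psi(\nabla y)|\,|\partial_i\nabla y|\le C\sqrt{W(\nabla y)}\,|\partial_i\nabla y|$, whence Young's inequality $2ab\le\lambda a^2+\lambda^{-1}b^2$ yields
\begin{equation*}
\int_\Omega|\partial_i\psi(\nabla y)|\,{\rm d}x\le C\left(\frac{\lambda}{2}\int_\Omega|\partial_i\nabla y|^2\,{\rm d}x+\frac{1}{2\lambda}\int_\Omega W(\nabla y)\,{\rm d}x\right)
\end{equation*}
for every $\lambda>0$. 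For (i), I would sum over $i=1,\dots,d$, use $\int_\Omega|\nabla^2 y|^2\le\eps^{-2}\mathcal{E}_\eps(y)$ together with $\int_\Omega W(\nabla y)\le\eps^2\mathcal{E}_\eps(y)$, and take $\lambda=\eps^2$; this balances both contributions and gives $|D\psi(\nabla y)|(\Omega)\le c\mathcal{E}_\eps(y)$, from which (i) follows via the identification above. For (ii), I fix $i\in\{1,\dots,d-1\}$: since every entry of $\partial_i\nabla y$ has the form $\partial_{ij}^2 y$ with $(i,j)\ne(d,d)$, the anisotropic term in $\mathcal{E}_\eps$ furnishes the sharper bound $\int_\Omega|\partial_i\nabla y|^2\le\eta^{-2}\mathcal{E}_\eps(y)=\eps^{2-2\alpha(d)}\mathcal{E}_\eps(y)$. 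Choosing now $\lambda=\eps^{\alpha(d)}$ balances the two contributions at $c\eps^{2-\alpha(d)}\mathcal{E}_\eps(y)$, which is precisely (ii).

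\textbf{Proof of (iii) and main obstacle.} The last inequality follows from (ii) by the slicing theorem for sets of finite perimeter (see, e.g., Theorem~3.108 in Ambrosio--Fusco--Pallara): for a.e.\ $t\in\R$ the slice $T\cap(\R^{d-1}\times\{t\})$ has finite perimeter in $\R^{d-1}$, and
\begin{equation*}
\int_{-\infty}^{+\infty}\mathcal{H}^{d-2}\big((\R^{d-1}\times\{t\})\cap\partial^* T\cap\Omega\big)\,{\rm d}t\le\sum_{i=1}^{d-1}\int_{\partial^* T\cap\Omega}|\langle\nu_T,e_i\rangle|\,{\rm d}\mathcal{H}^{d-1}\le c\eps^{2-\alpha(d)}\mathcal{E}_\eps(y)
\end{equation*}
by (ii). I expect the main obstacle to lie in the rigorous identification $\psi(\nabla y)\leftrightarrow\chi_{\{\Phi=B\}}$, i.e., in transferring the bound on $|D\psi(\nabla y)|(\Omega)$ and its directional counterparts to genuine bounds on $|D\chi_T|(\Omega)$: the perimeter of $T$ could a priori concentrate on the bad set where $\nabla y$ is far from both wells, and ruling this out requires a refined choice of $\psi$ near the wells together with the fact, built into the proof of Theorem~\ref{thm:rigiditythm}, that the phase indicator $\Phi$ is selected precisely so as to reflect the sublevel structure of $\psi(\nabla y)$. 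A secondary delicate point is the calibration of Young's parameter $\lambda$: the choices $\lambda=\eps^2$ and $\lambda=\eps^{\alpha(d)}$ must be matched exactly to the scale $\eta=\eps^{-1+\alpha(d)}$ in order to reproduce the sharp exponent $\eps^{2-\alpha(d)}$ in (ii) and (iii).
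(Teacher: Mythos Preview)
The paper does not actually prove this proposition here: it is quoted verbatim from the companion paper \cite{davoli.friedrich} (Proposition~3.7 and Remark~3.8 there), so there is no in-paper proof to compare against. That said, your Modica--Mortola argument with the calibration $|\nabla\psi|^2\le CW$ and the two Young-inequality balancings $\lambda=\eps^2$ and $\lambda=\eps^{\alpha(d)}$ is exactly the mechanism used in \cite{davoli.friedrich}, and your derivation of (iii) from (ii) via the coarea formula on the rectifiable set $\partial^*T$ (the tangential gradient of $x\mapsto x_d$ equals $\sqrt{1-\langle\nu_T,e_d\rangle^2}\le\sum_{i<d}|\langle\nu_T,e_i\rangle|$) is correct.

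The obstacle you flag---transferring bounds on $D\psi(\nabla y)$ to bounds on $D\chi_T$---is in fact a non-issue once you see how $\Phi$ is built in \cite{davoli.friedrich}: the phase set $T$ is \emph{defined} as a sublevel set $\{\psi(\nabla y)\le s\}$ for some $s\in(1/4,3/4)$ chosen by the coarea formula so that $\mathcal{H}^{d-1}(\partial^*T\cap\Omega)\le 2|D\psi(\nabla y)|(\Omega)$, and analogously for the directional measures. So $T$ is not handed to you independently and then compared to $\psi(\nabla y)$; rather, $T$ is constructed precisely so that (i)--(ii) hold. Your ``secondary delicate point'' about calibrating $\lambda$ is not delicate at all: the two choices you wrote down are forced by equating the two terms, and you carried them out correctly.
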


We point out that the statement in \cite[Proposition 3.7]{davoli.friedrich} is more general but reduces to the proposition above for the choice $\eta=\bar{\eta}_{\ep,d}$ (see \eqref{eq:alphad}).

In the proof of the compactness result, the set $T$ will be the starting point for constructing the partitions. Properties \eqref{eq: propertiesT}(i),(ii) are crucial to show \eqref{eq: partition property} and to pass to a limiting partition in $\mathscr{P}(\Omega)$ by compactness. Item \eqref{eq: propertiesT}(iii) is instrumental to prove \eqref{eq: partition property-new}.

\begin{figure}[h!]
\centering
\begin{tikzpicture}
\coordinate (A) at (-3,3);
 \coordinate (B) at (3,3);
 \coordinate (C) at (3,-3);
 \coordinate (D) at (-3,-3);
 \coordinate (E) at (3,2.1);
 \coordinate (F) at (3,1.4);
 \coordinate (G) at (3,0.6);
 \coordinate (H) at (3,-0.1);
 \coordinate (I) at (3,-0.7);
 \coordinate (L) at (3,-1.4);
 \coordinate (M) at (3,-1.9);
 \coordinate (N) at (3,-2.6);
  \coordinate (O) at (-3,2.1);
 \coordinate (P) at (-3,1.4);
 \coordinate (Q) at (-3,0.6);
 \coordinate (R) at (-3,-0.1);
 \coordinate (S) at (-3,-0.7);
 \coordinate (T) at (-3,-1.4);
 \coordinate (U) at (-3,-1.9);
 \coordinate (V) at (-3,-2.6);

 \draw (A)--(B)--(C)--(D)--cycle;
 \draw[fill=cyan, opacity=0.2] (O)--(E)--(F)--(P)--cycle;
 \draw [black] (-3,2.1)  to [out=-45,in=175] (3,2.1);
  \draw[fill=cyan, opacity=0.2] (Q)--(G)--(H)--(R)--cycle;
   \node[shape=ellipse, minimum height=0.1cm, minimum width=1.4cm, draw=black, fill=red!10, opacity=0.8] at (1.7, 0.25){$A$};
   \draw[fill=cyan, opacity=0.2] (S)--(I)--(L)--(T)--cycle;
    \draw [black] (-3,-0.7)  to [out=-45,in=175] (-1,-0.8) to [out=-10, in=-175] (1.8,-1.2)  to [out=20, in=-185] (3,-1.4);
    \draw[fill=cyan, opacity=0.2] (U)--(M)--(N)--(V)--cycle;
     \node[shape=ellipse, minimum height=0.1cm, minimum width=1.4cm, draw=black, fill=red!10, opacity=0.8] at (-0.6, -2.25){$B$};
  \draw[<->] (3.3,2.1)--(3.3,1.4);
 
\node at (-1.8,0.6) {$B$};
\node at  (-0.6,2.1) {$A$};
\node at  (1.8,-1.9) {$A$};
\node at (3.9,1.75) {$ \ep^{7/4}$};
\end{tikzpicture}
\caption{A visualization of phase regions in dimension $d=2$. The (anisotropic) second-order penalization guarantees that phase transitions occur inside cylindrical layers of height $\ep^{7/4}$.  (Note that $\alpha(d) = 1/4$ for $d=2$.) Additionally, $\ep^{7/4}$ is an upper bound on the height of minority islands in the $e_2$-direction. In other words, connected components of the phase regions have either small volume or coincide  (up to a small set)  with a layer of $\Omega$. In higher dimensions, a similar interpretation is possible, up to higher order terms.}
\label{fig:small-trans}
\end{figure}
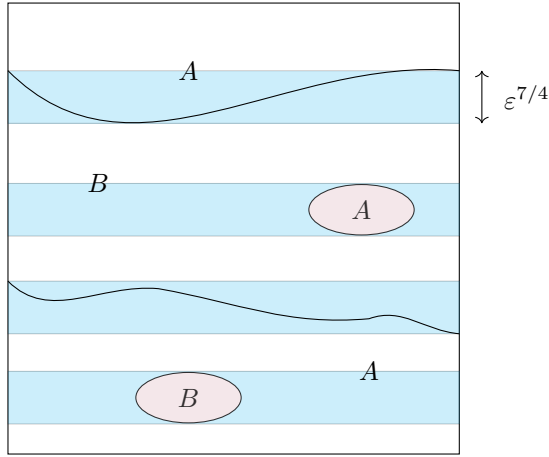

 We now start by identifying the limiting deformation and limiting partition. Recall the definition of $\mathcal{Y}_R(\Omega)$ and $\mathscr{P}(\Omega)$  in \eqref{eq: limiting deformations} and \eqref{eq:def-P}, respectively.

 \begin{proposition}[Deformations  and partitions]\label{lemma: intermediate step1}
 Let $\Omega$ be a bounded Lipschitz domain satisfying H8. Suppose that $W$ fulfills H1.--H4.  Let $\{y^{\ep}\}_{\ep}\subset H^2(\Omega;\R^d)$ be a sequence of deformations satisfying  \eqref{eq:unif-en-estimate}.  Then, we find a sequence of triples $(R^\eps, \mathcal{P}^\eps, \mathcal{M}^\eps)$, a limiting rotation $R \in SO(d)$, a limiting deformation $y \in \mathcal{Y}_R(\Omega)$, and a limiting partition $\mathcal{P} = \lbrace P_j\rbrace_j\in \mathscr{P}(\Omega) $ such that \eqref{eq: partition property}-\eqref{eq:comp-M} hold  after extracting a  subsequence. The components of $\mathcal{P}$ are  connected. 
 \end{proposition}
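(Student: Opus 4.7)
The starting point is Theorem~\ref{thm:rigiditythm}: applied on an exhaustion $\lbrace \Omega_n \rbrace_n$ of $\Omega$ by compactly contained subsets, followed by a diagonal extraction in $n$, it yields, for each $\eps$, a rotation $R^\eps \in SO(d)$ and a phase indicator $\Phi^\eps \in BV(\Omega;\lbrace A,B\rbrace)$ with $|D\Phi^\eps|(\Omega) \le C$ and $\Vert \nabla y^\eps - R^\eps \Phi^\eps \Vert_{L^2(\Omega')} \le C_{\Omega'}\eps$ for every $\Omega' \subset\subset \Omega$. I would then define $\mathcal{P}^\eps = \lbrace P_j^\eps \rbrace_j$ as the ordered collection of connected components of $T^\eps := \lbrace \Phi^\eps = A \rbrace$ and of $\Omega\setminus T^\eps$, with $M_j^\eps \in \lbrace A, B \rbrace$ labelling each component by its phase. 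This gives \eqref{eq:rigidity-compactness} directly, and \eqref{eq: partition property} follows from Proposition~\ref{lprop: phases}(i) together with $\bigcup_j \partial^* P_j^\eps \subset \partial^* T^\eps$ up to $\mathcal{H}^{d-1}$-null sets.

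The next step is to extract subsequences. Compactness of $SO(d)$ yields $R^\eps \to R$, while the compactness theorem for Caccioppoli partitions with equibounded perimeter (Theorem~\ref{th: comp cacciop}) delivers a limiting ordered partition $\mathcal{P} = \lbrace P_j \rbrace_j$ of $\Omega$ with $P_j^\eps \to P_j$ in measure, which is \eqref{eq:comp-sets}; consequently $\Phi^\eps \to \Phi := \sum_j M_j \chi_{P_j}$ in $L^1$ for a limiting indicator $\Phi \in BV(\Omega;\lbrace A, B\rbrace)$. Since $\mathcal{E}_\eps \ge I_\eps$, Lemma~\ref{lemma:comp-def} applies and yields $y^\eps - \fint_\Omega y^\eps \to y$ strongly in $H^1$ with $y \in \mathscr{Y}(\Omega)$. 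Passing to the limit in the rigidity bound identifies $\nabla y = R\Phi$ a.e., so $y \in \mathcal{Y}_R(\Omega)$; the weak-$^*$ convergence \eqref{eq:comp-M} is then a consequence of the $L^1$-convergence of $\sum_j R^\eps M_j^\eps \chi_{P_j^\eps} = R^\eps \Phi^\eps$ together with the uniform perimeter bound \eqref{eq: partition property}.

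To ensure $\mathcal{P} \in \mathscr{P}(\Omega)$ with connected components, I would invoke Proposition~\ref{lprop: phases}(ii): the factor $\eps^{2-\alpha(d)} \to 0$ combined with $L^1$-lower semicontinuity of $E \mapsto \int_{\partial^* E}|\langle \nu_E, e_i \rangle|\, d\mathcal{H}^{d-1}$ forces $\nu_T \in \lbrace \pm e_d \rbrace$ $\mathcal{H}^{d-1}$-a.e.\ on $\partial^* T \cap \Omega$, where $T = \lbrace \Phi = A \rbrace$. By $BV$-slicing, $\Phi$ depends only on $x_d$, so each $P_j$ coincides with $\Omega \cap (\R^{d-1} \times E_j)$ for some measurable $E_j \subset \R$ of finite perimeter. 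Decomposing each $E_j$ into its connected components in $\R$ and correspondingly refining the partitions at the $\eps$-level via horizontal cuts placed in the ``gaps'', assumption~H8 implies that each refined component is connected, so $\mathcal{P} \in \mathscr{P}(\Omega)$ as required.

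The main obstacle is the layer estimate \eqref{eq: partition property-new}. Fix $\Omega' \subset\subset \Omega$. Proposition~\ref{lprop: phases}(iii) bounds $\int_{\R} \mathcal{H}^{d-2}\big((\R^{d-1} \times \lbrace t\rbrace) \cap \partial^* T^\eps \cap \Omega\big)\,dt$ by $C\eps^{2-\alpha(d)}$. For each component $P_j^\eps$ and almost every height $t \in \pi_d(P_j^\eps)$, applying the relative isoperimetric inequality in the connected slice $\Omega \cap \lbrace x_d = t\rbrace$ (using H8) yields a slicewise dichotomy: either $\mathcal{L}^{d-1}(P_j^\eps \cap \lbrace x_d = t\rbrace)$ or $\mathcal{L}^{d-1}((L_{\Omega'}(P_j^\eps) \setminus P_j^\eps) \cap \lbrace x_d = t\rbrace)$ is bounded above by a suitable power of the slicewise $\mathcal{H}^{d-2}$-perimeter of $P_j^\eps$. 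Splitting $\pi_d(P_j^\eps)$ into the heights where the minority side is $P_j^\eps$ respectively its layer-complement, integrating the corresponding estimate in $t$, summing over $j$, and combining with the global perimeter bound \eqref{eq: partition property} via H\"older's inequality furnishes \eqref{eq: partition property-new} with some $p(d) \in (1,2)$ blending $\alpha(d) = 1/(2d)$ with the isoperimetric exponent. The delicate bookkeeping of exponents, and the separate treatment of $d=2$ and $d\ge 3$ where the relative isoperimetric inequality takes different forms, is the most technically involved part of the argument.
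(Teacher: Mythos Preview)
Your overall strategy for \eqref{eq:comp-R}--\eqref{eq:comp-M} and for showing $\mathcal{P}\in\mathscr{P}(\Omega)$ is sound and close to the paper's. However, your treatment of the layer estimate \eqref{eq: partition property-new} has a genuine gap, and it is precisely here that the paper's construction deviates from yours.

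\textbf{The threshold construction is essential.} You define $\mathcal{P}^\eps$ as the connected components of $T^\eps$ and $\Omega\setminus T^\eps$, and then try to prove \eqref{eq: partition property-new} by applying the slicewise isoperimetric inequality to bound $\min\{\mathcal{H}^{d-1}(P_j^\eps\cap\{x_d=t\}),\,\mathcal{H}^{d-1}((L_{\Omega'}(P_j^\eps)\setminus P_j^\eps)\cap\{x_d=t\})\}$ by a power of $h(t):=\mathcal{H}^{d-2}(\partial^*T^\eps\cap\{x_d=t\}\cap\Omega)$, integrate in $t$, and invoke H\"older together with the perimeter bound. This does not close: Proposition~\ref{lprop: phases}(iii) only gives $\int h\,dt\le C\eps^{2-\alpha(d)}$, an $L^1$ bound, while the isoperimetric inequality produces $h^{(d-1)/(d-2)}$ with exponent $>1$; no H\"older interpolation with the $O(1)$ perimeter bound \eqref{eq: partition property} can turn this into $\eps^p$ with $p>1$. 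For $d=2$ the slicewise isoperimetric inequality is vacuous whenever the one-dimensional slice boundary is nonempty, so the argument collapses entirely. The paper instead constructs $\tilde{\mathcal{P}}^\eps$ by cutting $T^\eps$ horizontally at the finitely many heights where the function $f^\eps(t)=\mathcal{H}^{d-1}(T^\eps\cap\Omega^\eps\cap\{x_d=t\})$ crosses a carefully chosen threshold $\sigma_\eps\in(\eps^p/2,\eps^p)$ (the coarea formula bounds the number of cuts). This forces every resulting component to have either \emph{all} slices of measure $\le\sigma_\eps$ or \emph{all} slices $>\sigma_\eps$, and that dichotomy is what makes the estimate work: thin components contribute $\le C\eps^p$ by Fubini, while for thick components the isoperimetric inequality is used in reverse, bounding from below the measure of ``bad'' heights rather than bounding the minority from above. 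The specific value $p=1+\tfrac{3}{2d(2d-3)}$ emerges from this balance.

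\textbf{The slicewise isoperimetric constant is not uniform.} You apply the relative isoperimetric inequality on $\Omega\cap\{x_d=t\}$ as if the constant were uniform in $t$, but for a general Lipschitz $\Omega$ it may blow up as $t$ approaches the endpoints of $\pi_d(\Omega)$. The paper handles this by constructing auxiliary sets $\Omega^\eps\subset\subset\Omega$ that are finite unions of cylinders (so only finitely many slice shapes occur) with isoperimetric constant at most $\eps^{-\alpha(d)}$; this controlled blow-up is then absorbed into the exponent bookkeeping. Without this step, your estimate is only valid on compact subsets and the dependence of the constant on $\Omega'$ in \eqref{eq: partition property-new} is not justified.
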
 
 
We point out that in Theorem \ref{thm:compactness} the components are not connected in general. At this intermediate stage, however, constructing the partition with this additional property is instrumental for the definition of displacement fields in Propositions \ref{lemma: intermediate step2} and  \ref{lemma: intermediate step3} below as it allows to apply Poincar\'e inequalities on each component.

\begin{proof}[Proof of Proposition \ref{lemma: intermediate step1}]
Let $\{y^{\ep}\}_{\ep}\subset H^2(\Omega;\R^d)$ be a sequence of deformations satisfying  \eqref{eq:unif-en-estimate}.  We denote the orthogonal projection of $\Omega$ onto the $e_d$-axis  by the interval $(a,b)$.  

\emph{Step 1: Preliminary estimates.} First, we apply Theorem \ref{thm:rigiditythm} to obtain sequences of rotations $\lbrace R^\ep \rbrace_\ep \subset SO(d)$ and phase indicators $\lbrace  \Phi^\ep  \rbrace_\ep \subset BV(\Omega;\lbrace A,B\rbrace)$  such that 
for all $\Omega' \subset \subset \Omega$   
\begin{align}\label{eq: rigidity in compactness}
\Vert\nabla  y^\ep -R^{\ep} \Phi^{\ep} \Vert_{ L^2(\Omega') \EEE } = \Vert \nabla y^\ep - (R^\ep A\chi_{T^\eps} + R^\ep B \chi_{\Omega \setminus T^\eps} )\Vert_{L^2( \Omega' \EEE )}\leq C_{\Omega'}\ep\quad\text{and}\quad |D\Phi^\ep|(\Omega)\leq C,
\end{align}
where  $T^\ep = \lbrace  \Phi^\ep  = A \rbrace$ denotes the \emph{$A$-phase regions}, see Proposition \ref{lprop: phases}, $C_{\Omega'}$ depends on $\Omega'$,  and $C$ is related to $C_0$ in \eqref{eq:unif-en-estimate}.

In the following, we will need to apply the relative isoperimetric inequality on sections of the form $\Omega \cap \lbrace x_d = t \rbrace$, $t \in (a,b)$. In general, the involved constant may depend on $t$. As a remedy, we pass to suitable subsets of $\Omega$ with  properties independent of $t$: for $\eps>0$, we can choose $\Omega^\eps \subset \subset \Omega$ with Lipschitz boundary, satisfying H8., and 
\begin{align}\label{eq: Omegaeps}
\sup\nolimits_{\eps>0} \mathcal{H}^{d-1}(\partial \Omega^\eps) < + \infty, \ \ \ \ \ \ \ \ \ \lim_{\eps \to 0 } d_H(\Omega,  \Omega^\eps) = 0
\end{align}
($d_H$ denotes the Hausdorff distance) such that for each $t \in (a,b)$ and each set of finite perimeter $E \subset \Omega^\eps \cap \lbrace x_d = t \rbrace$  there holds
\begin{align}\label{eq: isoperimetric}
\min \Big\{ \mathcal{H}^{d-1}(E), \ \mathcal{H}^{d-1}\Big( (\Omega^\eps \cap \lbrace x_d = t \rbrace) \setminus E  \Big) \Big\} \le \eps^{-\alpha(d)} \big(\mathcal{H}^{d-2}( \partial^*E \cap \Omega^\eps) \big)^\frac{d-1}{d-2},
\end{align}    
where $\alpha(d)$ is defined in \eqref{eq:alphad}. (For $d=2$, the left hand side has to be interpreted as zero if $\mathcal{H}^{0}( \partial^*E \cap \Omega^\eps) = 0$.) Indeed, these sets can be constructed as follows.

For fixed $\rho>0$, let  $\Omega^\rho \subset \subset \Omega$ be a Lipschitz domain satisfying H8.\ which is a finite union of cylindrical sets of the form $\omega \times  (h^-,h^+)$ for $\omega \subset \R^{d-1}$ Lipschitz, i.e., there are only a finite number of different shapes for $\Omega^\rho \cap \lbrace x_d = t \rbrace$, denoted by $\omega_i \times \lbrace t \rbrace$ for Lipschitz domains $\omega_i$, $i=1,\ldots,N^\rho$.  (We do not include $\rho$ in the notation for simplicity.) Given $t \in (a,b)$, choose $\omega_i$ such that  $\omega_i \times \lbrace t \rbrace = \Omega^\rho \cap \lbrace x_d = t \rbrace$ and consider $E \subset \Omega^\rho \cap \lbrace x_d = t \rbrace$. Then we can apply the relative isoperimetric inequality on $\omega_i$ to obtain \eqref{eq: isoperimetric} for a constant $C^\rho_i$ depending on $\omega_i$ in place of $\eps^{-\alpha(d)}$ and $\Omega^\rho$ in place of $\Omega^\eps$. (See \cite[Theorem 2, Section 5.6.2]{EvansGariepy92}; note  that the theorem in the reference above is stated and proved in a ball, but that the argument only relies on Poincar\'e inequalities, and thus easily extends to bounded Lipschitz domains.) Choose an infinitesimal sequence $\lbrace \rho_k\rbrace_k \subset (0,+\infty)$ and a corresponding strictly decreasing infinitesimal sequence $\lbrace \ep_{k}\rbrace_k \subset (0,+\infty)$ such that the sequence $\lbrace \Omega^{\rho_k} \rbrace_k$ satisfies \eqref{eq: Omegaeps} (with $\Omega^{\rho_k}$ in place of $\Omega^\ep$) and 
\begin{equation*}
\max_{i=1,\ldots, N^{\rho_k}} C^{\rho_k}_i \le \eps_{k}^{-\alpha(d)}.
\end{equation*}
To conclude, we apply the following diagonalization argument: for $\ep\in [\ep_{k}, \ep_{{k-1}}]$ we set $\rho^\ep:=\rho_{k-1}$. The claim follows by  considering the sets $\Omega^{\ep}:=\Omega^{\rho^\ep}$.

\emph{Step 2: Construction of auxiliary partitions.}  We start the actual proof by constructing a finite partition of $T^\eps \cap \Omega^\eps$ as follows: we define $f^\eps\colon (a,b) \to (0,+\infty)$ by 
\begin{align}\label{eq: fdef}
f^\eps(t) = \mathcal{H}^{d-1}( \lbrace x_d = t \rbrace \cap T^\eps \cap \Omega^\eps) \ \ \ \ \text{for } t \in (a,b).
\end{align}
We observe that $f^\eps \in BV((a,b))$, and that its total variation can be estimated by
\begin{align}\label{eq: NNparti1}
|Df^\eps|(a,b)  \le  \mathcal{H}^{d-1}(\partial^* T^\eps \cup \partial\Omega^\eps).
\end{align}
In fact, for any $\psi \in C^\infty_c(\Omega)$ with $\psi =1$ on $\Omega^\eps$, we get by Fubini's theorem that    
\begin{align*}
|Df^\eps|(a,b) & = \sup_{\substack{\varphi \in C^1_c(a,b) \\ \Vert \varphi\Vert_{L^\infty(a,b)} \le 1}} \  \int_{(a,b)} f^\eps \varphi' \, {\rm d}t =
\sup_{\substack{\varphi \in C^1_c(a,b) \\ \Vert \varphi\Vert_{L^\infty(a,b)} \le 1}} \ \int_{\Omega} \chi_{T^\eps \cap \Omega^\eps}(x',x_d) \ \varphi'(x_d) \,  {\rm d}(x',x_d), \\
& =  \sup_{{\varphi \in C^1_c(a,b),  \Vert \varphi\Vert_{L^\infty(a,b)} \le 1}} \ \int_{\Omega} \chi_{T^\eps \cap \Omega^\eps}(x',x_d) \  {\rm div} \big(\psi(x) \varphi(x_d)e_d \big) \,  {\rm d}(x',x_d),
\end{align*}
 where we write $x=(x',x_d)$ with $x'\in \R^{d-1}$ and $x_d\in \R$. Therefore, we obtain 
\begin{align*}
|Df^\eps|(a,b) \le
 \sup_{\varphi \in C^1_c(\Omega;\R^d ), \, \Vert \varphi\Vert_{L^\infty(\Omega)} \le 1} \  \int_\Omega \chi_{T^\eps \cap\Omega^\eps} \ {\rm div}(\varphi) \,  {\rm d}x = |D\chi_{T^\eps\cap\Omega^\eps}|(\Omega).
 \end{align*}
Then, \eqref{eq: NNparti1} follows from \cite[(3.29), (3.62)]{Ambrosio-Fusco-Pallara:2000}. 

We set $p:= 1+ \frac{3}{2d(2d-3)} \in (1,2)$. (The choice becomes clear later.) Choose  $\sigma_\eps \in (\eps^p/2,\eps^p)$ such that
\begin{align}\label{eq: NNparti2}
\mathcal{H}^0\big(\partial^* \lbrace f^\eps \le \sigma_\eps \rbrace \cap (a,b)\big) \le 2 \eps^{-p} \int_{\eps^p/2}^{\eps^p} \mathcal{H}^0\big(\partial^* \lbrace f^\eps \le s \rbrace \cap (a,b)\big) \, {\rm d}s \le 2\eps^{-p} |Df^\eps|(a,b),   
\end{align}
where the last step follows from the coarea formula for $BV$ functions (see \cite[Theorem 3.40]{Ambrosio-Fusco-Pallara:2000}). We choose $ a < d_1 < d_2 < \ldots < d_{m-1} < b$ such that $\partial^* \lbrace f^\eps \le \sigma_\eps \rbrace \cap (a,b) = \lbrace d_j \rbrace_{j=1}^{m-1}$, where $m-1 \le 2\eps^{-p} |Df^\eps|(a,b)$ by \eqref{eq: NNparti2}. We define a finite partition of $T^\eps \cap \Omega^\eps$ consisting of the sets
\begin{align}\label{eq: NNparti3}
\tilde{P}^\eps_j = T^\eps \cap \Omega^\eps \cap \lbrace  d_{j-1} < x_d < d_j \rbrace,\ \ \ \ \ j =1,\ldots,m, 
\end{align}
where we let $d_0 = a$ and $d_m = b$. In view of the definition in \eqref{eq: fdef}, we can estimate the `upper' and `lower' boundary of $\tilde{P}^\eps_j$ by  $\mathcal{H}^{d-1}((\partial^* \tilde{P}^\eps_j \cap \Omega^\eps) \setminus \partial^* T^\eps) \le 2\sigma_\eps \le 2\eps^p$. Therefore, since $m-1 \le 2\eps^{-p} |Df^\eps|(a,b)$ by \eqref{eq: NNparti2}, \eqref{eq: NNparti1} yields
\begin{align}\label{eq: NNparti4}
\sum\nolimits_{j=1}^m \mathcal{H}^{d-1}(\partial^* \tilde{P}_j^\eps) \le 2m\EEE\eps^p+ \mathcal{H}^{d-1}(\partial^*T^\eps\cup \partial \Omega^\eps) \le 5\mathcal{H}^{d-1}(\partial^* T^\eps \cup \partial \Omega^\eps) + 2\eps^p.
\end{align}
We repeat the above procedure for $\Omega^\eps \setminus T^\eps$ in place of $T^\eps$ and obtain a finite partition of $\Omega^\eps \setminus T^\eps$ which we denote by $\lbrace\tilde{P}^\eps_j\rbrace_{j=m+1}^{n}$. Repeating the argument in \eqref{eq: NNparti4} we get  $ \sum\nolimits_{j=m+1}^n \mathcal{H}^{d-1}(\partial^* \tilde{P}_j^\eps) \le 5\mathcal{H}^{d-1}(\partial^* T^\eps \cup \partial \Omega^\eps) + 2\eps^p$. We set $\tilde P_{n+1}^\eps = (\Omega \setminus \Omega^\eps) \cap T^\eps$ and $ \tilde P_{n+2}^\eps= \Omega \setminus (\Omega^\eps \cup T^\eps)$.  Since $\mathcal{H}^{d-1}(\partial \Omega) < + \infty$, by \eqref{eq: Omegaeps},  \eqref{eq:unif-en-estimate}, and Proposition \ref{lprop: phases}(i) we conclude 
 \begin{align}\label{eq: Ptilde}
\sum\nolimits_{j=1}^{n+2} \mathcal{H}^{d-1}(\partial^* \tilde{P}^\eps_j ) \le   10\mathcal{H}^{d-1}(\partial^* T^\eps \cup \partial \Omega^\eps) + 4\eps^p + \mathcal{H}^{d-1}(\partial(\Omega \setminus \Omega^\eps)) + 2\mathcal{H}^{d-1}(\partial^* T^\eps \cap (\Omega \setminus \Omega^\eps) ) \le  C
 \end{align}
for a constant $C>0$ independent of $\eps$. For later purposes, we note that each set $\tilde{P}^\eps_j$  is  either  contained in $T^\eps$ or  in  $\Omega \setminus T^\eps$.

\emph{Step 3: Limiting rotation, deformation, and partition.} Up to the extraction of a subsequence  (not relabeled), we may assume that 
$$R^\ep \to R \in SO(d),$$
 i.e., we directly have \eqref{eq:comp-R}. Applying Lemma \ref{lemma:comp-def}, up to passing to a further subsequence, we find  $y \in \mathscr{Y}(\Omega)$, see \eqref{eq: limiting deformations}, \EEE such that   \eqref{eq:comp-y} holds.  By \eqref{eq: rigidity in compactness} we get that there exists $\Phi\in BV(\Omega;\{A,B\})$ such that
 $$\Phi^\ep\wk^*\Phi\quad\text{weakly* in }BV(\Omega;\{A,B\})$$
 and hence almost everywhere in $\Omega$. By \eqref{eq: rigidity in compactness}, \eqref{eq:comp-R},  and   \eqref{eq:comp-y}  we then get $y \in \mathcal{Y}_R(\Omega)$.

 By \eqref{eq: Ptilde} and the compactness theorem for Caccioppoli partitions (Theorem \ref{th: comp cacciop})  we obtain a limiting partition $ \tilde{\mathcal{P}} := \EEE \lbrace \tilde{P}_j \rbrace_j$ such that $\tilde{P}_j^\eps \to \tilde{P}_j$ in measure for all indices $j$ (up to a subsequence). Note that the components $\lbrace \tilde{P}_j \rbrace_j$ are possibly not indecomposable. Therefore, we let $\mathcal{P} = \lbrace P_j \rbrace_j$ be the partition consisting of the connected components of $\lbrace \tilde{P}_j \rbrace_j$. (This partition exists due to \cite[Theorem 1]{Ambrosio-Morel},  see also Appendix \ref{sec: caccio}.)  By the lower semicontinuity of the Hausdorff measure and \eqref{eq: Ptilde} we also deduce
\begin{align}\label{eq: limiting parti}
\sum\nolimits_j \mathcal{H}^{d-1}(\partial^* P_j) = \sum\nolimits_j \mathcal{H}^{d-1}(\partial^* \tilde{P}_j) \le C. 
\end{align}
We close this step of the proof by showing that $\mathcal{P} \in \mathscr{P}(\Omega)$.  Clearly, by the definition of $\mathcal{P}$, it suffices to prove $\tilde{\mathcal{P}}\in \mathscr{P}(\Omega)$. \EEE  To this end, it suffices to show that 
\begin{align}\label{eq: nu-directioni}
\nu_{\tilde{P}_j}(x) = \pm e_d\quad\text{for}\quad\mathcal{H}^{d-1}\text{-a.e. }x \in \partial^* \tilde{P}_j \cap \Omega,
\end{align}
 where $\nu_{\tilde{P}_j}$ denotes the outer unit normal to  $\tilde{P}_j$. Let $\Omega' \subset \subset \Omega$. Fix $i\in \{1,\dots,d-1\}$. Since the function $\varphi(\nu) = |\langle \nu, e_i \rangle|$ is $BV$-elliptic (see \cite[Theorem 5.20, Example 5.23]{Ambrosio-Fusco-Pallara:2000}), lower semicontinuity results for sets of finite perimeter \cite[Theorem 2.1]{AmbrosioBraides2} imply 
\begin{align}\label{eq: lsc}
\int_{\partial^* \tilde{P}_j\cap \Omega'} |\langle \nu_{\tilde{P}_j}, e_i \rangle| \, {\rm d}\mathcal{H}^{d-1} \le \liminf_{\eps \to 0} \int_{\partial^* \tilde{P}^\eps_j \cap \Omega'} |\langle \nu_{\tilde{P}^\eps_j}, e_i \rangle| \, {\rm d}\mathcal{H}^{d-1}.
\end{align}
For $\eps$ sufficiently small we have $\Omega' \subset \Omega^\eps$, see \eqref{eq: Omegaeps}. Then  the definition of $\tilde{P}^\ep_j$ (see \eqref{eq: NNparti3})  implies
\begin{align}\label{eq: lsc2}
 \liminf_{\eps \to 0} \int_{\partial^* \tilde{P}^\eps_j \cap \Omega'} |\langle \nu_{\tilde{P}^\eps_j}, e_i \rangle| \, {\rm d}\mathcal{H}^{d-1} \le  \liminf_{\eps \to 0} \int_{\partial^* T^\eps \cap \Omega} |\langle \nu_{T^\eps}, e_i \rangle| \, {\rm d}\mathcal{H}^{d-1} 
\end{align}
since $ \nu_{\tilde{P}^\eps_j}(x) = \pm e_d$ for $\mathcal{H}^{d-1}$-a.e.\ $x \in \partial^* \tilde{P}^\eps_j \setminus \partial^* T^\eps $.  In view of Proposition \ref{lprop: phases}(ii) and   \eqref{eq:unif-en-estimate},  recalling the definition of $\alpha(d) =1/(2d)$ in \eqref{eq:alphad}, we obtain by \eqref{eq: lsc}--\eqref{eq: lsc2}  
$$\int_{\partial^* \tilde{P}_j \cap \Omega'} |\langle  \nu_{\tilde{P}_j}, e_i \rangle| \, {\rm d}\mathcal{H}^{d-1} = 0\quad\text{for every}\ i=1,\dots,d-1.$$
  Thus, \eqref{eq: nu-directioni} holds since $\Omega' \subset \subset \Omega$ was arbitrary.   Therefore, $\tilde{\mathcal{P}}  \in \mathscr{P}(\Omega)$ and then also ${\mathcal{P}}  \in \mathscr{P}(\Omega)$.  \EEE

 \emph{Step 4: Definition of  the sequence of partitions and phase indicators.}   We now define the partitions $\mathcal{P}^\eps$ and the phase indicators $\mathcal{M}^\eps$, and show  \eqref{eq: partition property}, \eqref{eq:rigidity-compactness}, \eqref{eq:comp-sets}, and \eqref{eq:comp-M}.  The proof of \eqref{eq: partition property-new} is deferred to Step 5 below. Let $\mathcal{P}^\eps  = \lbrace P_j^\eps \rbrace_j$  be the partition consisting of the nonempty components of 
\begin{align}\label{eq: Peps}
\lbrace \tilde{P}^\eps_k \cap P_j\colon \ j,k \in \N\rbrace.    
\end{align}
 Since $\tilde{P}_k^\eps \to \tilde{P}_k$ for all indices $k$ and $P_j \subset \tilde{P}_k$ for some $k$, we  clearly get that \eqref{eq:comp-sets} holds. Additionally, property \eqref{eq: partition property} follows from   \eqref{eq: Ptilde}--\eqref{eq: limiting parti}.

 Recall \EEE that each component of $\mathcal{P}^\eps$ is contained in $T^\eps$ or $\Omega \setminus T^\eps$, see  the  sentence below \eqref{eq: Ptilde}. We define the sequence $\mathcal{M}^\eps = \lbrace M^\eps_j \rbrace_{j, \eps}$   by  $M_j^\eps = A$ for all $j$ such that $P^\eps_j \subset T^\eps$, and   $M_j^\eps = B$ otherwise. Then \eqref{eq:rigidity-compactness} follows from  \eqref{eq: rigidity in compactness}.   This along with \eqref{eq:comp-y}  also  implies   
\begin{align}\label{eq: strong loc}
\sum\nolimits_j R^\eps M^\eps_j\chi_{P^\eps_j} \to  \nabla y\quad\text{strongly in }L^2_{\rm loc}(\Omega;\M^{d \times d}).
\end{align}
Due to \eqref{eq: partition property}, we have  $\sum\nolimits_j \mathcal{H}^{d-1}(\partial^* P^\eps_j) \le C,$
which yields 
$$\big|D\big(\sum\nolimits_j R^\eps M^\eps_j\chi_{P^\eps_j}\big)\big|(\Omega) \le C.$$ 
 This along with \eqref{eq: strong loc} and a  $BV$ compactness argument yields \eqref{eq:comp-M}. 
 
\emph{Step 5: Proof of \eqref{eq: partition property-new}.} It remains to prove \eqref{eq: partition property-new}. Choose $\Omega' \subset \subset \Omega$ and let $\eps$ be sufficiently small such that $\Omega' \subset \Omega^\eps$, see \eqref{eq: Omegaeps}.  
We show \eqref{eq: partition property-new} only for the components of $\mathcal{P}^\eps$ which are contained in $T^\eps \cap \Omega^\eps$ since for components contained in $\Omega^\eps \setminus T^\eps$ the argument is the same.  Denote by $\pi_d(P^\eps_j)$   the orthogonal projection of $P^\eps_j$ onto the $e_d$-axis. In view of \eqref{eq: NNparti2}--\eqref{eq: NNparti3}, \eqref{eq: Peps}, and the fact that $\mathcal{P} \in \mathscr{P}(\Omega)$,  we can decompose the collection of components into the two sets    
\begin{align}\label{eq: isi2}
\mathcal{J}^\eps_1 &= \big\{ P_j^\eps \subset T^\eps \cap \Omega^\eps\colon\, \mathcal{H}^{d-1}(P^\eps_j \cap \lbrace x_d = t \rbrace)  \le \sigma_\eps  \ \text{for a.e.\ }   t \in \pi_d(P^\eps_j)\big\} , \notag\\ 
 \mathcal{J}^\eps_2 &= \big\{ P_j^\eps\subset T^\eps \cap \Omega^\eps\colon \, \mathcal{H}^{d-1}(P^\eps_j \cap \lbrace x_d = t \rbrace) > \sigma_\eps \ \text{for a.e.\ }   t \in \pi_d(P^\eps_j)\big\} . 
\end{align}
First, since $\sigma_\eps \le \eps^p$, we clearly get by Fubini's theorem that 
\begin{align}\label{eq: isi3}
\sum\nolimits_{P^\eps_j \in \mathcal{J}^\eps_1}  \mathcal{L}^d( \Omega^\eps  \cap P^\eps_j) \le (b-a) \sigma_\eps \le C\eps^p,  
\end{align}
where $C$ only depends on $\Omega$. We now consider the components in $\mathcal{J}^\eps_2 $. We let  
\begin{align}\label{eq: isi12}
I_j^\eps = \big\{t \subset \pi_d(P^\eps_j)\colon\, \mathcal{H}^{d-1}\big( ( \Omega^\eps \setminus P^\eps_j) \cap\lbrace x_d  =t\rbrace \big) > \sigma_\eps \big\} \quad\text{for every }j\in \mathcal{J}_2^\ep.
\end{align}
 Since $\sigma_\eps \le \eps^p$, we get
\begin{align}\label{eq: isi4}
\sum\nolimits_{P^\eps_j \in \mathcal{J}^\eps_2} \int_{\pi_d(P^\eps_j) \setminus I^\eps_j} \mathcal{H}^{d-1}\big(  (\Omega^\eps \setminus  P^\eps_j) \cap \lbrace x_d=t \rbrace  \big) \,{\rm d}t \le (b-a) \sigma_\eps \le C\eps^p.  
\end{align}
On the other hand, for a.e. $t \in I_j^\eps$ we get by \eqref{eq: isoperimetric}, applied for $E = P_j^\eps \cap \lbrace x_d = t\rbrace$, and  by \eqref{eq: isi2}, \eqref{eq: isi12} that 
\begin{align*}
&\sigma_\eps \le \min\big\{\mathcal{H}^{d-1}(P^\eps_j \cap \lbrace x_d = t\rbrace ), \mathcal{H}^{d-1}\big( (\Omega^\eps \setminus P^\eps_j) \cap \lbrace x_d = t\rbrace \big) \big\}\\
&\quad \le \eps^{-\alpha(d)}  \big(\mathcal{H}^{d-2}(\partial^* (P^\eps_j \cap \lbrace x_d = t\rbrace) \cap \Omega^\eps )\big)^{\frac{d-1}{d-2}}. 
\end{align*}
As $\sigma_\eps \ge \eps^p/2$, we find  $1/2 \le (1/2)^{(d-2)/(d-1)} \le \eps^{-(\alpha(d)+p)(d-2)/(d-1)} \mathcal{H}^{d-2}(\partial^* P^\eps_j \cap \lbrace x_d = t\rbrace  \cap \Omega^\eps )$.  
Integrating over $I_j^\eps$ and summing over the components $\mathcal{J}^\eps_2$, we get 
$$ 
\sum\nolimits_{P^\eps_j \in \mathcal{J}^\eps_2} \mathcal{L}^1(I^\eps_j ) \le  C \eps^{\frac{-(\alpha(d)+p)(d-2)}{d-1}} \sum\nolimits_j  \int_{a}^b\mathcal{H}^{d-2}\big(\partial^* P^\eps_j \cap \lbrace x_d = t\rbrace \cap \Omega^\eps \big)  \, {\rm d}t.
$$
We recall \eqref{eq: Peps}  and  the fact that $\mathcal{P} \in \mathscr{P}(\Omega)$. Moreover, we have $\bigcup_j\partial^* \tilde{P}^\eps_j \cap \Omega^\eps \cap \lbrace x_d = t\rbrace \subset \partial^* T^\eps \cap \Omega \cap \lbrace x_d = t\rbrace$ for a.e.\ $t \in (a,b)$, where $\mathcal{H}^{d-1}$-a.e. $x \in \partial^* T^\eps$ is contained in the boundary of at most two different components, see \eqref{eq: NNparti3}. Then, \eqref{eq:unif-en-estimate} and Proposition \ref{lprop: phases}(iii) yield 
$$  
\sum_{P^\eps_j \in \mathcal{J}^\eps_2} \mathcal{L}^1(I^\eps_j ) \le  C \eps^{\frac{-(\alpha(d)+p)(d-2)}{d-1}} \int_{-\infty}^\infty\mathcal{H}^{d-2}(\partial^* T^\eps \cap \lbrace x_d = t\rbrace \cap \Omega )  \, {\rm d}t \le C\eps^{-(\alpha(d)+p)(d-2)/(d-1)}\eps^{2-\alpha(d)},
$$
where $C>0$ depends on $C_0$. Recalling $p= 1+ \frac{3}{2d(2d-3)}$ and $\alpha(d)=1/(2d)$, this yields $\sum_{P^\eps_j \in \mathcal{J}^\eps_2} \mathcal{L}^1(I^\eps_j ) \le C\eps^p$ by an elementary computation. This along with \eqref{eq: isi4} and the fact that   $\mathcal{H}^{d-1}(\Omega^\eps \cap \lbrace x_d  =t \rbrace  ) \le ({\rm diam}(\Omega))^{d-1}$ for all $t \in (a,b)$ yields  
\begin{align}\label{eq: isi5}
\sum\nolimits_{P^\eps_j \in \mathcal{J}^\eps_2}  \mathcal{L}^d( L_{\Omega^\eps}(P^\eps_j) \setminus  P^\eps_j)  & \le ({\rm diam}(\Omega))^{d-1} \sum\nolimits_{P^\eps_j \in \mathcal{J}^\eps_2}   \mathcal{L}^1(I^\eps_j ) \notag \\ 
& \ \ \  + \sum\nolimits_{P^\eps_j \in \mathcal{J}^\eps_2}  \int_{\pi_d(P^\eps_j) \setminus I^\eps_j} \mathcal{H}^{d-1}\big(  (\Omega^\eps \setminus  P^\eps_j) \cap \lbrace x_d=t \rbrace  \big) \,{\rm d}t \le  C\eps^p,  
\end{align}
where the constant $C$ depends only on $\Omega$ and $C_0$, and $L_{\Omega^\eps}(P^\eps_j)$ is defined in \eqref{eq: layer set}.   By combining  \eqref{eq: isi3} and \eqref{eq: isi5} we get \eqref{eq: partition property-new} since $\Omega^\eps \supset \Omega'$ (for $\eps$ small enough). This concludes the proof. 
\end{proof}

 \begin{remark}[Geometry of $\Omega$]\label{rem: Geometric condition}
 {\normalfont

(i) Condition H8. could be dropped at the expense of more elaborated estimates. First, in \eqref{eq: partition property-new}, $L_{\Omega'}(P_j)$  would have to be replaced by the connected components of $L_{\Omega'}(P_j)$ which intersect $P_j$. Accordingly, the isoperimetric inequality \eqref{eq: isoperimetric}, applied in Step 5 of the proof,  would need  to be applied separately in each of the components of $\Omega^\eps \cap \lbrace x_d = t \rbrace$ to get an estimate along the lines of \eqref{eq: isi5}. 

(ii) The passage to a subdomain in \eqref{eq: partition property-new}--\eqref{eq:rigidity-compactness} is not needed if $\Omega$ is a paraxial cuboid: in this case, Theorem \ref{thm:rigiditythm} can be replaced by an equivalent statement directly on $\Omega$, see \cite[Theorem 3.1 and Remark 3.2]{davoli.friedrich}. Moreover, the isoperimetric inequality \eqref{eq: isoperimetric} in Step 5 can be performed on the (identical) cuboids $\Omega \cap \lbrace x_d  = t\rbrace$ of dimension $d-1$.}
 \end{remark}


 Recall the definition of $\mathscr{U}(\Omega)$ in \eqref{eq: def of U}.  The next step will be to identify limiting displacement fields for subsets $\Omega' \subset \subset \Omega$. Before that, we state an elementary local property of partitions that we will use several times.

 \begin{lemma}[Local property  of partitions]\label{lemma: localization}
Let $K\subset \subset  \Omega$. Then, for each $\mathcal{P} \in \mathscr{P}(\Omega)$, the set $K$ only intersects a finite number of sets contained in  $\mathcal{P}$.
\end{lemma}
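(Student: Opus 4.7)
The plan is to exploit the layered structure of $\mathcal{P} \in \mathscr{P}(\Omega)$ (interfaces confined to countably many horizontal hyperplanes) together with the finite total perimeter property of Caccioppoli partitions. Since $K \subset \subset \Omega$, I fix $\delta := \tfrac{1}{2}\mathrm{dist}(K,\partial\Omega) > 0$; for every $t \in \pi_d(K)$ the cross-section of the closed $\delta$-neighborhood $K_\delta \subset \subset \Omega$ at height $t$ contains a $(d-1)$-dimensional ball of radius $\delta$, hence has $\mathcal{H}^{d-1}$-measure at least $\mu := \omega_{d-1}\delta^{d-1} > 0$. For each index $j$ with $\mathcal{L}^d(K \cap P_j) > 0$ I choose a density-$1$ point $x_j = (x'_j,x_{j,d}) \in K \cap P_j$ lying outside the $\mathcal{L}^d$-null set $\bigcup_k \partial^* P_k \cap \Omega \subset \bigcup_i \R^{d-1} \times \{s_i\}$. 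By the horizontality of the interfaces, a simple slicing argument produces an open cylinder $C_j = V_j \times I_j \subset \Omega$ around $x_j$, with $V_j$ an open neighborhood of $x'_j$ and $I_j$ the maximal open subinterval of $\pi_d(\Omega) \setminus \{s_i\}_i$ containing $x_{j,d}$, such that $\mathcal{L}^d(C_j \setminus P_j) = 0$. Pairwise disjointness of the $P_j$'s then makes these cylinders pairwise essentially disjoint.

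Assume for contradiction that infinitely many $P_j$ meet $K$ in positive measure. Picking distinct indices $\{j_n\}_n$ and passing to a subsequence by compactness of $K$ yields $x^n := x_{j_n} \to x^\infty \in K$. If $x^\infty$ avoids every interface hyperplane, then the cylinder through $x^\infty$ absorbs all but finitely many of the $x^n$ into a single component, forcing the $j_n$ to eventually coincide, which is a contradiction. Otherwise $x^\infty_d = s_{i_0}$ for some $i_0$, and two sub-cases arise. Either the maximal slabs $(\alpha_n,\beta_n) \subset \pi_d(\Omega)\setminus\{s_i\}_i$ containing the heights $x^n_d$ are eventually one fixed slab $(\alpha,\beta)$ abutting $s_{i_0}$; then for small enough $r > 0$ the set $B_r(x^\infty) \cap (\R^{d-1}\times(\alpha,\beta))$ is open, connected, interior to $\Omega$, and contains no interface of $\mathcal{P}$, so it must lie in a single component, again forcing all $x^n$ with $n$ large to share the same $P_j$. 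Or the heights $x^n_d$ populate infinitely many distinct slabs, which means that infinitely many interface heights $s_i$ must accumulate at $s_{i_0}$.

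In the latter case, each such accumulating effective height $s_i$ must carry a horizontal interface piece inside $B_r(x^\infty) \subset \Omega$ of $\mathcal{H}^{d-1}$-measure bounded below by a positive constant comparable to $\mu$: indeed, on either side of $s_i$ there are density-$1$ points of distinct components $P_{j_n} \neq P_{j_{n'}}$, and since no further interface of $\mathcal{P}$ separates them within their respective slabs, the change of component across $s_i$ must occur on essentially the full horizontal cross-section $B_r(x^\infty) \cap \R^{d-1}\times\{s_i\}$, whose $(d-1)$-measure is bounded below by the geometry of the ball. Summing over the infinitely many accumulating heights $s_i$ then forces $\sum_j \mathcal{H}^{d-1}(\partial^* P_j \cap \Omega) = +\infty$, contradicting the Caccioppoli partition property of $\mathcal{P}$ and concluding the proof. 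The main technical hurdle is precisely this last estimate: turning distinctness of components across accumulating heights into a uniform lower bound on horizontal interface areas, which relies on the horizontality of the interfaces together with the interior geometry of $\Omega$ around the point $x^\infty$.
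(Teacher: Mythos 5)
Your underlying mechanism is the right one, and it is essentially what the paper's own one-line proof (``direct consequence of the compactness of $K$ and the definition of $\mathscr{P}(\Omega)$'') tacitly relies on: horizontality of the interfaces plus finite total perimeter give a uniform lower bound on the area of any interface met inside a set compactly contained in $\Omega$, so only finitely many components can reach $K$. As written, however, there are concrete defects. First, the cylinders $C_j=V_j\times I_j$ are built from ``the maximal open subinterval of $\pi_d(\Omega)\setminus\{s_i\}_i$ containing $x_{j,d}$''. The family $\{s_i\}_i$ in \eqref{eq:def-P} is merely some countable set of heights whose hyperplanes contain the boundaries; it is not canonical and may be dense (nothing prevents presenting $\mathcal{P}$ with $\{s_i\}_i\supset\Q$), in which case the maximal interval is empty. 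Even if you replace it by the set of \emph{effective} heights (those actually carrying interface), ruling out that these accumulate at $x_{j,d}\in K$ is essentially the content of the lemma, so the construction as stated is ill-founded or circular; moreover $V_j\times I_j\subset\Omega$ is not guaranteed for the maximal choice of $I_j$. Second, your trichotomy at $x^\infty$ is not exhaustive: you pass from ``$x^\infty$ avoids every interface hyperplane'' directly to ``$x^\infty_d=s_{i_0}$'', omitting the case $x^\infty_d\notin\{s_i\}_i$ but $x^\infty_d$ an accumulation point of the $s_i$, which is precisely the delicate regime. Third, the decisive estimate — that a change of component across an interior height occupies ``essentially the full horizontal cross-section'' of a small ball — is asserted rather than proved; you flag it yourself as the main hurdle.

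All three issues are fixed by one observation, which also makes the contradiction scaffolding and the case analysis unnecessary. Since $\partial^* P_j\cap\Omega$ is contained in countably many hyperplanes orthogonal to $e_d$, the measure-theoretic normal $\nu_{P_j}$ equals $\pm e_d$ $\mathcal{H}^{d-1}$-a.e.\ there, hence $\partial_k\chi_{P_j}=0$ in $\mathcal{D}'(\Omega)$ for $k=1,\dots,d-1$; consequently on every cylinder $V\times I\subset\Omega$ with $V\subset\R^{d-1}$ connected, each $\chi_{P_j}$ coincides a.e.\ with a $BV$ function of $x_d$ alone, i.e.\ the partition is layered there. Set $\rho:=\tfrac12\dist(K,\partial\Omega)$ and, for $x\in K$, consider $Z_x:=B'_{\rho/2}(x')\times(x_d-\tfrac{\rho}{2},x_d+\tfrac{\rho}{2})\subset B_\rho(x)\subset\Omega$. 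On $Z_x$ the number of indices $j$ with $\mathcal{L}^d(P_j\cap Z_x)>0$ is at most one plus the number of jump heights of the layered structure, and each jump height contributes at least $\mathcal{H}^{d-2+1}(B'_{\rho/2})=c(\rho)>0$ to $\sum_j\mathcal{H}^{d-1}(\partial^* P_j)<+\infty$; hence at most $1+c(\rho)^{-1}\sum_j\mathcal{H}^{d-1}(\partial^* P_j)$ components meet $Z_x$. Covering $K$ by finitely many sets $B_{\rho/2}(x_k)\subset Z_{x_k}$, $x_k\in K$, by compactness yields the lemma directly (with ``intersects'' read, as it must be for sets defined up to null sets, as intersection of positive measure), without any reference to the non-canonical family $\{s_i\}_i$.
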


\begin{proof}
The result is a direct consequence of the compactness of $K$, and of the definition of  $\mathscr{P}(\Omega)$.
\end{proof}

 \begin{proposition}[Rescaled displacement fields on  subdomains]\label{lemma: intermediate step2}
 Consider the setting of Proposition \ref{lemma: intermediate step1}. Let $\Omega' \subset \subset \Omega$, and denote by $\lbrace P_j \rbrace_{j=1}^N$ the components of $\mathcal{P}$ which intersect $\Omega'$, see Lemma \ref{lemma: localization}. Then there exist  $u \in \mathscr{U}(\Omega')$ with $J_u \subset \bigcup_j \partial P_j$ and collections of constants $\lbrace t_j^\eps\rbrace_{j=1}^N$ for $\eps>0$ such that the rescaled displacements  $u^{\ep}\colon\Omega'\to \R^d$ defined by 
 \begin{align}\label{eq: uepsdef}
 u^\eps(x) := \eps^{-1}\sum\nolimits_{j=1}^N \big(y^{\ep}(x)- ( R^{\ep} M_j^{\ep}\, x+t_j^{\ep}) \big)\chi_{P^{\ep}_j}(x) + \eps^{-1}\sum\nolimits_{j>N} \big(y^{\ep}(x)- R^{\ep} M_j^{\ep}\, x \big)\chi_{P^{\ep}_j}(x)
 \end{align}
for $x \in \Omega'$ satisfy (up to a subsequence, not relabeled)
 \begin{align}\label{eq: uepsdef2}
 u^{\ep}\to u\quad \text{ in measure in }\ \Omega', \quad\quad\quad\quad  \nabla u^{\ep}\wk \nabla u\quad\text{weakly in } L^2(\Omega';\M^{d\times d}).
 \end{align}
    \end{proposition}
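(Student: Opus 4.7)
The plan is to exploit the rigidity estimate \eqref{eq:rigidity-compactness} for the gradient bound, choose translations $t_j^\eps$ that normalize $u^\eps$ on a fixed reference set inside each limiting component, apply a uniform Poincar\'e-type inequality on each large component, and then extract a convergent subsequence via Rellich-Kondrachov. The limiting displacement will then be assembled from its restrictions to each component $P_j$ ($j\le N$), while contributions from small components (indices $j>N$) disappear in measure in the limit.

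First, by Lemma \ref{lemma: localization} only finitely many components $P_1,\ldots,P_N$ of $\mathcal{P}$ intersect $\Omega'$. Regardless of the choice of $\{t_j^\eps\}_j$, the rigidity estimate \eqref{eq:rigidity-compactness} immediately yields $\|\nabla u^\eps\|_{L^2(\Omega')}\le C_{\Omega'}$. For $j=1,\ldots,N$, I would fix an open set $G_j\subset \subset P_j$ of positive measure and set
\[
t_j^\eps := \fint_{G_j\cap P_j^\eps}\bigl(y^\eps- R^\eps M_j^\eps\,x\bigr)\,{\rm d}x,
\]
which is well defined for $\eps$ small, since $\chi_{P_j^\eps}\to\chi_{P_j}$ in $L^1$ implies $\mathcal{L}^d(G_j\cap P_j^\eps)\to\mathcal{L}^d(G_j)>0$. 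This normalizes $u^\eps$ to have vanishing mean on $G_j\cap P_j^\eps$.

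Combining this normalization with a Poincar\'e-type inequality on the fixed Lipschitz slab $L_j := L_{\Omega'}(P_j)$ (which is well-shaped thanks to H8.), and using \eqref{eq: partition property-new} to control the discrepancy between $P_j^\eps\cap L_j$ and $L_j$, I would obtain $\|u^\eps\|_{L^2(P_j^\eps\cap L_j)}\le C$. By Rellich-Kondrachov and a diagonal argument over $j=1,\ldots,N$, I would then extract a subsequence along which $u^\eps\to u_j$ strongly in $L^2(P_j\cap\Omega')$ and $\nabla u^\eps \wk \nabla u_j$ weakly in $L^2$ for some $u_j\in H^1(P_j\cap\Omega')$. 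Setting $u:=\sum_{j=1}^N u_j\chi_{P_j}$ and noting that $\mathcal{P}\in\mathscr{P}(\Omega)$ forces $\bigcup_j\partial P_j\cap\Omega$ to lie in countably many hyperplanes orthogonal to $e_d$, I would conclude $u\in\mathscr{U}(\Omega')$ with $J_u\subset\bigcup_j\partial P_j$. Convergence in measure on $\Omega'$ follows since $u^\eps\to u$ strongly in $L^2$ on each $P_j\cap P_j^\eps\cap\Omega'$ ($j\le N$), while the complement has vanishing measure by $\mathcal{L}^d(P_j\triangle P_j^\eps)\to 0$ and $\sum_{j>N}\mathcal{L}^d(P_j^\eps\cap\Omega')\to 0$. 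Weak $L^2$ convergence of $\nabla u^\eps$ to $\nabla u$ then follows from the global $L^2$ bound and the pointwise identification on each $P_j$.

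The hard part will be establishing the uniform Poincar\'e-type inequality: the domain $P_j^\eps$ is not fixed and may fail to be connected. The remedy is to exploit \eqref{eq: partition property-new}, which ensures that $P_j^\eps$ essentially fills the layer $L_{\Omega'}(P_j^\eps)$ up to a set of measure $\le C\eps^p$, and that this layer approaches the fixed Lipschitz slab $L_j$ as $\eps\to 0$. A careful Sobolev extension or a slice-wise Poincar\'e argument (using H8.\ to guarantee that horizontal slices of $\Omega'$ are connected), combined with the normalization on the fixed reference set $G_j$, should then reduce the estimate to a standard Poincar\'e-Wirtinger inequality on $L_j$ with a constant uniform in $\eps$.
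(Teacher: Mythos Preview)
Your overall strategy---fix translations by normalizing on a reference set, apply a Poincar\'e-type inequality, extract a limit component-wise---is the right one, and the final steps (assembling $u$, identifying $J_u$, deducing weak $L^2$ convergence of gradients from the global bound plus pointwise identification) are fine. But the ``hard part'' you flag at the end is a genuine obstruction, and your proposed remedies do not close it in the $L^2$ framework. The issue is that the function $g_j^\eps:=\eps^{-1}(y^\eps-R^\eps M_j^\eps x-t_j^\eps)$, which agrees with $u^\eps$ on $P_j^\eps$, is smooth on all of $L_j$, but its gradient is \emph{not} bounded in $L^2(L_j)$: on the spillover set $L_j\setminus P_j^\eps$ (of measure at most $C\eps^p$ by \eqref{eq: partition property-new}, with $p\in(1,2)$) the deformation may be in the opposite phase, so $|\nabla g_j^\eps|\sim\kappa/\eps$ there, giving $\|\nabla g_j^\eps\|_{L^2(L_j\setminus P_j^\eps)}\sim\eps^{p/2-1}\to\infty$. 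Consequently you cannot reduce to Poincar\'e--Wirtinger on the fixed slab $L_j$ via any Sobolev extension that stays in $H^1$, and working directly on the varying, possibly disconnected set $P_j^\eps\cap L_j$ gives no uniform Poincar\'e constant.

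The paper's fix is precisely to drop from $L^2$ to $L^p$ for the same exponent $p\in(1,2)$ appearing in \eqref{eq: partition property-new}. One works on fixed compact sets $K_n\subset\subset P_j\cap\Omega'$ and bounds $\|\nabla g_j^\eps\|_{L^p(K_n)}$ by splitting $K_n$ into $K_n\cap P_j^\eps$ (controlled by \eqref{eq:rigidity-compactness} and H\"older), the bounded-gradient part of $K_n\setminus P_j^\eps$ (contributing $\eps^{-1}\mathcal{L}^d(K_n\setminus P_j^\eps)^{1/p}\le C$ thanks to the matching exponent), and the large-gradient part (controlled by the elastic energy via H4.). This yields a uniform $W^{1,p}(K_n)$ bound, hence compactness and measure convergence of $g_j^\eps$ on each $K_n$; an exhaustion argument in $n$ then produces $f_j\in W^{1,p}(P_j\cap\Omega')$, and the weak $L^2$ convergence of $\nabla u^\eps$ is recovered afterward from the global $L^2$ rigidity bound, exactly as in your final sentence.
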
 

We note that the second addend in \eqref{eq: uepsdef} is intended to be zero if $\lbrace P^\eps_j\rbrace_j$ consists only of $N$ components.

\begin{proof}
First, we recall that the components $\lbrace P_j \rbrace_{j=1}^N$ are connected by definition, that  $\mathcal{H}^{d-1}(\partial P_j \setminus \partial^* P_j) =0$, and that $\nu_{P_j} = \pm e_d$ for  $\mathcal{H}^{d-1}$-a.e. $x \in \partial \EEE P_j \cap \Omega$, where the latter two properties follow from the fact that $\mathcal{P} \in \mathscr{P}(\Omega)$.  Possibly choosing another set $\Omega' \subset \subset \Omega'' \subset \subset \Omega$ we can assume that the sets $P_j \cap \Omega''$, $j= 1,\ldots, N$, are connected and have Lipschitz boundary. Clearly, it suffices to show the statement for $\Omega''$ in place of $\Omega'$. For simplicity, we still denote this set by $\Omega'$.

  Let  $(R^\eps, \mathcal{P}^\eps, \mathcal{M}^\eps)$ be the triples identified in Proposition \ref{lemma: intermediate step1}.  By \eqref{eq:rigidity-compactness} we get 
\begin{align}\label{eq: veps2}
\Big\|  \sum\nolimits_{j} \big( \nabla y^\eps - R^\eps M_j^{\ep}\big)\,\chi_{P^{\ep}_j}  \Big\|_{L^2(\Omega')} \le C\eps  
\end{align}
for a constant $C>0$ depending  on $\Omega'$.

\emph{Step 1: Poincar\'e estimate on each component.}   Since $P_j \cap \Omega'$ is connected with Lipschitz boundary, we can choose an increasing sequence of smooth connected sets $K_n\subset\subset P_j\cap\Omega' $ such that $\mathcal{L}^d( (P_j \cap \Omega') \setminus K_n) \to 0$ as $n\to\infty$. The sets can be chosen such that the functions 
\begin{equation}
\label{eq:def-fj}
f^{n,\eps}_j(x) := \eps^{-1}(y^\eps(x) - R^\eps  \,  M_j^\eps   \,   x - t^{n,\eps}_j)\quad\text{ for every }x \in K_n,
\end{equation}    
for suitable $t^{n,\eps}_j \in \R^d$, satisfy a Poincar\'e estimate
\begin{equation}
\label{eq:new-est}
\Vert  f^{n,\eps}_j \Vert_{L^p(K_n)} \le C\Vert \nabla f^{n,\eps}_j \Vert_{L^p(K_n)},
\end{equation}
where the constant $C$ depends on $P_j$, but is independent of $\eps$ and $n$. By \eqref{eq:comp-sets} and \eqref{eq: partition property-new}  we get   $P^\eps_j \cap \Omega' \to P_j \cap \Omega'$ and $L_{\Omega'}(P^\eps_j) \to P_j \cap \Omega'$ in measure as $\eps \to 0$. The latter and the fact that  $K_n\subset\subset \Omega' \cap P_j$ show that $K_n \subset L_{\Omega'}(P^\eps_j)$ for $\eps$ small enough (depending on $n$). Thus, by using again \eqref{eq: partition property-new}  and  $\mathcal{L}^d((P^\eps_j \cap \Omega') \triangle (P_j \cap \Omega'))\to 0$, we get 
  \begin{equation}
\label{eq: saturation}
\mathcal{L}^d(K_n\setminus {P}^\ep_j) \le  \mathcal{L}^d\big(K_n\setminus L_{\Omega'}(P^\eps_j)\big) + \mathcal{L}^d\big(L_{\Omega'}(P^\eps_j)\setminus {P}^\ep_j\big) = \mathcal{L}^d\big(L_{\Omega'}(P^\eps_j)\setminus {P}^\ep_j\big)  \leq C\eps^p 
\end{equation}
for $\eps$ small enough depending on $n$, where $p=p(d) \in (1,2)$ is fixed. Let $L$ be a sufficiently large constant  (independent of $\eps,n$)  such that 
\begin{align*}
\dist(F,SO(d)\lbrace A,B\rbrace) \ge |F-R^\eps M_j^\eps|/2 \ \ \ \ \text{for all} \ \ F \in \M^{d \times d} \text{ with } |F-R^\eps M_j^\eps| \ge L.
\end{align*}
Then, $\Vert \nabla f^{n,\eps}_j \Vert_{L^p(K_n)}$ can be controlled by
\begin{align*}
 &\Vert \nabla f^{n,\eps}_j \Vert_{L^p({P}_j^\eps\cap K_n)} + \Vert \nabla f^{n,\eps}_j  \Vert_{L^p((K_n \setminus {P}_j^\eps) \cap \lbrace |\nabla y^\eps - R^\eps M^\eps_j| \le L \rbrace)}  + \Vert \nabla f^{n,\eps}_j  \Vert_{L^p((K_n \setminus {P}_j^\eps) \cap \lbrace |\nabla y^\eps-R^\eps M^\eps_j| > L \rbrace)} \notag\\
& \le \frac{1}{\eps} \Vert \nabla y^\eps - R^\eps M^\eps_j \Vert_{L^p({P}_j^\eps \cap \Omega')} + \frac{L}{\eps}  \big(\mathcal{L}^d( K_n \setminus {P}^\eps_j  )\big)^{\frac{1}{p}} \EEE + \frac{2}{\eps}\Vert \dist(\nabla y^\eps,SO(d)\lbrace A,B\rbrace) \Vert_{L^p(\lbrace |\nabla y^\eps-R^\eps M^\eps_j| > L \rbrace)}.
\end{align*}
Using  H\"older's inequality for $p<2$, \eqref{eq: veps2}, \eqref{eq: saturation}, as well as  \eqref{eq: nonlinear energy}, \eqref{eq:unif-en-estimate} together with H4.\ we obtain the uniform estimate $\Vert \nabla f^{n,\eps}_j \Vert_{L^p(K_n)} \le C$ for $C>0$ independent of $n$ and $\eps$.  Then \eqref{eq:new-est} yields    
\begin{equation}
\label{eq:t-and-C}
\Vert  f^{n,\eps}_j   \Vert_{W^{1,p}(K_n)} \le C. 
\end{equation}
We now show that the translations $\lbrace t^{n,\eps}_j\rbrace_\eps$ and thus the functions $\lbrace f^{n,\eps}_j\rbrace_\eps$ can actually be chosen \emph{independently} of $n$. Recall that $K_n \supset K_1$ for all $n \in \N$.  In view of \eqref{eq:def-fj} and \eqref{eq:t-and-C}, we have 
\begin{align}\label{eq: compare-diff-n}
 \ep^{-1}|t^{n,\ep}_j-t^{m,\ep}_j| \, \mathcal{L}^d(K_1) \le  \|f^{\ep,n}_j\|_{L^1(K_n)}+\|f^{\ep,m}_j\|_{L^1(K_m)}  \leq C
\end{align}
for every $m,n\in \N$, where the constant $C$ is independent of $n$, $m$, and $\eps$.
Thus, for every $\ep>0$ we get that $\{t^{n,\ep}_j\}_{n}$ is a bounded sequence, and up to the extraction of a  subsequence (not relabeled) there exists $t^{\ep}_j$ such that 
\begin{align}\label{eq: limiting constants}
t^{n,\ep}_j\to t^{\ep}_j\quad\text{as }n\to +\infty.
\end{align}
  The constants $t^\eps_j$ are the ones from the statement of the proposition. By \eqref{eq: compare-diff-n} we  get $\ep^{-1}|t^{n,\ep}_j-t^{\ep}_j| \le C$ for a constant $C>0$ independent of $n$ and $\eps$. This along with \eqref{eq:t-and-C} yields that the functions 
\begin{align}\label{eq: feps-defi}  
  f^{\eps}_j(x) := \eps^{-1}(y^\eps(x) - R^\eps  \,  M_j^\eps   \,   x -t^\eps_j) \quad\text{ for every }x \in P^\eps_j, 
  \end{align}
satisfy for all $n \in \N$ and all  $\eps$ small enough (depending on $n$) 
$$\|f^{\ep}_j\|_{W^{1,p}(K_n)}\leq C,$$
where the constant $C>0$ is independent of $\eps$ and $n$.  Thus, by a compactness and a diagonal argument there exists a function $f_j \in W^{1,p}(P_j\cap \Omega';\R^d)$ such that (up to  a subsequence) 
\begin{align}\label{eq: first-lim-pass}
f^\eps_j\rightharpoonup f_j \ \ \ \ \text{weakly in } W^{1,p} (P_j\cap \Omega';\R^d).
\end{align}

\emph{Step 2: Definition of the limiting displacement field.} Recall the    functions $f_j$ identified in \eqref{eq: first-lim-pass} and the constants $t_j^{\ep}$ from \eqref{eq: limiting constants}.  We set $u := \sum_{j=1}^N f_j \chi_{P_j}$ on $\Omega'$ and define $u^\eps$ as in \eqref{eq: uepsdef}. Below we will show that indeed  $u \in \mathscr{U}(\Omega')$, see  \eqref{eq: def of U}, but now we first confirm  \eqref{eq: uepsdef2}. In view of \eqref{eq: feps-defi}, we get that $u^\eps = f_j^\eps$ on $P_j^\eps\cap \Omega'$.  We claim that, up to a further subsequence, there holds
\begin{align}\label{eq: convergence-for-gradu}
{\rm (i)}&  \ \   u^\eps \to f_j =  u \ \ \ \text{ in measure on} \ P_j\cap \Omega' \quad \text{for all } j=1,\ldots,N, \notag \\ 
{\rm  (ii)}&  \ \  \nabla u^\eps   \rightharpoonup      \nabla u \quad \text{weakly in } L^2(\Omega';\mathbb{M}^{d\times d}).  
\end{align}
In fact, \eqref{eq: first-lim-pass} along with  \eqref{eq:comp-sets} and $u^\eps = f_j^\eps$ on $P_j^\eps\cap \Omega'$ implies measure convergence on $P_j\cap \Omega'$. This yields (i). To see (ii), we use  \eqref{eq: uepsdef} and \eqref{eq: veps2} to get 
$$ \nabla u^\eps  = \eps^{-1}   \sum\nolimits_{j} \big( \nabla y^\eps - R^\eps M_j^{\ep}\big)\,\chi_{P^{\ep}_j}  \rightharpoonup g$$
weakly in $L^2(\Omega';\M^{d \times d})$ for a suitable function $g$. Again by \eqref{eq: first-lim-pass} we get $g = \nabla f_j$ on each $P_j \cap \Omega'$, and therefore $g = \nabla u$ a.e.\ on $\Omega'$. This yields (ii).  Clearly, \eqref{eq: convergence-for-gradu} implies \eqref{eq: uepsdef2}.

It remains to check that  $u \in \mathscr{U}(\Omega')$. \EEE Recall that only the components $P_j$, $j=1,\ldots,N$, intersect $\Omega'$. Since $f_j \in W^{1,p}(P_j\cap \Omega';\R^d)$ for all $j=1,\ldots,N$, we get $J_u \subset \bigcup_{j=1}^N \partial P_j$. Thus, we find $\mathcal{H}^{d-1}(J_u)< + \infty$ since $\mathcal{P}$ is a Caccioppoli partition. More precisely, as $\mathcal{P} \in \mathscr{P}(\Omega)$, the jump set of $u$ is contained in $(d-1)$-dimensional hyperplanes orthogonal to $e_d$. It thus remains to show that $u \in SBV^2(\Omega';\R^d)$. First, note that $\nabla u \in L^2(\Omega';\M^{d \times d})$  by \eqref{eq: convergence-for-gradu}(ii).  Since each $P_j \cap \Omega'$ has Lipschitz boundary,  we get that $u|_{P_j\cap \Omega'} \in H^1(P_j\cap \Omega';\R^d)$, and the trace of $u$ on $\partial P_j \cap \Omega'$ exists. As the number of sets $\lbrace P_j \rbrace_j$ intersecting $\Omega'$ is finite, we obtain $u \in SBV^2(\Omega';\R^d)$ by applying \cite[Theorem 3.84]{Ambrosio-Fusco-Pallara:2000}.
\end{proof}

We next show that the translations can be defined so that there exists a limiting rescaled displacement field on the whole domain $\Omega$.

 \begin{proposition}[Rescaled displacement fields]\label{lemma: intermediate step3}
 Consider the setting of Proposition \ref{lemma: intermediate step1}. Then there exist collections of constants $\mathcal{T}^\eps = \lbrace t_j^\eps\rbrace_{j}$ for $\eps >0$ and $u \in \mathscr{U}(\Omega)$ with $J_u \subset \bigcup_j \partial P_j$  such that the rescaled displacements  $u^{\ep}$ defined in \eqref{eq: rescaled disp} satisfy \eqref{eq:comp-u}--\eqref{eq:comp-grad-u}.  
 \end{proposition}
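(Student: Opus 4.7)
The plan is to exhaust $\Omega$ by an increasing sequence of Lipschitz subdomains $\Omega_1\subset\subset \Omega_2\subset\subset\cdots$ with $\bigcup_k \Omega_k=\Omega$, to apply Proposition \ref{lemma: intermediate step2} on each $\Omega_k$, and to glue the resulting local translations and displacement fields together via an inductive correction combined with a diagonal extraction. By Lemma \ref{lemma: localization}, only finitely many components of $\mathcal{P}$ meet any given $\Omega_k$; up to relabeling I can write them as $P_1,\dots,P_{N_k}$ with $N_k\le N_{k+1}$ and $\bigcup_k\{1,\dots,N_k\}=\N$ (components of zero measure being irrelevant).

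Next, I would construct the translations inductively. At $k=1$, Proposition \ref{lemma: intermediate step2} furnishes, along a subsequence $\{\eps^{(1)}\}$, translations $\{t_j^{\eps,1}\}_{j\le N_1}$ producing a limit $v_1\in\mathscr{U}(\Omega_1)$. Assume translations $\{t_j^{\eps,k}\}_{j\le N_k}$ and a limit $v_k\in\mathscr{U}(\Omega_k)$ have been chosen along $\{\eps^{(k)}\}$. Apply Proposition \ref{lemma: intermediate step2} on $\Omega_{k+1}$ along a further subsequence $\{\eps^{(k+1)}\}\subset\{\eps^{(k)}\}$ to obtain translations $\{\tilde t_j^{\eps,k+1}\}_{j\le N_{k+1}}$ and a limit $\tilde v_{k+1}\in\mathscr{U}(\Omega_{k+1})$. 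Because translations do not affect gradients, the two rescaled displacements on $\Omega_k$ share the same weak $L^2$-limit of their gradients, so their difference on $P_j^\eps\cap\Omega_k$ is the spatial constant $\eps^{-1}(\tilde t_j^{\eps,k+1}-t_j^{\eps,k})$ for every $j\le N_k$. Passing to the measure limit on $P_j\cap\Omega_k$ forces the existence of $c_j^{k+1}:=\lim_\eps \eps^{-1}(\tilde t_j^{\eps,k+1}-t_j^{\eps,k})\in\R^d$, and $v_k=\tilde v_{k+1}+c_j^{k+1}$ on $P_j\cap\Omega_k$. Setting $t_j^{\eps,k+1}:=\tilde t_j^{\eps,k+1}-\eps c_j^{k+1}$ for $j\le N_k$ and $t_j^{\eps,k+1}:=\tilde t_j^{\eps,k+1}$ for $N_k<j\le N_{k+1}$, one checks that the corresponding rescaled displacement converges (along $\{\eps^{(k+1)}\}$) to $v_{k+1}:=\tilde v_{k+1}+\sum_{j\le N_k}c_j^{k+1}\chi_{P_j}\in\mathscr{U}(\Omega_{k+1})$, and by construction $v_{k+1}|_{\Omega_k}=v_k$. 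Note also that $t_j^{\eps,k+1}-t_j^{\eps,k}=o(\eps)$ for every $j\le N_k$, so the correction to the translations is negligible at leading order.

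Finally, I perform a Cantor diagonal extraction by choosing $\bar\eps_k\in\{\eps^{(k)}\}$ with $\bar\eps_k\downarrow 0$, and for $\eps=\bar\eps_k$ define $t_j^\eps:=t_j^{\eps,k}$ for $j\le N_k$ and arbitrary (say zero) for $j>N_k$. Define $u\in L^1_{\rm loc}(\Omega;\R^d)$ by $u|_{\Omega_k}:=v_k$; the consistency $v_{k+1}|_{\Omega_k}=v_k$ makes this well-posed. Since $J_{v_k}\subset\bigcup_j\partial P_j\cap\Omega_k$ and $\bigcup_j\partial P_j\cap\Omega$ is contained in a countable union of hyperplanes orthogonal to $e_d$, one gets $u\in\mathscr{U}(\Omega)$ with $J_u\subset\bigcup_j\partial P_j$. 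Along $\{\bar\eps_k\}_k$ one inherits measure convergence $u^\eps\to u$ on each $\Omega_k$ and weak $L^2$-convergence $\nabla u^\eps\rightharpoonup\nabla u$ on each $\Omega_k$; together with Lemma \ref{lemma: localization} and a standard diagonal argument these yield \eqref{eq:comp-u}--\eqref{eq:comp-grad-u}. The principal technical difficulty lies in the consistency step: what makes the absorption of the discrepancy between two consecutive applications of Proposition \ref{lemma: intermediate step2} possible is precisely the uniqueness of the $W^{1,p}$-limit on each component up to additive constants, which guarantees that the required correction $c_j^{k+1}$ is a finite vector and that the updates to the translations are of order $o(\eps)$, hence compatible with a well-defined global rescaled displacement.
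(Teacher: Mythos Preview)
Your proposal is correct and follows essentially the same strategy as the paper: exhaust $\Omega$ by compactly contained subdomains, apply Proposition~\ref{lemma: intermediate step2} on each, observe that the rescaled translation discrepancy $\eps^{-1}(t_j^{\eps,n}-t_j^{\eps,n+1})$ has a finite limit (this is exactly \eqref{eq: controlled difference}), and glue. The only organizational difference is that the paper first extracts a single subsequence along which all local convergences \eqref{eq: uepsdef2-new} hold simultaneously and then fixes $t_j^\eps:=t_j^{\eps,n}$ from the \emph{first} level $n$ with $N_{n-1}<j\le N_n$, absorbing the accumulated shifts into modified limits $v^n$ (see \eqref{eq: unvn}), whereas you correct the translations inductively and diagonalize at the end; for your final diagonal to work you must choose $\bar\eps_k\in\{\eps^{(k)}\}$ quantitatively (so that $u^{\bar\eps_k,k}$ is within $1/k$ of $v_k$ in a metric for measure convergence on $\Omega_k$), not arbitrarily.
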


\begin{proof}
Consider a sequence $\{\Omega_n\}_n$ of open sets, compactly contained in $\Omega$, satisfying $\Omega_n\subset \Omega_{n+1}$ for every $n\in \N$, and such that  $\mathcal{L}^d(\Omega\setminus \Omega_n)\to 0$ as $n \to \infty$.   We denote by $\lbrace {\mathcal{P}}^\eps\rbrace_\eps$ and $\mathcal{P}$ the partitions identified in Proposition \ref{lemma: intermediate step1}. In view of Lemma \ref{lemma: localization}, we can reorder the partition ${\mathcal{P}} =\lbrace P_j \rbrace_j$ in a specific way and can choose integers ${N}_1 \le {N}_2 \le \ldots$ such that  $ \lbrace {P}_j \rbrace_{j=1}^{{N}_n}$ indicate the components of ${\mathcal{P}}$   which intersect $\Omega_n$. For each $n\in\N$, the translations given by Proposition \ref{lemma: intermediate step2} (with $\Omega_n$ in place of $\Omega'$) are denoted by $\lbrace {t}_j^{\eps,n} \rbrace_{j=1}^{{N}_n}$. The displacement fields on $\Omega_n$ defined in \eqref{eq: uepsdef} are denoted by  $u^{\eps,n}$. We denote their limits by $u^n \in \mathscr{U}(\Omega_n)$ and recall that $J_{u^n} \subset \bigcup_j \partial P_j$. By a diagonal argument, we may suppose that there exists a subsequence of $\eps$ (not relabeled) such that \eqref{eq: uepsdef2} holds for all $n \in \N$, i.e.,
 \begin{align}\label{eq: uepsdef2-new}
 u^{\ep,n}\to u^n\quad \text{ in measure in \EEE } \Omega_n, \quad\quad\quad\quad  \nabla u^{\ep,n}\wk \nabla u^n\quad\text{weakly in } L^2(\Omega_n;\M^{d\times d}).
 \end{align}
Now it is elementary to check that for each $n \in \N$  there holds
\begin{align}\label{eq: controlled difference}
\lim_{\eps \to 0}\eps^{-1} (t^{\eps,n}_j - t^{\eps,n+1}_j) \ \ \ \text{exists and is finite for all $1 \le j \le N_{n}$.}
\end{align}
Indeed, this follows from $\mathcal{L}^d(P_j \cap \Omega_{n})>0$ for all $1 \le j \le N_{n}$, and the fact that
$$ \eps^{-1}  (t^{n}_j - t^{n+1}_j)\chi_{P^\eps_j\cap \Omega_n} =  ( u^{\eps,n+1} - u^{\eps,n}) \chi_{P^\eps_j\cap \Omega_n}  \to   (u^{n+1}-u^{n} )  \chi_{P_j\cap \Omega_n}$$
in measure, see  \eqref{eq:comp-sets} and \eqref{eq: uepsdef}--\eqref{eq: uepsdef2}, as well as \eqref{eq: uepsdef2-new}.

We define the collection of translations $\mathcal{T}^\eps = \lbrace t_j^\eps\rbrace_ {j}$ as follows: for each $j$, choose $n \in \N$ such that $N_{n-1} <  j \le  N_n$, and set $t_j^\eps = t_j^{\eps,n}$, where we define $N_0 =0$ for convenience. We define $u^{\ep}\colon\Omega\to \R^d$  as in \eqref{eq: rescaled disp}. By recalling the definition of  $u^{\eps,n}$   in \eqref{eq: uepsdef}, we get that the restriction of $u^{\eps}$ on $\Omega_n$, for $n \in \N$, satisfies
$$u^\eps =   u^{\eps,n} + \sum\nolimits_{j=1}^{N_n} \eps^{-1}(t_j^{\eps,n}- t_j^{\eps})  \chi_{P^\eps_j \cap \Omega_n} - \sum\nolimits_{j> N_n} \eps^{-1} t_j^{\eps} \chi_{P^\eps_j \cap \Omega_n}  \ \ \ \text{on $\Omega_n$}.$$
We introduce the function  $v^n \in \mathscr{U}(\Omega_n)$  by
\begin{align}\label{eq: unvn} 
v^n = u^n + \sum\nolimits_{j=1}^{N_n} \,\Big( \lim_{\eps \to 0} \eps^{-1}( t_j^{\eps,n}- t_j^{\eps}) \Big) \,  \chi_{P_j \cap \Omega_n},
\end{align}
which is well defined by  \eqref{eq: controlled difference} and the fact that $t_j^\eps = t_j^{\eps,m}$ for the index $1 \le m \le n$ such that $N_{m-1}< j\le N_m$. In view of  \eqref{eq:comp-sets}, \eqref{eq: uepsdef2-new}, and the fact that  $P_j \cap \Omega_n = \emptyset$ for all $j>N_n$, we then get 
\begin{align}\label{eq: controlled difference2}
 u^{\ep}\to v^n\quad \text{ in measure on } \Omega_n, \quad\quad\quad\quad  \nabla u^{\ep}\wk \nabla v^n\quad\text{weakly in } L^2(\Omega_n;\M^{d\times d}).
 \end{align}
 This also shows that $v^n=v^m$ on $\Omega_n$ for all $n \le m$. This observation allows to define the function $u\colon \Omega \to \R^d$ by $u = v^n$ on $\Omega_n$ for all $n \in \N$.  The fact that $J_{u^n} \subset \bigcup_j \partial P_j$  along with \eqref{eq: unvn} also yields $J_u \subset \bigcup_j \partial P_j$. Clearly, we get $u \in \mathscr{U}(\Omega)$ since $v^n \in \mathscr{U}(\Omega_n)$ for all $n \in \N$. Finally, by \eqref{eq: controlled difference2} and the fact that $u = v^n$ on $\Omega_n$  we get that $u^{\ep}$ satisfies \eqref{eq:comp-u}--\eqref{eq:comp-grad-u}. This concludes the proof.   
\end{proof}

We conclude this section with the  proof of  Theorem \ref{thm:compactness}. Given the above constructions, it remains to show that the partitions and translations can be chosen in a specific way  such that also the selection principle \eqref{eq: toinfty} is satisfied. Although the realization of this is very technical, the main idea is quite simple: whenever two components violate \eqref{eq: toinfty}, they are combined, and they are replaced by a single component in the partition.

\begin{proof}[Proof of Theorem \ref{thm:compactness}]
Let $\{y^{\ep}\}_{\ep}\subset H^2(\Omega;\R^2)$ be a sequence of deformations satisfying  \eqref{eq:unif-en-estimate}. Consider a sequence $\{\Omega_n\}_n$ of open sets compactly contained in $\Omega$, satisfying $\Omega_n\subset \Omega_{n+1}$ for every $n\in \N$, and such that $\mathcal{L}^d(\Omega\setminus \Omega_n)\to 0$. We will prove that, after extracting a subsequence in $\eps$ (not relabeled), for each $n \in \N$ there exists a sequence of quadruples $(R^{\eps}, \mathcal{P}^{\eps,n}, \mathcal{M}^{\eps,n}, \mathcal{T}^{\eps,n})$ with $\mathcal{P}^{\eps,n} = \lbrace P_j^{\eps,n}\rbrace_j$, $\mathcal{M}^{\eps,n} = \lbrace M_j^{\eps,n}\rbrace_j$, $\mathcal{T}^{\eps,n} = \lbrace t_j^{\eps,n}\rbrace_j$ and limiting triples $(y,u^n,\mathcal{P}^n) \in \mathscr{Y}(\Omega)\times \mathscr{U}(\Omega) \times \mathscr{P}(\Omega)$ such that \eqref{eq: partition property}--\eqref{eq:comp-M} and  \eqref{eq: rescaled disp}--\eqref{eq:comp-grad-u} hold, and additionally we have
\begin{align}\label{eq: toinfty-onK}
 \frac{|t_i^{\ep,n}-t_j^{\ep,n}|}{\ep}\to +\infty \ \ \ \ &\text{for all $i \neq j$ }\ \text{with $P_i^{n}\cap \Omega_n \neq \emptyset$, $P_j^{n}\cap \Omega_n \neq \emptyset$,}\  \text{and $\lim_{\eps \to 0} M^{\eps,n}_i = \lim_{\eps \to 0} M^{\eps,n}_j$},
\end{align} 
where $\lbrace P^n_j\rbrace_j$ denote the components of the limiting partition $\mathcal{P}^n$. Note that the deformation $y$ and the rotations  $R^\eps$ can be chosen independently of $n \in \N$. Moreover, we will see that the objects can be constructed such that for each $n \ge m$ and each  $\eps >0$ we have  
\begin{align}\label{eq: toinfty-onK2}
{\rm (i)}& \ \ \text{for all $j$ there exists $l_j$ such that \ }  P_j^{\eps,m}  \subset   P_{l_j}^{\eps,n},  \notag \\
{\rm (ii)} & \ \  \text{for all $j$ we have }  M^{\eps,n}_{l_j} = M_j^{\eps,m} \text{ with $l_j$ given in {(i)}}, \notag \\
{\rm (iii)} & \ \  \text{if } \mathcal{L}^d(P_j^{\eps,m} \cap \Omega_m)>0, \text{ then } t^{\eps,n}_{l_j} = t_j^{\eps,m}  \text{ with $l_j$ given in {(i)}},\notag\\
{\rm (iv)} & \ \  u^{\eps,n} = u^{\eps,m} \text{ on $\Omega_m$} \ \ \text{ and } \  \ \nabla u^{\eps,n} = \nabla u^{\eps,m} \text{ on $\Omega$},  
\end{align}
where $u^{\eps,n}$ denote the rescaled displacement fields given in \eqref{eq: rescaled disp} for the quadruples  $(R^{\eps}, \mathcal{P}^{\eps,n}, \mathcal{M}^{\eps,n}, \mathcal{T}^{\eps,n})$. We defer the proof to Step 2 below and first show that this implies Theorem \ref{thm:compactness} for a suitable diagonal sequence (Step 1).

\emph{Step 1: Extracting a diagonal sequence.} First,  we find by  \eqref{eq:comp-u}  on $\Omega_n$ and $\Omega_m$, and by  \eqref{eq: toinfty-onK2}(iv) that for all $n \ge m$ there holds $u^n = u^m$ on $\Omega_m$ and $\nabla u^n = \nabla u^m$ on $\Omega$.  
This observation allows to define the function $u\colon \Omega \to \R^d$ by $u = u^n$ on $\Omega_n$ for all $n \in \N$. Clearly, we get $u \in \mathscr{U}(\Omega)$ since $u^n \in \mathscr{U}(\Omega)$ for all $n \in \N$.  In particular, there holds  for all $n \in \N$
\begin{align}\label{eq: the same!}
u =  u^n\quad  \text{ on } \ \Omega_n, \ \ \ \ \ \nabla u = \nabla u^n \quad\text{on $\Omega$}.
 \end{align}
As $\mathcal{P}^{\eps,n}$ is a coarsening of $\mathcal{P}^{\eps,m}$   for all $n \ge m$ by  \eqref{eq: toinfty-onK2}(i), we get that 
 $\mathcal{P}^{n}$ is a coarsening of $\mathcal{P}^{m}$   for all $n \ge m$ by \eqref{eq:comp-sets}. This gives $\sum_j \mathcal{H}^{d-1}(\partial P^n_j) \le \sum_j \mathcal{H}^{d-1}(\partial P^1_j) < + \infty$ for all $n \in \N$.  By Theorem \ref{th: comp cacciop}    there exists a partition $\mathcal{P} = \lbrace P_j \rbrace_j$  such that $P_j^{n}\to P_j$ in measure for all $j \in \N$. Note that this convergence also implies $\mathcal{P}  \in \mathscr{P}(\Omega)$. This and  \eqref{eq:comp-sets} for each $m \in \N$ yield
\begin{align}\label{eq: the same!8}
\lim_{n\to \infty}\sum\nolimits_j \mathcal{L}^d(P_j^n \triangle P_j) =  0, \ \ \ \ \ \ \ \lim_{\eps \to 0} \sum\nolimits_j \mathcal{L}^d(P_j^{\eps,m} \triangle P_j^m) = 0 \ \ \ \text{for all $m \in \N$},
\end{align}
where $\triangle$ denotes the symmetric difference of two sets, see below Theorem \ref{th: comp cacciop}. Thus, by Attouch's diagonalization lemma \cite[Lemma 1.15 and Corollary 1.16]{attouch}, we can choose a diagonal sequence $\lbrace n(\eps) \rbrace_\eps$ such that 
\begin{align}\label{eq: partition convergence}
P^{\eps,n(\eps)}_j \to P_j \ \ \ \text{ in measure as $\ep\to 0$  for all indices $j$.}
\end{align}
We now define the triples $\mathcal{P}^\eps = \mathcal{P}^{\eps,n(\eps)}$, $\mathcal{M}^\eps= \mathcal{M}^{\eps,n(\eps)}$, and $\mathcal{T}^\eps = \mathcal{T}^{\eps,n(\eps)}$, and check  that \eqref{eq: partition property}--\eqref{eq:comp-grad-u} hold for the limiting triple  $(y,u,\mathcal{P})$.

 First,  \eqref{eq: partition property}--\eqref{eq: partition property-new}  follow directly from the corresponding properties of the partitions  $\mathcal{P}^{\eps,n}$. We observe that  \eqref{eq: toinfty-onK2}(i),(ii)  yield  
\begin{align*}
\sum\nolimits_j R^{\ep} M_j^{\ep,  n(\ep)}\chi_{P_j^{\ep,  n(\ep) }} = \sum\nolimits_j R^{\ep} M^{\ep,1}_j\chi_{P^{\ep,1}_j}.
\end{align*}
This implies \eqref{eq:rigidity-compactness}, \eqref{eq:comp-R}, \eqref{eq:comp-y}, and \eqref{eq:comp-M}  by using the corresponding properties for the triple  $(R^{\eps}, \mathcal{P}^{\eps,1}, \mathcal{M}^{\eps,1})$. Property \eqref{eq:comp-sets} follows from \eqref{eq: partition convergence}.

Consider the rescaled displacement fields $u^{\eps,n(\eps)}$ defined in   \eqref{eq: rescaled disp}. For each $m \in \N$ we have $u^{\eps,n(\eps)} \to u^m =u$  in measure   on $\Omega_m$ by \eqref{eq: toinfty-onK2}(iv), \eqref{eq: the same!}, and \eqref{eq:comp-u}  for $m$. As $m$ was arbitrary, we get  \eqref{eq:comp-u}. In a similar fashion, \eqref{eq:comp-grad-u} follows also by taking  into account   \eqref{eq: toinfty-onK2}(iv), \eqref{eq: the same!}, and \eqref{eq:comp-grad-u} for each $m$.

It remains to check \eqref{eq: toinfty}. To this end, we fix $i \neq j$ such that $\mathcal{L}^d(P_i), \mathcal{L}^d(P_j)>0$, and  $\lim_{\eps \to 0 }M_i^{\eps,n(\eps)} = \lim_{\eps \to 0 }M_j^{\eps,n(\eps)}$. In view of \eqref{eq: the same!8}--\eqref{eq: partition convergence}, we can fix $m \in \N$ (independently of $\eps$) and $\eps_0 = \eps_0(m)>0$  such that for all $0<\eps \le \eps_0$ we have for $k=i,j$
\begin{align}\label{eq: NNNNN}
{\rm (i)} \ \ \mathcal{L}^d(P^{m}_k\cap \Omega_m)>0, \ \mathcal{L}^d(P^{\eps,m}_k\cap \Omega_m)>0  \ \ \ \ \text{ and }  \ \ \ \ {\rm (ii)} \ \  \mathcal{L}^d(P_k^{\eps, n(\eps)} \triangle P_k^{\eps,m}) \le \frac{1}{2} \mathcal{L}^d(P_k^{\eps, m}).
\end{align}
 (To see (ii), we use   $\mathcal{L}^d(P_k^{\eps,n(\eps)} \triangle P_k^{\eps,m}) \le \mathcal{L}^d(P_k^{\eps,n(\eps)} \triangle P_k) +  \mathcal{L}^d(P_k \triangle P_k^{m})+ \mathcal{L}^d(P_k^{m} \triangle P_k^{\eps,m}) \to 0$ and $ \mathcal{L}^d(P_k^{ \eps, m}) \to  \mathcal{L}^d(P_k^{m})$ as $\eps \to 0$.) Possibly by passing to a smaller $\eps_0$,  we can also suppose that $n(\eps) \ge m$  for all $\eps \le \eps_0$. By \eqref{eq: toinfty-onK2}(i) for $n=n(\eps)$ we find a component $P^{\eps,n(\eps)}_{l_k}$ which contains $P^{\eps,m}_k$ up to an $\mathcal{L}^d$-negligible set for $k=i,j$. By  \eqref{eq: NNNNN}(ii) we necessarily have  that $\mathcal{L}^d(P^{\eps,n(\eps)}_k\cap P^{\eps,m}_{k})>0$. Thus, $k = l_k$. This along with \eqref{eq: NNNNN}(i) and \eqref{eq: toinfty-onK2}(ii),(iii) shows $M^{\eps,n(\eps)}_k = M^{\eps,m}_k$ and  $t^{\eps,n(\eps)}_k = t^{\eps,m}_k$ for $k=i,j$.   Then, also    $\lim_{\eps \to 0} M^{\eps,m}_i = \lim_{\eps \to 0} M^{\eps,m}_j$  and therefore, taking also    \eqref{eq: toinfty-onK}, \eqref{eq: NNNNN}(i) into account, we finally get 
    \begin{align*}
\lim_{\eps  \to 0} \frac{ |t_i^{\eps,n(\eps)}-t_j^{\eps,n(\eps)}|}{ \eps} = \lim_{\eps  \to 0} \frac{|t_i^{\ep,m}-t_j^{\ep,m}|}{ \eps }= + \infty.
  \end{align*}


  \emph{Step 2: Coarsening scheme.} We inductively construct sequences of quadruples $(R^{\eps}, \mathcal{P}^{\eps,n}, \mathcal{M}^{\eps,n}, \mathcal{T}^{\eps,n})$ and limiting triples $(y,u^n,\mathcal{P}^n)$ for $n \in \N$ such that \eqref{eq: partition property}--\eqref{eq:comp-M}, \eqref{eq: rescaled disp}--\eqref{eq:comp-grad-u}, and \eqref{eq: toinfty-onK}--\eqref{eq: toinfty-onK2}  hold.

  We start with $n=1$. We apply  Proposition \ref{lemma: intermediate step3} to obtain rotations $R^\eps$  and triples $(\hat{\mathcal{P}}^{\eps}, \hat{\mathcal{M}}^{\eps}, \hat{\mathcal{T}}^{\eps})$, as well as a limiting triple $(y,\hat{u},\hat{\mathcal{P}})$ such that \eqref{eq: partition property}--\eqref{eq:comp-M} and \eqref{eq: rescaled disp}--\eqref{eq:comp-grad-u} hold. We write $\hat{\mathcal{P}}^\eps = \lbrace \hat{P}^\eps_j\rbrace_j$, $\hat{\mathcal{M}}^\eps = \lbrace \hat{M}^\eps_j\rbrace_j$, and $\hat{\mathcal{T}}^\eps = \lbrace \hat{t}^\eps_j\rbrace_j$. We modify the triples to get sequences which also satisfy \eqref{eq: toinfty-onK}.

\emph{Coarsening scheme for $n=1$.} We  construct ${\mathcal{P}}^{\eps,1}$, ${\mathcal{T}}^{\eps,1}$, and ${\mathcal{M}}^{\eps,1}$, as well as the limiting partition  ${\mathcal{P}}^1$ and the limiting displacement $u^1$ by the following iterative scheme: suppose that two components $\hat{P}_i$ and $\hat{P}_j$ of $\hat{\mathcal{P}}$ with $i\neq j$  violate  \eqref{eq: toinfty-onK}  on $\Omega_1$, i.e., 
\begin{align}\label{eq: uniform-eps}
\liminf\nolimits_{\eps \to 0}\eps^{-1}  |\hat{t}^{\eps}_i - \hat{t}^{\eps}_j| <+\infty, \ \ \ \ \  \lim_{\eps \to 0} \hat{M}^\eps_i = \lim_{\eps \to 0} \hat{M}^\eps_j,  \ \ \ \ \ \hat{P}_i\cap \Omega_1\neq \emptyset,  \ \ \ \ \   \hat{P}_j\cap \Omega_1\neq \emptyset. 
\end{align}
First, by passing to a subsequence in $\eps$ (not relabeled), we get $\hat{M}^{\eps}_i = \hat{M}^{\eps}_j$ for all $\eps$. Now, we replace $\hat{P}_i$ and $\hat{P}_j$ in $\hat{\mathcal{P}}$ by $P^1_* := \hat{P}_i \cup \hat{P}_j$.   In a similar fashion, we replace $\hat{P}^{\eps}_i$ and $\hat{P}^{\eps}_j$ in $\hat{\mathcal{P}}^{\eps}$ by $P^{\eps,1}_* := \hat{P}^{\eps}_i \cup \hat{P}^{\eps}_j$ for each $\eps>0$. Accordingly, on the set ${{P}}^{\eps,1}_*$ we introduce the translation $t^{\eps,1}_* = \hat{t}^{\eps}_i$ and the phase $M^{\eps,1}_* := \hat{M}^{\eps}_i = \hat{M}^{\eps}_j$ for each $\eps>0$. In view of Lemma \ref{lemma: localization}, only finitely many components of $\hat{\mathcal{P}}$ intersect $\Omega_1$. Thus, we can repeat this construction at most a finite number of times until, for the resulting partition $\mathcal{P}^1$ and the triples $({\mathcal{P}}^{\eps,1}, {\mathcal{M}}^{\eps,1}, {\mathcal{T}}^{\eps,1})$, each pair of components $P^1_i$ and $P^1_j$ satisfies  \eqref{eq: toinfty-onK}. This concludes the construction in the case $n=1$. (The definition of the resulting displacement field $u^1$ will be indicated below.) 

We check that   \eqref{eq: partition property}--\eqref{eq:comp-M}, \eqref{eq: rescaled disp}--\eqref{eq:comp-grad-u}, and \eqref{eq: toinfty-onK} are satisfied. First, \eqref{eq: toinfty-onK} clearly holds true by construction. To confirm the other properties, we assume for simplicity that the above coarsening scheme was applied only once for two sets $\hat{P}_i$ and $\hat{P}_j$ intersecting $\Omega_1$ since the general case follows by induction.   First, \eqref{eq:comp-R} and \eqref{eq:comp-y}  are not affected by the modification, and therefore still hold. Since the function $\sum_jR^\eps \hat{M}_j^\eps \chi_{\hat{P}^\eps_j}$ remains unchanged by construction, also \eqref{eq:rigidity-compactness}  and \eqref{eq:comp-M} are still satisfied. To see \eqref{eq: partition property} and \eqref{eq:comp-sets}, it suffices to recall that $P^{\eps,1}_*  =  \hat{P}^{\eps}_i \cup \hat{P}^{\eps}_j$ which implies   that $P^{\eps,1}_* \to P^1_* = \hat{P}_i \cup \hat{P}_j$ in measure. We now show \eqref{eq: partition property-new} for $\Omega' \subset\subset \Omega$. As $\mathcal{L}^d(\hat{P}_k \cap \Omega_1)>0$  for $k=i,j$,   for $\eps$ small enough, \eqref{eq: partition property-new} and \eqref{eq:comp-sets} (for $\hat{\mathcal{P}^\eps}$) imply $\mathcal{L}^d(\Omega' \cap \hat{P}^\eps_k) \ge  \mathcal{L}^d( L_{\Omega'}(\hat{P}^\eps_k) \setminus \hat{P}^\eps_k) $ for $k=i,j$. This also yields $\mathcal{L}^d(\Omega' \cap P^{\eps,1}_*) \ge  \mathcal{L}^d( L_{\Omega'}(P^{\eps,1}_*) \setminus P^{\eps,1}_*) $ for $\eps$ small enough. Therefore, since $\mathcal{L}^d( L_{\Omega'}(P^{\eps,1}_*) \setminus P^{\eps,1}_*) \le \sum_{k=i,j}\mathcal{L}^d( L_{\Omega'}(\hat{P}^\eps_k) \setminus \hat{P}^\eps_k) $, \eqref{eq: partition property-new} holds, as well. We now finally introduce the limiting displacement field and check \eqref{eq:comp-u}--\eqref{eq:comp-grad-u}. We observe 
$$u^{\eps,1} - \hat{u}^{\eps} = \eps^{-1} (\hat{t}_j^{\ep}-\hat{t}_i^{\ep})\chi_{\hat{P}_j^{\eps}}$$
where $u^{\eps,1}$ and $\hat{u}^{\eps}$ are the corresponding displacement fields defined in  \eqref{eq: rescaled disp} with respect to the quadruples $(R^\eps,{\mathcal{P}}^{\eps,1}, {\mathcal{M}}^{\eps,1}, {\mathcal{T}}^{\eps,1})$ and  $(R^\eps,\hat{\mathcal{P}}^{\eps}, \hat{\mathcal{M}}^{\eps}, \hat{\mathcal{T}}^{\eps})$, respectively.  By \eqref{eq: uniform-eps}  we obtain   $\eps^{-1}(\hat{t}_j^{\ep}-\hat{t}_i^{\ep}) \to   t_0 \in \R^d$, possibly passing to a subsequence (not relabeled). This implies that $u^{\eps,1}$ converges in measure  to
 \begin{align}\label{eq: u1def}
u^1 : = \hat{u} + t_0 \chi_{\hat{P}_j} \in \mathscr{U}(\Omega)
\end{align}
 and gives  \eqref{eq:comp-u}. Finally, \eqref{eq:comp-grad-u} follows from $\nabla {u}^{\eps,1} = \nabla \hat{u}^{\eps}$ and $\nabla {u}^{1} = \nabla \hat{u}$.

Now suppose that the quadruples $(R^{\eps}, \mathcal{P}^{\eps,n-1}, \mathcal{M}^{\eps,n-1}, \mathcal{T}^{\eps,n-1})$ and the limiting triple $(y,u^{n-1},\mathcal{P}^{n-1})$  in step $n-1$ have been constructed such that \eqref{eq: partition property}--\eqref{eq:comp-M}, \eqref{eq: rescaled disp}--\eqref{eq:comp-grad-u}, and \eqref{eq: toinfty-onK} hold, and \eqref{eq: toinfty-onK2} is satisfied up to step $n-1$.  We define  the objects in step $n$ as follows: if \eqref{eq: toinfty-onK} holds with respect to the set $\Omega_n$, we simply set $(\mathcal{P}^{\eps,n}, \mathcal{M}^{\eps,n}, \mathcal{T}^{\eps,n})=(\mathcal{P}^{\eps,n-1}, \mathcal{M}^{\eps,n-1}, \mathcal{T}^{\eps,n-1})$, and observe that all properties are automatically satisfied.

If \eqref{eq: toinfty-onK} is violated, the strategy is to apply the coarsening scheme described above to modify the partitions and translations such that all properties, in particular \eqref{eq: toinfty-onK}--\eqref{eq: toinfty-onK2}, are fulfilled.

\emph{Coarsening scheme for general $n$.}  If two components ${P}^{n-1}_i$ and ${P}^{n-1}_j$ violate  \eqref{eq: toinfty-onK} (with respect to the set $\Omega_n$), we combine them to one component $P^n_* := {P}^{n-1}_i\cup {P}^{n-1}_j$ and similarly we define $P^{\eps,n}_* := {P}^{\eps,n-1}_i\cup {P}^{\eps,n-1}_j$ for all $\eps>0$. Moreover, we define the phase $M^{\eps,n}_* = M^{\eps,n-1}_i = M^{\eps,n-1}_j$ for all $\eps>0$. Concerning the translation $t_*^{\eps,n}$, we proceed as follows: we observe that at most one of the two sets ${P}^{n-1}_i$ and ${P}^{n-1}_j$ intersects $\Omega_{n-1}$. Indeed, it is not possible that both sets intersect $\Omega_{n-1}$ as \eqref{eq: toinfty-onK} holds by construction in step $n-1$, and we assumed that ${P}^{n-1}_i$ and ${P}^{n-1}_j$ violate  \eqref{eq: toinfty-onK} with respect to $\Omega_{n} \supset \Omega_{n-1}$. Suppose that (at most) ${P}^{n-1}_i$ intersects $\Omega_{n-1}$. We define $t_*^{\eps,n} := t_{i}^{\eps,n-1}$. We repeat this procedure (at most a finite number of times, cf.\  Lemma \ref{lemma: localization}) until all pairs of components satisfy \eqref{eq: toinfty-onK}.

Then, for the resulting quadruple, \eqref{eq: toinfty-onK} is satisfied by construction. Exactly as before in the step $n=1$, we can check that \eqref{eq: partition property}-\eqref{eq:comp-M} and \eqref{eq: rescaled disp}-\eqref{eq:comp-grad-u} hold. Finally, let us confirm \eqref{eq: toinfty-onK2}: (i)  follows  from the fact that in the procedure we iteratively have combined two components. Similarly, (ii) is a consequence of the fact that only sets with the same phase are combined. Finally,  (iii) and (iv) follow from the definition of the translations in the coarsening scheme and the fact that, if two components are combined, at least one did not intersect $\Omega_{n-1}$. 

We perform this coarsening scheme for each $n \in \N$. Note that in each step we pass to a further subsequence in $\eps$ (not relabeled). Then, \eqref{eq: toinfty-onK}--\eqref{eq: toinfty-onK2} follow for each $n \in \N$ for a suitable diagonal sequence. 
\end{proof}

 \begin{remark}[Local properties of jump sets]\label{remark: localization}
For later purposes, we remark that each $K \subset \subset \Omega$  intersects only a finite number of $(d-1)$-dimensional hyperplanes orthogonal to $e_d$ which intersect $J_u$. This can be seen as follows: the construction of the displacement fields in the previous proof  shows  that $J_{u^n} \subset \bigcup_j \hat{P}_j$ for all $n\in \N$. This follows from \eqref{eq: u1def} and the fact that $J_{\hat{u}} \subset \bigcup_j \hat{P}_j$, see  Proposition  \ref{lemma: intermediate step3} for $\hat{u}$ and $\hat{P}_j$ in place of $u$ and $P_j$, respectively. Therefore, also   $J_{u} \subset \bigcup_j \hat{P}_j$ by \eqref{eq: the same!}. The desired property now follows from Lemma \ref{lemma: localization}.
\end{remark}

\EEE

We close this section by mentioning that the definition and construction of the partition in the previous proof is inspired by  \cite[Section 5]{Friedrich-ARMA} where in a different context partitions with a property of type \eqref{eq: toinfty} are called  \emph{coarsest partitions}.

\section{Analysis  of admissible limiting configurations}
\label{sec:limiting-triple}

 This section is devoted to the proofs of Proposition \ref{prop:ex-coarsest-part}, Proposition \ref{lemma: admissible-u-y-jump}, and Proposition \ref{lemma: admissible-u}.    We first show that limiting deformations and partitions are uniquely identified whereas limiting displacements may differ by global infinitesimal rotations and piecewise translations.

 \begin{proof}[Proof of  Proposition \EEE \ref{prop:ex-coarsest-part}]
  Let $\lbrace y^\eps \rbrace_\eps$ be a   sequence as in Theorem \ref{thm:compactness} and let $(y^1,u^1,\mathcal{P}^1)$,  $(y^2,u^2,\mathcal{P}^2)$ be two admissible triples. We start with the proof of (i). First,  $y^1 =y^2$ \EEE  follows directly from \eqref{eq:comp-y}. In what follows, we thus simply denote the deformation by $y$. Suppose by contradiction that the two partitions $\mathcal{P}^1 = \lbrace P^1_j\rbrace_j$ and $\mathcal{P}^2 = \lbrace P_j^2 \rbrace_j$ are different. Up to reordering we may assume that $P^1_1 \cap {P}^2_1$ and $P^1_2 \cap P^2_1$ have positive $\mathcal{L}^d$-measure.

  Let $({R}^{\eps,1}, {\mathcal{P}}^{\eps,1}, {\mathcal{M}}^{\eps,1}, {\mathcal{T}}^{\eps,1})$ and 
 $( {R}^{\eps,2},  {\mathcal{P}}^{\eps,2}, {\mathcal{M}}^{\eps,2}, {\mathcal{T}}^{\eps,2})$ be sequences of quadruples converging to the limiting triples $(y,u^1,\mathcal{P}^1)$ and $(y,u^2,{\mathcal{P}}^2)$, respectively, in the sense of \eqref{eq: partition property}--\eqref{eq:comp-grad-u}. By \eqref{eq:comp-R} we have  $\lim_{\eps \to 0} {R}^{\eps,1} = \lim_{\eps \to 0} {R}^{\eps,2}  = R \in SO(d)$, where  $R$ is such that  $y \in \mathcal{Y}_R(\Omega)$. By  \eqref{eq:comp-sets}, \eqref{eq:comp-M}, and the fact that $P^1_1 \cap {P}^2_1$ and $P^1_2 \cap P^2_1$ have positive $\mathcal{L}^d$-measure,  we then obtain for all $\eps$ small enough
  \begin{align}\label{eq: same-phases}
  M^{\eps,1}_1 = M^{\eps,1}_2 = M^{\eps,2}_1. 
  \end{align}
  Since the rescaled displacement fields $u^{\eps,1}$ and ${u}^{\eps,2}$,  defined in \eqref{eq: rescaled disp} with respect to the two different quadruples, converge in measure  in $\Omega$ by \eqref{eq:comp-u}, we observe that also
 $$\frac{1}{\eps}\Big(\sum\nolimits_j (R^{\eps,1} M^{\eps,1}_j\,x + t^{\eps,1}_j) \chi_{P^{\eps,1}_j} - \sum\nolimits_j ({R}^{\eps,2}{M}^{\eps,2}_j\,x + {t}^{\eps,2}_j) \chi_{{P}^{\eps,2}_j}\Big)$$  
 converges  in measure in $\Omega$.   In view of \eqref{eq:comp-sets}, \eqref{eq: same-phases}, and the fact that $P^1_1 \cap {P}^2_1$ and $P^1_2 \cap {P}^2_1$ have positive $\mathcal{L}^d$-measure, we obtain
\begin{align}\label{eq: rot-and-t-diff}
 |R^{\eps,1}-R^{\eps,2}| +  |t_1^{\eps,1} - {t}_1^{\eps,2}| + |t_2^{\eps,1} - {t}_1^{\eps,2}| \le C\eps
\end{align}
uniformly in $\eps$ for some $C>0$.   This is an elementary property for affine mappings. (See, e.g., \cite[Lemma 3.4]{FriedrichSolombrino}; the function $\psi$ therein can be chosen as in  \cite[Remark 2.2]{FriedrichSolombrino2}.) By the triangle inequality this particularly yields $|t_1^{\eps,1} - t_2^{\eps,1}| \le C\eps$. This, however, contradicts \eqref{eq: toinfty} in view of  \eqref{eq: same-phases}.  This concludes the proof of (i).

In the following, we denote the unique partition by $\mathcal{P} = \lbrace P_j \rbrace_j$ to simplify notation. We now show (ii). To this end, fix $P_j$ with positive measure. In view of \eqref{eq:comp-sets} and \eqref{eq:comp-M}, we find  $M_j^{\eps,1} = M_j^{\eps,2}$   for $\eps$ small enough. As  $u^{\eps,1} - {u}^{\eps,2}$ converges in measure in $\Omega$ by \eqref{eq:comp-u}, we thus obtain $|{R}^{\eps,1}  - R^{\eps,2} | \le C \eps$ and $ |t^{\eps,1}_j - t^{\eps,2}_j| \le C_j\eps$  for a constant $C>0$ depending only on $\Omega$, and some $C_j>0$  depending on $j$ but not on $\eps$,  see \eqref{eq: rot-and-t-diff} for a similar argument. Using the  formula
(see \cite[(3.20)]{FrieseckeJamesMueller:02})
\begin{align}\label{eq: linearization formula}
\Big| \frac{(F R^T)^T + FR^T}{2} - {\rm Id}\Big| = \dist(F,SO(d)) + {\rm O}(|F-R|^2) \ \ \  \ \text{for $F \in \M^{d\times d}$, $R \in SO(d)$,}
\end{align}
 we obtain $S^\eps \in \M^{d \times d}_{\rm skew}$ with $|S^\eps| \le C$ such that
 $${R}^{\eps,2}  - R^{\eps,1}   =  ({R}^{\eps,2} \,  (R^{\eps,1})^T  - {\rm Id}) \,  R^{\eps,1} =   (\eps S^\eps + {\rm O}(\eps^2)) \, R^{\eps,1}.$$
  Thus, possibly passing to a  subsequence (not relabeled),  we find $S \in \M^{d \times d}_{\rm skew}$ and  for each $j \in \N$ with $\mathcal{L}^d(P_j) >0$ a constant  $t_j \in \R^d$ such that $\eps^{-1}(t^{\eps,2}_j - {t}^{\eps,1}_j) \to t_j$  and  $\eps^{-1}({R}^{\eps,2}  - R^{\eps,1}) \to S R$, where $R \in SO(d)$ is such that $y \in \mathcal{Y}_R(\Omega)$. In particular, note that $S$ is independent of the component $P_j$.  By \eqref{eq:comp-M},   \eqref{eq: rescaled disp}--\eqref{eq:comp-u}, and the fact that $M_j^{\eps,1} = M_j^{\eps,2}$  for $\eps$ small enough \EEE we get for almost every $x \in P_j$ 
 $$  u^1(x) - {u}^2(x) \EEE = \lim_{\eps \to 0} \,  \big(u^{\eps,1}(x) - {u}^{\eps,2}(x)\big) = \lim_{\eps \to 0}  \frac{1}{\eps} \Big( (R^{\eps,2} - {R}^{\eps,1}) M_j^{\eps,1} \, x +  t_j^{\ep,2}-{t}_j^{\ep,1} \Big) =   S \, \nabla y(x)\,  x + t_j.$$
Recalling the definition in \eqref{eq: infini rigid}  we obtain (ii). 

We finally show (iii). To this end, fix $\tilde{T}\in  \mathscr{T}(y,\mathcal{P})$, say $\tilde{T}(x) = \sum_j \tilde{t}_j \chi_{P_j}(x)  + \tilde{S} \, \nabla y(x) \, x $ for $x \in \Omega$. We have to show that $(y,u^1 + \tilde{T},\mathcal{P} )$  is an admissible triple. Recall that the quadruples $({R}^{\eps,1}, {\mathcal{P}}^{\eps,1}, {\mathcal{M}}^{\eps,1}, {\mathcal{T}}^{\eps,1})$ converge to $(y,u^1,\mathcal{P} )$ in the sense of \eqref{eq: partition property}-\eqref{eq:comp-grad-u}. 

   We let $\bar{\mathcal{P}}^\eps = \mathcal{P}^{\eps,1}$,  $\bar{\mathcal{M}}^\eps = \mathcal{M}^{\eps,1}$  and define
$\bar{\mathcal{T}}^\eps = \lbrace \bar{t}_j^\eps \rbrace_j$ by $\bar{t}_j^\eps = {t}_j^{\eps,1}  -  \eps  \tilde{t}_j$ for all indices $j$. Moreover, we let $\bar{R}^\eps \in SO(d)$  be  such that $|\bar{R}^\eps - ({\rm Id} -  \eps \tilde{S}){R}^{\eps,1}| = \dist(({\rm Id}  -  \eps \tilde{S}){R}^{\eps,1},SO(d))$, which by \eqref{eq: linearization formula}  (for $F=({\rm Id} - \eps \tilde{S}){R}^{\eps,1}$ and $R={R}^{\eps,1}$) implies
\begin{align}\label{eq: rigid motions}
\bar{R}^\eps   =  ({\rm Id} - \eps \tilde{S})   \, {R}^{\eps,1} + {\rm O}(\eps^2).
\end{align}
We now see that $(\bar{R}^\eps, \bar{\mathcal{P}}^\eps, \bar{\mathcal{M}}^\eps, \bar{\mathcal{T}}^\ep)$ converges to $(y, {u}^1 + \tilde{T},\mathcal{P})$ in the sense of \eqref{eq: partition property}-\eqref{eq:comp-grad-u}.   Indeed, as  $|\bar{R}^\eps - {R}^{\eps,1}| \le C\eps$,  the  properties \eqref{eq: partition property}-\eqref{eq:comp-M} are  satisfied.   Property \eqref{eq: toinfty} follows from the corresponding property for $\mathcal{T}^{\eps,1}$ and the definition of $\bar{\mathcal{T}}^\eps$.   Define $\bar{u}^\eps$ as in \eqref{eq: rescaled disp}. To confirm \eqref{eq:comp-u}, we  calculate for almost every $x \in P_j$ using \eqref{eq:comp-M}   and \eqref{eq: rigid motions}
\begin{align*}
 \lim_{\ep\to 0}  \big( \bar{u}^\eps(x)-{u}^{\eps,1}(x)\big) &=  \lim_{\ep \to 0}\frac{1}{\eps} \big( ( {R}^{\eps,1} - \bar{R}^\eps ) \,  M_j^{\eps,1} \, x +   {t}_j^{\ep,1} - \bar{t}_j^{\ep}\big)  = \lim_{\ep\to 0}  \frac{1}{\eps} ({R}^{\eps,1} - \bar{R}^\eps) \,  M_j^{\eps,1} \, x   + \tilde{t}_j \\ &  =   \tilde{S} \, \nabla y(x) \, x +  \tilde{t}_j. 
\end{align*}
Using \eqref{eq:comp-u} for $u^1$, we find  $\bar{u}^\eps \to u^1 + \tilde{T}$ in measure on the bounded set $\Omega$. This yields   \eqref{eq:comp-u}. Finally, \eqref{eq:comp-grad-u} follows from a similar computation. 
 \end{proof}

 We proceed by characterizing the jump set of the gradients of limiting deformations.

\begin{proof}[Proof of Proposition \ref{lemma: admissible-u-y-jump}]
 As $y \in \mathcal{Y}_R(\Omega)$, we recall that $\partial \{x\in \Omega\colon\,\nabla y(x)\in RA\}$ consists of subsets of hyperplanes orthogonal to $e_d$, see below Lemma \ref{lemma:comp-def}. Now, assume by contradiction that $J_{\nabla y} \not\subset \bigcup\nolimits_{j} \partial P_j \cap \Omega$. Then, by   $\mathcal{P} \in\mathscr{P}(\Omega)$ and Lemma  \ref{lemma: localization}, we find a stripe $D := \lbrace t_0-\rho < x_d < t_0+\rho \rbrace \cap \Omega'$, with $\Omega' \subset \subset \Omega$, $t_0 \in \R$, and $\rho>0$ small, such that $D \subset P_j$ for some $j \in \N$ and (up to reflection) $D \cap \lbrace x_d > t_0 \rbrace \subset \lbrace \nabla  y = RA \rbrace$,  $D \cap \lbrace x_d < t_0 \rbrace \subset \lbrace \nabla y = RB \rbrace$. In view of \eqref{eq:comp-R}--\eqref{eq:comp-sets}, however, this contradicts \eqref{eq:comp-M}.  
To see that the inclusion might be strict, we refer to Case (2) in Example \ref{ex} with $l=1/2$.
 \end{proof}

We conclude this section with a characterization of the jump heights of limiting displacements. 
 
\begin{proof}[Proof of Proposition \ref{lemma: admissible-u}]
We first observe that it suffices to show  that, if $\Omega' \subset \subset \Omega$, then  the result  holds  for  every  $x \in \Omega'$. Consider a (subset of a) hyperplane $S := \lbrace x_d = t_0\rbrace \cap \Omega'$ \EEE with $\mathcal{H}^{d-1}\big(S \cap  J_u   \big)>0$. We distinguish two situations: 
$$\text{(a)}\quad \mathcal{H}^{d-1}\Big(S \cap  \bigcup\nolimits_j\partial P_j  \Big)= 0\quad\text{and}\quad\text{(b)}\quad \mathcal{H}^{d-1}\Big(S \cap  \bigcup\nolimits_j\partial P_j  \Big)> 0.$$
 To simplify notation, we set without restriction $t_0 = 0$. We start with Case (a). Choose another set $\Omega''$ with $\Omega' \subset \subset \Omega'' \subset \subset \Omega$. As $\mathcal{P} \in \mathscr{P}(\Omega)$, by   Lemma  \ref{lemma: localization} and  Remark \ref{remark: localization} we find $\rho>0$ small enough such that the cylindrical set    $D:= \omega \times (- \rho,\rho)$, $\omega \subset \R^{d-1}$, satisfies $D \cap \lbrace x_d =0 \rbrace = S$, is  contained in a single component $P_j$,  is contained in $\Omega''$, and  satisfies 
  \begin{equation}
 \label{eq:jump-set-u-one}
 J_u\cap D\subset  S = \lbrace x_d = 0\rbrace \cap \Omega'. 
 \end{equation} 
By Proposition \ref{lemma: admissible-u-y-jump},  it is not restrictive to concentrate on the case $\nabla y = RA$ on $D  \subset P_j \EEE$, which corresponds to proving properties (i) and (ii) of the statement. Analogously, property (iii) may be derived after some modifications in the notation. 
 
   \emph{Step 1: Case (a), property (ii).} Let $({R}^{\eps}, {\mathcal{P}}^{\eps}, {\mathcal{M}}^{\eps}, {\mathcal{T}}^{\eps})$  be sequences of quadruples converging to  $(y,u,\mathcal{P})$ in the sense of \eqref{eq: partition property}--\eqref{eq:comp-grad-u}, and define $u^\eps$ as in \eqref{eq: rescaled disp}.  Assume also that $\mathcal{J}^\ep$ is the (at most countable) set of indices for the partition $\mathcal{P}^\ep$.  We denote  by $\mathcal{J}_1^\eps$  the indices with $\mathcal{L}^d(\Omega'' \cap P^\eps_j) \le   \mathcal{L}^d( L_{\Omega''}(P^\eps_j) \setminus P^\eps_j  )$, and  we  let $\mathcal{J}^\eps_2 =  \mathcal{J}^\ep \EEE \setminus \mathcal{J}^\eps_1$.  By \eqref{eq: partition property-new}, \eqref{eq:comp-sets},  \eqref{eq:comp-u},  \eqref{eq:comp-grad-u}, Fubini's theorem,  and Fatou's lemma we get that for  $\mathcal{H}^{d-1}$-a.e.\  $x'\in  \omega$ there exists a sequence $\lbrace \eps_k \rbrace_k \subset (0,+\infty)$ with $\eps_k \to 0$ such that for a.e.\ $0<\rho'<\rho$ we have   
\begin{align}\label{eq: slicing properties3}
{\rm (i)}& \ \ (x',-\rho'), (x',\rho')   \in P^{\eps_k}_{j} \  \text{for all $k$  large enough}, \ \ \ u^{\eps_k}(x',\pm \rho') \to u(x',\pm \rho') \ \text{as $k \to \infty$},   \notag \\ 
{\rm (ii)} & \ \  \sum_{j \in \mathcal{J}^\eps_1}  \mathcal{L}^1\big( P_j^{\eps_k} \cap (\lbrace x' \rbrace \times (-\rho',\rho')) \big) + \sum_{j \in \mathcal{J}^\eps_2}   \mathcal{L}^1\Big( \big(L_{\Omega''}(P^{\eps_k}_j) \setminus P^{\eps_k}_j\big) \cap \big(\lbrace x' \rbrace \times (-\rho',\rho')\big) \Big) \le \bar{C}(x') \,  \eps^p_k, \notag\\   
{\rm (iii)} & \ \ \int_{-\rho'}^{\rho'} |\nabla u^{ \eps_k}(x',t)|^2\, {\rm d}t \le  \bar{C}(x'),  
\end{align}
where $\bar{C}(x')>0$ depends on $\Omega''$ and  $x'$, but is independent of $\rho'$  and $\lbrace \eps_k\rbrace_k$. We point out that in general the sequence $\lbrace \eps_k\rbrace_k$ depends on $x'$. For later purposes, however, we note that, for a.e.\ pair of points  $x_1',x_2'\in \omega$, we can choose a single sequence  $\lbrace \eps_k\rbrace_k$ such that \eqref{eq: slicing properties3} holds.

Fix $x' \in \omega$ and $0<\rho'<\rho$ such that \eqref{eq: slicing properties3} is satisfied. For notational simplicity, we drop the subscript $k$ of the corresponding sequence $\lbrace \eps_k \rbrace_k$ and we omit the dependence on $x'$. Define    
\begin{align}\label{eq: E slice def}
  \mathcal{B}^\eps (x';\rho') := \Big\{ t \in  (-\rho',\rho'):   \, \sum\nolimits_j    M^\eps_j \chi_{P^\eps_j}(x',t)  = B \Big\}.
\end{align}
By the fundamental theorem of calculus,  in view of the definition of $u^\eps$ in \eqref{eq: rescaled disp}, we get    
\begin{align*}
y^\eps(x',\rho') - y^\eps(x',-\rho') &= \int_{-\rho'}^{\rho'} \partial_d y^\eps(x',t)\, {\rm d}t  \notag\\ 
& =  \eps \int_{-\rho'}^{\rho'} \partial_d u^\eps(x',t)\, {\rm d}t  +   \mathcal{L}^1( \mathcal{B}^\eps  (x';\rho')) \, R^\eps B \, e_d+ \big( 2\rho' - \mathcal{L}^1( \mathcal{B}^\eps  (x';\rho')) \big) R^\eps A \,  e_d. 
\end{align*} 
Thus, by  \eqref{eq: slicing properties3}(iii) and H\"older's inequality we find 
\begin{align}\label{eq: liminf2}
\eps^{-1} \big|y^\eps(x',\rho') - y^\eps(x',-\rho') - 2\rho' R^\eps A \, e_d -   \mathcal{L}^1( \mathcal{B}^\eps (x';\rho')) \, R^\eps(B-A) \, e_d    \big|  \le  (2\bar{C}(x')\rho')^{1/2}.
\end{align}
 Since  $\nabla y = RA$ on $D \subset P_j $, we get $M_j^\eps =A$ for $\eps$ sufficiently small by \eqref{eq:comp-M}. Thus, by \eqref{eq: rescaled disp} and    \eqref{eq: slicing properties3}(i), we also have 
\begin{align*}
&\eps^{-1}\big(y^\eps(x',\rho') - y^\eps(x',-\rho') -  2\rho' R^\eps A \,  e_d\big)   =  u^\eps(x',\rho') - u^\eps(x',-\rho')
\end{align*}
 for every $\eps$ sufficiently small. Recall the definition of $\kappa$ in H3.  By \eqref{eq:comp-R}, \eqref{eq: slicing properties3}(i), and  \eqref{eq: liminf2}, up to passing to a further subsequence (depending on $\rho'$), we get that $ \ell(x';\rho') := \lim_{\eps \to 0} \eps^{-1} \mathcal{L}^1( \mathcal{B}^\eps (x';\rho') ) \ge 0$ exists, is finite, and satisfies  
\begin{align}\label{eq: get contradiction-prop}
|u(x',\rho') - u(x',-\rho') -   \kappa  \,  \ell(x';\rho') \,  Re_d| \le (2\bar{C}(x')\rho')^{1/2}.
\end{align}
 Here, we used that $\bar{C}(x')$ is independent of $\eps$. On the other hand,  the fundamental theorem of calculus  for the limiting displacement  together with \eqref{eq:jump-set-u-one} yields
\begin{align}\label{eq: contribution elastic part-prop}
|u(x',\rho') - u(x',-\rho') - [u](x',0) | \le  \int_{-\rho'}^{\rho'} |\partial_d u(x',t)|\,{\rm d}t \le (2\bar{C}(x')\rho')^{1/2},
\end{align}
where the last inequality follows by \eqref{eq: slicing properties3}(iii),  H\"older's inequality,  and a lower semicontinuity argument. By combining \eqref{eq: get contradiction-prop} and \eqref{eq: contribution elastic part-prop} we deduce
\begin{align}\label{eq: jump height estimate}
\big|[u](x',0) -   \kappa \,  \ell(x';\rho') \, Re_d     \big| \le 2  (2\bar{C}(x')\rho')^{1/2}. 
\end{align}
Property (ii) in Case (a) now  follows by recalling  that $ \ell(x';\rho') \ge 0$, \EEE by  the fact that $\bar{C}(x')$ may depend on $x'$ but is independent of $\rho'$,  and by considering a sequence $\rho' \to 0$  such that \eqref{eq: slicing properties3} holds.  (We briefly note that property (iii) corresponds to $\nabla y = RB$ on $D  \subset P_j$. This case can be treated  along similar lines, by interchanging the roles of $A$ and $B$.)

 \emph{Step 2: Case (a), property (i).} \EEE  We now show property (i) by contradiction, where without restriction we treat the  case $\nabla y = RA$ on $D  \subset P_j$.  If the statement  were wrong, we would find $x'_1,x'_2 \in \omega$  and $0<\rho' <\rho$  such that for each $x'_i$, $i=1,2$, \eqref{eq: slicing properties3} holds (with $x'_i$ in place of $x'$, for a single sequence $\lbrace \eps_k \rbrace_k$) and such that 
\begin{align}\label{eq: jump difference}
\big|[u](x'_1,0) - [u](x'_2,0) | \ge 5 (2\bar{C}\rho')^{1/2},
\end{align}
 where we set $\bar{C} = \max_{i=1,2} \bar{C}(x_i')$. We again drop the index $k$ of the sequence $\lbrace \eps_k\rbrace_k$. Define $ \mathcal{B}^\eps (x_i';\rho')$  as in \eqref{eq: E slice def}  for $i=1,2$. Repeating the reasoning in Step 1, see particularly \eqref{eq: jump height estimate}, we find $|[u](x'_i,0) -  \kappa  \,  \ell(x'_i;\rho')\,  Re_d     | \le 2 (2\bar{C}\rho')^{1/2}$ for $i=1,2$, where  the  limits    $\ell(x'_i;\rho') := \lim_{\eps \to 0} \eps^{-1} \mathcal{L}^1( \mathcal{B}^\eps (x'_i;\rho') ) $  can again be assumed to exist after passage   to a subsequence (not relabeled). By the triangle inequality and \eqref{eq: jump difference}, we find $ \kappa |\ell(x'_1;\rho') - \ell(x'_2;\rho')| \ge  (2\bar{C}\rho')^{1/2}$. This implies
$${ \inf_{\eps>0} \eps^{-1} \big| \mathcal{L}^1( \mathcal{B}^\eps (x'_1;\rho') ) -  \mathcal{L}^1( \mathcal{B}^\eps  (x'_2;\rho') ) \big| >0}.$$
In view of the definition \eqref{eq: E slice def}, this contradicts \eqref{eq: slicing properties3}(ii)  since $p>1$. This concludes the proof of (i) and of Case (a).

 \emph{Step 3: Case (b), property (i).}  To complete the proof of the proposition, it remains to show  assertion (i) in Case (b). (Note that assertions (ii) and (iii) are trivial in this case.) In this situation, possibly passing to a smaller $\rho$, by Lemma \ref{lemma: localization} we get that the set  $D = \omega \times (-\rho,\rho)$ considered in Case (a), see before \eqref{eq:jump-set-u-one}, only intersects two components $P_{j_1}$ and $P_{j_2}$,  with $D \cap P_{j_1} =   D \cap \lbrace x_d < 0 \rbrace$ and $D \cap P_{j_2} =   D \cap \lbrace x_d > 0 \rbrace$.  In a similar fashion to \eqref{eq: slicing properties3}, in view of  \eqref{eq: partition property-new}, \eqref{eq:comp-sets},  \eqref{eq:comp-u}, and  \eqref{eq:comp-grad-u}, Fatou's lemma yields that for $\mathcal{H}^{d-1}$-a.e.\  $x'\in  \omega$ there exists an infinitesimal  sequence $\lbrace \eps_k\rbrace_k$ such that for a.e.\ $0<\rho'<\rho$ there holds 
\begin{align}\label{eq: slicing properties-prop}
(x',-\rho') \in P^{\eps_k}_{j_1}, \  \  (x',\rho') \in P^{\eps_k}_{j_2} \  \text{ for all $k$ large enough}, \ \ \ \  u^{\eps_k}(x',\pm \rho') \to u(x',\pm \rho') \ \text{as $k \to \infty$}, 
\end{align}
and properties (ii) and (iii) of \eqref{eq: slicing properties3} are satisfied. Given $x' \in \omega$ and $0<\rho'<\rho$,  arguing exactly as in the proof of \eqref{eq: liminf2} in Case (a), we find (we again drop the index $k$ and the dependence on $x'$  in the sequel)
\begin{align*}
\eps^{-1} \big|y^\eps(x',\rho') - y^\eps(x',-\rho') - 2\rho' R^\eps A \, e_d -   \mathcal{L}^1( \mathcal{B}^\eps (x';\rho')) \,   R^\eps(B-A) \, e_d    \big|  \le  (2\bar{C}(x')\rho')^{1/2},
\end{align*}
 where  $ \mathcal{B}^\eps  (x';\rho')$  is defined  in \eqref{eq: E slice def}.  By \eqref{eq:comp-M}, for $\eps$ sufficiently small, we may assume that $M^\eps_{j} = M_j$ for $j=j_1,j_2$.  Thus,  in view of \eqref{eq: rescaled disp} and \eqref{eq: slicing properties-prop}, we get
\begin{align*}
\eps^{-1} \big( y^\eps(x',\rho') - y^\eps(x',-\rho') -  \rho' \, R^\eps \,  (M_{j_1}+ M_{j_2})   \,  e_d \big) -  \ep^{-1}(t^{\ep}_{j_2}-t^{\ep}_{j_1}) = u^\eps(x',\rho') - u^\eps(x',-\rho').
\end{align*}
 This along with the previous estimate entails
\begin{align}\label{eq:entail}
\big|u^\eps(x',\rho') - u^\eps(x',-\rho')  - v_\eps(x';\rho')   \big|  \le  (2\bar{C}(x')\rho')^{1/2},
\end{align}
where for brevity we have set
\begin{align}\label{eq: veps def}
v_\eps(x';\rho') :=  \eps^{-1}  \mathcal{L}^1(\mathcal{B}^\eps  (x';\rho')) \,  R^\eps(B-A) \, e_d  +  \eps^{-1}  \rho' R^\eps \,  (2A - (M_{j_1} + M_{j_2}))  \, e_d      -  \ep^{-1}(t^{\ep}_{j_2}-t^{\ep}_{j_1}).  
\end{align}
Then  \eqref{eq: slicing properties-prop} and \eqref{eq:entail}  show that  there exists \EEE  a constant vector  $v(x';\rho') \in \R^d$  depending on $\rho'$ and $x'$ such that, up to the extraction of a subsequence (not relabeled), there holds $v_\eps(x';\rho') \to v(x';\rho')$. By using \eqref{eq:jump-set-u-one} and \eqref{eq: slicing properties3}(iii), we get that \eqref{eq: contribution elastic part-prop} also holds in the present situation. Then, similar to the proof of \eqref{eq: jump height estimate} in Case (a), we obtain by \eqref{eq: slicing properties-prop} and \eqref{eq:entail} 
\begin{align}\label{eq: jump height estimate-prop}
|[u](x',0) -   v(x';\rho')| \le 2(2\bar{C}(x')\rho')^{1/2}. 
\end{align}
The proof of property (i)  is now obtained by contradiction by following the lines of the proof in Case (a): suppose that  there were $x'_1,x'_2 \in \omega$  and $0<\rho' <\rho$  such that for each $x'_i$, $i=1,2$,  \eqref{eq: slicing properties-prop} and \eqref{eq: slicing properties3} (ii),(iii)  hold (with $x'_i$ in place of $x')$, and   the two points are   such that $\big|[u](x'_1,0) - [u](x'_2,0) | \ge 5 (2\bar{C}\rho')^{1/2}$, where as before $\bar{C} := \max_{i=1,2} \bar{C}(x_i')$. By \eqref{eq: jump height estimate-prop} this yields $\big|v(x'_1;\rho') - v(x'_2;\rho')| \ge  (2\bar{C}\rho')^{1/2}$. In  view of \eqref{eq: veps def}, this however contradicts  \eqref{eq: slicing properties3}(ii).  This concludes the proof.   
\end{proof}

\section{Derivation of the effective linearized energy}
\label{sec:gamma}
This section is devoted to the proof of our $\Gamma$-convergence result for the sequence of energies $\mathcal{E}_\eps = E_{\eps,\bar{\eta}_{\ep,d}}$ introduced in \eqref{eq: nonlinear energy} (with $\bar{\eta}_{\ep,d}$ from \eqref{eq:alphad}) and the limiting energy $\mathcal{E}_0^{\mathcal{A}}$ defined in \eqref{eq: limiting energy}. In Subsections \ref{subs:liminf} and \ref{subs:limsup} we prove Theorems \ref{thm:liminf} and \ref{thm:limsup-new}, respectively. A key ingredient for the liminf inequality is a characterization of the double-profile energy $K^{M}_{\rm dp}$ (see \eqref{eq: our-k2}),  in particular its connection to the optimal-profile counterpart $K$ (see  \eqref{eq: our-k1}).  This result is subject of Proposition \ref{eq: KundKdo-new} and is proven in Subsection \ref{sec: cell-formula}. 
The proof of the limsup inequality is performed under the additional assumption that
\begin{equation}
\label{eq:2-cell-eq}
K^{M}_{\rm dp}=2K \quad\text{ for $M\in \{A,B\}$},  
\end{equation}
and essentially relies on Propositions  \ref{lemma: local1}  and  \ref{lemma: local2}. The latter provide constructions of local recovery sequences around interfaces performing a single and a double phase transition, respectively, and coinciding with isometries far from the interfaces. Their proofs are contained in Subsection \ref{subs:local}.
 Finally, in Subsection \ref{subs:1d} we show that, under the additional assumption in \eqref{eq: isotropy}, condition \eqref{eq:2-cell-eq} can be verified.  This  hinges on the property that in this case optimal profiles for single phase transitions are one dimensional, see Lemma \ref{lemma: 1d}.

 \subsection{The liminf inequality} 
 \label{subs:liminf}

  In this subsection we  show that the functional $\mathcal{E}^{\mathcal{A}}_0$ is a lower bound for the asymptotic behavior of the energy functionals $\mathcal{E}_{\ep}$.  As a preparation, we introduce the notion of optimal-profile and double-profile energy functions, and we state their main properties. 

Consider $\omega \subset \R^{d-1}$ open and  bounded, and let  $h >0$. For brevity, we use the following notation  for cylindrical sets
\begin{equation}
\label{eq:def-dlh}
D_{\omega,h} := \omega\times (-h,h).
\end{equation} 
We define the \emph{optimal-profile energy   function}  
\begin{align}\label{eq: k-intro}
\mathcal{F}(\omega;h) = \inf\Big\{\liminf_{\ep\to 0} \mathcal{E}_{\ep}(y^{\ep},D_{\omega,h}): \   \lim_{\eps \to 0}  \Vert  y^\eps -  y_0^+ \Vert_{L^1(D_{\omega,h})}  = 0\Big\}
\end{align}
for every $\omega\subset \mathbb{R}^{d-1}$ and $h>0$, where $y_0^+$ was defined below \eqref{eq: conti-schweizer-k}. As mentioned there, due to the invariance of the energy functionals $\mathcal{E}_{\ep}$  under the operation $Ty(x) = -y(-x)$, the  optimal-profile energy   is independent of the direction in which the transition between the two phases $A$ and $B$ occurs, i.e., in \eqref{eq: k-intro} we can replace $y_0^+$ by the continuous function $y_0^-  \in H^1_{\rm loc}(\R^d;\R^d)$  with $y_0^-(0)=0$ and $\nabla y_0^-=B\chi_{\{x_d> 0\}}+A\chi_{\{x_d<0\}}$.  
We refer to \cite[Lemma 3.2]{conti.schweizer2} for details. We start with the property that the optimal-profile energy is independent of $h$ and depends on $\omega$ only in terms of $\mathcal{H}^{d-1}(\omega)$. The following characterization has been proved in \cite[Proposition 4.6]{davoli.friedrich}. 

\begin{proposition}[Optimal-profile energy function]
\label{prop:cell-form}
For all $h>0$ and all  open, bounded   sets  $\omega \subset \R^{d-1}$ with $\mathcal{H}^{d-1}(\partial\omega)=0$ there holds  $\mathcal{F}(\omega;h)  = K\, \mathcal{H}^{d-1}(\omega)$, where $K$ is the constant from  \eqref{eq: our-k1}. 
\end{proposition}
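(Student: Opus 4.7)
The plan is to prove the equality $\mathcal{F}(\omega;h) = K\,\mathcal{H}^{d-1}(\omega)$ by establishing the two matching bounds via a localisation/covering argument. I would first note the trivial identity $\mathcal{F}(Q';1/2) = K$ (since $D_{Q',1/2} = Q$) and then leverage translation invariance of the energy functional and of the target $y_0^+$ in the $x'$-directions to transfer the unit-cube statement to arbitrary slabs $D_{\omega,h}$. The hypothesis $\mathcal{H}^{d-1}(\partial\omega) = 0$ is crucial: it allows approximation of $\omega$ from the inside and outside by finite disjoint unions of translated small cubes $z_i + r Q'$, with total area approximating $\mathcal{H}^{d-1}(\omega)$.

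For the upper bound $\mathcal{F}(\omega;h) \le K\,\mathcal{H}^{d-1}(\omega)$, I would take an interior approximation $\bigsqcup_{i=1}^N (z_i + r Q') \subset \omega$ with $\sum_i r^{d-1} \ge \mathcal{H}^{d-1}(\omega) - \delta$, and on each cube paste a rescaled copy of a near-optimal sequence $y^{\eps,i}$ for $K$ on $Q$, extending by $y_0^+$ outside. The recovery sequence realising $K$ must be chosen so as to coincide with $y_0^+$ (up to a rigid translation vanishing as $\eps \to 0$) near the lateral boundary of $Q$, so that the pieces glue in $H^2$; this finer realisation of $K$ is essentially provided by \cite[Proposition 4.7]{davoli.friedrich} which is recalled later in the paper. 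Sending first $\eps \to 0$, then $r,\delta \to 0$ closes the estimate.

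For the lower bound, I would fix any sequence $\{y^\eps\}$ with $\mathcal{E}_\eps(y^\eps,D_{\omega,h}) \le C$ and $y^\eps \to y_0^+$ in $L^1(D_{\omega,h})$, cover $\omega$ from the outside by countably many disjoint translated cubes $z_i + r Q'$, and use Fubini to select heights $h_i' \in (h/2,h)$ along which the traces of $y^\eps$ and $\nabla y^\eps$ are well controlled. After restricting to each cylinder $(z_i + r Q') \times (-h_i',h_i')$ and rescaling horizontally to the unit cube, the local sequence becomes (up to the failure of exact invariance discussed below) admissible in the definition of $K$, and the local energy is bounded from below by $(K-o(1))r^{d-1}$. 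Summation over $i$ yields $\liminf_\eps \mathcal{E}_\eps(y^\eps,D_{\omega,h}) \ge K\,\mathcal{H}^{d-1}(\omega)$ after passing $r \to 0$.

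The main obstacle is that the rescaling $y^\eps(x) \mapsto r^{-1} y^\eps(r\tilde x + (z_i,0))$ maps $\mathcal{E}_\eps$ on $z_i + rQ$ into a functional of the form $r^{d-1}$ times a weighted $\mathcal{E}_{\eps/\sqrt{r}}$-like energy, but with the anisotropic coefficient $\bar{\eta}_{\eps,d}^2 / r$ instead of $\bar{\eta}_{\eps/\sqrt{r},d}^2 = r^{-1+1/(2d)}\bar{\eta}_{\eps,d}^2$. Since the former dominates the latter for $r \le 1$, the mismatch is one-sided and favourable for the lower bound (the rescaled energy controls $\mathcal{E}_{\eps/\sqrt r}$ from above), while for the upper bound it must be absorbed by choosing the vertical extent $h_0$ of the transition strip small enough that the extra anisotropic contribution vanishes in the limit. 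Careful diagonalisation of the parameters $r$, $h_0$, $\eps$ together with the construction of boundary-matching recovery sequences for $K$ constitutes the technical heart of the proof.
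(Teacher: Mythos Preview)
The paper does not reprove this result; it simply invokes \cite[Proposition~4.6]{davoli.friedrich}, and for the companion double-profile statement (Proposition~\ref{prop:cell-form-new}) it refers back to the same source and to \cite[Lemma~4.3]{conti.fonseca.leoni}. Your covering-plus-rescaling strategy is therefore in the right spirit, and your analysis of the one-sided scaling defect in the anisotropic term is exactly the content of Remark~\ref{sub:2-prof} (equation~\eqref{eq: transformation preparation}), which indeed works cleanly for the lower bound.

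There is, however, a genuine gap in your upper-bound construction. You propose to paste near-optimal sequences on interior cubes and glue them in $H^2$ by appealing to \cite[Proposition~4.7]{davoli.friedrich} for a realisation of $K$ that ``coincides with $y_0^+$ near the lateral boundary of $Q$''. But that proposition (restated here as Proposition~\ref{lemma: local1} in the case $\Omega=D_{\omega',h}$) only provides rigidity in the $e_d$-direction, namely $v_\eps^\pm = I^\pm_{1,\eps}\circ y_0^\pm$ for $x_d\ge 3h/4$ and similarly below; it says nothing about the behaviour near $\partial\omega'\times(-h,h)$. Hence your cube-pieces will not match laterally in $H^2$, and your proposed extension by $y_0^+$ on the residual set $(\omega\setminus\bigcup_i(z_i+rQ'))\times(-h,h)$ introduces a kink at $\{x_d=0\}$ that is not in $H^2$. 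Relatedly, your interior/exterior covers are swapped: an \emph{interior} family of cubes is what one uses for the \emph{lower} bound (restrict the competitor), not the upper bound.

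The way the upper bound actually goes avoids lateral gluing altogether: one embeds $\omega$ and $Q'$ in a single large cube $\mathcal Q'$ with $\alpha\mathcal Q'=Q'$, rescales a near-optimal sequence for $K$ from $D_{Q',\alpha h}$ up to $D_{\mathcal Q',h}$ via \eqref{eq: transformation preparation}, and then splits the energy over $D_{\omega,h}$ and $D_{\mathcal Q'\setminus\omega,h}$, using the already-established lower bound on the complement to force the portion over $D_{\omega,h}$ to be at most $K\mathcal H^{d-1}(\omega)$. This is precisely the mechanism the paper displays later in the proof of Proposition~\ref{prop:kMdo}; transplanting that argument to the single-profile setting closes your gap.
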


In a similar fashion, we investigate properties of the double-profile energy given in \eqref{eq: our-k2}. Recall $\mathcal{W}_d$ in  \eqref{ew: W sequence}. We define the set of functions jumping on the interface by  
\begin{align}\label{eq: UUU}
\mathcal{U}_{\rm dp}(D_{\omega,h}) := \big\{ u \in  SBV^2_{\rm loc}(D_{\omega,h};\R^d) \colon \,  \mathcal{H}^{d-1}(J_u )>0, \ J_u \subset \omega \times \lbrace 0 \rbrace \big\}.
\end{align}
Then, for $M \in \lbrace A,B \rbrace$, we  define the \emph{double-profile energy   function}  
\begin{align}\label{eq: k2-intro}
\mathcal{F}^M_{\rm dp}(\omega;h)  = \inf_{u \in \mathcal{U}_{\rm dp}(D_{\omega,h})}  \inf_{\lbrace w_\eps \rbrace_\eps \in \mathcal{W}_d} \inf\Big\{\liminf_{\ep\to 0} \mathcal{E}_{\ep}(y^{\ep},D_{\omega,h}): \  \frac{y^\eps -  Mx}{w_\eps} \to u  \text{  in measure  in $D_{\omega,h}$ as $\eps \to 0$}\Big\},
\end{align}
for every $\omega\subset \mathbb{R}^{d-1}$ and $h>0$. The double-profile energy can be characterized as follows.

\begin{proposition}[Double-profile energy   function]\label{eq: KundKdo-new}
For all $h>0$, all  open, bounded   sets  $\omega \subset \R^{d-1}$ with $\mathcal{H}^{d-1}(\partial\omega)=0$, and  for $M\in\lbrace A,B \rbrace$  there holds 
\begin{align}\label{eq: dpe-cha}
K^M_{\rm dp} \,  \mathcal{H}^{d-1}(\omega) \ge \mathcal{F}^M_{\rm dp}(\omega,h) \ge 2K \, \mathcal{H}^{d-1}(\omega),
\end{align}
 where $K$ and  $K^M_{\rm dp }$ are defined   in \eqref{eq: our-k1} and  \eqref{eq: our-k2}, respectively.
\end{proposition}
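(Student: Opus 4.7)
My plan is to establish the two inequalities separately.

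\textbf{Upper bound.} For $\mathcal{F}^M_{\rm dp}(\omega,h)\le K^M_{\rm dp}\mathcal{H}^{d-1}(\omega)$ I will perform a tiling construction in the spirit of the proof of Proposition \ref{prop:cell-form} in \cite{davoli.friedrich}. Given $\theta>0$, the definition of $K^M_{\rm dp}$ provides $h_0>0$, $\{w_\eps\}\in\mathcal{W}_d$, and a sequence $\{\tilde y^\eps\}$ on $Q'\times(-h_0,h_0)$ with $(\tilde y^\eps-Mx)/w_\eps\to y_{\rm dp}^M$ in measure and $\limsup_\eps\mathcal{E}_\eps(\tilde y^\eps,Q'\times(-h_0,h_0))\le K^M_{\rm dp}+\theta$. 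For small $\delta>0$ I tile $\omega$ (up to boundary error of $(d-1)$-measure $o_\delta(1)$) with disjoint translated cubes $c_i+\delta Q'$, $i\in I_\delta$, and on each $(c_i+\delta Q')\times\delta(-h_0,h_0)$ I insert an appropriately rescaled and translated copy of $\tilde y^\eps$; outside the central slab $\omega\times\delta(-h_0,h_0)$ I extend by an $M$-affine rigid motion matched to boundary traces in $H^2$. Since the rigid-motion extensions carry zero energy and the surface areas scale as $\delta^{d-1}|I_\delta|\to\mathcal{H}^{d-1}(\omega)$, the resulting competitor has $\limsup_\eps\mathcal{E}_\eps(\cdot,D_{\omega,h})\le(K^M_{\rm dp}+\theta)\mathcal{H}^{d-1}(\omega)+o_\delta(1)$, and diagonalizing $\delta\to 0$ and $\theta\to 0$ yields the claim.

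\textbf{Lower bound, identifying two transitions.} Take $u\in\mathcal{U}_{\rm dp}(D_{\omega,h})$, $\{w_\eps\}\in\mathcal{W}_d$ and $\{y^\eps\}$ with $(y^\eps-Mx)/w_\eps\to u$ in measure, and assume WLOG $\liminf_\eps\mathcal{E}_\eps(y^\eps,D_{\omega,h})<+\infty$. Because $w_\eps\to 0$, $y^\eps\to Mx$ in measure; Theorem \ref{thm:compactness} yields a limiting deformation $\bar y$ with $\nabla\bar y=M$ a.e., so the phase indicator $\Phi^\eps$ from Proposition \ref{lprop: phases} converges to $M$ in measure. On the other hand, replicating the fundamental-theorem-of-calculus slicing argument in the proof of Proposition \ref{lemma: admissible-u} and using $\mathcal{H}^{d-1}(J_u)>0$ together with the structural constraint that $[u]$ is constant on the hyperplane $\omega\times\{0\}$ (Proposition \ref{lemma: admissible-u}(i) applied after checking that $w_\eps/\eps$ must stay bounded so $u$ is, up to a scalar, the displacement from Theorem \ref{thm:compactness}), $\Phi^\eps$ is forced to take the value $M'\ne M$ on an intermediate horizontal layer concentrated near $\omega\times\{0\}$ of $\mathcal{L}^d$-measure $\sim w_\eps\mathcal{H}^{d-1}(\omega)$. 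Combining this with Proposition \ref{lprop: phases}(iii) and a coarea argument in the style of Step 5 of the proof of Proposition \ref{lemma: intermediate step1}, I extract levels $a_\eps<0<b_\eps$ with $a_\eps,b_\eps\to 0$ such that both slices $\omega\times\{a_\eps\},\omega\times\{b_\eps\}$ lie in $\{\Phi^\eps=M\}$ up to a negligible set while the $M'$-layer is squeezed into $\omega\times(a_\eps,b_\eps)$, so that $\Phi^\eps$ performs two distinct $M\to M'\to M$ transitions concentrating near $\omega\times\{0\}$.

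\textbf{Lower bound, reduction to Proposition \ref{prop:cell-form}.} Setting $c_\eps:=(a_\eps+b_\eps)/2$, I construct two auxiliary sequences $y^{\eps,\pm}$ on $D_{\omega,h}$ that coincide with $y^\eps$ on the respective halves $\omega\times(-h,c_\eps)$ and $\omega\times(c_\eps,h)$, and are $H^2$-extended by an $M'$-affine rigid motion on the complementary half via a gluing layer of vanishing thickness around $\omega\times\{c_\eps\}$. Since $c_\eps$ lies in the $M'$-phase region for $\eps$ small, Theorem \ref{thm:rigiditythm} controls $\nabla y^\eps-M'$ in $L^2$ on a thin slab near $c_\eps$ and ensures the gluing-energy overhead is $o(1)$. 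The modified sequences then converge in $L^1(D_{\omega,h})$ to $y_0^-$ and $y_0^+$ respectively (up to constants), so Proposition \ref{prop:cell-form}, together with the phase-swap invariance of the optimal-profile energy recalled above it, yields $\liminf_\eps\mathcal{E}_\eps(y^{\eps,\pm},D_{\omega,h})\ge K\mathcal{H}^{d-1}(\omega)$. Using additivity of the energy across $\omega\times\{c_\eps\}$, the identity of $y^{\eps,\pm}$ with $y^\eps$ on the corresponding halves, and the $o(1)$ gluing bounds, one obtains
\begin{equation*}
\mathcal{E}_\eps(y^\eps,D_{\omega,h})\ge\mathcal{E}_\eps(y^{\eps,-},D_{\omega,h})+\mathcal{E}_\eps(y^{\eps,+},D_{\omega,h})-o_\eps(1),
\end{equation*}
and taking the liminf yields $\liminf_\eps\mathcal{E}_\eps(y^\eps,D_{\omega,h})\ge 2K\mathcal{H}^{d-1}(\omega)$. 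The crux and main obstacle is the $H^2$-gluing construction for $y^{\eps,\pm}$: producing extensions that match the traces and normal derivatives of $y^\eps$ across $\omega\times\{c_\eps\}$ without creating spurious phase transitions, keeping the gluing-energy overhead $o(1)$, and guaranteeing the $L^1$-convergence to the single-transition step profiles $y_0^\pm$ required by Proposition \ref{prop:cell-form}. Both the selection of a good slice $\omega\times\{c_\eps\}$ for the gluing and the control of the gluing energy rest on the layered geometry of phase regions provided by the anisotropic rigidity estimate in Theorem \ref{thm:rigiditythm}.
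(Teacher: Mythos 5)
Your lower bound has the right skeleton (two flat transitions detected through the phase indicator and Proposition \ref{lprop: phases}, then reduction to two single-transition profile energies), and this matches the spirit of the paper's Lemma \ref{lemma: intefacefind2}. But the step you yourself call the crux --- cutting $y^\eps$ at a level $c_\eps$ and gluing an $M'$-affine rigid motion in $H^2$ with $o(1)$ energy overhead so that Proposition \ref{prop:cell-form} can be applied on the fixed cylinder --- is a genuine gap, and in this setting it cannot be filled as suggested. Matching both trace and normal derivative of $y^\eps$ to an exact rigid motion across $\omega\times\{c_\eps\}$ requires an interpolation layer whose rescaled energy $\eps^{-2}\int W$ can only be controlled via an upper growth condition on $W$: this is precisely the content of Lemma \ref{lemma: transition1}, which needs H6.\ together with the slice-selection machinery of Proposition \ref{lemma: optimal profile}. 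Since Proposition \ref{eq: KundKdo-new} feeds into Theorem \ref{thm:liminf}, which assumes only H1.--H5., a proof by surgery on the competitor is not available. The paper avoids all surgery: it only composes with isometries and restricts the competitor to two disjoint thin cylinders of height $\sim\mu w_\eps$ around the two interfaces (Lemma \ref{lemma: intefacefind2}), and the real work is then Lemma \ref{lemma: lower energy bound}, which shows that a cylinder of vanishing height still carries energy at least $K\mathcal{H}^{d-1}(\omega)$, by blowing it up to unit height (using $\eps/\sqrt{w_\eps}\to 0$ from \eqref{ew: W sequence}) and selecting a good unit cube by a De Giorgi averaging argument; your plan contains no substitute for this step. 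A further error in your chain: the claim that $w_\eps/\eps$ must stay bounded, so that Proposition \ref{lemma: admissible-u}(i) gives constancy of $[u]$, is false --- $\mathcal{W}_d$ explicitly allows $w_\eps\gg\eps$ (indeed $w_\eps=\sqrt{\eps}$ is used in the recovery construction), and in that regime the scale-$\eps$ displacement of Theorem \ref{thm:compactness} blows up across the layer. Fortunately constancy of $[u]$ is not needed; the fact that the intermediate layer spans essentially all of $\omega$ comes from Proposition \ref{lprop: phases}(ii),(iii), as in the paper.

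Your upper bound is also problematic, and moreover unnecessary. In the paper the first inequality in \eqref{eq: dpe-cha} needs no construction: $K^M_{\rm dp}\ge\mathcal{F}^M_{\rm dp}(Q',1)$ is immediate from the definitions (any competitor in \eqref{eq: our-k2} is a competitor in \eqref{eq: k2-intro} with $u=y^M_{\rm dp}\in\mathcal{U}_{\rm dp}$, and $\limsup\ge\liminf$), and general $\omega$ and $h$ are handled by the homogeneity properties of $\mathcal{F}^M_{\rm dp}$ in Proposition \ref{prop:cell-form-new}(ii),(iii), which are proved by restriction and De Giorgi-type selection rather than by pasting competitors. Your tiled map is in general not even $H^1$: the rescaled copies on adjacent tiles do not match across lateral faces, and a near-optimal sequence for $K^M_{\rm dp}$ is not a rigid motion near the top and bottom of its cylinder, so ``matching boundary traces in $H^2$ by an $M$-affine rigid motion'' is impossible without the machinery of Proposition \ref{lemma: optimal profile} and Lemma \ref{lemma: transition1} (this is exactly the work done in Proposition \ref{lemma: local2}, and only under additional hypotheses). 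Finally, the $\delta$-rescaling does not leave $\mathcal{E}_\eps$ invariant: the anisotropic term with prefactor $\bar\eta_{\eps,d}^2$ scales the wrong way (which is why \eqref{eq: transformation preparation} and Proposition \ref{prop:cell-form-new}(i) are only one-sided inequalities), so even the energy bookkeeping $\delta^{d-1}|I_\delta|\,(K^M_{\rm dp}+\theta)$ is unjustified before the gluing issue arises.
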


Note that the  result in particular implies Proposition \ref{prop:cell-form-chaper2}.  Moreover, in the case  $2K=K^M_{\rm dp}$, equality holds in \eqref{eq: dpe-cha}.   (We refer to Subsection \ref{subs:1d} for a setting in which this condition is fulfilled). We defer the proof of Proposition \ref{eq: KundKdo-new} to Subsection \ref{sec: cell-formula} below. At this stage, we only mention that it is achieved in two steps: we first show that  $\mathcal{F}^M_{\rm dp}(\omega,h)$ is independent of $h$ and depends on $\omega$ only in terms of $\mathcal{H}^{d-1}(\omega)$, see Proposition \ref{prop:cell-form-new} below. Then, in a second step we address the connection between  $\mathcal{F}^M_{\rm dp}(Q',1)$, $K^M_{\rm dp}$, and $2K$, see Proposition \ref{eq: KundKdo}.   We now proceed with the proof of the liminf inequality.  \EEE

\begin{proof}[Proof of Theorem \ref{thm:liminf}]
Let $(y,u,\mathcal{P}) \in \mathcal{A}$, see Definition \ref{def:convergence2}, and let $y^{\ep}\to (y,u,\mathcal{P})$ in the sense of Definition \ref{def:convergence1}, i.e., there are sequences $\lbrace R^\eps \rbrace_\eps$, $\lbrace \mathcal{P}^\eps\rbrace_\eps$, $\lbrace \mathcal{M}^\eps\rbrace_\eps$, and $\lbrace \mathcal{T}^\eps\rbrace_\eps$ such that  \eqref{eq: partition property}--\eqref{eq:comp-grad-u} hold. Suppose that $y \in \mathcal{Y}_R(\Omega)$ for $R \in SO(d)$, see \eqref{eq: limiting deformations}.   To simplify the exposition, we suppose that $\fint_{\Omega}y^{\ep}\,{\rm d}x = 0$, i.e., by \eqref{eq:comp-y} we get 
\begin{align}\label{eq: L1-convg}
y^{\ep}\to y\quad\text{strongly in }H^1(\Omega;\R^d).
\end{align}
By Proposition \ref{prop:ex-coarsest-part}(iii), Proposition \ref{lemma: admissible-u}(i), and Remark \ref{rem: AR}, possibly passing to another  displacement field being admissible for the sequence $\lbrace y^\eps \rbrace_\eps$,  we may without restriction assume that
\begin{align}\label{eq: jump-part2}
\bigcup\nolimits_j \partial P_j \cap \Omega \subset J_u.
\end{align}
As $\Omega$ has Lipschitz boundary,  by the definition of the set $\mathcal{A}$ in  Definition \ref{def:convergence2} and  by  Proposition \ref{lemma: admissible-u}(i) there exist sequences    $\{\omega_i^y\}_i$, $\{\omega_i^u\}_i$   of Lipschitz domains in $\R^{d-1}$    and sequences $\{\alpha_i^y\}_i$, $\{\alpha_i^u\}_i$  of real numbers such that
\begin{align}\label{eq: jumpo}
J_{\nabla y} =  \bigcup\nolimits_{i\in \N} \omega_i^y \times \{\alpha_i^y\}\quad \text{and} \quad  J_u\setminus J_{\nabla y} =  \bigcup\nolimits_{i\in \N} \omega_i^u \times \{\alpha_i^u\}.
\end{align}
Let $\delta >0$. We can find $I_y, I_u \in \N$ such that
\begin{align}\label{eq: lowerbound1}
\mathcal{H}^{d-1}(J_{\nabla y}) -\delta \le \sum\nolimits_{i=1}^{I_y} \mathcal{H}^{d-1}\big(\omega^y_i \times \lbrace \alpha^y_i \rbrace\big), \ \ \ \ \  \mathcal{H}^{d-1}(J_u \setminus J_{\nabla y}) -\delta \le \sum\nolimits_{i=1}^{I_u} \mathcal{H}^{d-1}\big(\omega^u_i \times \lbrace \alpha^u_i \rbrace\big).
 \end{align}
Moreover, we choose $h>0$ such that the cylindrical sets  (see \eqref{eq:def-dlh})  $\alpha^y_i e_d + D_{\omega^y_i,h}$, $i=1,\ldots,I_y$, and $\alpha^u_i e_d + D_{\omega^u_i,h}$, $i=1,\ldots,I_u$, are pairwise disjoint, and do not intersect the interfaces $ \lbrace \omega^y_i \times \lbrace \alpha^y_i \rbrace \rbrace_{i >I_y}$ and $\lbrace\omega^u_i \times \lbrace \alpha^u_i \rbrace\rbrace_{i >I_u}$. The latter is possible due to $J_{\nabla y} \subset \bigcup\nolimits_ {j} \partial P_j \cap \Omega$ (see definition of $\mathcal{A}$), Lemma \ref{lemma: localization}, and Remark \ref{remark: localization} which   imply  that the interfaces $\lbrace \omega^y_i \times \lbrace \alpha^y_i \rbrace\rbrace_{i >I_y}$ and $\lbrace \omega^u_i \times \lbrace \alpha^u_i \rbrace\rbrace_{i >I_u}$ can only accumulate at $\partial \Omega$, see \cite[Proof of Proposition 3.1]{conti.schweizer2} for details,  and the lower part of Figure \ref{fig:limiting-def} for an illustration. 

By possibly passing to a smaller $h>0$ (not relabeled),  we can choose $\tilde{\omega}^y_i \subset\subset \omega^y_i$ and $\tilde{\omega}^u_i \subset\subset \omega^u_i$ with Lipschitz boundary such that 
\begin{align}\label{eq: lowerbound2}
\mathcal{H}^{d-1}(\omega^y_i) & \le \mathcal{H}^{d-1}(\tilde{\omega}^y_i)  + \delta/I_y  \ \ \ \text{for $i=1,\ldots,I_y$},\notag\\ 
\mathcal{H}^{d-1}(\omega^u_i)  & \le \mathcal{H}^{d-1}(\tilde{\omega}^u_i)  + \delta/I_u  \ \ \ \text{for $i=1,\ldots,I_u$},
\end{align}
and such that  
$$D^y_i := \alpha^y_ie_d + D_{\tilde{\omega}_i^y,h} \subset \subset \Omega \text{ for $i=1,\ldots,I_y$}, \quad \quad \quad D^u_i:=\alpha^u_ie_d + D_{\tilde{\omega}_i^u,h}\subset \subset \Omega \text{ for $i=1,\ldots,I_u$},$$
  see Figure \ref{fig:limiting-sets1} below.  
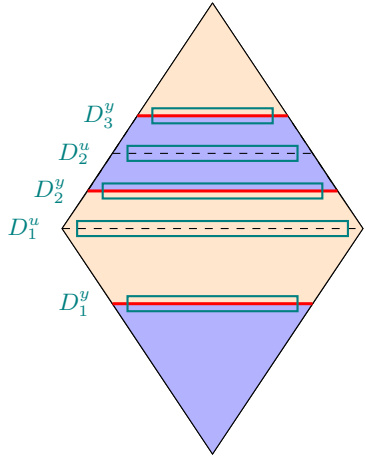
\begin{figure}[h]
\centering
\begin{tikzpicture}
\coordinate (A) at (0,3);
 \coordinate (B) at (2,0);
 \coordinate (C) at (-2,0);
 \coordinate (D) at (0,-3);
 \coordinate (E) at (1,1.5);
 \coordinate (F) at (1.33,1);
  \coordinate (G) at (1.33,-1);
  \coordinate (H) at (0.66,-2);
  \coordinate (I) at (0.33,-2.5);
  \coordinate (L) at (0.2,-2.7);
  \coordinate (M) at (0.1,-2.85);
   \coordinate (N) at (-1,1.5);
 \coordinate (O) at (-1.33,1);
 \coordinate (U) at (-1.66, 0.5);
 \coordinate (V) at (1.66, 0.5);
  \coordinate (P) at (-1.33,-1);
  \coordinate (Q) at (-0.66,-2);
  \coordinate (R) at (-0.33,-2.5);
  \coordinate (S) at (-0.2,-2.7);
  \coordinate (T) at (-0.1,-2.85);
  \draw (A)--(B)--(D)--(C)--cycle;
  \draw (E)--(N);
   \draw (G)--(P);
  
  \draw[fill=orange!20] (A)--(N)--(E);
   \draw (A)--(E);
  \draw[dashed, fill=blue!30] (N)--(E)--(V)--(U)--cycle;
  \draw (E)--(V);
  \draw (N)--(U);
  \draw[dashed] (O)--(F);
 \draw[fill=orange!20] (U)--(V)--(B)--(G)--(P)--(C)--cycle;
 \draw[dashed](C)--(B);
 \draw[fill=blue!30] (P)--(G)--(D)--cycle;
 \draw[line width=0.40 mm,red] (N)--(E);
 \draw[line width=0.40 mm,red] (U)--(V);
 \draw[line width=0.40 mm,red] (P)--(G);
 \draw[thick, teal] (-0.8,1.6)--(0.8,1.6)--(0.8, 1.4)--(-0.8,1.4)--cycle;
 \node at (-1.5,1.5) {\color{teal}\small $D_3^y$};
\draw[thick, teal] (-1.13,1.1)--(1.13,1.1)--(1.13, 0.9)--(-1.13,0.9)--cycle;
 \node at (-1.83,1) {\color{teal}\small $D_2^u$};
\draw[thick, teal] (-1.46,0.6)--(1.46,0.6)--(1.46, 0.4)--(-1.46,0.4)--cycle;
 \node at (-2.16,0.5) {\color{teal}\small $D_2^y$};
\draw[thick, teal] (-1.8,0.1)--(1.8,0.1)--(1.8, -0.1)--(-1.8,-0.1)--cycle;
 \node at (-2.5,0) {\color{teal}\small $D_1^u$};
\draw[thick, teal] (-1.13,-1.1)--(1.13,-1.1)--(1.13, -0.9)--(-1.13,-0.9)--cycle;
 \node at (-1.83,-1) {\color{teal}\small $D_1^y$};
\end{tikzpicture}
\caption{A visualization of the different interfaces and sets under \eqref{eq: jump-part2}. The phase regions associated to $A$ and $B$ are colored in blue and orange, respectively. The cylindrical sets $\{D_i^y\}_{i=1,\dots, \mathcal{I}_y}$ and $\{D_i^u\}_{i=1,\dots, \mathcal{I}_u}$ are drawn in green. The corresponding interfaces in $J_{\nabla y}$ and $J_u$ are highlighted with thick red and dashed black lines, respectively. }
\label{fig:limiting-sets1}
\end{figure}

 Moreover, it is also not restrictive to assume that
\begin{align}\label{eq: small vol}
\sum\nolimits_{i=1}^{I_y} \mathcal{L}^d(D_i^y) + \sum\nolimits_{i=1}^{I_u} \mathcal{L}^d(D_i^u) \le \delta.
\end{align}
We define the set 
\begin{align}\label{eq: omegadelta}
\Omega_\delta := \big\{ x\in \Omega\colon \, {\rm dist}(x,\partial \Omega) >\delta \big\}  \setminus \Big( \bigcup\nolimits_{i=1}^{I_y} D^y_i \cup \bigcup\nolimits_{i=1}^{I_u} D^u_i \Big).  
\end{align}
The main steps of the proof will consist in estimating the surface energies by
\begin{align}\label{eq: check1}
{\rm (i)} & \ \ \liminf_{\ep \to 0} \mathcal{E}_{\ep}\Big(y^{\ep},\bigcup\nolimits_{i=1}^{I_y} D_i^{y}\Big)  \ge    K\big(\mathcal{H}^{d-1}(J_{\nabla y}) -2\delta\big), \notag \\
{\rm (ii)} & \ \ \liminf_{\ep \to 0} \mathcal{E}_{\ep}\Big(y^{\ep},\bigcup\nolimits_{i=1}^{I_u} D_i^{u}\Big)  \ge  2K\big(\mathcal{H}^{d-1}(J_{u}\setminus J_{\nabla y}) -2\delta\big),
\end{align}
and the elastic energy by 
\begin{align}\label{eq: check2}
\liminf_{\ep\to 0} \mathcal{E}_\eps(y^\eps,\Omega_\delta ) \geq \int_{\Omega_{\delta}} \mathcal{Q}_{\rm lin}(\nabla y,\nabla u)\,{\rm d}x,
\end{align}
where the quadratic form $ \mathcal{Q}_{\rm lin}$ is defined in \eqref{eq:def-Q}. Once these estimates have been settled, in view of  \eqref{eq: limiting energy}, we then indeed obtain $\liminf_{\ep\to 0} \mathcal{E}_\eps(y^\eps,\Omega)  \ge  \mathcal{E}_0^{\mathcal{A}} (y,u,\mathcal{P})$ by letting $\delta \to 0$, by taking \eqref{eq: jump-part2} as well as \eqref{eq: small vol}--\eqref{eq: omegadelta} into account, and by using monotone convergence. Let us now prove \eqref{eq: check1} and \eqref{eq: check2}.

\emph{Step 1: Proof of \eqref{eq: check1}{\rm (i)}.}  By \eqref{eq: L1-convg}, $y \in \mathcal{Y}_R(\Omega)$,  \eqref{eq: jumpo}, and the fact that the sets $\lbrace D^y_i\rbrace_i$ are pairwise disjoint and contain only one interface, we get for each $i=1,\ldots,I_y$  that
$$R^{-1} y^\eps(\cdot + \alpha_i^y e_d)  \to  y_0^+  \quad \quad \quad  \text{or} \quad \quad \quad  R^{-1} y^\eps(\cdot + \alpha_i^y e_d)  \to  y_0^-   \quad \quad \quad     \text{ in } L^1(D_{\tilde{\omega}_i^y,h};\R^d).$$
Therefore,  by H2.,  \eqref{eq: k-intro}, and the comment thereafter we obtain 
\begin{equation*}
\liminf_{\ep \to 0} \mathcal{E}_{\ep}\Big(y^{\ep},\bigcup\nolimits_{i=1}^{I_y} D_i^{y}\Big)\ge\sum\nolimits_{i=1}^{I_y}\liminf_{\ep \to 0} \mathcal{E}_{\ep}\Big( R^{-1} y^\eps(\cdot + \alpha_i^y e_d), D_{\tilde{\omega}_i^y,h}\Big)\geq \sum\nolimits_{i=1}^{I_y} \mathcal{F}(\tilde{\omega}_i^y;h).
\end{equation*}
Then, by Proposition \ref{prop:cell-form} and \eqref{eq: lowerbound1}--\eqref{eq: lowerbound2} we get 
\begin{equation*}
\liminf_{\ep \to 0} \mathcal{E}_{\ep}\Big(y^{\ep},\bigcup\nolimits_{i=1}^{I_y} D_i^{y}\Big)  \ge  K \sum\nolimits_{i=1}^{I_y} \mathcal{H}^{d-1}(\tilde{\omega}_i^y) \geq  K\big(\mathcal{H}^{d-1}(J_{\nabla y}) -2\delta\big).
\end{equation*}
This shows \eqref{eq: check1}(i).

\emph{Step 2: Proof of \eqref{eq: check1}{\rm (ii)}.} By \eqref{eq: jumpo} and the fact that the cylindrical sets are chosen to be pairwise disjoint  and to contain only one interface we know that $\nabla y$ is constant on each $D_i^u$, $i=1\ldots,I_u$. We choose $M_i \in \lbrace A, B \rbrace$ such that $\nabla y = RM_i$ on $D_i^u$. We will distinguish two cases, indicated by the index sets
\begin{align}\label{eq: I12}
\mathcal{I}_1 := \Big\{ i= 1\ldots I_u\colon\, (\omega^u_i \times \lbrace \alpha^u_i\rbrace)  \cap \bigcup\nolimits_j \partial P_j  \cap \Omega \EEE= \emptyset \Big\}, \quad \quad \quad \mathcal{I}_2 := \lbrace 1,\ldots,I_u\rbrace \setminus \mathcal{I}_1.
\end{align}

\emph{Step 2(a): $i \in \mathcal{I}_1$.} In view of  \eqref{eq: jumpo}, \eqref{eq: I12}, and the fact that the cylindrical sets are  pairwise disjoint and contain only one interface, we get $D_i^u \subset P_k$ for some index $k$.   Then by  \eqref{eq:comp-sets}, \eqref{eq:comp-M}, \eqref{eq: rescaled disp}, and  \eqref{eq:comp-u}  we get  as $\eps \to 0$
\begin{align}\label{eq: strange pointwise} 
\eps^{-1}\big(y^{\ep}-  R^{\ep} M_i\, x- t_k^{\ep}\big)  \to u \quad \quad \quad \text{  in measure in $D_i^u$}.
\end{align}
As the cylindrical sets are pairwise disjoint and contain only one interface, we find $u(\cdot + \alpha^u_i e_d ) \in \mathcal{U}_{\rm dp}(D_{\tilde{\omega}_i^u,h})$ (recall \eqref{eq: UUU}).  We define the function 
$$\bar{y}^\eps(x) \coloneqq  (R^\eps)^T y^\eps (x + \alpha_i^ue_d)  - (R^\eps)^Tt_k^\eps - M_i\alpha_i^ue_d$$
 for $x \in D_{\tilde{\omega}_i^u,h}$, and  we  note by \eqref{eq:comp-R} and \eqref{eq: strange pointwise} that $\eps^{-1}(\bar{y}^\eps   -M_ix) \to \bar{u}$ in measure in $D_{\tilde{\omega}_i^u,h}$, where $\bar{u} := R^Tu(\cdot + \alpha^u_i e_d )  \in \mathcal{U}_{\rm dp}(D_{\tilde{\omega}_i^u,h})$. Then, the sequences $\lbrace \bar{y}^\eps\rbrace_\eps$ and $\lbrace w_\eps \rbrace_\eps \in \mathcal{W}_d$ defined by   $w_\eps : = \eps $ for all $\eps$ are admissible in \eqref{eq: k2-intro}. Thus, by the translational and rotational  invariance of the energy we get 
\begin{align}\label{eq: lower-i1}
\liminf_{\eps \to 0} \mathcal{E}_\eps \big(y^\eps , D_i^u \big) = \liminf_{\eps \to 0}\mathcal{E}_\eps \big(\bar{y}^\eps, D_{\tilde{\omega}_i^u,h} \big) \ge \mathcal{F}^{M_i}_{\rm dp}(\tilde{\omega}_i^u;h). 
\end{align}
 
\emph{Step 2(b): $i \in \mathcal{I}_2$.} In this case, by \eqref{eq: jumpo} and the fact that the cylindrical sets are pairwise disjoint and contain only one interface, $D_i^u$ intersects two components $P_k$ and $P_l$, namely $\tilde{\omega}^u_i \times (\alpha^u_i - h,\alpha^u_i   )   \subset   P_k $  and $\tilde{\omega}^u_i \times (\alpha^u_i ,\alpha^u_i + h)   \subset   P_l $. As before, we have $\nabla y = RM_i$ on $D_i^u$.  Let $w_\eps : = |t^\eps_k - t^\eps_l| $, where $t^\eps_k, t^\eps_l$  are the elements  from the translations $\mathcal{T}^\eps$  corresponding to the sets $P_k^\ep$ and $P_l^\ep$.  By  \eqref{eq:comp-sets}, \eqref{eq:comp-M}, \eqref{eq: rescaled disp}, and \eqref{eq:comp-u}   we get  as $\eps \to 0$
\begin{align}\label{eq: strange pointwise2} 
\eps^{-1}\big(y^{\ep}- R^{\ep} M_i\, x  -   t_j^{\ep}\big)  \to u \quad \quad \text{  in measure in   $D_i^u \cap P_j$ for $j \in \lbrace k,l\rbrace$}.
\end{align}
By \eqref{eq: toinfty} we find $w_\eps/\eps \to \infty$. Moreover, for a.e.\ $x_k \in D_i^u \cap P_k$ and  a.e.\ $x_l \in D_i^u \cap P_l$, by multiplying  \eqref{eq: strange pointwise2}  with $\eps$ and using \eqref{eq: L1-convg} we get  $\limsup_{\eps \to 0} |t_k^\eps - t_l^\eps| \le |y(x_k)-y(x_l)|+ |M_i||x_k-x_l|$. This implies  that  $\lim_{\eps\to 0} w_\eps = \lim_{\eps \to 0}|t^\eps_k-t^\eps_l| = 0$ as $y$ is continuous. Thus,  $\lbrace w_\eps \rbrace_\eps \in \mathcal{W}_d$, see \eqref{ew: W sequence}. By possibly passing to a subsequence (not relabeled), we may suppose that $(t^\eps_l - t^\eps_k)/ w_\eps \to  t_0 \in \R^d$.  We check that 
\begin{align}\label{eq: second-check}
 \frac{y^{\ep}- (R^{\ep} M_i\, x  +   t_k^{\ep})}{w_\eps}  \to t_0 \chi_{\lbrace x_d \ge \alpha_i^u\rbrace} \quad \quad \text{  in measure in  $D_i^u$.}
\end{align}
In fact, by \eqref{eq: strange pointwise2} and  $\eps /w_\eps \to 0$, we first get  
$$w_\eps^{-1} \big(y^{\ep}- R^{\ep} M_i\, x  -   t_k^{\ep}\big) =  (\eps/w_\eps) \, \eps^{-1} \big(y^{\ep}- R^{\ep} M_i\, x  -   t_k^{\ep} \big)   \to 0 \quad \quad \text{ in measure in  $D_i^u \cap P_k$},$$
and by again using \eqref{eq: strange pointwise2}, $\eps /w_\eps \to 0$, as well as $(t^\eps_l - t^\eps_k)/ w_\eps \to  t_0$ we find \begin{align*}
w_\eps^{-1} \big( y^{\ep}- R^{\ep} M_i\, x  -   t_k^{\ep} \big)  = (\eps/w_\eps) \,  \eps^{-1} \big( y^{\ep}- R^{\ep} M_i\, x  -   t_l^{\ep}\big)   +  w_\eps^{-1} \big( t_l^\eps - t_k^\eps \big)   \to t_0
\end{align*}
in measure in  $D_i^u \cap P_l$. Now, by \eqref{eq: second-check} and  by arguing along the lines of  \eqref{eq: strange pointwise}--\eqref{eq: lower-i1} we can define a sequence $\lbrace \bar{y}^\eps\rbrace_\eps$ via rotation and shifting such that   $\lbrace \bar{y}^\eps\rbrace_\eps$ and $\lbrace w_\eps \rbrace_\eps \in \mathcal{W}_d$ are admissible in \eqref{eq: k2-intro}.  Then, we deduce
\begin{align}\label{eq: lower-i2}
\liminf_{\eps \to 0} \mathcal{E}_\eps \big(y^\eps , D_i^u \big)  \ge \mathcal{F}^{M_i}_{\rm dp}(\tilde{\omega}_i^u;h). 
\end{align}
We now conclude the proof of \eqref{eq: check1}(ii) as follows: combining \eqref{eq: lower-i1}, \eqref{eq: lower-i2}, and  Proposition \ref{eq: KundKdo-new} we get
\begin{align*}
\liminf_{\ep \to 0} \mathcal{E}_{\ep}\Big(y^{\ep},\bigcup\nolimits_{i=1}^{I_u} D_i^{u}\Big) \ge \sum\nolimits_{i=1}^{I_u} \mathcal{F}^{M_i}_{\rm dp}(\tilde{\omega}_i^u;h) \ge 2K \sum\nolimits_{i=1}^{I_u}  \mathcal{H}^{d-1}(\tilde{\omega}_i^u).
\end{align*}
Then, \eqref{eq: check1}(ii) follows from \eqref{eq: lowerbound1}--\eqref{eq: lowerbound2}.

\emph{Step 3: Proof of \eqref{eq: check2}.} We start by recalling the definition of $u^\eps$ in \eqref{eq: rescaled disp} and by noting that \eqref{eq:comp-grad-u}  implies
\begin{align}\label{eq: strain bound}
\int_{\Omega_\delta}|\nabla u^{\ep}|^2\,{\rm d}x\leq C_\delta \quad \quad \quad \text{for all $\eps>0$},
\end{align} 
where $C_\delta>0$ depends on the set $\Omega_\delta$ defined in \eqref{eq: omegadelta}, and thus on $\delta$. We now define two small exceptional sets: first, we let  $\alpha\in (0,1)$, and  we define the set of large linearized strains by 
\begin{align}\label{eq: strain-om}
\Omega^\eps_{\rm strain} := \lbrace x \in \Omega_\delta \colon  |\nabla u^{\ep}(x)| \ge \ep^{-\alpha} \rbrace.  
\end{align}
By Chebyshev's inequality and \eqref{eq: strain bound} we estimate
 \begin{equation}
\label{eq:meas-u}
\mathcal{L}^d\big(\Omega_{\rm strain}^{\ep}\big)\leq \ep^{2\alpha}\int_{\Omega_\delta}|\nabla u^{\ep}|^2\,{\rm d}x\leq C_\delta\ep^{2\alpha}.
\end{equation}
Moreover, by \eqref{eq:comp-M}  and by the continuous embedding of $BV(\Omega;\mathbb{M}^{d \times d})$ into $L^1(\Omega;\mathbb{M}^{d \times d})$  we find  a sequence $\{\delta_{\ep}\}_{ \eps}\subset (0,+\infty)$  such that
$\delta_{\ep}\to 0$ and 
\begin{align} \label{eq:delta-ep-slow}
\lim_{\ep\to 0} \frac{1}{\delta_{\ep}}\int_{\Omega} \big|\sum\nolimits_{j} (R^{\ep}M^{\ep}_j)\chi_{P^{\ep}_j}-\nabla y\big|\,{\rm d}x=0.
\end{align}
Then, we define the set
\begin{align}\label{eq: phase-om}
\Omega^\eps_{\rm phase} := \Big\{ x \in \Omega_\delta \colon \, \big|\sum\nolimits_{j} (R^{\ep}M^{\ep}_j)\chi_{P^{\ep}_j}(x)-\nabla y(x)\big|\ge \delta_{\ep}\Big\}
\end{align}
of points where the phases along the sequence differ by at least $\delta_\eps$ from the phases in the limit.  Clearly, \eqref{eq:delta-ep-slow} entails  
\begin{equation}
\label{eq:meas-y}
\lim_{\ep\to 0} \mathcal{L}^d(\Omega^\eps_{\rm phase}) \leq \lim_{\ep\to 0} \frac{1}{\delta_{\ep}}\int_{\Omega} \big|\sum\nolimits_{j} (R^{\ep}M^{\ep}_j)\chi_{P^{\ep}_j}-\nabla y\big|\,{\rm d}x=0.
\end{equation}
By combining \eqref{eq:meas-u} and \eqref{eq:meas-y} we find
\begin{align}\label{eq: omegagood}
\lim_{\eps \to 0}  \mathcal{L}^d(\Omega_\delta \setminus \Omega_{\rm good}^\eps) = 0, \quad \quad \quad \text{where } \ \Omega^\eps_{\rm good} := \Omega_\delta \setminus \big(\Omega_{\rm strain}^{\ep} \cup \Omega^\eps_{\rm phase}  \big).
\end{align}
By \eqref{eq: nonlinear energy} and the definition in \eqref{eq: rescaled disp} we get 
\begin{align}\label{eq: firsti step}
\mathcal{E}_\eps(y^\eps, \Omega_\delta)  \ge \frac{1}{\ep^2}\int_{\Omega_{\delta}} W(\nabla y^{\ep})\,{\rm d}x \geq\frac{1}{\ep^2}\sum\nolimits_{j}\int_{\Omega_{\rm good}^{\ep}\cap P_j^{\ep}} W\big( R^{\ep}M^{\ep}_j + \ep\nabla u^{\ep}(x) \big)\,{\rm d}x.
\end{align}
By assumptions H2., H3., and H5.\ we can perform a Taylor expansion and write 
$$W(RM + F) = \frac{1}{2}D^2W(RM) F : F + \omega_W(F)$$ 
for all $F \in \mathbb{M}^{d \times d}$ with $|F| < \delta_W$, where $\omega_W\colon\mathbb{M}^{d\times d} \to \R$ satisfies 
\begin{equation}
\label{eq:cont-mod}
\lim_{\rho\to 0^+} \eta_W(\rho) = 0, \quad \quad \quad \text{where } \ \eta_W(\rho) :=  \sup\Big\{\frac{ \omega_W(F)}{|F|^2}:\,|F|\leq \rho\Big\}.
\end{equation}
This expansion along with \eqref{eq: strain-om},  \eqref{eq: firsti step}, and the fact that $\Omega^\eps_{\rm good} \cap \Omega^\eps_{\rm strain} = \emptyset$  yields   for $\ep$ small enough  
\begin{align*}
\mathcal{E}_\eps(y^\eps, \Omega_\delta) &\ge \sum\nolimits_{j}\int_{{\Omega_{\rm good}^{\ep}\cap P_j^{\ep}}} \Big( \frac12  D^2W(R^{\ep}M^{\ep}_j)\nabla u^{\ep}:\nabla u^{\ep} + \frac{1}{\eps^2} \omega_W(\eps \nabla u^\eps) \Big) \, {\rm d}x\notag\\
& = \sum\nolimits_{j}\int_{{\Omega_{\rm good}^{\ep}\cap P_j^{\ep}}} \Big( \frac12  D^2W(R^{\ep}M^{\ep}_j)\nabla u^{\ep}:\nabla u^{\ep} + |\nabla u^\eps|^2 \frac{ \omega_W(\eps \nabla u^\eps)}{|\eps \nabla u^\eps|^2} \Big) \, {\rm d}x \notag\\
& \ge \sum\nolimits_{j}  \frac12  \int_{{\Omega_{\rm good}^{\ep}\cap P_j^{\ep}}}  D^2W(R^{\ep}M^{\ep}_j)\nabla u^{\ep}:\nabla u^{\ep}\,{\rm d}x - \eta_W\big(\eps^{1-\alpha}\big) \Vert \nabla u^\eps \Vert^2_{L^2(\Omega_{\rm good}^{\ep})}.
\end{align*}
Then, by \eqref{eq: strain bound} and \eqref{eq:cont-mod} we get
\begin{align}\label{eq: secondi step}
\liminf_{\ep\to 0}  \mathcal{E}_\eps(y^\eps, \Omega_\delta) \ge \liminf_{\ep\to 0} \sum\nolimits_{j}  \frac12  \int_{{\Omega_{\rm good}^{\ep}\cap P_j^{\ep}}}  D^2W(R^{\ep}M^{\ep}_j)\nabla u^{\ep}:\nabla u^{\ep}\,{\rm d}x. 
\end{align}
By H5.,  \eqref{eq: phase-om},  and the fact that $\Omega^\eps_{\rm good} \cap \Omega^\eps_{\rm phase} = \emptyset$ we find
$$
\Big|\sum\nolimits_{j} \int_{\Omega^\eps_{\rm good}\cap P_j^{\ep}} \big(D^2W(R^{\ep}M^{\ep}_j)-D^2W(\nabla y)\big)\,\nabla u^{\ep}:\nabla u^{\ep}\,{\rm d}x\Big|\leq \hat{\delta}_{\ep}\int_{\Omega_{\rm good}^\eps}|\nabla u^{\ep}|^2\,{\rm d}x,
$$
where $\lbrace \hat{\delta}_\eps \rbrace_\eps \subset (0,+\infty)$ is a sequence depending on $W$ and $\lbrace {\delta}_\eps \rbrace_\eps$, which satisfies $\hat{\delta}_\eps \to 0$. This along with \eqref{eq: strain bound} and \eqref{eq: secondi step} yields
\begin{align}\label{eq: thirdi step}
\liminf_{\ep\to 0}  \mathcal{E}_\eps(y^\eps, \Omega_\delta) \ge \liminf_{\ep\to 0}   \frac12  \int_{{\Omega_{\rm good}^{\ep}}}  D^2W(\nabla y)\nabla u^{\ep}:\nabla u^{\ep}\,{\rm d}x. 
\end{align}
In view of \eqref{eq:comp-grad-u} and \eqref{eq: omegagood}, there holds $\nabla u^{\ep}\chi_{\Omega_{\rm good}^{\ep}}\rightharpoonup \nabla u$ weakly in $L^2(\Omega_\delta;\mathbb{M}^{d \times d})$.  Note that $D^2W(RM)$ is positive semidefinite for $M \in \lbrace A,B\rbrace$ by H2.\ and H3.  Thus, by \eqref{eq: thirdi step} and the  weak  lower semicontinuity of convex integral functionals, we conclude  that
$$\liminf_{\ep\to 0}  \mathcal{E}_\eps(y^\eps, \Omega_\delta)  \ge \frac12  \int_{\Omega_\delta}  D^2W(\nabla y)\nabla u:\nabla u\,{\rm d}x.  $$
This along with the definition in \eqref{eq:def-Q} shows  \eqref{eq: check2}.  This concludes the proof.
\end{proof}


 \subsection{The limsup inequality}
 \label{subs:limsup}
In this subsection we   prove the optimality of  the lower bound identified in Theorem \ref{thm:liminf},  under the additional condition that $2K=K^M_{\rm dp}$, for $M\in \{A,B\}$, cf.\ \eqref{eq: our-k1} and  \eqref{eq: our-k2}. 
We first collect some basic properties of the elastic energy density. 
\begin{lemma}[Elementary properties of the energy density]\label{lemma: limsup-W}
Let $W\colon\mathbb{M}^{d\times d}\to [0,+\infty)$ satisfy assumptions H1.--H5.\ and H7. 
Let $0 <\delta \le \delta_W/2$, where $\delta_W$ is the constant introduced in $\mathrm{ H5.}$ We define $ \mathcal{V}_\delta = \lbrace F\in \M^{d \times d}\colon \, {\rm dist}(F,SO(d)\lbrace A,B \rbrace) <\delta \rbrace$.  Then there exists a constant $C>0$  only depending on $W$, a constant $C_\delta>0$ additionally depending on $\delta$, and  $\rho_\delta>0$ with $\rho_\delta \to 0$ as $\delta\to 0$ such that
\begin{align*}
{\rm (i)}& \ \ W(F +  G)  \le W(F) + C\sqrt{W(F)}|G| + \frac{1}{2} D^2W(F) \, G:G + \rho_\delta|G|^2 \ \ \text{for all $F \in \mathcal{V}_\delta$, $G \in B_{\delta}(0) $,}\\
{\rm (ii)}& \ \ W(F + G)  \le W(F) + C_\delta\sqrt{W(F)}|G|  \ \ \text{for all $F \in \M^{d\times d} \setminus \mathcal{V}_\delta$, $G \in B_{\delta}(0)$},
\end{align*}
where  $B_{\delta}(0) \subset \M^{d\times d}$ denotes the open ball centered at $0$ with radius $\delta$.  
\end{lemma}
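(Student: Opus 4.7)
The plan is to treat the two regimes separately, exploiting that part (i) is a Taylor-expansion statement valid only near the wells, where $W \in C^2$, while part (ii) rests on the Lipschitz-type bound H7 applicable in the complement.

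For (i), I would first observe that by frame indifference (H2) together with H5, the map $D^2 W$ extends uniformly continuously to the set $\mathcal{V}_{\delta_W}$: indeed, for $F\in \mathcal{V}_{\delta_W}$ one can write $F = RM + H$ with $R\in SO(d)$, $M\in\{A,B\}$, and $|H|\le \delta_W$, and the relation $W(RF) = W(F)$ yields a compatibility for the derivatives that reduces all quantities to a bounded neighborhood of $\{A,B\}$. This gives both a uniform bound $|D^2 W|\le M_W$ on $\mathcal V_{\delta_W/2}$ and a modulus of continuity $\omega_W$ for $D^2 W$ on $\mathcal V_{\delta_W}$ with $\omega_W(s)\to 0$ as $s\to 0^+$. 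Since $F\in\mathcal V_\delta$ and $|G|<\delta\le \delta_W/2$, the segment $[F,F+G]$ lies in $\mathcal V_{2\delta}\subset\mathcal V_{\delta_W}$, so Taylor's formula with integral remainder yields
\[
W(F+G) = W(F) + DW(F):G + \tfrac12 D^2W(F)G:G + R(F,G),
\]
with $|R(F,G)|\le \tfrac12\omega_W(\delta)|G|^2$. Setting $\rho_\delta := \tfrac12\omega_W(\delta)$ handles the remainder. To estimate $DW(F):G$, I would choose $F_0\in SO(d)\{A,B\}$ realizing $\mathrm{dist}(F,SO(d)\{A,B\})$; since $F_0$ is a minimizer of $W$, $DW(F_0)=0$, so the mean-value theorem gives $|DW(F)|\le M_W|F-F_0|$, and then H4 gives $|F-F_0|\le \sqrt{W(F)/c_1}$. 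Combining yields $|DW(F):G|\le (M_W/\sqrt{c_1})\sqrt{W(F)}\,|G|$, which provides the $C\sqrt{W(F)}|G|$ term with $C=C(W)$.

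For (ii), I would apply H7 directly to $F_1=F+G$ and $F_2=F$:
\[
W(F+G)\le W(F) + c_3(1+2|F|+|G|)|G|.
\]
Since $F\notin\mathcal V_\delta$ and elements of $SO(d)\{A,B\}$ have bounded Frobenius norm $\le \max(|A|,|B|)=:C_0$, the triangle inequality gives $\mathrm{dist}(F,SO(d)\{A,B\})\ge \max\{\delta,\,|F|-C_0\}$, and H4 yields $\sqrt{W(F)}\ge \sqrt{c_1}\,\max\{\delta,|F|-C_0\}$. In particular $|F|\le C_0 + \sqrt{W(F)/c_1}$ and $\sqrt{W(F)}\ge \sqrt{c_1}\,\delta$. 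Using $|G|<\delta$, the factor $1+2|F|+|G|$ is bounded above by $C(1+\sqrt{W(F)})\le C_\delta\sqrt{W(F)}$ with $C_\delta = C\,(1+(\sqrt{c_1}\delta)^{-1})$, which gives (ii).

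The routine but only mildly delicate point is the reduction to uniform continuity of $D^2 W$ via frame indifference in part (i); once this is in place, the two bounds follow from Taylor expansion (respectively the H7 Lipschitz bound) together with the elementary consequence of H4 that $\mathrm{dist}(F,SO(d)\{A,B\})$ is controlled by $\sqrt{W(F)}$.
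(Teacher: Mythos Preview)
Your proposal is correct and follows essentially the same route as the paper: Taylor expansion with a uniform-continuity remainder for (i), combined with $DW(F_0)=0$ and H4 to bound $|DW(F)|\le C\sqrt{W(F)}$; and H7 together with H4 for (ii). One small simplification: the detour through frame indifference to obtain uniform continuity of $D^2W$ is unnecessary, since $SO(d)\{A,B\}$ is compact and hence $\overline{\mathcal V_{2\delta}}$ (with $2\delta\le\delta_W$) is a compact subset of the open $C^2$-region, giving uniform continuity directly.
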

The proof of this lemma is postponed to the end of this subsection.

We proceed with the construction of local recovery sequences around the interfaces. To this end, recall the definition of $K$ in \eqref{eq: our-k1}. Let $y_0^+$ and $y_0^-$ be the maps defined right after \eqref{eq: conti-schweizer-k}. We recall the notion of cylindrical sets from \eqref{eq:def-dlh}  and the definition of strictly star-shaped domains in \eqref{eq: starshape}.  We start by stating the local construction of recovery sequences for a single phase transition.

\begin{proposition}[Local recovery sequence for single phase transition]\label{lemma: local1}
Let  $d\in \mathbb{N}$, $d\geq 2$. Let $\Omega \subset \R^d$ be a bounded, strictly star-shaped Lipschitz domain. Let $\omega' \subset \R^{d-1}$ be a bounded Lipschitz domain and $h>0$ such that $\partial \omega' \times (-h,h)$ does not intersect ${\Omega}$. Then,  there exist sequences $\lbrace v^+_\eps \rbrace_\eps, \lbrace v^-_\eps \rbrace_\eps \subset H^2(D_{\omega',h}\cap \Omega;\R^d)$ with 
\begin{equation}
\label{eq:local1-1}
v^\pm_\eps \to y^\pm_0 \quad\text{ in } H^1(D_{\omega',h}\cap\Omega ;\R^d),
\end{equation}
 such that
 \begin{equation}
 \label{eq:local1-3a}
 \lim_{\eps \to 0} \mathcal{E}_{\ep} (v^\pm_\eps, D_{\omega',h} \cap\Omega)  =  K \, \mathcal{H}^{d-1}\big((\omega' \times \lbrace 0 \rbrace) \cap \Omega\big), 
 \end{equation}
 and for $\eps$ sufficiently small we have
\begin{equation}
\label{eq:local1-3} v^\pm_\eps = 
\begin{cases} 
I^\pm_{1,\eps} \circ y_0^\pm & \text{if $x_d \ge   3h/4$},\\
 I^\pm_{2,\eps} \circ y_0^\pm & \text{if $x_d \le -  3h/4$},\\
  \end{cases}  \end{equation}
where $\{I^\pm_{1,\eps}\}_{ \eps}$  and $\{I^\pm_{2,\eps}\}_{ \eps}$ are sequences of isometries which converge to the identity as $\eps \to 0$.
\end{proposition}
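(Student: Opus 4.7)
My starting point would be the one-dimensional slicing construction already used for the proof of Proposition \ref{prop:cell-form} (and already implemented in \cite[Proposition 4.7]{davoli.friedrich}). From \eqref{eq: our-k1} one selects a near-optimal sequence $\{z_\eps\}\subset H^2(Q;\R^d)$ with $z_\eps \to y_0^+$ in $L^1(Q)$ and $\mathcal{E}_\eps(z_\eps,Q)\to K$. A Fubini/slicing argument along the $e_d$-direction, followed by averaging over the $(d-1)$-dimensional slices, produces a profile $\varphi_\eps\colon\R\to\R^d$ depending only on $x_d$ whose derivative $\varphi_\eps'(x_d)$ transitions from a value close to $Ae_d$ at large positive $x_d$ to a value close to $Be_d$ at large negative $x_d$, with the non-trivial part of the transition concentrated in a layer of width $\sim \eps^2$ around $x_d=0$, and whose associated energy (as a $d$-dimensional deformation constant in $x'$) still matches $K\mathcal{H}^{d-1}(\omega')$ in the limit on any cylinder $D_{\omega',h}$. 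Denoting this $d$-dimensional deformation by $\tilde v_\eps^+$, we therefore have $\tilde v_\eps^+ \to y_0^+$ in $H^1$ and $\limsup_{\eps\to 0}\mathcal{E}_\eps(\tilde v_\eps^+,D_{\omega',h})\le K\mathcal{H}^{d-1}(\omega')$.

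Next I would enforce condition \eqref{eq:local1-3}. Because the transition layer has width $\sim\eps^2$, on the two caps $\{|x_d|\ge h/2\}$ the deformation $\tilde v_\eps^+$ is affine up to an exponentially small perturbation in $H^2$: more precisely, there exist affine maps $y\mapsto R^+_{1,\eps}Ay+b^+_{1,\eps}$ and $y\mapsto R^+_{2,\eps}By+b^+_{2,\eps}$, with $R^+_{j,\eps}\in SO(d)$ satisfying $R^+_{j,\eps}\to\mathrm{Id}$ and $b^+_{j,\eps}\to 0$, that coincide with $\tilde v_\eps^+$ on $\{|x_d|=h/2\}$ up to an error that is small in $H^2$ on a neighborhood of this slice. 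In the buffer region $\{h/2<|x_d|<3h/4\}$ I would perform a smooth convex combination (through an $x_d$-dependent $H^2$-cut-off) between $\tilde v_\eps^+$ and these affine maps. Setting $v_\eps^+$ equal to $\tilde v_\eps^+$ on $\{|x_d|\le h/2\}$, equal to the constructed interpolant on the buffer region, and equal to the affine maps (which are $I^+_{1,\eps}\circ y_0^+$ and $I^+_{2,\eps}\circ y_0^+$ for the obvious isometries $I^+_{j,\eps}$) on $\{|x_d|\ge 3h/4\}$, produces a function of class $H^2$ satisfying \eqref{eq:local1-3}. An analogous argument, with the roles of $A$ and $B$ swapped, yields the sequence $\{v_\eps^-\}$.

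Finally I would restrict $v^\pm_\eps$ to $D_{\omega',h}\cap\Omega$. The hypothesis that $\partial\omega'\times(-h,h)$ avoids $\Omega$ means that the construction needs no modification near the lateral sides of the cylinder, so the convergence \eqref{eq:local1-1} and the representation \eqref{eq:local1-3} pass to the restriction. For the energy, Proposition \ref{prop:cell-form} applied on the transition slab — through a localization argument exploiting that $\varphi_\eps$ depends only on $x_d$ and that the $d$-dimensional area element picked up by the profile on $\Omega\cap(\omega'\times(-h,h))$ equals $\mathcal{H}^{d-1}((\omega'\times\{0\})\cap\Omega)$ by Fubini — gives
\[
\limsup_{\eps\to 0}\mathcal{E}_\eps\bigl(v_\eps^\pm,(\omega'\times(-h/2,h/2))\cap\Omega\bigr)\le K\,\mathcal{H}^{d-1}\bigl((\omega'\times\{0\})\cap\Omega\bigr),
\]
while on the buffer and cap regions the energy is $o(1)$: on the caps both the elastic and the higher-order parts of $\mathcal{E}_\eps$ vanish because $v_\eps^\pm$ is an isometry composed with $y_0^\pm$ (its gradient lies in $SO(d)\{A,B\}$ and $\nabla^2 v_\eps^\pm = 0$), and on the buffer the smallness of the interpolation together with H1., H6., and H7.\ yields vanishing elastic, Modica–Mortola, and anisotropic second-order contributions. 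The matching lower bound \eqref{eq: check1}(i) of the liminf proof forces then the $\liminf$ to equal the $\limsup$, giving \eqref{eq:local1-3a}.

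The main obstacle in the plan is the buffer-zone interpolation: one must verify that the $H^2$-gluing between the almost-one-dimensional profile $\tilde v_\eps^+$ and an exact rigid motion does not produce spurious contributions, particularly in the anisotropic term $\bar\eta_{\eps,d}^2\int(|\nabla^2 v|^2-|\partial_{dd}^2 v|^2)\,{\rm d}x$ whose prefactor blows up. The key point will be that, since $\tilde v_\eps^+$ and the affine map differ by an $H^2$-correction whose norm tends to $0$ faster than any power of $\bar\eta_{\eps,d}^{-1}$ (thanks to the sharpness of the transition layer of width $\sim\eps^2$ and to the fact that $\bar\eta_{\eps,d}=\eps^{-1+\alpha(d)}$ only diverges polynomially in $\eps$), the interpolation can be designed so that the resulting second derivatives remain controlled; this is exactly the delicate quantitative point inherited from the construction in \cite{davoli.friedrich}.
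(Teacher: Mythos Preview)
Your plan has a genuine gap at its very first step. You claim that ``a Fubini/slicing argument along the $e_d$-direction, followed by averaging over the $(d-1)$-dimensional slices, produces a profile $\varphi_\eps$ depending only on $x_d$ \ldots\ whose associated energy still matches $K\mathcal{H}^{d-1}(\omega')$.'' This is precisely the assertion that one-dimensional profiles are optimal for $K$, and the paper shows this only under the extra structural hypothesis \eqref{eq: isotropy} (Lemma~\ref{lemma: 1d}); it explicitly remarks after that lemma that without such a condition optimal profiles need \emph{not} be one-dimensional. Neither slicing (fixing an $x'$ and keeping only $\partial_d y,\partial_{dd}^2 y$ along that line) nor averaging (which cannot decrease the nonconvex elastic part $W$) produces a one-dimensional competitor with energy $\le K$. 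Since your subsequent localization step---``the $d$-dimensional area element picked up by the profile on $\Omega\cap(\omega'\times(-h,h))$ equals $\mathcal{H}^{d-1}((\omega'\times\{0\})\cap\Omega)$ by Fubini''---relies entirely on this one-dimensional structure, the whole argument collapses. The further claims that the transition layer has width $\sim\eps^2$ and that the profile is affine ``up to an exponentially small perturbation'' on the caps are also unsubstantiated for a general near-optimal sequence.

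The paper's route is quite different and does not need one-dimensionality. It takes for granted the case $\Omega=D_{\omega',h}$ from \cite[Proposition~4.7]{davoli.friedrich} (whose construction, like the proof of Proposition~\ref{lemma: local2} here, is based on the slice estimates of Proposition~\ref{lemma: optimal profile} and the transition-to-rigid-motion Lemma~\ref{lemma: transition1}, applied to a genuinely $d$-dimensional near-optimal sequence). To pass to a general strictly star-shaped $\Omega$, it chooses a sequence of thin cylinders $D_i=\alpha_i e_d+D_{\omega_i,h_i}$ with $h_i\to 0$, $\alpha_i\to 0$, and Lipschitz cross-sections $\omega_i\searrow\omega$ satisfying $\mathcal{H}^{d-1}(\omega_i)\le\mathcal{H}^{d-1}(\omega)+1/i$, where $\omega\times\{0\}=(\omega'\times\{0\})\cap\Omega$; star-shapedness is used precisely to arrange that $\partial\omega_i\times(-h_i+\alpha_i,\alpha_i+h_i)$ does not meet $\Omega$. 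On each $D_i$ the cylinder case gives a sequence with energy $\to K\mathcal{H}^{d-1}(\omega_i)$ that is a rigid motion near the top and bottom, so it extends by isometries to all of $D_{\omega',h}\cap\Omega$ with zero extra energy. A diagonal argument in $i$ and $\eps$ then yields \eqref{eq:local1-3a}. Note that your sketch never invokes star-shapedness, which is a tell that the reduction to the correct cross-sectional measure is missing.
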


We emphasize that the above statement  means that for \emph{any} sequence $\lbrace \eps_i\rbrace_i$ converging to zero a local
recovery sequence can be constructed. The crucial point is that the sequence $\lbrace v^\pm_\eps \rbrace_\eps$ is rigid away from
the interface. This will allow us to appropriately `glue together' local recovery sequences around different interfaces.

The next result provides a local construction of recovery sequences for the case in which  two  consecutive phase transitions create small intermediate layers at level $\ep$ between two portions of the material in the same phase, cf.\ Figure \ref{fig2}.  Owing to the compatibility condition that $2K=K^M_{\rm dp}$, for $M\in \{A,B\}$, cf.\ \eqref{eq: our-k1} and  \eqref{eq: our-k2},   this provides a double energetic contribution. Recall the mappings $y^M_{\rm dp}$  defined in \eqref{eq: step functions}.

\begin{proposition}[Local recovery sequence for double phase transitions]\label{lemma: local2}
Let  $d\in \mathbb{N}$, $d\geq 2$. Let $\Omega \subset \R^d$ be a bounded, strictly star-shaped Lipschitz domain. Let $\omega' \subset \R^{d-1}$ be a bounded Lipschitz domain and $h>0$ such that $\partial \omega' \times (-h,h)$ does not intersect ${\Omega}$. Let $M \in \lbrace A,B \rbrace$ and  suppose that the constant $K^M_{\rm dp}$ defined in \eqref{eq: our-k2} satisfies   $K^M_{\rm dp} = 2K$.   Then, for  every $\{w_\ep\}_\ep\subset\mathcal{W}_d$ there exists a sequence $\lbrace v^M_\eps \rbrace_\eps\subset H^2(D_{\omega',h}\cap \Omega;\R^d)$ with 
\begin{equation}
\label{eq:local2-1}
\frac{v^M_\eps -  Mx}{w_\eps} \to  y^M_{\rm dp}\quad\text{ in measure on } \Omega \cap D_{\omega',h}\end{equation}
 such that
\begin{equation}
\label{eq:local2-2}\lim_{\eps \to 0} \mathcal{E}_\eps (v^M_\eps, \Omega \cap D_{\omega',h})  = 2K \, \mathcal{H}^{d-1}\big((\omega' \times \lbrace 0 \rbrace) \cap \Omega\big),  \ \ \ \ \ v^M_\eps = \begin{cases} I^{M}_{1,\ep} \circ Mx  & \text{if $x_d \ge 3h/4$} \\ I^{M}_{2,\ep} \circ Mx   & \text{if $x_d \le - 3h/4$},\end{cases} \end{equation}
where $\{I^{M}_{1,\ep}\}_\ep$ and $\{I^{M}_{2,\ep}\}_\ep$ are sequences of isometries converging to the identity as $\eps \to 0$. 
\end{proposition}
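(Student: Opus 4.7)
The plan is to construct the double-transition recovery sequence by gluing together two copies of the single-transition recovery sequences from Proposition \ref{lemma: local1}, one performing a transition from $M$ to the opposite phase $M' \in \lbrace A,B\rbrace \setminus \lbrace M\rbrace$ near $x_d = w_\eps$ and a second performing the reverse transition from $M'$ back to $M$ near $x_d = 0$. Heuristically, for $\lbrace w_\eps \rbrace_\eps \in \mathcal{W}_d$ (so $w_\eps \ge c\eps$ and $w_\eps \to 0$) the widths of the two single transition layers are negligible compared to their separation $w_\eps$, so the two single profiles do not interact. The assumption $K_{\rm dp}^M = 2K$ ensures that splitting into two independent single transitions is energetically optimal, which is exactly what Proposition \ref{prop:cell-form-chaper2}/Proposition \ref{eq: KundKdo-new} allow us to attain rather than just lower bound.

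More concretely, I would first rescale space to an $\eps$-dependent frame in which the two transitions sit at two well-separated locations. Apply Proposition \ref{lemma: local1} on the cylinder $D_{\omega',h/4}\cap\Omega$ (with interfaces at $x_d=0$) to obtain a sequence $\lbrace v_\eps^{(1)}\rbrace_\eps$ going from phase $M$ (below) to phase $M'$ (above), modifying it by a translation to place the transition at $x_d = w_\eps$; similarly produce $\lbrace v_\eps^{(2)}\rbrace_\eps$ going from $M'$ to $M$ with transition at $x_d = 0$. By the isometry property \eqref{eq:local1-3}, above $3h/4$ each sequence equals an isometry composed with the affine map $M'x + \mathrm{const}$ or $Mx + \mathrm{const}$, and similarly below $-3h/4$; in particular, in the intermediate slab $\lbrace 0 < x_d < w_\eps\rbrace$ (after localization) each sequence is a rigid movement in phase $M'$, modulo vanishing corrections. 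We then define $v_\eps^M$ piecewise: below an intermediate height it equals (a rigidly corrected version of) $v_\eps^{(2)}$, between the two transition layers a single rigid motion of $M'x$ compatible with both traces, and above it equals (a rigidly corrected version of) $v_\eps^{(1)}$. Finally, above $3h/4$ and below $-3h/4$ we match to isometries $I^M_{1,\eps}, I^M_{2,\eps}$ of $Mx$ that converge to the identity. The translations in the gluing are fixed by imposing that the jump of $v_\eps^M - Mx$ across the two layers matches $(B-A)w_\eps e_d = \kappa w_\eps e_d$ (up to sign depending on $M$), which is exactly what is needed for \eqref{eq:local2-1} to give $y_{\rm dp}^M = \pm e_d \chi_{\lbrace x_d > 0\rbrace}$.

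For the energy upper bound, split the integral over $D_{\omega',h}\cap\Omega$ into the two transition slabs and the three rigid-motion regions (top, middle, bottom). On the rigid regions, $\nabla v_\eps^M \in SO(d)\lbrace A,B\rbrace$ exactly, so $W(\nabla v_\eps^M) = 0$ and $\nabla^2 v_\eps^M = 0$, contributing nothing. On each transition slab, by \eqref{eq:local1-3a} applied to the corresponding $v_\eps^{(i)}$, the contribution converges to $K\,\mathcal{H}^{d-1}((\omega'\times\lbrace 0\rbrace)\cap\Omega)$. Summing, we get $2K\,\mathcal{H}^{d-1}((\omega'\times\lbrace 0\rbrace)\cap\Omega)$, matching \eqref{eq:local2-2}. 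The convergence in measure \eqref{eq:local2-1} follows because $v_\eps^M - Mx$ is uniformly close to $w_\eps e_d \chi_{\lbrace x_d>0\rbrace}$ outside the two thin transition slabs (whose Lebesgue measure tends to zero), using that the single-transition profiles satisfy $v_\eps^{(i)} \to y_0^{\pm}$ in $L^1$ by \eqref{eq:local1-1}, together with the rigid matchings.

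The principal obstacle is the \emph{gluing in $H^2$ around the two interfaces}: in order to keep the sequence in $H^2$ and maintain the rigid behaviour \eqref{eq:local1-3}, the rotations $I^\pm_{1,\eps}, I^\pm_{2,\eps}$ coming from the two copies of Proposition \ref{lemma: local1} must be reconciled so that the in-between region is still (close to) a single rigid motion. A delicate point is that $w_\eps$ may be of the same order as $\eps$, so the two transition layers are only separated by a slab of width $\sim \eps$; one has to verify that the mollification scales used in Proposition \ref{lemma: local1} are compatible with this separation, and that the correction rotations converge to the identity fast enough not to spoil \eqref{eq:local2-2}. I expect this to be handled by exactly the same type of cut-off and rigid-interpolation argument used in Proposition \ref{lemma: local1}, applied here on two concentric cylinders, together with the standing compatibility assumption $K_{\rm dp}^M = 2K$ which rules out any genuine $w_\eps$-dependent excess cost in the limit.
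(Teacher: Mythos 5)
There is a genuine gap. Your plan glues two copies of the single-transition recovery sequences of Proposition \ref{lemma: local1}, separated by a slab of width $w_\eps$, and invokes $K^M_{\rm dp}=2K$ only as a heuristic certificate that this is optimal. But Proposition \ref{lemma: local1} makes its sequences rigid only beyond the \emph{fixed} macroscopic distance $3h/4$ from the interface; it gives no control whatsoever on their behaviour at distance $\sim w_\eps$ from it, and $w_\eps$ may be of order $\eps$. Hence the two profiles cannot be matched through a slab of width $w_\eps\to 0$ by inserting a single rigid motion: inside that slab both sequences are still in their (uncontrolled) transition regions, and making the gluing $H^2$ with vanishing extra energy would require either an $\eps$-dependent rescaled version of Proposition \ref{lemma: local1} (for which the energy statement \eqref{eq:local1-3a} no longer applies and would have to be reproved) or a quantitative rate at which the single profiles become rigid near their interface; neither is available. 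More fundamentally, if your gluing went through for every $W$ satisfying H1.--H7., it would show that two optimal single transitions can always be concatenated at separation $w_\eps$ with asymptotically no extra cost, i.e.\ it would prove $K^M_{\rm dp}\le 2K$ unconditionally. The paper treats this as an assumption, establishes it only under the additional hypothesis \eqref{eq: isotropy} (Lemma \ref{lemma: 1d}, Proposition \ref{prop: compat}), and conjectures it may fail in general; so the hypothesis $K^M_{\rm dp}=2K$ must enter your construction as a \emph{source of competitors}, not merely as a bound to be matched.

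This is precisely how the paper's proof runs: from $K^M_{\rm dp}=2K$, Proposition \ref{prop:kMdo} and Lemma \ref{cor: layer energy} produce, for the prescribed $\lbrace w_\eps\rbrace\in\mathcal{W}_d$ and a slightly larger cross-section $\tilde\omega$, a sequence $\lbrace y^\eps\rbrace$ with $(y^\eps-Mx)/w_\eps\to y^M_{\rm dp}$ in measure, energy converging to $2K\,\mathcal{H}^{d-1}(\tilde\omega)$, and energy concentrated in $D_{\tilde\omega,h/16}$. The surgery is then only performed far from the interface, at the fixed scale $\tau=h/8$: the slicing estimates of Proposition \ref{lemma: optimal profile} select good slices $s^\pm_\eps$ and rotations $R^\pm_\eps$, and Lemma \ref{lemma: transition1} replaces $y^\eps$ above $s^+_\eps$ and below $s^-_\eps$ by maps coinciding with isometries of $Mx$ for $|x_d|\ge 3h/8$, at a cost controlled by $\delta^M_\eps\to 0$. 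The remaining delicate point, which your outline does not address, is that this outer modification must not destroy the convergence at the fine scale $w_\eps$: the paper proves $w_\eps^{-1}(v^M_\eps-Mx)\to y^M_{\rm dp}$ on the modified regions by a compactness and lower-bound argument (Theorems \ref{thm:compactness} and \ref{thm:liminf} applied on $\omega\times(h/16,h)$), which forces the limit there to be affine and then identifies it by matching with the unmodified strip $\omega\times(h/16,h/8)$. I would encourage you to rework your argument along these lines, replacing the two-profile gluing by a modification of a near-optimal double-profile competitor.
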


We defer the proofs of Propositions   \ref{lemma: local1}  and  \ref{lemma: local2} to  Subsection \ref{subs:local}. (Let us mention that in the special case  $\Omega= D_{\omega',h}$ the statement in Proposition  \ref{lemma: local1} has already  been proven in \cite[Proposition 4.7]{davoli.friedrich}, and here we address the generalization to strictly star-shaped Lipschitz domains $\Omega$.)  We  continue with the proof of the limsup inequality. As a final preparation, we introduce the following convention: we say that a sequence of functions $\lbrace v^\eps \rbrace_\eps$  converges to $v$   \emph{up to translation} if there exist $\lbrace \alpha_\eps \rbrace_\eps \subset \R$ and $\lbrace b_\eps \rbrace_\eps \subset \R^d$ such that 
\begin{align}\label{eq: convention}
 v^\eps(\cdot  - \alpha_\eps e_d ) - b_\eps \to v
 \end{align}
 with respect to a given topology. In a similar fashion, we say that two functions $v_1,v_2$ coincide \emph{up to translation} if $v_2 =v_1(\cdot - \alpha e_d) - b$ for $\alpha \in \R$ and $b \in \R^d$.

\begin{proof}[Proof of Theorem \ref{thm:limsup-new}]
Let $(y,u,\mathcal{P})\in \mathcal{A}$. Without loss of generality,  after a rotation, \EEE we can assume that   $y\in \mathcal{Y}_{\rm Id}(\Omega)$. Moreover, similarly to the proof of Theorem   \ref{thm:liminf}, it is also not restrictive to assume that 
 \begin{align}\label{eq: jump-part}
J_{\nabla y} \subset \bigcup\nolimits_j \partial P_j \cap \Omega \subset J_u.
\end{align}
In fact, the first inclusion always holds true by Definition \ref{def:convergence2}, and by using   Proposition \ref{lemma: admissible-u}(i) we may pass to another  displacement field of the form $\tilde{u} = u + \mathcal{T}(y,\mathcal{P})$, see \eqref{eq: infini rigid}, such that the second inclusion holds for $\tilde{u}$ in place of $u$.  In view of Remark \ref{rem: AR},  this does not affect the energy and we observe that a recovery sequence   $\lbrace y^\eps\rbrace_\eps$ for  $(y,\tilde{u},\mathcal{P})$ in the sense of Definition \ref{def:convergence1}  is also admissible for the original triple $(y,u,\mathcal{P})$ by  Proposition \ref{prop:ex-coarsest-part}(iii). As a further preliminary remark, we observe that by a diagonal argument  it suffices to find for every $\delta>0$ a recovery sequence  $\lbrace y^\eps\rbrace_\eps$ for  $(y,u, \mathcal{P})$ such that
\begin{align}\label{eq: delta-en}
\limsup_{\eps \to 0} \mathcal{E}_\eps(y^\eps) \le \mathcal{E}_0^{\mathcal{A}}(y,u,\mathcal{P}) + \delta.
 \end{align}
In this context, we point out that the asymptotic representation introduced in Definition \ref{def:convergence1}  is based on the convergences  \eqref{eq: partition property}--\eqref{eq:comp-grad-u} which themselves are metrizable, i.e., diagonal arguments are applicable.

 For convenience of the reader, we start with a short outline of the proof: in Steps 1--2 we explain that it is not restrictive to treat only  problems with a finite number of interfaces and that one can assume  $\nabla u$ to be smooth. In Step 3 we construct local approximate sequences around the interfaces. These  are then `glued together'  to obtain an auxiliary recovery sequence $\{\tilde{y}^\ep\}_\ep$ converging to $y$, and capturing  correctly the surface energy of the limiting triple $(y,u,\mathcal{P})$, see Step 4. To recover the displacement field $u$ in the limit and to estimate the elastic contributions correctly, we then perturb $\{\tilde{y}^\ep\}_\ep$ by adding a term of order $\eps$. We  check that this new sequence  $\{y^\ep\}_\ep$  indeed satisfies $y^\eps \to (y,u,\mathcal{P})$ (Step 5) and $\limsup_{\eps \to 0} \mathcal{E}_\eps(y^\eps) \le \mathcal{E}_0^{\mathcal{A}}(y,u,\mathcal{P})$ (Step 6). Finally, Step 7 is devoted to some technical estimates.

\emph{Step 1: Reduction to  a  finite number of interfaces.}
  Using the star-shapedness of the domain  (say, with respect to the origin)  along with Remark \ref{remark: localization},  one can apply a scaling argument to reduce the problem to    \EEE limiting configurations where  $J_u$ consists of a finite number of disjoint interfaces orthogonal to $e_d$.   For details on this argument we refer to  \cite[Proof of Proposition 5.1]{conti.schweizer2} and also  \cite[Proof of Theorem~4.4]{davoli.friedrich}, Step I).  We just mention that, for $\rho>1$, one considers rescaled triples $(y_\rho,u_\rho,\mathcal{P}_\rho)$ of the form  $y_\rho(x) = \rho y(x/\rho)$, $u_\rho(x) = \rho u(x/\rho)$, and $P_j^\rho = \rho P_j \cap \Omega$ for each component $P^\rho_j \in \mathcal{P}_\rho$. This sequence satisfies  $\mathcal{E}_0^{\mathcal{A}}(y_\rho,u_\rho,\mathcal{P}_\rho) \to \mathcal{E}_0^{\mathcal{A}}(y,u,\mathcal{P})$ as $\rho \to 1$.   The geometrical intuition is that, since infinitely many interfaces can only occur close to the boundary (see also  the  lower part of Figure \ref{fig:limiting-def}), a rescaling allows to reduce the study to a finite  number of interfaces.    It suffices to construct recovery sequences for $(y_\rho,u_\rho,\mathcal{P}_\rho)$ since a recovery sequence for $(y,u,\mathcal{P})$  can then be obtained by a diagonal argument.
  
   Summarizing, by \eqref{eq: jump-part}   we can suppose that there exist finitely many Lipschitz domains $\omega_{i} \subset \R^{d-1} $  and $\alpha_i \in  \R$ for $i=1,\ldots,I$ such that 
\begin{align}\label{eq: jump-part-new}
J_{\nabla y} \cup \bigcup\nolimits_j (\partial P_j \cap \Omega) \cup J_u = J_u  = \bigcup\nolimits_{i=1}^I (\omega_i \times \lbrace \alpha_i \rbrace).
\end{align}
Since $\Omega$ is star-shaped, we have  that $\Omega\setminus J_u$ is the union of  $I+1$  connected components which we indicate as  $\{B_i\}_{i=1}^{I+1}$. The  sets are ordered such that $\partial B_i \cap \partial B_{i+1} = \omega_i \times \lbrace \alpha_i\rbrace$ for $i=1,\ldots,I$, and the outer normal to $B_i$ on $\partial B_i \cap \partial B_{i+1}$ is given by $e_d$  (see Figure \ref{fig:limiting-sets} below).

\emph{Step 2: Reduction to displacement fields with smooth gradient.}
In a similar fashion, we can also suppose that  $u \in \mathscr{U}(\Omega)$ has a smooth gradient:  
by Proposition \ref{lemma: admissible-u} we find $\lbrace b_i\rbrace_{i=1}^{I+1} \subset \R e_d$ such that the mapping
\begin{align}\label{eq: def u strich}
u' := u-\sum\nolimits_{i=1}^{I+1} b_i \chi_{B_i}
\end{align}  
satisfies $u'\in H^1(\Omega;\R^d)$. Choose a smooth sequence $\lbrace u_k'\rbrace_k \subset C^\infty(\overline{\Omega};\R^d)$ approximating $u'$ in $H^1(\Omega;\R^d)$ and observe that $u_k := u_k' + \sum_{i=1}^{I+1} b_i \chi_{B_i} \in \mathscr{U}(\Omega)$ satisfies $u_k \to u$ in $L^1(\Omega;\R^d)$ and $\nabla u_k \to \nabla u$ in $L^2(\Omega;\mathbb{M}^{d\times d})$.     Again by a diagonal argument  and by using that the limiting energy $\mathcal{E}_0^{\mathcal{A}}$ is continuous with respect to the strong $L^2$-convergence of displacement-gradients (see \eqref{eq: limiting energy}),  it suffices to construct recovery sequences  for displacement fields $u \in \mathscr{U}(\Omega)$ such that $\nabla u \in C^\infty(\overline{\Omega};\mathbb{M}^{d \times d})$.

\emph{Step 3:  Local  construction of  the approximate  recovery sequence.} We now start with the construction of recovery sequences around the interfaces. For brevity, we set $J_{\mathcal{P}} = \bigcup_j \partial P_j \cap \Omega$. In view of \eqref{eq: jump-part} and \eqref{eq: jump-part-new}, we can write 
\begin{align}
\label{eq: diff-i}
J_{\nabla y} = \bigcup\nolimits_{i \in \mathcal{I}_y} (\omega_i \times \lbrace \alpha_i \rbrace), \quad \quad    J_{\mathcal{P}} \setminus J_{\nabla y}= \bigcup\nolimits_{i \in \mathcal{I}_{\mathcal{P}}} (\omega_i \times \lbrace \alpha_i \rbrace),  \quad \quad  J_u \setminus (J_{\nabla y} \cup J_{\mathcal{P}})= \bigcup\nolimits_{i \in \mathcal{I}_u} (\omega_i \times \lbrace \alpha_i \rbrace),
\end{align}
where $\mathcal{I}_y$, $\mathcal{I}_{\mathcal{P}}$, and $\mathcal{I}_u$ are three pairwise disjoint index sets with $\mathcal{I}_y \cup \mathcal{I}_{\mathcal{P}} \cup \mathcal{I}_u = \lbrace 1,\ldots,I\rbrace$. 

\begin{figure}[h]
\centering
\begin{tikzpicture}
\coordinate (A) at (0,3);
 \coordinate (B) at (2,0);
 \coordinate (C) at (-2,0);
 \coordinate (D) at (0,-3);
 \coordinate (E) at (1,1.5);
 \coordinate (F) at (1.33,1);
  \coordinate (G) at (1.33,-1);
  \coordinate (H) at (0.66,-2);
  \coordinate (I) at (0.33,-2.5);
  \coordinate (L) at (0.2,-2.7);
  \coordinate (M) at (0.1,-2.85);
   \coordinate (N) at (-1,1.5);
 \coordinate (O) at (-1.33,1);
 \coordinate (U) at (-1.66, 0.5);
 \coordinate (V) at (1.66, 0.5);
  \coordinate (P) at (-1.33,-1);
  \coordinate (Q) at (-0.66,-2);
  \coordinate (R) at (-0.33,-2.5);
  \coordinate (S) at (-0.2,-2.7);
  \coordinate (T) at (-0.1,-2.85);
  \draw (A)--(B)--(D)--(C)--cycle;
  \draw (E)--(N);
   \draw (G)--(P);
  
  \draw[fill=orange!20] (A)--(N)--(E);
   \draw (A)--(E);
  \draw[dashed, fill=blue!30] (N)--(E)--(V)--(U)--cycle;
  \draw (E)--(V);
  \draw (N)--(U);
  \draw[dashed] (O)--(F);
 \draw[fill=orange!20] (U)--(V)--(B)--(G)--(P)--(C)--cycle;
 \draw[dashed](C)--(B);
 \draw[fill=blue!30] (P)--(G)--(H)--(Q)--cycle;
  \draw[fill=blue!30] (Q)--(H)--(D)--cycle;
 \node at (0,0.75) {\small $B6$};
  \node at (0,0.25) {\small $B5$};
  \node at (0,-0.5) {\small $B4$};
 \node at (0,-1.5) {\small $B3$};
 \node at (0,-2.25) {\small $B2$};
  \node at (0,-2.75) {\small $B1$};
  \node at (0,1.25) {\small $B7$};
 \node at (0,2.25) {\small $B8$};
 \draw[line width=0.40 mm,red] (N)--(E);
 \draw[line width=0.40 mm,red] (U)--(V);
 \draw[line width=0.40 mm,red] (P)--(G);
 \draw (R)--(I);
 \draw[thick, teal] (-1.2,1.6)--(1.2,1.6)--(1.2, 1.4)--(-1.2,1.4)--cycle;
 \node at (-1.5,1.5) {\color{teal}\small $D7$};
\draw[thick, teal] (-1.53,1.1)--(1.53,1.1)--(1.53, 0.9)--(-1.53,0.9)--cycle;
 \node at (-1.83,1) {\color{teal}\small $D6$};
\draw[thick, teal] (-1.86,0.6)--(1.86,0.6)--(1.86, 0.4)--(-1.86,0.4)--cycle;
 \node at (-2.16,0.5) {\color{teal}\small $D5$};
\draw[thick, teal] (-2.2,0.1)--(2.2,0.1)--(2.2, -0.1)--(-2.2,-0.1)--cycle;
 \node at (-2.5,0) {\color{teal}\small $D4$};
\draw[thick, teal] (-1.53,-1.1)--(1.53,-1.1)--(1.53, -0.9)--(-1.53,-0.9)--cycle;
 \node at (-1.83,-1) {\color{teal}\small $D3$};
\draw[thick, teal] (-0.86,-2.1)--(0.86,-2.1)--(0.86, -1.9)--(-0.86,-1.9)--cycle;
 \node at (-1.16,-2) {\color{teal}\small $D2$};
\draw[thick, teal] (-0.53,-2.4)--(0.53,-2.4)--(0.53, -2.6)--(-0.53,-2.6)--cycle;
 \node at (-0.83,-2.5) {\color{teal}\small $D1$};
\end{tikzpicture}
\caption{A visualization of the different interfaces and sets after the rescaling in Step~1 and under \eqref{eq: jump-part}.  The phase regions associated to $A$ and $B$ are colored in blue and orange, respectively. The interfaces associated to the sets $\mathcal{I}_y$ and $\mathcal{I}_u$ are highlighted with thick red and dashed black lines, respectively. The remaining interfaces correspond to the set $\mathcal{I}_{\mathcal{P}}$. The connected components of $\Omega\setminus J_u$ are indicated as  $\{B_i\}_{i=1}^{8}$, whereas the cylindrical sets $\{D_i\}_{i=1}^{7}$ around   the interfaces   (see \eqref{eq: shrinko}) are drawn in green.}
\label{fig:limiting-sets}
\end{figure}
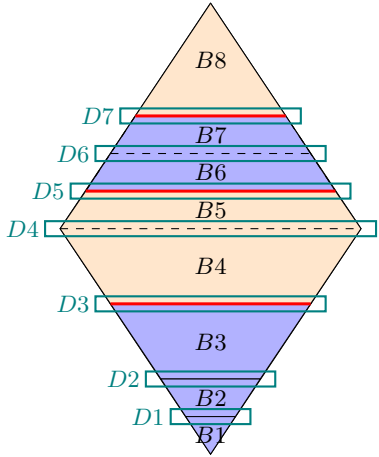

As explained in \cite[Proof of Proposition 5.1]{conti.schweizer2},  we can choose Lipschitz domains $\omega_i' \supset \supset \omega_i$  as well as $h>0$ such that  the sets $\partial \omega_i' \times (\alpha_i-h,\alpha_i + h)$ do not intersect ${\Omega}$, the different cylindrical sets $D_i:=\alpha_ie_d +D_{\omega_i',h}$ are pairwise disjoint,  and one   has  
\begin{align}\label{eq: shrinko}
(\omega_i' \times \lbrace \alpha_i \rbrace) \cap \Omega  = \omega_i \times \lbrace \alpha_i \rbrace.
\end{align}
We again refer to Figure \ref{fig:limiting-sets} for an illustration. We now distinguish the  cases of the three index sets  $\mathcal{I}_y$, $\mathcal{I}_{\mathcal{P}}$, and $\mathcal{I}_u$: first, we fix $i \in \mathcal{I}_y$. As the sets $D_i$ are pairwise disjoint, we get that on $D_i \cap \Omega$ the function  $y$  coincides with $y_0^+$ or $y_0^-$ up to translation (recall convention \eqref{eq: convention}). Thus, by Proposition \ref{lemma: local1}  we can find  a sequence $\{v_{\ep}^+\}_{\ep}$  or $\{v_{\ep}^-\}_{\ep}$  such that \eqref{eq:local1-1} holds up to translation, \eqref{eq:local1-3a}--\eqref{eq:local1-3} are satisfied,  and the sequence  $\{v_{\ep}^+\}_{\ep}$  or $\{v_{\ep}^-\}_{\ep}$ converges to $y$ in $L^1(D_i \cap \Omega;\R^d)$. 
 
For $i \in \mathcal{I}_{\mathcal{P}} \cup \mathcal{I}_u$,   we observe that $y$ coincides up to translation with $Mx$  on $D_i \cap \Omega$ for some $M \in \lbrace A,B\rbrace$.  If $i \in \mathcal{I}_{\mathcal{P}}$, we apply   \EEE    Proposition~\ref{lemma: local2} for the sequence $w_\eps = \sqrt{\eps}$.   If $i \in \mathcal{I}_{u}$, we apply       Proposition~\ref{lemma: local2} for $w_\eps = |b_{i+1} - b_i| \eps$, cf.\ \eqref{eq: def u strich}. In this context, we also note that by Proposition \ref{lemma: admissible-u}, the fact that $\nu_u = e_d$ on $J_u$, and the ordering of the sets $\lbrace B_i\rbrace_{i=1}^{I+1}$ (see Step 1), we have  $(b_{i+1}-b_i) \chi_{\lbrace x_d >0 \rbrace} = |b_{i+1} - b_i| y_{\rm dp}^M$ with $y_{\rm dp}^M$ defined in \eqref{eq: step functions}.   In both cases, we obtain  a sequence $\{v_{\ep}\}_{\ep} \subset H^2(D_i \cap\Omega;\R^d)$ such that \eqref{eq:local2-1} holds up to translation, \eqref{eq:local2-2} is fulfilled, and  $v_\eps \to y$  in measure  on $D_i \cap \Omega$. More precisely, \eqref{eq:local2-1} and the definition of $\lbrace w_\eps \rbrace_\eps$ in each case imply
\begin{align}\label{eq: Pandu-old}
{\rm (i)} &\ \ \eps^{-1}\big(v_\eps - y\big) \to (b_{i+1} - b_i) \chi_{\lbrace x_d \ge \alpha_i\rbrace} \ \  \  \ \,   \ \quad \quad  \, \text{ on $D_i \cap\Omega$ for $i \in \mathcal{I}_u$},\notag\\
{\rm (ii)} &\ \  {\eps}^{-1/2} \big(v_\eps  - y\big)  \to y_{\rm dp}^M(\cdot - \alpha_i e_d), \ \ \quad  \quad\quad \quad    \  \ \, \text{ on $D_i \cap\Omega$ for $i \in \mathcal{I}_{\mathcal{P}}$,}
\end{align}
where both properties hold in the sense of measure convergence. 

For convenience, we denote this local sequence by $\{v_\ep^i\}_\eps \subset H^2(D_i \cap \Omega;\R^d)$  for each $i=1,\ldots,I$. For later purposes,  by using Lemma \ref{lemma:comp-def}  we note that
\begin{align}\label{eq: pointwi-c}
v_\eps^i \to y \ \ \  \text{strongly in $H^1(D_i\cap \Omega;\R^d)$ for all $i=1,\ldots,I$.}
\end{align}

\emph{Step 4: Global construction of the recovery sequence.} Recall that $\Omega \setminus J_u = \bigcup_{i=1}^{I+1} B_i$, and let $B'_i := B_i \setminus \bigcup_{j=1}^I D_{j}$ for all $i=1,\ldots,I+1$.     Owing to Propositions \ref{lemma: local1} and \ref{lemma: local2}, using \eqref{eq: pointwi-c}, and arguing as in \cite[Proof of Proposition 3.5]{conti.schweizer},  we then choose iteratively isometries $\{I_i^\ep\}_{i=1}^I$ and $\{\hat{I}_i^\ep\}_{i=1}^{I+1}$ such that all isometries converge to the identity as $\ep\to 0$, and setting
$$\tilde{y}^\ep:=I_i^\ep\circ v^i_\ep\quad\text{on }D_i\cap \Omega\qquad\text{and}\qquad \tilde{y}^\ep:=\hat{I}_i^\ep\circ y\quad\text{on }B_i',$$ 
the maps $\tilde{y}^\ep\colon\Omega\to \R^d$ satisfy $\{\tilde{y}^{\ep}\}_\ep\subset H^2(\Omega;\R^d)$ and 
\begin{align}\label{eq: nochmal strong}
\tilde{y}^\ep\to y \text{ strongly in $H^1(\Omega;\R^d)$}.
\end{align}
Moreover, by \eqref{eq: Pandu-old} we get that    
\begin{align}\label{eq: Pandu}
{\rm (i)} &\ \ \eps^{-1}\big(\tilde{y}^\eps - I_i^\eps \circ y\big) \to (b_{i+1} - b_i) \chi_{\lbrace x_d \ge \alpha_i\rbrace}    \quad  \quad \quad \quad \quad \quad \quad  \text{if $i \in \mathcal{I}_u$},\notag\\
{\rm (ii)} &\ \ \eps^{-1/2}\big(\tilde{y}^\eps - I_i^\eps \circ y\big)\to  y_{\rm dp}^{M_i}(\cdot - \alpha_ie_d), \quad \quad \quad \quad \quad \ \ \,   \quad \quad   \, \  \text{if $i \in \mathcal{I}_{\mathcal{P}}$},
\end{align}
where both convergences hold in measure in $D_i \cap \Omega$,  and $M_i \in \lbrace A,B\rbrace$ is such that $\nabla y = M_i$ on $D_i \cap \Omega $ if $i\in \mathcal{I}_{\mathcal{P}}$. 
 Note that, up to translations, it is not restrictive to suppose that  $\fint_\Omega \tilde{y}^\eps\, {\rm d}x = 0$.  By construction we have
\begin{align}\label{eq: second gradient-same}
\nabla \tilde{y}^\ep\in SO(d)\{A,B\} \quad \quad \text{and} \quad \quad \nabla^2 \tilde{y}^{\eps} =0 \quad \quad \text{ on }    \bigcup\nolimits_{i=1}^{I+1} B_i' = \Omega \setminus \bigcup\nolimits_{i=1}^I D_i
\end{align}
for every $\ep$. Thus, again by the properties of the sequences $\{v_\ep^i\}_\ep$ obtained from  Propositions \ref{lemma: local1} and \ref{lemma: local2}, we get by \eqref{eq: jump-part}, \eqref{eq: diff-i}, \eqref{eq: shrinko}, and \eqref{eq: second gradient-same}  that
\begin{align*}
\limsup_{\eps \to 0}\mathcal{E}_\eps(\tilde{y}^\eps)& = \lim_{\eps \to 0}\, \sum\nolimits_{i=1}^I \mathcal{E}_\eps(v_\eps^i,D_i) \le K\sum\nolimits_{i \in \mathcal{I}_y} \mathcal{H}^{d-1}(\omega_i \times \lbrace \alpha_i\rbrace)+   2K  \sum\nolimits_{i \in \mathcal{I}_{\mathcal{P}} \cup \mathcal{I}_u} \mathcal{H}^{d-1}(\omega_i \times \lbrace \alpha_i\rbrace)\notag\\
& =   K \,  \mathcal{H}^{d-1}(J_{\nabla y})+ 2  K \, \mathcal{H}^{d-1}\Big(\big(J_u\cup\big( \bigcup\nolimits_j \partial P_j\cap \Omega\big)\big)\setminus J_{\nabla y}\Big).
\end{align*}
  By  \eqref{eq: limiting energy} we then conclude that  
\begin{align}\label{eq: energy-tilde}
\limsup_{\eps \to 0}\mathcal{E}_\eps(\tilde{y}^\eps) +  \int_{\Omega} \mathcal{Q}_{\rm lin}(\nabla y,\nabla u)\,{\rm d}x \le   \mathcal{E}_0^{\mathcal{A}} (y,u,\mathcal{P}).
\end{align}
So far, we have  constructed a sequence $\{\tilde{y}^\ep\}_\ep\subset H^2(\Omega;\R^d)$ satisfying  $\tilde{y}^\ep\to y$ strongly in $H^1(\Omega;\R^d)$  and \eqref{eq: energy-tilde}.   In view of \eqref{eq: energy-tilde}, we can apply Theorem \ref{thm:compactness} to obtain a limiting triple $(\tilde{y},\tilde{u},\tilde{\mathcal{P}})$ such that $\tilde{y}^\eps \to (\tilde{y},\tilde{u},\tilde{\mathcal{P}})$ in the sense of Definition \ref{def:convergence1}. We also  note  by \eqref{eq:comp-y}, \eqref{eq: nochmal strong}, and $\fint_\Omega \tilde{y}^\eps\,  {\rm d}x = 0$ that $\tilde{y} = y$. Then, by  \eqref{eq: jump-part}, \eqref{eq: energy-tilde}, and    Theorem \ref{thm:liminf} we find
\begin{align}\label{eq: great inequ}
\int_{\Omega}\mathcal{Q}_{\rm lin}(\nabla y,\nabla \tilde{u})\,{\rm d}x + 2K \mathcal{H}^{d-1}\Big(\big(J_{\tilde{u}}\cup\big( \bigcup\nolimits_j \partial \tilde{P}_j\cap \Omega\big)\big)\setminus J_{\nabla y}\Big) \le 2K \mathcal{H}^{d-1}(J_u \setminus J_{\nabla y}).
\end{align}
We write $\tilde{\mathcal{P}}= \lbrace \tilde{P}_j \rbrace_j$. We will prove that 
\begin{align}\label{eq: part-prop}
{\rm (i)} & \ \    \bigcup\nolimits_j \partial \tilde{P}_j \cap \Omega = \bigcup\nolimits_j \partial  {P}_j \cap \Omega,\notag \\
{\rm (ii)} & \ \      J_u = J_{\tilde{u}}\cup\big( \bigcup\nolimits_j \partial \tilde{P}_j\cap \Omega\big).
\end{align}
  In particular, (i)  yields $\mathcal{P} = \tilde{\mathcal{P}}$. We defer the proof of \eqref{eq: part-prop} to Step 7 below and now proceed with the construction of the recovery sequence. Note that in general $\tilde{u} \neq u$, and therefore we need to perturb   $\{\tilde{y}^\ep\}_\ep$ to obtain a sequence such that the rescaled displacement fields converge to $u$. To this end, for each $\eps>0$ we let
\begin{align}\label{eq: final def}
y^\eps := \tilde{y}^\eps + \eps u',
\end{align}
where $u'$ is the (smooth) function corresponding to $u$ defined in \eqref{eq: def u strich}. We now check that $y^\eps \to (y,u,\mathcal{P})$ in the sense of Definition \ref{def:convergence1} (Step 5) and then compute the energy of the sequence (Step 6).   

\emph{Step 5: Convergence to  the  limiting triple.} The goal of this step is to show that $y^\eps \to (y,u,\mathcal{P})$ in the sense of Definition \ref{def:convergence1}. Owing to \eqref{eq: jump-part} and recalling $y \in \mathcal{Y}_{\rm Id}(\Omega)$, we choose $M_j \in \lbrace A,B\rbrace$ such that $\nabla y = M_j$ on each component $P_j$.  Similarly to \eqref{eq: def u strich}, by the fact that $J_{\tilde{u}} \subset J_u$ (see \eqref{eq: part-prop}) and Proposition \ref{lemma: admissible-u}(i) applied for $\tilde{u}$ we find  $\lbrace\tilde{b}_i\rbrace_{i=1}^{I+1} \subset \R e_d$ such that  $\tilde{u}' := \tilde{u}-\sum_{i=1}^{I+1} \tilde{b}_i \chi_{B_i} \in H^1(\Omega;\R^d)$. By \eqref{eq: great inequ} and   \eqref{eq: part-prop}(ii) we get   $\int_{\Omega}\mathcal{Q}_{\rm lin}(\nabla y,\nabla \tilde{u})\,{\rm d}x = \int_{\Omega}\mathcal{Q}_{\rm lin}(\nabla y,\nabla \tilde{u}')\,{\rm d}x =0$. Note that $F \mapsto \mathcal{Q}_{\rm lin}(M,FM)$ is positive definite on $\mathbb{M}^{d \times d}_{\rm sym}$ by  \eqref{eq: only symmetric}. Therefore, by Korn's and Poincar\'e's inequalities  and  the fact that  $\tilde{u}'  \in H^1(\Omega;\R^d)$, it is elementary to check that $\tilde{u}' = \sum_j ( SM_j  x + \tilde{s}_j) \chi_{P_j}$ for some $S \in \mathbb{M}^{d \times d}_{\rm skew}$  and  suitable $\lbrace \tilde{s}_j\rbrace_j \subset \R^d$. (Note that the skew symmetric matrix $S$  here is necessarily independent of the set $P_j$ as $\tilde{u}'  \in H^1(\Omega;\R^d)$.) Consequently, we get 
\begin{align}\label{eq: def u tilde}
\tilde{u} =\sum\nolimits_j (S M_j  x + \tilde{s}_j) \chi_{P_j} +\sum\nolimits_{i=1}^{I+1}   \tilde{b}_i \chi_{B_i}.
\end{align}
Since $\lbrace B_i\rbrace_{i=1}^{I+1}$ is a refinement of the partition $\lbrace P_j\rbrace_j$ (see \eqref{eq: jump-part-new} and Figure \ref{fig:limiting-sets}), we find for each $i = 1,\ldots, I+1$ a corresponding index $j_i$ such that $B_i \subset P_{j_i}$. For $i \in \mathcal{I}_{\mathcal{P}} \cup \mathcal{I}_u$, this implies   
\begin{align}\label{eq: spefi-jump}
[\tilde{u}] = \tilde{b}_{i+1} + \tilde{s}_{j_{i+1}}  - (\tilde{b}_i +  \tilde{s}_{j_{i}} ) \   \text{ on } \omega_i \times \lbrace \alpha_i\rbrace = \partial B_{i} \cap \partial B_{i+1},
\end{align}
where $j_i = j_{i+1}$ if  $ i \in \mathcal{I}_u$, cf.\ \eqref{eq: diff-i}. Let $\{({R}^\ep,  {\mathcal{P}}^\ep, {\mathcal{M}}^\ep, {\mathcal{T}}^\ep)\}_\ep$ be the   quadruples given by Theorem \ref{thm:compactness}  for $\lbrace \tilde{y}^\eps \rbrace_\eps$ such that  \eqref{eq: partition property}--\eqref{eq:comp-grad-u}  hold.
In particular,  \eqref{eq:comp-sets}  and \eqref{eq: rescaled disp}--\eqref{eq:comp-u} yield  
 \begin{align}\label{eq: def u tilde-lim}
\eps^{-1}\big(\tilde{y}^{\ep}-   (R^{\ep} M_j^{\ep}\, x+t_j^{\ep})\big) \to \tilde{u} \quad \quad \text{in measure on $P_j$ for every $j$}.
\end{align}
Fix $i \in \mathcal{I}_u$ as defined in \eqref{eq: diff-i}, and recall that $D_i \cap \Omega \subset P_j$ for some index $j$. By \eqref{eq: Pandu}(i), the fact that $\nabla y = M_j$ on $P_j$, and  by  a compactness argument for affine mappings we find (for a subsequence, not relabeled) $\eps^{-1}(I^\eps \circ y - (R^{\ep} M_j^{\ep}\, x+t_j^{\ep}))  \to  S_i M_jx +  d_i$ pointwise  almost everywhere on $D_i \cap \Omega$ for suitable $S_i \in \mathbb{M}^{d\times d}_{\rm skew}$ and $d_i \in \R^d$. (We omit the details here and refer to the proof of Proposition \ref{prop:ex-coarsest-part} above for a very similar argument.) This along with \eqref{eq: Pandu}(i) and \eqref{eq: def u tilde-lim} yields  
\begin{align}\label{eq: strange estimate}
\tilde{u} =   (b_{i+1} - b_i) \chi_{\lbrace x_d \ge \alpha_i\rbrace} +   S_i M_jx + d_i  \quad \quad    \text{on  $D_{i}  \cap \Omega$}. 
\end{align}
  Then, in view of \eqref{eq: spefi-jump}  and the fact that $j_i = j_{i+1}$ for  $ i \in \mathcal{I}_u$, we check that $b_{i+1} - b_i = \tilde{b}_{i+1} - \tilde{b}_i$ for all $i \in \mathcal{I}_u$.   Therefore, by \eqref{eq: def u tilde} there exist  $\lbrace s_j \rbrace_j \subset \R^d$ such that 
\begin{align}\label{eq: reprisi}
\tilde{u} =  \sum\nolimits_j ( S M_j  x + {s}_j) \chi_{P_j} +\sum\nolimits_{i=1}^{I+1}   {b}_i \chi_{B_i}. 
\end{align}
We define $\bar{u} = u + \sum\nolimits_j (SM_j x + {s}_j) \chi_{P_j}$. We observe that $u-\bar{u} \in \mathcal{T}(y,\mathcal{P})$, and  by \eqref{eq: def u strich} and \eqref{eq: reprisi}  we note that
\begin{align}\label{eq: uutilde}
\bar{u} = \tilde{u} + u'. 
\end{align}
In view of \eqref{eq: final def},   \eqref{eq: def u tilde-lim}, and \eqref{eq: uutilde}, we find that 
\begin{align*}
\lim_{\eps \to 0}\eps^{-1} \big({y}^{\ep}-  ( R^{\ep} M_j^{\ep}\, x+t_j^{\ep})\big)  & =  \tilde{u}+ \lim_{\eps \to 0} \eps^{-1} (y^\eps - \tilde{y}^{\eps}) =  \tilde{u} + u' = \bar{u}
\end{align*}
in measure on $P_j$ for every $j$. In other words, by \eqref{eq:comp-sets} this means 
\begin{align}\label{eq: good conv-new}
u^\eps \to \bar{u} \quad  \text{in measure in $\Omega$},
\end{align}
where $\lbrace u^\eps\rbrace_\eps$ is defined in \eqref{eq: rescaled disp} with respect to $\lbrace y^\eps \rbrace_\eps $ and the quadruples $\{({R}^\ep, {\mathcal{P}}^\ep, {\mathcal{M}}^\ep,  {\mathcal{T}}^\ep)\}_\ep$. Now, we see that $(y,\mathcal{P},\bar{u})$ is an admissible limit for the quadruples  $\{({R}^\ep,  {\mathcal{P}}^\ep, {\mathcal{M}}^\ep, {\mathcal{T}}^\ep)\}_\ep$. Indeed, all properties apart from \eqref{eq:rigidity-compactness}, \eqref{eq:comp-y}, and \eqref{eq:comp-u}--\eqref{eq:comp-grad-u} follow from the corresponding properties of $\lbrace \tilde{y}^\eps \rbrace_\eps$. For \eqref{eq:rigidity-compactness} and \eqref{eq:comp-y} we additionally take \eqref{eq: final def} and $u' \in C^\infty(\overline{\Omega};\R^d)$ into account, and for \eqref{eq:comp-u} we use \eqref{eq: good conv-new}. Finally, to see \eqref{eq:comp-grad-u}, we use $\nabla \tilde{u}^\eps \to \nabla \tilde{u}$ in $L^2_{\rm loc}(\Omega;\mathbb{M}^{d \times d})$, where $\tilde{u}^\eps$ is defined in  \eqref{eq: rescaled disp} with respect to $\lbrace \tilde{y}^\eps \rbrace_\eps $, and $\nabla \bar{u} = \nabla \tilde{u} + \nabla  u$ by \eqref{eq: def u strich} and  \eqref{eq: uutilde}, as well as $\nabla u^\eps = \nabla \tilde{u}^\eps + \nabla  u$  by  \eqref{eq: rescaled disp},  \eqref{eq: def u strich}, and \eqref{eq: final def}.   Thus, $y^\eps \to (y,\bar{u},\mathcal{P})$ in the sense of Definition \ref{def:convergence1}. As  $u-\bar{u} \in \mathcal{T}(y,\mathcal{P})$, by Proposition \ref{prop:ex-coarsest-part}(iii) we then also find $y^\eps \to (y,u,\mathcal{P})$, as desired. This concludes this step of the proof.

\emph{Step 6: Convergence of   the  energies.} The goal of this step is to prove $\limsup_{\eps \to 0} \mathcal{E}_\eps(y^\eps) \le \mathcal{E}_0^{\mathcal{A}}(y,u,\mathcal{P})$.  To this end, fix $\delta, \theta>0$. Recalling the construction of the local recovery sequences in Step 3, it is not restrictive to suppose that
\begin{align}\label{eq: Dsmall}
\mathcal{L}^d\Big( \bigcup\nolimits_{i=1}^I D_i) \le \theta^2
\end{align}
by choosing the constant $h>0$ sufficiently small, see before equation \eqref{eq: shrinko}.  In view of   \eqref{eq: energy-tilde}, we see that we essentially need to estimate the difference of $\mathcal{E}_\eps(y^\eps)$ and $\mathcal{E}_\eps(\tilde{y}^\eps)$.  

First, we note that $\eps|\nabla u| \le \delta$ for $\eps$ small enough since $\nabla u \in C^\infty(\overline{\Omega};\M^{d\times d})$. Define $\Omega_\eps = \lbrace x\in \Omega \colon \, {\rm dist}(\nabla y^\eps,SO(d)\lbrace A,B \rbrace) <\delta  \rbrace$.  By  \eqref{eq: nonlinear energy}, Lemma \ref{lemma: limsup-W}, \eqref{eq: def u strich}, \eqref{eq: final def}, and a quadratic expansion we calculate  
\begin{align}\label{eq: first Taylor}
\mathcal{E}_\eps(y^\eps) \le  \mathcal{E}_\eps(\tilde{y}^\eps) + \frac{C_\delta}{\eps}\int_{\Omega} \sqrt{W(\nabla \tilde{y}^\eps)} |\nabla u| \, {\rm d}x + \int_{\Omega_\eps}  \frac{1}{2}D^2W(\nabla \tilde{y}^\eps) \nabla u:\nabla u\,{\rm d}x\EEE + \rho_\delta \int_{\Omega_\eps} |\nabla u|^2 \, {\rm d}x + \gamma_\eps, 
\end{align}
where $\rho_\delta$ and $C_\delta$ are the constants from Lemma \ref{lemma: limsup-W}, and $\gamma_\eps$ is defined by 
\begin{align*}
\gamma_\eps & := \eps^3 \int_\Omega 2\nabla^2 \tilde{y}^{\ep} : \nabla^2 u \,{\rm d}x + \eps^4 \int_\Omega |\nabla^2 u|^2\,{\rm d}x +   \bar{\eta}_{\eps,d}^2  \sum\nolimits_{1 \le \min\lbrace i,j\rbrace <d} \int_\Omega  \Big( 2\eps \partial^2_{ij} \tilde{y}^{\ep} \,  \partial^2_{ij} u  + \eps^2|\partial^2_{ij} u|^2 \Big) \,{\rm d}x.
\end{align*}
As $\mathcal{E}_\eps(\tilde{y}^\eps) \le C$ by \eqref{eq: energy-tilde} and $\nabla u \in C^\infty(\overline{\Omega};\mathbb{M}^{d\times d})$, the fact that $\lim_{\eps \to 0} \eps\bar{\eta}_{\eps,d} = 0$ (see \eqref{eq:alphad})  along with Young's inequality shows that $\lim_{\eps \to 0} \gamma_\eps= 0$. (More precisely, for the third term we use an estimate of the form    $\bar{\eta}_{\eps,d} ^2\eps\partial^2_{ij} \tilde{y}^{\ep} \, \partial^2_{ij} u \le \frac{\bar{\eta}_{\eps,d} ^2}{2\lambda_\eps^2} |\partial^2_{ij} \tilde{y}^{\ep}|^2 + \frac{1}{2} \eps^2\bar{\eta}_{\eps,d}^2 \lambda_\eps^2 |\partial^2_{ij} u|^2$ for a sequence $\lbrace\lambda_\eps\rbrace_\eps$ such that $\lambda_\eps \to \infty$ and $\lambda_\eps\eps\bar{\eta}_{\eps,d} \to 0$.) 
Moreover,  for the second term in \eqref{eq: first Taylor} we compute by   H\"older's inequality, \eqref{eq: second gradient-same}, H3., and \eqref{eq: Dsmall}
\begin{align}\label{eq: second Taylor}
\frac{1}{\eps}\int_{\Omega} \sqrt{W(\nabla \tilde{y}^\eps)}|\nabla u| \, {\rm d}x & = \frac{1}{\eps}\int_{\bigcup_i D_i} \sqrt{W(\nabla \tilde{y}^\eps) }| \nabla u| \, {\rm d}x  \le \frac{1}{\eps} \Big( \int_\Omega W(\nabla \tilde{y}^\eps) \, {\rm d}x \Big)^{1/2}  \Vert \nabla u \Vert_{L^2(\bigcup_i D_i)}\notag \\ 
&\le C\Vert \nabla u \Vert_{L^\infty(\Omega)} \Big(\mathcal{L}^d\big(\bigcup\nolimits_i D_i\big) \Big)^{1/2} \le C\theta,
\end{align} 
where in the penultimate step we have also used  the fact that  $\int_\Omega W(\nabla \tilde{y}^\eps) \, {\rm d}x \le C\eps^2$ by \eqref{eq: energy-tilde}. Then, from \eqref{eq: nochmal strong}, \eqref{eq: first Taylor}, \eqref{eq: second Taylor}, $\gamma_\eps \to 0$, the regularity of $W$, and  the  dominated convergence theorem we obtain 
$$\limsup_{\eps \to 0}\mathcal{E}_\eps ({y}^\eps) \le \limsup_{\eps \to 0}\mathcal{E}_\eps(\tilde{y}^\eps) +   \int_{\Omega}\mathcal{Q}_{\rm lin}(\nabla y(x),\nabla u(x))\,{\rm d}x + C C_\delta\theta + \rho_\delta \Vert\nabla u \Vert^2_{L^2(\Omega)},$$
where $\mathcal{Q}_{\rm lin}$ is defined in \eqref{eq:def-Q}.  In view of \eqref{eq: energy-tilde}, this yields
$$\limsup_{\eps \to 0}\mathcal{E}_\eps ({y}^\eps) \le   \mathcal{E}_0^{\mathcal{A}} (y,u,\mathcal{P})  + CC_\delta\theta + \rho_\delta \Vert\nabla u \Vert^2_{L^2(\Omega)}.$$
The limsup inequality now  follows by first letting $\theta\to 0$, then $\delta \to 0$, and by recalling the comment in   \eqref{eq: delta-en}.

\emph{Step 7: Proof of \eqref{eq: part-prop}.} To conclude the proof, it remains to show the technical property \eqref{eq: part-prop}. We observe that it suffices to prove the estimates
\begin{align}\label{eq: three proofs} 
{\rm (i)} & \ \   J_u  \setminus J_{\nabla y}  \subset \big(J_{\tilde{u}}\cup\big( \bigcup\nolimits_j \partial \tilde{P}_j\cap \Omega\big)\big)\setminus J_{\nabla y},\notag\\
{\rm (ii)}  &\ \ \bigcup\nolimits_j (\partial {P}_j \cap \Omega) \setminus J_{\nabla y } \subset \bigcup\nolimits_j (\partial  \tilde{P}_j \cap \Omega) \setminus J_{\nabla y},\notag \\ 
{\rm (iii)} & \ \ \bigcup\nolimits_j (\partial {P}_j \cap \Omega) \setminus J_{\nabla y } \supset \bigcup\nolimits_j (\partial  \tilde{P}_j \cap \Omega) \setminus J_{\nabla y}.
\end{align}
In fact, \eqref{eq: three proofs}(ii),(iii) along with Definition \ref{def:convergence2} show  \eqref{eq: part-prop}(i). By  \eqref{eq: three proofs}(i) and Definition \ref{def:convergence2} we obtain one inclusion in  \eqref{eq: part-prop}(ii). The other one then follows from \eqref{eq: great inequ}.  

Let us now show \eqref{eq: three proofs} by contradiction. First,  if \eqref{eq: three proofs}(i) were wrong, we would find a cylindrical set  $\alpha_ie_d + D_{\omega_i,l}$ for $i \in \mathcal{I}_{\mathcal{P}} \cup \mathcal{I}_{u}$ (see \eqref{eq: diff-i}) and $l>0$ sufficiently small and some component $\tilde{P}_j$ of $\tilde{\mathcal{P}}$ such that $ (\alpha_ie_d +D_{\omega_i,l})  \cap \Omega \subset \tilde{P}_j$ and $(\alpha_ie_d +D_{\omega_i,l})  \cap J_{\tilde{u}} = \emptyset$. By Theorem \ref{thm:compactness} applied for $\lbrace \tilde{y}^\eps\rbrace_\eps$,  we then get  (see also  \eqref{eq: def u tilde-lim})  
\begin{align}\label{eq: contra}
\eps^{-1}\big(\tilde{y}^{\ep}- ( R^{\ep} M x+t^{\ep}_{j})\big) \to \tilde{u}  \quad \quad \text{in measure on $(\alpha_ie_d + D_{\omega_i,l}) \cap \tilde{P}_j$}, 
\end{align} 
where $R^\eps \to {\rm Id}$, $\lbrace t_j^\eps \rbrace_\eps \subset \R^d$, and $M$ such that $\nabla y \equiv M$ on $\tilde{P}_j$. In view of  the fact that $(\alpha_ie_d +D_{\omega_i,l})  \cap J_{\tilde{u}} = \emptyset$,  we obtain a contradiction to \eqref{eq: Pandu}(i),(ii).  On the other hand, if \eqref{eq: three proofs}(ii) were wrong, we would  find $i \in \mathcal{I}_{\mathcal{P}}$ such that  \eqref{eq: contra} holds. But then  \eqref{eq: contra} and the fact that $\tilde{u}$ is finite a.e.\  contradict \eqref{eq: Pandu}(ii).

Finally, suppose that \eqref{eq: three proofs}(iii) were wrong. Then, there would exist  a cylindrical set $D:= \alpha e_d + D_{\omega,l}$ which  intersects two components $\tilde{P}_{j_1}$ and $\tilde{P}_{j_2}$, but not $\bigcup_{i \in \mathcal{I}_\mathcal{P}}(\omega_i \times \lbrace \alpha_i\rbrace)$, i.e., there exists $P_j$ such that  $D \cap \Omega \subset P_j$.  Similarly to \eqref{eq: contra}, we find  sequences $\lbrace t^\eps_{j_1} \rbrace_\eps, \lbrace t^\eps_{j_2} \rbrace_\eps \subset \R^d$ from the sequence $\lbrace \mathcal{T}^\eps\rbrace_\eps$ given in Theorem \ref{thm:compactness} such that 
\begin{align}\label{eq: contra2}
\eps^{-1}\big(\tilde{y}^{\ep}- ( R^{\ep} M x+t^{\ep}_{j_k})\big) \to \tilde{u}  \quad \quad \text{in measure on $D \cap \tilde{P}_{j_k}$ for  $k=1,2$}, 
\end{align}
where $M$ is such that  $\nabla y \equiv M$ on ${P}_j$. On the other hand, we find a sequence of  isometries $\lbrace I^\eps\rbrace_\eps$ converging to the identity as $\eps \to 0$ such that $\eps^{-1} (\tilde{y}^\eps - I^\eps\circ y)$ converges to a finite value a.e.\ on $\Omega \cap D$ due to \eqref{eq: Pandu}--\eqref{eq: second gradient-same}, where  we exploit that   $D$ does not intersect $\bigcup_{i \in \mathcal{I}_\mathcal{P}}(\omega_i \times \lbrace \alpha_i\rbrace)$. This along with \eqref{eq: contra2} shows $\limsup_{\eps \to 0} |(t^\eps_{j_1} - t^\eps_{j_2})/\eps| <+\infty$. This, however, contradicts \eqref{eq: toinfty}. This argument concludes the proof of \eqref{eq: three proofs},  and thus we have completed the proof of \eqref{eq: part-prop}. 
\end{proof}

 We conclude this subsection by showing that $W$ satisfies the estimates in Lemma \ref{lemma: limsup-W}.

\begin{proof}[Proof of Lemma \ref{lemma: limsup-W}]
Fix $0< \delta \le \delta_W /2$. We start with (i). By a Taylor expansion,  by assumption  H5.,  and the fact that $D^2W$ is uniformly continuous on $\overline{\mathcal{V}_{\delta}}$ we find  that  for  any  $F \in \mathcal{V}_\delta$,  and  $G \in B_{\delta}(0)$  there holds
$$ W(F +  G)  \le W(F) + DW(F):G +  \frac{1}{2}D^2W(F)\,G : G    + \rho_\delta|G|^2,  $$
where $\rho_\delta \to 0$ as $\delta \to 0$. Letting $R_F \in SO(d)\lbrace A,B\rbrace$  be such that  $|R_F - F| = {\rm dist}(F,SO(d)\lbrace A,B\rbrace)$,  assumptions H3.\ and H4., together with  the fact that  $DW$ is Lipschitz on $\mathcal{V}_\delta$ and $DW(R_F)=0$, give
$$|DW(F)| \le |DW(R_F)| + C|F-R_F| = C{\rm dist}(F,SO(d)\lbrace A,B\rbrace) \le (C/ \sqrt{c_1}) \sqrt{W(F)}$$
for a constant $C$ only depending on $W$.  By the Cauchy-Schwarz inequality this concludes the proof of (i).  To prove (ii), we exploit H7.\ to find for $F \in \mathbb{M}^{d \times d}$ and $G \in B_\delta(0)$ that 
$$W(F + G) \le W(F) + c_3 (1+ 2 |F| + \delta) |G|. $$
For $F \in \mathbb{M}^{d \times d} \setminus \mathcal{V}_\delta$ one has $ \max  \{\delta,\,1+ 2 |F|\} \le C_\delta {\rm dist}(F,SO(d)\lbrace A,B\rbrace)$ for a sufficiently large constant depending on $\delta$. The desired estimate follows then again from H4. 
\end{proof}

\subsection{Properties of the double-profile energy}\label{sec: cell-formula}

In this subsection we analyze the double-profile energy functional introduced in \eqref{eq: k2-intro} and address  its relation to $K$ and $K_{\rm dp}^M$. In particular, we prove Proposition \ref{eq: KundKdo-new}. We start by stating the results of this subsection.

\begin{proposition}[Properties of the double-profile energy function]
\label{prop:cell-form-new}
The functions $\mathcal{F}^M_{\rm dp}$, $M \in \lbrace A,B\rbrace$,   satisfy for all $h>0$ and all  open, bounded   sets  $\omega \subset \R^{d-1}$ with $\mathcal{H}^{d-1}(\partial\omega)=0$: 
\begin{itemize}
\item[{\rm (i)}] $\mathcal{F}^M_{\rm dp}(\alpha \omega;\alpha h) \ge  \alpha^{d-1}\mathcal{F}^M_{\rm dp}(\omega;h) $ for all $0 < \alpha < 1$.
\item[{\rm (ii)}] $\mathcal{F}^M_{\rm dp}(\omega;h)= \mathcal{H}^{d-1}(\omega) \, \mathcal{F}^M_{\rm dp}(Q';h)$, where $Q' := (-\frac{1}{2},\frac{1}{2})^{d-1}$.
\item[{\rm (iii)}] $\mathcal{F}^M_{\rm dp}(\omega;h)= \mathcal{F}^M_{\rm dp}(\omega; 1)$. 
\end{itemize}
\end{proposition}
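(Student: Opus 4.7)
The plan is to follow the strategy employed for the single-profile energy $\mathcal{F}$ in \cite[Proposition 4.6]{davoli.friedrich} and adapt it to the double-profile setting. The essential new ingredient is that one must track, under the various operations, the data entering \eqref{eq: k2-intro}: the target $u \in \mathcal{U}_{\rm dp}$, the width sequence $\{w_\eps\}_\eps \in \mathcal{W}_d$, and the measure convergence $(y^\eps - Mx)/w_\eps \to u$.

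For (i), given $\eta > 0$, I would select an almost-optimal triple $(\tilde y^\delta, \tilde u, \{\tilde w_\delta\}_\delta)$ for $\mathcal{F}^M_{\rm dp}(\alpha\omega,\alpha h)$ and transfer it to $D_{\omega,h}$ by setting $\eps := \alpha^{-1/2}\delta$, $w_\eps := \alpha^{-1}\tilde w_\delta$, $y^\eps(z) := \alpha^{-1}\tilde y^\delta(\alpha z)$, and $u(z) := \tilde u(\alpha z)$. The identity $y^\eps(z) - Mz = \alpha^{-1}(\tilde y^\delta(\alpha z) - M(\alpha z))$ yields $(y^\eps - Mz)/w_\eps \to u$ in measure on $D_{\omega,h}$, with $u \in \mathcal{U}_{\rm dp}(D_{\omega,h})$, and $\{w_\eps\}_\eps \in \mathcal{W}_d$ follows since $w_\eps/\eps = \alpha^{-1/2}\tilde w_\delta/\delta$. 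A direct change of variables in $\mathcal{E}_\eps$, together with the identity $\bar\eta_{\eps,d}^2 = \alpha^{1-\alpha(d)}\bar\eta_{\delta,d}^2$, yields
\begin{equation*}
\mathcal{E}_\eps(y^\eps,D_{\omega,h}) = \alpha^{1-d}\Big(\tfrac{1}{\delta^2}\textstyle\int W(\nabla\tilde y^\delta)\,{\rm d}x + \delta^2\textstyle\int|\nabla^2\tilde y^\delta|^2\,{\rm d}x\Big) + \alpha^{3-d-\alpha(d)}\bar\eta_{\delta,d}^2\textstyle\int\big(|\nabla^2\tilde y^\delta|^2 - |\partial^2_{dd}\tilde y^\delta|^2\big)\,{\rm d}x,
\end{equation*}
and since $\alpha(d) < 2$ forces $\alpha^{3-d-\alpha(d)} \le \alpha^{1-d}$ for $\alpha < 1$, the right-hand side is bounded by $\alpha^{1-d}\mathcal{E}_\delta(\tilde y^\delta, D_{\alpha\omega,\alpha h})$. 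Passing to the infimum and letting $\eta \to 0$ gives (i). The choice $\delta = \alpha^{1/2}\eps$ is dictated by the requirement that the $W$- and $|\nabla^2 y|^2$-contributions scale by the same power of $\alpha$; the anisotropic penalty is automatically subcritical thanks to $\alpha(d) > 0$.

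For (ii), translation invariance is immediate from \eqref{eq: k2-intro}, since a translation of $\omega$ in $\R^{d-1}$ only shifts the affine reference $Mx$, and that shift can be absorbed into $y^\eps$. The upper bound $\mathcal{F}^M_{\rm dp}(\omega,h) \le \mathcal{H}^{d-1}(\omega)\mathcal{F}^M_{\rm dp}(Q',h)$ follows from a Vitali-type packing of $\omega$ from the inside by almost-disjoint translates $\{x_k + \rho Q'\}_{k=1}^{N_\rho}$ with $N_\rho \rho^{d-1} \to \mathcal{H}^{d-1}(\omega)$ as $\rho \to 0$, placing a rescaled almost-optimal competitor for $\mathcal{F}^M_{\rm dp}(\rho Q',h)$ on each (via (i) and translation invariance) and setting $y^\eps = Mx$ on the complement; the complementary $(d-1)$-measure tending to $0$ makes the gluing cost $o(1)$. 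For the lower bound, I would apply the elementary superadditivity $\mathcal{F}^M_{\rm dp}(\omega_1 \sqcup \omega_2,h) \ge \mathcal{F}^M_{\rm dp}(\omega_1,h) + \mathcal{F}^M_{\rm dp}(\omega_2,h)$ (immediate by restriction of competitors) to a packing of $\omega$ by translates of $\rho Q'$ and pass to the limit $\rho \to 0$.

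For (iii), the inequality $\mathcal{F}^M_{\rm dp}(\omega,h) \le \mathcal{F}^M_{\rm dp}(\omega,h')$ for $h \le h'$ is trivial by restricting an admissible sequence from $D_{\omega,h'}$ to $D_{\omega,h}$. The converse I would prove by a cutoff-and-extend argument: given an almost-optimal sequence $y^\eps$ on $D_{\omega,h}$, Theorem \ref{thm:rigiditythm} applied on thin horizontal slabs near $\omega \times \{\pm h\}$ provides rotations $R^\eps$ and translations $t^\eps$ such that $y^\eps$ is $\mathcal{O}(\eps)$-close in $L^2$ to the isometry $R^\eps Mx + t^\eps$, and a Fubini-type slicing in height selects a level at which one can cut $y^\eps$ and replace it by this isometry with total added energy $o(1)$; extending the resulting map by the same isometry up to height $h'$ adds no further contributions. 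The main obstacle is to verify that the measure convergence $(y^\eps - Mx)/w_\eps \to u$ survives on the enlarged cylinder; this amounts to showing that the extension isometry differs from $Mx$ by a translation of order $\mathcal{O}(w_\eps)$, which follows from the measure convergence on $D_{\omega,h}$ combined with the same rigidity estimate, yielding a natural affine extension of $u$ to $D_{\omega,h'}$.
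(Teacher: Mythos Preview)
Your treatment of (i) is correct and coincides with the paper's (the paper phrases the scaling via Remark~\ref{sub:2-prof}, but the content is identical).

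For (ii) and (iii) the paper simply invokes the argument of \cite[Proposition~4.6]{davoli.friedrich} and \cite[Lemma~4.3]{conti.fonseca.leoni}, which proceeds \emph{without any gluing}. The logic there is: superadditivity (restriction of competitors) and translation invariance are immediate; combining them with (i) and the trivial monotonicity in $h$ one first gets $\mathcal{F}^M_{\rm dp}(Q';h')=\mathcal{F}^M_{\rm dp}(Q';Nh')$ for every integer $N$, hence (iii) on cubes; then (i) and superadditivity give the exact scaling $\mathcal{F}^M_{\rm dp}(\rho Q';h)=\rho^{d-1}\mathcal{F}^M_{\rm dp}(Q';h)$; finally, for general $\omega$ with $\mathcal{H}^{d-1}(\partial\omega)=0$, one embeds $\omega$ in a large cube $\mathcal{Q}'$ and uses superadditivity on the pair $(\omega,\,\mathcal{Q}'\setminus\overline{\omega})$ together with the already-established identity on $\mathcal{Q}'$ to obtain the \emph{upper} bound, while the lower bound comes from an inner packing by small cubes. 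Item (iii) for general $\omega$ is then a corollary of (i) and (ii).

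Your route diverges precisely where this machinery bites. In (ii) you propose the upper bound by placing competitors on small translates $x_k+\rho Q'$ and setting $y^\eps=Mx$ on the complement. As written this produces a function that is not even in $H^1(D_{\omega,h})$: nothing forces the competitors to match $Mx$ on $\partial(x_k+\rho Q')\times(-h,h)$, and ``the complementary $(d-1)$-measure tending to $0$'' does not address this discontinuity. Moreover, invoking (i) here does not help: (i) is an inequality in the wrong direction for producing small-cube competitors (the anisotropic penalty scales with $\alpha^{3-d-\alpha(d)}>\alpha^{1-d}$ when $\alpha<1$). The covering-cube trick above is the standard way to sidestep this entirely.

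For (iii), your cut-and-extend argument is plausible but carries a real burden you underplay: after extending $y^\eps$ by the isometry $R^\eps Mx+t^\eps$ beyond the cut level $s^\eps\in(h/2,h)$, the limit $u'$ of $(y^\eps_{\rm ext}-Mx)/w_\eps$ on the extension region is affine, and there is no reason for its trace to match that of the original $u$ at $\omega\times\{s^\eps\}$. A mismatch would put a jump of $u'$ outside $\omega\times\{0\}$, so $u'\notin\mathcal{U}_{\rm dp}(D_{\omega,h'})$ and the extended sequence is not admissible in \eqref{eq: k2-intro}. This can likely be repaired, but the route via (i)+(ii) avoids the issue altogether.
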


 We now address the relationship between the optimal-profile and double-profile energies.

\begin{proposition}[Relation between $K$ and $K^M_{\rm dp}$]\label{eq: KundKdo}
 There holds $K^M_{\rm dp} \ge \mathcal{F}^M_{\rm dp}(Q',1) \ge 2K$   for $M\in\lbrace A,B \rbrace$,  where $ Q'  = (-\frac{1}{2},\frac{1}{2})^{d-1}$, and  $K$, $K^M_{\rm dp }$ are defined   in \eqref{eq: our-k1} and  \eqref{eq: our-k2}, respectively.
\end{proposition}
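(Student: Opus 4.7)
\emph{First inequality $K^M_{\rm dp}\ge \mathcal{F}^M_{\rm dp}(Q',1)$.} This follows by directly comparing \eqref{eq: our-k2} and \eqref{eq: k2-intro}. Setting $h=1$ in \eqref{eq: our-k2} and noting that $y_{\rm dp}^M\in\mathcal{U}_{\rm dp}(D_{Q',1})$ is an admissible competitor in the outer infimum of \eqref{eq: k2-intro}, one obtains $\mathcal{F}^M_{\rm dp}(Q',1)\le\inf_{\{w_\eps\}\in\mathcal{W}_d}\inf\{\liminf_\eps\mathcal{E}_\eps(y^\eps,D_{Q',1}):(y^\eps-Mx)/w_\eps\to y_{\rm dp}^M\}$. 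Since $\liminf\le\limsup$ along every sequence, one has $\inf_{\{y^\eps\}}\liminf\le\inf_{\{y^\eps\}}\limsup$; combined with $\inf_{\{w_\eps\}}\le\sup_{\{w_\eps\}}$, the right-hand side is bounded above by $K^M_{\rm dp}$.

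\emph{Second inequality $\mathcal{F}^M_{\rm dp}(Q',1)\ge 2K$: set-up and construction.} Fix an admissible triple $(\{y^\eps\},\{w_\eps\},u)$ with $L:=\liminf_\eps\mathcal{E}_\eps(y^\eps,D)<+\infty$, where $D=Q'\times(-1,1)$, and assume (by subsequence) that the $\liminf$ is attained. Since $w_\eps\to 0$, we have $y^\eps\to Mx$ in $L^1(D)$. The rigidity estimate of Theorem \ref{thm:rigiditythm}, applied on the paraxial cuboid $D$ (cf.\ Remark \ref{rem: Geometric condition}(ii)), yields $R^\eps\in SO(d)$ and $\Phi^\eps\in BV(D;\{A,B\})$ with $\|\nabla y^\eps-R^\eps\Phi^\eps\|_{L^2(D)}\le C\eps$ and $|D\Phi^\eps|(D)\le C$; combined with $\nabla y^\eps\to M$ in $L^2$ and the fact that $RA=RB$ admits no common solution in $SO(d)$ for distinct $A,B$, this forces $R^\eps\to\mathrm{Id}$ and $\Phi^\eps\to M$ in measure. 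Consequently the opposite-phase set $T^\eps:=\{\Phi^\eps=M^c\}$ satisfies $\mathcal{L}^d(T^\eps)\to 0$ while $\mathcal{H}^{d-1}(\partial^* T^\eps)$ remains bounded; by Proposition \ref{lprop: phases}(i)--(iii), $T^\eps$ is essentially a finite union of thin horizontal slabs, and the non-triviality of $J_u\subset Q'\times\{0\}$ forces at least one such slab to have upper/lower boundaries at heights $\zeta_\eps^\pm\to 0$. Now fix a small $h\in(0,1)$. Via a Fubini/coarea selection in the spirit of Steps 1--2 of the proof of Proposition \ref{lemma: intermediate step1}, choose a height $t_\eps\in(\zeta_\eps^-,\zeta_\eps^+)$ and a cutoff width $\delta_\eps\to 0$ with $\delta_\eps\gg\eps$ so that the trace $y^\eps(\cdot,t_\eps)$ is close, in $L^2(Q')$, to an affine map of the form $x'\mapsto M^c(x',t_\eps)+c_\eps$ for some $c_\eps\in\R^d$. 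On $D_{Q',h}$ define the auxiliary sequence $\bar y^{\eps,+}\in H^2(D_{Q',h};\R^d)$ by $\bar y^{\eps,+}(x):=y^\eps(x',x_d+t_\eps)$ for $x_d\ge 0$, $\bar y^{\eps,+}(x):=M^c x+c_\eps$ for $x_d\le-\delta_\eps$, and smooth $H^2$ interpolation on $(-\delta_\eps,0)$. The choice of $t_\eps,\delta_\eps$ ensures the interpolation layer contributes $o(1)$ to the energy, hence $\mathcal{E}_\eps(\bar y^{\eps,+},D_{Q',h})\le\mathcal{E}_\eps(y^\eps,Q'\times(t_\eps,t_\eps+h))+o(1)$. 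As $\zeta_\eps^+-t_\eps\to 0$, the residual $M^c$-sliver above $x_d=0$ shrinks, so that $\bar y^{\eps,+}\to y_0^+$ (if $M=A$) or $y_0^-$ (if $M=B$) in $L^1(D_{Q',h})$, up to an additive constant. Proposition \ref{prop:cell-form}, together with the symmetry of $K$ under interchanging the two wells noted below \eqref{eq: conti-schweizer-k}, gives $\liminf_\eps\mathcal{E}_\eps(\bar y^{\eps,+},D_{Q',h})\ge K\mathcal{H}^{d-1}(Q')=K$. Defining $\bar y^{\eps,-}$ analogously for the lower transition on $Q'\times(t_\eps-h,t_\eps)$ and adding the two contributions yields $L\ge 2K$.

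\emph{Main obstacle.} The technical heart of the plan is the simultaneous selection of $t_\eps$ and $\delta_\eps$. The cutoff width must be large enough to control the second-derivative blow-up in the singular-perturbation term $\eps^2|\nabla^2 y|^2$ (forcing $\delta_\eps\gg\eps$), yet $t_\eps$ must be chosen so that the horizontal trace $y^\eps(\cdot,t_\eps)$ is uniformly close to an affine target with gradient $M^c$, so that the $H^2$-matching at $x_d=-\delta_\eps$ and $x_d=0$ produces only an $o(1)$-energy correction. The required trace estimates combine the bound $\|\nabla y^\eps-R^\eps\Phi^\eps\|_{L^2}\le C\eps$ with the horizontal boundary control of Proposition \ref{lprop: phases}(ii)--(iii), in the spirit of the layered constructions of Subsection \ref{subs:local}.
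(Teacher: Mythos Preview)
Your first inequality is fine and matches the paper: $K^M_{\rm dp}\ge \mathcal{F}^M_{\rm dp}(Q',1)$ is indeed immediate from the definitions once one notes that $y^M_{\rm dp}\in\mathcal{U}_{\rm dp}(D_{Q',1})$.

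The second inequality has a genuine gap, precisely at the point you flag as the ``main obstacle''. The cut-and-glue construction you propose requires you to build an $H^2$ interpolant on $Q'\times(-\delta_\eps,0)$ matching the trace \emph{and normal derivative} of $y^\eps$ at $x_d=t_\eps$ on one side and the affine map $M^c x+c_\eps$ on the other, with total energy $o(1)$. The anisotropic penalization $\bar\eta_{\eps,d}^2\,(|\nabla^2\bar y|^2-|\partial^2_{dd}\bar y|^2)$ with $\bar\eta_{\eps,d}=\eps^{-1+\alpha(d)}\gg 1$ makes this very delicate: any tangential second derivative of the trace propagates into the interpolant and gets amplified. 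The slice estimates you would need are exactly those of Proposition~\ref{lemma: optimal profile} and the transition construction of Lemma~\ref{lemma: transition1}, but both of these require smallness of the quantity $\delta^{M}_\eps(y;\hat\omega,h,\tau)$ in \eqref{eq:eta2}, i.e.\ that the energy \emph{away} from the interface is already small. That hypothesis is available only in the \emph{upper}-bound construction (via Lemma~\ref{cor: layer energy}, which itself presupposes $K^M_{\rm dp}=2K$); in the lower-bound setting you are in, nothing prevents the energy of $y^\eps$ from concentrating inside the thin opposite-phase slab of width $\sim w_\eps$, so there is no good slice $t_\eps$ at which the required trace bounds hold.

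The paper avoids any cutting and gluing. Instead of producing modified sequences converging to $y_0^\pm$ on a \emph{fixed} cylinder, it works directly with $y^\eps$ on \emph{shrinking} cylinders. Lemma~\ref{lemma: intefacefind2} locates two disjoint cylinders $D^j_i=\alpha^j_ie_d+D_{Q',\mu w_i}$, $j=1,2$, of height proportional to $w_i$, on which (after a rigid motion) the gradient is asymptotically close to $\nabla y_0^\pm$ in the rescaled sense $w_i^{-1}\|\nabla v^i(\cdot+\alpha^j_ie_d)-\nabla y_0^\pm\|^2_{L^2(D_{Q',\mu w_i})}\to 0$. Then Lemma~\ref{lemma: lower energy bound} --- a pure rescaling-and-covering argument based on Remark~\ref{sub:2-prof} --- turns this gradient closeness on a cylinder of vanishing height $\tau_i=w_i$ (with $\eps_i/\sqrt{\tau_i}\to 0$) into the bound $\liminf_i\mathcal{E}_{\eps_i}(y^i,D^j_i)\ge K$ for each $j$. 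Summing over $j=1,2$ gives $2K$. No trace selection, no interpolation, and no appeal to the upper-bound machinery of Subsection~\ref{subs:local} is needed.
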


 Finally, if $2K=K^M_{\rm dp}$ for $M\in \{A,B\}$,  in the definition \eqref{eq: our-k2} one can replace cubes by general Lipschitz domains, and the formula holds for every $h>0$ and general $\lbrace w_\eps \rbrace_\eps \in \mathcal{W}_d$.

\begin{proposition}[Characterization of  $K^M_{\rm dp}$]
\label{prop:kMdo}
Let $M \in \lbrace A,B\rbrace$, and   suppose that the constant $K^M_{\rm dp}$ defined in \eqref{eq: our-k2} satisfies $K^M_{\rm dp}  =  2K$. Then there holds
\begin{align}\label{eq: added-equ}
\inf\Big\{\limsup_{\ep\to 0} \mathcal{E}_{\ep}(y^{\ep},D_{\omega,h})\colon \  \frac{y^\eps -  Mx}{w_\eps} \to y_{\rm dp}^M  \text{ \rm  in measure  in $D_{\omega,h}$ as $\eps \to 0$}\Big\} = K^M_{\rm dp}  \, \mathcal{H}^{d-1}(\omega),
\end{align}
for every Lipschitz domain $\omega\subset \mathbb{R}^{d-1}$,  $h>0$, and  $ \lbrace w_\eps \rbrace_\eps \in \mathcal{W}_d$.
\end{proposition}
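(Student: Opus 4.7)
\emph{Plan.} Writing $J$ for the infimum on the left-hand side of \eqref{eq: added-equ}, the proof splits into the two inequalities $J\geq K^M_{\rm dp}\mathcal{H}^{d-1}(\omega)$ and $J\leq K^M_{\rm dp}\mathcal{H}^{d-1}(\omega)$.

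The lower bound is essentially free. Any sequence $\{y^\eps\}$ admissible in $J$ is automatically admissible in the infimum defining $\mathcal{F}^M_{\rm dp}(\omega,h)$ in \eqref{eq: k2-intro}, because $y_{\rm dp}^M\in\mathcal{U}_{\rm dp}(D_{\omega,h})$ (its jump set equals $\omega\times\{0\}$) and the prescribed sequence $\{w_\eps\}$ lies in $\mathcal{W}_d$. Using $\liminf\leq\limsup$, I would deduce $J\geq\mathcal{F}^M_{\rm dp}(\omega,h)$, and then combine Proposition \ref{eq: KundKdo-new} with the assumption $K^M_{\rm dp}=2K$ to obtain $J\geq 2K\,\mathcal{H}^{d-1}(\omega)=K^M_{\rm dp}\mathcal{H}^{d-1}(\omega)$.

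For the upper bound I would construct, for every $\delta>0$, a sequence $\{y^\eps_\delta\}\subset H^2(D_{\omega,h};\R^d)$ with $(y^\eps_\delta-Mx)/w_\eps\to y_{\rm dp}^M$ in measure and $\limsup_\eps\mathcal{E}_\eps(y^\eps_\delta,D_{\omega,h})\leq 2K\,\mathcal{H}^{d-1}(\omega)+C(\omega,h)\delta$, and then extract a final recovery sequence via a diagonal argument in $(\delta,\eps)$. Using the classical fact that every bounded Lipschitz domain decomposes into finitely many open, pairwise disjoint (up to $(d-2)$-dimensional interfaces), strictly star-shaped Lipschitz subdomains $\omega_1,\ldots,\omega_{N}$, I would apply Proposition \ref{lemma: local2} on each $D_{\omega_j,h}$ with $\omega'=\omega_j$ and the strictly star-shaped domain $\Omega_j=\omega_j\times(-2h,2h)$. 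The required non-intersection condition $\partial\omega_j\times(-h,h)\cap\Omega_j=\emptyset$ is automatic since $\omega_j$ is open, and the proposition furnishes sequences $\{v^j_\eps\}$ with $(v^j_\eps-Mx)/w_\eps\to y_{\rm dp}^M$ in measure, $\mathcal{E}_\eps(v^j_\eps,D_{\omega_j,h})\to 2K\,\mathcal{H}^{d-1}(\omega_j)$, and the essential rigidity $v^j_\eps=I^j_{k,\eps}\circ Mx$ on $\{|x_d|\geq 3h/4\}$ for isometries $I^j_{k,\eps}\to\mathrm{id}$.

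\emph{Main obstacle.} The principal technical difficulty is to weld the $v^j_\eps$'s into a single globally $H^2$ map on $D_{\omega,h}$ without exceeding the target energy. The vertical rigidity at $\{|x_d|\geq 3h/4\}$ lets me align the boundary isometries iteratively, exactly as in the proof of Theorem \ref{thm:limsup-new} (inspired by \cite[Proposition 3.5]{conti.schweizer}). However, nothing is known about the lateral boundary traces of $v^j_\eps$ on $\partial\omega_j\times(-3h/4,3h/4)$, so adjacent $v^j_\eps$ and $v^k_\eps$ need not agree across the shared faces $\partial\omega_j\cap\partial\omega_k\cap\omega$. My proposal is to slightly retract each $\omega_j$ to $\omega_j^{-\rho}\subset\subset\omega_j$, set $y^\eps_\delta=v^j_\eps$ on each $D_{\omega_j^{-\rho},h}$, and fill the thin lateral buffer $D_{\omega\setminus\bigcup_j\omega_j^{-\rho},h}$ with an $H^2$ interpolant that performs its own local double phase transition in the $x_d$ direction (matching the already-isometric traces of $v^j_\eps$ on $\{|x_d|\geq 3h/4\}$ and smoothly interpolating in $x'$ across $\partial\omega_j^{-\rho}$). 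Such a buffer contributes at most $\sim 2K\rho\sum_j\mathcal{H}^{d-2}(\partial\omega_j)$ in energy, so choosing $\rho=\rho(\delta)$ small makes the error $O(\delta)$. The delicate calibration is to keep the second-gradient cost of the lateral interpolation under control, which is exactly where the subcritical anisotropic scaling $\bar{\eta}_{\eps,d}\ll\eps^{-1}$ from \eqref{eq:alphad}, together with $\liminf w_\eps/\eps>0$, becomes crucial.
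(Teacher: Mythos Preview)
Your lower bound is fine and matches the paper's. The upper bound, however, is circular: you invoke Proposition \ref{lemma: local2}, but in the paper Proposition \ref{lemma: local2} is proved \emph{after} Proposition \ref{prop:kMdo} and relies on it through Lemma \ref{cor: layer energy}, which explicitly calls Proposition \ref{prop:kMdo} to produce a sequence on $D_{\omega,h}$ with energy close to $2K\mathcal{H}^{d-1}(\omega)$ for the \emph{given} $\{w_\eps\}$. So your argument presupposes exactly the conclusion you want. The lateral gluing scheme you sketch is also underspecified (you give no construction of the $H^2$ interpolant in the buffer, nor an energy bound for it), but this is moot given the circularity.

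The paper avoids constructing anything on $\omega$ directly. Instead it argues by contradiction via a scaling/covering trick: assuming the infimum exceeds $(K^M_{\rm dp}+2\delta)\mathcal{H}^{d-1}(\omega)$, it embeds $\omega$ in a large cube $\mathcal{Q}'$ with $\alpha\mathcal{Q}'=Q'$, pulls a near-optimal competitor for the definition \eqref{eq: our-k2} of $K^M_{\rm dp}$ from the unit cube $D_{Q',\alpha h}$, rescales it to $D_{\mathcal{Q}',h}$ using \eqref{eq: transformation preparation}, and then splits the energy on $D_{\mathcal{Q}',h}$ into the part over $D_{\omega,h}$ (bounded below by the contradiction hypothesis) and the part over $D_{\mathcal{Q}'\setminus\omega,h}$ (bounded below by $\mathcal{F}^M_{\rm dp}(\mathcal{Q}'\setminus\omega;h)\ge K^M_{\rm dp}\mathcal{H}^{d-1}(\mathcal{Q}'\setminus\omega)$ via Proposition \ref{eq: KundKdo-new}). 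Summing gives more than $\alpha^{1-d}K^M_{\rm dp}+2\delta\mathcal{H}^{d-1}(\omega)$, contradicting the near-optimality of the cube competitor. The key point is that only the \emph{definition} of $K^M_{\rm dp}$ on the standard cube and the already-established lower bound $\mathcal{F}^M_{\rm dp}\ge 2K\mathcal{H}^{d-1}$ are used---no local recovery construction on general domains is needed at this stage.
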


We point out that Propositions \ref{prop:cell-form-new} and \ref{eq: KundKdo}  directly imply Proposition \ref{eq: KundKdo-new}. Proposition \ref{prop:kMdo} will be instrumental in Subsection \ref{subs:local} below for the proof of Proposition \ref{lemma: local2}.  We prove it here as it completes the characterization of the relation between $K^M_{\rm dp}$, $M\in \{A,B\}$, and the double-profile energy  functions.  We now proceed with the proofs of Propositions \ref{prop:cell-form-new}, \ref{eq: KundKdo}, and  \ref{prop:kMdo}.  As a preparation, we start with a standard rescaling argument which we will use several times.

\begin{remark}\label{sub:2-prof}
{\normalfont For a configuration $y \in H^2(\alpha D_{\omega,h};\R^d)$ and $0 < \alpha <1$, we define $\bar{y} \in H^2(D_{\omega,h};\R^d)$ by $\bar{y}(x) =  y(\alpha x)/\alpha$. We observe that  $\nabla \bar{y}(x) = \nabla y (\alpha x)$ and  $\nabla^2 \bar{y}(x) = \alpha  \nabla^2y(\alpha x)$ for all $x \in D_{\omega,h}$.  Since $\lbrace  \bar{\eta}_{\ep,d} \rbrace_\eps$ is increasing as $\eps \to 0$ (see \eqref{eq:alphad}), we get $\bar{\eta}^2_{\sqrt{\alpha}\eps,d} \ge \alpha\bar{\eta}^2_{\ep,d}$. Thus, we obtain by \eqref{eq: nonlinear energy}--\eqref{eq: specific energy choice}  
\begin{align}\label{eq: transformation preparation}
\mathcal{E}_{\sqrt{\alpha}\ep}(y,\alpha D_{\omega,h})  &\ge  \frac{1}{\alpha\ep^2}\int_{\alpha D_{\omega,h}}W(\nabla y)\,{\rm d}x+ \alpha\ep^2\int_{\alpha D_{\omega,h}}|\nabla^2 y|^2\,{\rm d}x+ \alpha\bar{\eta}_{\ep,d}^2  \int_{\alpha D_{\omega,h}}(|\nabla^2 y|^2-|\partial^2_{dd} y|^2)\,{\rm d}x \notag \\
& =  \frac{\alpha^{d-1}}{\ep^2}\int_{D_{\omega,h}}W(\nabla \bar{y})\,{\rm d}x+\alpha^{d-1} \ep^2\int_{D_{\omega,h}}|\nabla^2 \bar{y}|^2\,{\rm d}x+ \alpha^{d-1} \bar{\eta}_{\ep,d}^2 \int_{D_{\omega,h}}(|\nabla^2 \bar{y}|^2-|\partial^2_{dd} \bar{y}|^2)\,{\rm d}x \notag\\ 
&=  \alpha^{d-1}  \mathcal{E}_{\ep}(\bar{y},D_{\omega,h}). 
\end{align}
} 
\end{remark}

\begin{proof}[Proof of Proposition \ref{prop:cell-form-new}]
We prove {\rm (i)}. Let $0< \alpha < 1$.  By \eqref{eq: k2-intro}, for a given $\delta>0$, we find sequences $\lbrace \eps_i\rbrace_{i}$ with $\eps_i \to 0$,  $\lbrace w_{i}\rbrace_i \in \mathcal{W}_d$, $u \in \mathcal{U}_{\rm dp}(\alpha D_{\omega,h})$, and   $\lbrace y^{i}\rbrace_i \subset H^2(\alpha D_{\omega,h};\R^d)$ with  $w_{i}^{-1}(y^{i} -  Mx) \to  u$    in measure in   $\alpha D_{\omega,h}$ such that 
\begin{align}\label{eq: quasi opt}
\liminf_{i \to \infty} \mathcal{E}_{\sqrt{\alpha}\eps_i}(y^{i}, \alpha D_{\omega,h}) \le \mathcal{F}^M_{\rm dp}(\alpha\omega; \alpha h) + \delta.
\end{align}
Let $\lbrace \bar{y}^{i}\rbrace_i\subset H^2(D_{\omega,h};\R^d)$ be the rescaled functions defined before \eqref{eq: transformation preparation}. Note that $\alpha w_{i}^{-1}(\bar{y}^{i} -  Mx) = w_{i}^{-1}(y^{i}(\alpha x) -  M (\alpha x)) \to \alpha\bar{u}$  in measure in  $D_{\omega,h}$, where $\bar{u}(x) = u(\alpha x)/\alpha$ for $x \in D_{\omega,h}$. Then the definition of $\mathcal{F}^M_{\rm dp}$,  \eqref{eq: transformation preparation}, and \eqref{eq: quasi opt} imply
$$\delta + \mathcal{F}^M_{\rm dp}(\alpha \omega; \alpha h) \ge  \liminf_{i \to \infty} \mathcal{E}_{\sqrt{\alpha}\eps_i}(y^{i}, \alpha D_{\omega,h}) \ge \alpha^{d-1}\liminf_{i \to \infty} \mathcal{E}_{\ep_i}(\bar{y}^{i}, D_{\omega,h}) \ge \alpha^{d-1}\mathcal{F}^M_{\rm dp}(\omega;h).   $$
Since $\delta >0$ was arbitrary, {\rm (i)} follows. 

The proof of (ii) and (iii) is exactly as in \cite[Propostion 4.6]{davoli.friedrich} and we refer therein for details. (See also \cite[Lemma 4.3]{conti.fonseca.leoni} for similar arguments.) 
\end{proof}

  We now move to the proof of Proposition \ref{eq: KundKdo}. We first state two technical lemmas.    
Recall the definition of $y^+_0$ and   $y^-_0$ below  \eqref{eq: conti-schweizer-k}.  

\begin{lemma}[Lower energy bound]\label{lemma: lower energy bound}
Let $\lbrace \eps_i\rbrace_i$ be an infinitesimal sequence, and  $\lbrace \tau_i \rbrace_i \subset \R$ be a bounded sequence with $\eps_i/\sqrt{\tau_i} \to 0$.  Let $\omega \subset \R^{d-1}$ be a bounded Lipschitz domain.  Suppose that there exists a sequence $\lbrace v^i\rbrace_i$ with $v^i \in H^2(D_{\omega, \tau_i};\R^d)$, and 
\begin{align}\label{eq: to confirm-old1}
\tau_i^{-1}\Vert \nabla v^i - \nabla y_0^+\Vert^2_{L^2(D_{\omega, \tau_i})} \to 0.
\end{align} 
Then 
\begin{align}\label{eq: to confirm-old2}
\liminf_{i \to \infty} \mathcal{E}_{\eps_i}(v^i,D_{\omega,\tau_i})\ge K\mathcal{H}^{d-1}(\omega),
\end{align}
where $K$ is the constant from \eqref{eq: our-k1}.  
\end{lemma}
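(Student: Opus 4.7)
My plan is to reduce this thin-cylinder lower bound to the fixed-height optimal-profile identity $\mathcal{F}(\omega;h)=K\,\mathcal{H}^{d-1}(\omega)$ of Proposition~\ref{prop:cell-form}. The main tool will be an $H^2$-extension of $v^i$ to a cylinder $D_{\omega,h_0}$ of fixed height $h_0>0$ (so that $\tau_i<h_0/2$ for large $i$) which is affine outside $D_{\omega,\tau_i}$ and which adds only vanishing energy. The assumption $\eps_i/\sqrt{\tau_i}\to 0$ will guarantee both that the $\sim\eps_i^2$-wide transition layer fits inside the thin cylinder and that the anisotropic penalisation $\bar\eta_{\eps_i,d}^2$ remains subcritical relative to the interpolation scale.

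First I would extract a translation sequence: a Poincar\'e--Wirtinger inequality on the Lipschitz domain $D_{\omega,\tau_i}$ (whose diameter stays bounded since $\{\tau_i\}_i$ is bounded) yields constants $c^i\in\R^d$ such that
\[
\|v^i-y_0^+-c^i\|_{L^2(D_{\omega,\tau_i})}^{\,2}\le C\,\|\nabla v^i-\nabla y_0^+\|_{L^2(D_{\omega,\tau_i})}^{\,2}=C\,\tau_i\,o(1),
\]
and hence $\|v^i-y_0^+-c^i\|_{L^1(D_{\omega,\tau_i})}=\tau_i\,o(1)$. I would also invoke Fubini together with a mean-value argument to select slice heights $r^i\in(\tau_i/2,\tau_i)$ and a thin interpolation width $\delta_i\to 0$ with $\delta_i/\tau_i\to 0$ such that the energy $\mathcal{E}_{\eps_i}\big(v^i,\omega\times((r^i-\delta_i,r^i)\cup(-r^i,-r^i+\delta_i))\big)=o(1)$. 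This ensures that substituting $v^i$ by an interpolated function on these good slices cannot create any bulk energy contribution that we have to account for.

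Next I would construct $\tilde v^i\in H^2(D_{\omega,h_0};\R^d)$ by setting $\tilde v^i=v^i$ on $D_{\omega,r^i-\delta_i}$, $\tilde v^i=y_0^++c^i$ on $D_{\omega,h_0}\setminus D_{\omega,r^i}$, and using a smooth cutoff $\chi=\chi(x_d)$ to interpolate $\tilde v^i=\chi\,v^i+(1-\chi)(y_0^++c^i)$ on the two transition strips of width $\delta_i$. Since $\nabla y_0^+=A$ for $x_d>0$ and since the hypothesis forces the trace of $\nabla v^i$ on these strips to be $L^2$-close to $A$ (and analogously to $B$ on the lower strip), the interpolation stays inside a single phase, so $W(\nabla\tilde v^i)\le C|\nabla\tilde v^i-A|^2$ can be bounded via the assumption and the Poincar\'e estimate. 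A direct computation of $\nabla\tilde v^i$ and $\nabla^2\tilde v^i$ in terms of $\chi',\chi''$, $v^i-y_0^+-c^i$ and $\nabla v^i-\nabla y_0^+$, combined with the $o(\tau_i)$-smallness of these quantities and the good-slice bound, allows the choice of $\delta_i$ (e.g.\ a suitable power of $\eps_i$, compatible with $\bar\eta_{\eps_i,d}=\eps_i^{-1+1/(2d)}$) that makes every resulting contribution $\mathcal{E}_{\eps_i}(\tilde v^i,\text{strip})=o(1)$. Consequently $\mathcal{E}_{\eps_i}(\tilde v^i,D_{\omega,h_0})\le\mathcal{E}_{\eps_i}(v^i,D_{\omega,\tau_i})+o(1)$.

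Finally, since the $L^1$-estimate above together with the construction forces $\tilde v^i-c^i\to y_0^+$ in $L^1(D_{\omega,h_0};\R^d)$, Proposition~\ref{prop:cell-form} and the translational invariance of $\mathcal{E}_{\eps_i}$ yield
\[
\liminf_{i\to\infty}\mathcal{E}_{\eps_i}(\tilde v^i,D_{\omega,h_0})\ge\mathcal{F}(\omega;h_0)=K\,\mathcal{H}^{d-1}(\omega),
\]
which combined with the previous inequality gives \eqref{eq: to confirm-old2}. The principal obstacle in carrying out this plan is the simultaneous control of the three second-order terms in the interpolation strip: the $\eps_i^{-2}W$-term (which demands $\delta_i$ not too small), the isotropic $\eps_i^2|\nabla^2|^2$-term and especially the anisotropic penalty $\bar\eta_{\eps_i,d}^2(|\nabla^2|^2-|\partial^2_{dd}|^2)$ which carries the large factor $\eps_i^{-2+1/d}$; threading a choice of $\delta_i$ through all of these simultaneously is the delicate point, but is made possible by the rate $\eps_i/\sqrt{\tau_i}\to 0$ and the good-slice selection above.
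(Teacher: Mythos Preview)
Your extension–interpolation strategy is genuinely different from the paper's argument, but it has a real gap at precisely the point you flag as ``the principal obstacle'': the anisotropic second-order penalty cannot be threaded through the cutoff without a quantitative rate on the hypothesis \eqref{eq: to confirm-old1}, which the lemma does not provide. Concretely, with $\chi=\chi(x_d)$ the mixed derivative $\partial^2_{jd}\tilde v^i$ for $j<d$ picks up the term $\chi'\,(\partial_j v^i-\partial_j y_0^+)$, which lies squarely inside the anisotropic part $|\nabla^2\tilde v^i|^2-|\partial^2_{dd}\tilde v^i|^2$. Writing $\sigma_i:=\tau_i^{-1}\Vert\nabla v^i-\nabla y_0^+\Vert_{L^2(D_{\omega,\tau_i})}^2\to 0$ and using your good-slice averaging, this contribution is of order
\[
\bar\eta_{\eps_i,d}^2\,\delta_i^{-2}\!\int_{\mathrm{strip}}|\partial_j v^i-\partial_j y_0^+|^2\,\mathrm{d}x\ \sim\ \eps_i^{-2+1/d}\,\frac{\sigma_i}{\delta_i}.
\]
To make this vanish you need $\delta_i\gg\eps_i^{-2+1/d}\sigma_i$, while the strip must sit inside $D_{\omega,\tau_i}$, i.e.\ $\delta_i<\tau_i$. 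But the hypotheses allow, for instance, $\tau_i=\eps_i$ (then $\eps_i/\sqrt{\tau_i}=\sqrt{\eps_i}\to 0$) and $\sigma_i=\eps_i^{2-1/d}$; in that regime the lower bound $\eps_i^{-2+1/d}\sigma_i=1$ already exceeds $\tau_i$, so no admissible $\delta_i$ exists. The assumption $\eps_i/\sqrt{\tau_i}\to 0$ controls only $\eps_i^2/\tau_i$, which is far weaker than what the factor $\bar\eta_{\eps_i,d}^2=\eps_i^{-2+1/d}$ demands.

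The paper bypasses any cutting by a \emph{rescaling and De Giorgi averaging} argument (cf.\ Remark~\ref{sub:2-prof}): blow up $v^i$ to a unit-height cylinder $D_{\gamma_i\mathcal{Q}',1}$ with $\gamma_i=\tau_i^{-1}$ via $y^i(x)=v^i(\tau_i x)/\tau_i$, so that by \eqref{eq: transformation preparation} one has $\mathcal{E}_{\eps_i}(v^i,D_{\mathcal{Q}',\tau_i})\ge\tau_i^{d-1}\mathcal{E}_{\sqrt{\gamma_i}\eps_i}(y^i,D_{\gamma_i\mathcal{Q}',1})$; then cover the wide base by $\sim\gamma_i^{d-1}$ translates of $\mathcal{Q}'$ and pick one where both the (rescaled) energy and $\Vert\nabla y^i-\nabla y_0^+\Vert_{L^2}^2$ are below average. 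Since $\sqrt{\gamma_i}\eps_i=\eps_i/\sqrt{\tau_i}\to 0$, Proposition~\ref{prop:cell-form} applies on this fixed unit cube, and no interpolation---hence no competition between the elastic and anisotropic terms---is ever needed. This is the idea you are missing.
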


\begin{lemma}[Zooming to two interfaces]\label{lemma: intefacefind2}
Let $\lbrace \eps_i\rbrace_i$ be an infinitesimal sequence. Let $\mathcal{Q}' \subset \R^{d-1}$ be a cube and  let $h>0$.  Let $M \in \lbrace A,B \rbrace$.   For every $i\in \mathbb{N}$, let $y^{i} \in H^2( D_{\mathcal{Q}',h};\R^d)$  with $\mathcal{E}_{\eps_i}(y^{i}, D_{\mathcal{Q}', h}) \le C_0 <+\infty$, let $\lbrace \tau_i \rbrace_i \in \mathcal{W}_d$, let $u \in \mathcal{U}_{\rm dp}(D_{\mathcal{Q}',h})$, and   assume that  
\begin{align}\label{eq: converg assu}
 \frac{y^i -  Mx}{\tau_i} \to u \text{ in measure in  $D_{\mathcal{Q}',h}$ as $i \to \infty$}.
\end{align} 
Then, there exist   $\mu>0$, sequences $\lbrace \alpha^1_i \rbrace_i, \lbrace \alpha^2_i \rbrace_i  \subset \R$ such that $D^j_i := \alpha^j_i e_d+D_{\mathcal{Q}',\mu\tau_i}$, $j=1,2$, satisfy $D^1_i,D^2_i \subset D_{\mathcal{Q}',h}$ and  $D^1_i \cap D^2_i = \emptyset$, and there exists a sequence  of isometries $\lbrace I_i\rbrace_i$ such that the  maps  $ v^i  \in H^2(D^1_i \cup D^2_i;\R^d)$, defined by 
\begin{align}\label{eq: vi}
v^i(x) = I_i \circ  y^{i}  (x) \ \ \ \text{ for every }\, x \in  D^1_i \cup D^2_i, 
\end{align}
satisfy, up to a subsequence,  for $j=1,2$ that
\begin{align}\label{eq: to confirm}
\min\Big\{ \tau_i^{-1}\Vert \nabla v^i(\cdot + \alpha^j_i e_d) - \nabla y^+_0\Vert^2_{L^2(D_{\mathcal{Q}',\mu \tau_i})}\, , \ \tau_i^{-1} \Vert \nabla v^i (\cdot + \alpha^j_i e_d) - \nabla y^-_0\Vert^2_{L^2(D_{\mathcal{Q}',\mu \tau_i})} \Big\} \to 0.
\end{align} 
\end{lemma}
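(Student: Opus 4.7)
Without loss of generality, take $M=A$ (the case $M=B$ is analogous after swapping roles). Since $D_{\mathcal{Q}',h}$ is a paraxial cuboid, Theorem \ref{thm:rigiditythm} and Proposition \ref{lprop: phases} apply on the whole domain, yielding rotations $R^i\in SO(d)$ and phase indicators $\Phi^i\in BV(D_{\mathcal{Q}',h};\lbrace A,B\rbrace)$ with $\|\nabla y^i-R^i\Phi^i\|_{L^2}\le C\eps_i$, $|D\Phi^i|\le C$, and the nearly horizontal normals supplied by \eqref{eq: propertiesT}(ii),(iii). From $(y^i-Ax)/\tau_i\to u$ in measure and $\tau_i\to 0$ we obtain $y^i\to Ax$ strongly in $L^p_{\rm loc}$, so $\nabla y^i\rightharpoonup A$ weakly in $L^2$; combined with the two-well structure of $R^i\Phi^i$ this forces (up to subsequence) $R^i\to{\rm Id}$ and $\Phi^i\to A$ in measure. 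Setting $B^i:=\lbrace\Phi^i=B\rbrace$ we have $\mathcal{L}^d(B^i)\to 0$.

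Next I would pin down the geometry of $B^i$. The one-sided slicing argument from Proposition \ref{lemma: admissible-u} (essentially the fundamental theorem of calculus applied to $y^i(x',\cdot)$, using $\partial_d y^i\approx R^i\Phi^i e_d$ and $B-A=\kappa e_d\otimes e_d$) shows that, for $\mathcal{H}^{d-1}$-a.e.\ $x'\in \mathcal{Q}'$, the total vertical length $\ell^i(x')$ of $\lbrace t: \Phi^i(x',t)=B\rbrace$ satisfies $\ell^i(x')/\tau_i\to([u](x')\cdot e_d)/\kappa$. Since $u\in\mathcal{U}_{\rm dp}$ has $\mathcal{H}^{d-1}(J_u)>0$, integrating over $\mathcal{Q}'$ gives $\mathcal{L}^d(B^i)=\Theta(\tau_i)$. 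Moreover, applying Theorem \ref{thm:compactness} on the subdomains $\mathcal{Q}'\times(\delta,h)$ and $\mathcal{Q}'\times(-h,-\delta)$ (where $u$ has no jump) yields $\|\nabla y^i-A\|_{L^2(\lbrace|x_d|>\delta\rbrace)}\le C_\delta\tau_i$ by \eqref{eq:comp-grad-u}, and then H3.--H4.\ give $\mathcal{L}^d(B^i\cap\lbrace|x_d|>\delta\rbrace)\le C_\delta\tau_i^2=o(\tau_i)$ for every $\delta>0$, i.e., the $B$-phase concentrates near $\lbrace x_d=0\rbrace$. Finally, the horizontality estimates \eqref{eq: propertiesT}(ii),(iii) together with an isoperimetric inequality in the $(d-1)$-slices $\mathcal{Q}'\times\lbrace t\rbrace$ imply that $\Phi^i$ agrees with its horizontal approximation $\tilde\Phi^i(x_d):=A\chi_{\lbrace g^i<1/2\rbrace}+B\chi_{\lbrace g^i\ge 1/2\rbrace}$ (where $g^i(t)=\mathcal{L}^{d-1}(\lbrace\Phi^i(\cdot,t)=B\rbrace)/|\mathcal{Q}'|$) up to a set of measure $O(\eps_i^{2-\alpha(d)})$, which by $\lbrace\tau_i\rbrace\in\mathcal{W}_d$ is $o(\tau_i)$.

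With these preparations, the selection of $\alpha^1_i,\alpha^2_i$ and $\mu$ is straightforward. The 1D function $\tilde\Phi^i$ has at most $N$ jumps (with $N$ depending only on $C/|\mathcal{Q}'|$ via $|D\Phi^i|\le C$), all located in $\lbrace|t|<\delta\rbrace$ modulo the $o(\tau_i)$ error; by pigeonhole one of the maximal $B$-components of $\tilde\Phi^i$ has width $\ge c\tau_i$ for a universal constant $c>0$. Declare $\alpha^1_i$ to be its upper and $\alpha^2_i$ its lower endpoint, and fix $\mu:=c/4$; the boxes $D^j_i$ are then disjoint, lie in $D_{\mathcal{Q}',h}$ for $i$ large, and each contains exactly one transition of $\tilde\Phi^i$. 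Choosing $I_i(x):=(R^i)^Tx$, so that $v^i=I_i\circ y^i$ satisfies $\|\nabla v^i-\Phi^i\|_{L^2(D_{\mathcal{Q}',h})}\le C\eps_i$, the bound
\[\|\nabla v^i(\cdot+\alpha^1_ie_d)-\nabla y_0^+\|^2_{L^2(D_{\mathcal{Q}',\mu\tau_i})}\le C\eps_i^2+C\|\Phi^i-\tilde\Phi^i\|^2_{L^2}=O(\eps_i^2)+o(\tau_i)\]
holds on $D^1_i$, and dividing by $\tau_i$ gives \eqref{eq: to confirm} for $j=1$ (the sign $\pm$ is determined by whether the transition is of type $A\to B$ or $B\to A$ as one descends). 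The same argument at $\alpha^2_i$ produces the opposite sign. The main obstacle is the quantitative control in paragraph two: reconciling the $\Theta(\tau_i)$ B-layer width (needed to choose $\mu>0$ uniformly) with the $o(\tau_i)$ horizontal-mismatch bound (needed for \eqref{eq: to confirm}) and the concentration near $\lbrace x_d=0\rbrace$ (needed so that the boxes $D^j_i$ avoid secondary $B$-fingers) requires carefully combining Proposition \ref{lprop: phases}, the slice-wise identification of jump heights, and a subdomain application of Theorem \ref{thm:compactness}.
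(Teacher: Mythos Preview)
Your overall strategy---rigidity, a one-dimensional phase indicator, locating a $B$-layer of width comparable to $\tau_i$, and reading off two transitions---matches the paper's. Two steps in your execution, however, do not go through as written.

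\textbf{The box selection.} The assertion ``each box contains exactly one transition of $\tilde\Phi^i$'' is the heart of the matter and is not justified by your pigeonhole. You pick a maximal $B$-component $[\beta,\gamma]$ of $\tilde\Phi^i$ with $\gamma-\beta\ge c\tau_i$ and set $\alpha^1_i=\gamma$, $\mu=c/4$. The lower half $[\gamma-\mu\tau_i,\gamma]$ of $D^1_i$ lies inside the $B$-component, fine; but on the upper half $[\gamma,\gamma+\mu\tau_i]$ nothing prevents another $B$-interval of width, say, $c\tau_i/100$ from sitting at distance $o(\tau_i)$ above $\gamma$. That secondary $B$-interval contributes $\Theta(\tau_i)$ to $\|\tilde\Phi^i(\cdot+\alpha^1_ie_d)-\nabla y_0^+\|^2_{L^2(D_{\mathcal{Q}',\mu\tau_i})}$, and after dividing by $\tau_i$ you get a positive constant, not zero. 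The paper's Step~4 avoids this by a more delicate choice: writing the jump points of the indicator as $s_1^i<\dots<s_N^i$, one first fixes an index $k$ with $s_{k+1}^i-s_k^i\ge\bar c\tau_i$ (a long $B$-interval), and then takes $j_1\le k$ to be the \emph{largest} index whose preceding $A$-interval has width $\liminf_i\tau_i^{-1}(s_{j_1}^i-s_{j_1-1}^i)>0$. Maximality forces every $A$-interval between $s_{j_1}^i$ and $s_k^i$ to have width $o(\tau_i)$, so the upper half of the box centered at $s_{j_1}^i$ is $B$ up to $o(\tau_i)$; the lower half is $A$ by construction. The second box is chosen symmetrically above $s_{k+1}^i$.

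\textbf{The horizontal approximation.} With the fixed threshold $1/2$ in the definition of $\tilde\Phi^i$, the mismatch set $\{\Phi^i\neq\tilde\Phi^i\}$ is exactly the slice-wise minority phase, whose total measure is $\int\min(g^i,1-g^i)\,{\rm d}t\cdot|\mathcal{Q}'|$. This is bounded by $\mathcal{L}^d(B^i)=\Theta(\tau_i)$, not $O(\eps_i^{2-\alpha(d)})$ as you claim; the isoperimetric inequality on slices only tells you that the set of \emph{intermediate} slices (where both phases exceed a threshold $\lambda$) is small, but on the remaining slices the minority can still be up to $\lambda$. The paper therefore uses a vanishing threshold $\lambda_i=\eps_i^{1+1/(4d)}$, chosen precisely so that both $\lambda_i/\tau_i\to 0$ (mismatch on good slices) and $\eps_i^{2-\alpha(d)}\lambda_i^{(1-d)/d}/\tau_i\to 0$ (measure of bad slices) hold; see \eqref{eq: cond on h2}--\eqref{eq: good layer3}.

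A smaller point: your concentration step invoking Theorem~\ref{thm:compactness} and \eqref{eq:comp-grad-u} does not give $\|\nabla y^i-A\|_{L^2(\{|x_d|>\delta\})}\le C_\delta\tau_i$. That reference yields weak $L^2$ convergence of $\eps_i^{-1}(\nabla y^i-\sum_j R^iM_j^i\chi_{P_j^i})$, which says nothing at scale $\tau_i$ and does not exclude $B$-phase of measure $\Theta(\tau_i)$ away from the interface. The paper instead argues by contradiction directly from \eqref{eq: converg assu}: a persistent $B$-interval away from $0$ would force $(y^i-Ax)/\tau_i$ to diverge there.
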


The lemma states that one finds two cylindrical sets with height $\mu \tau_i$ such that each `contains an interface', i.e.,  asymptotically a big portion of $D^j_i \cap \lbrace x_d \ge \alpha^j_i \rbrace$ and  $D^j_i \cap \lbrace x_d \le \alpha^j_i \rbrace$, respectively, is contained in the $A$ and $B$-phase region, respectively,  cf.\ Figure \ref{fig:zoom-in}. 
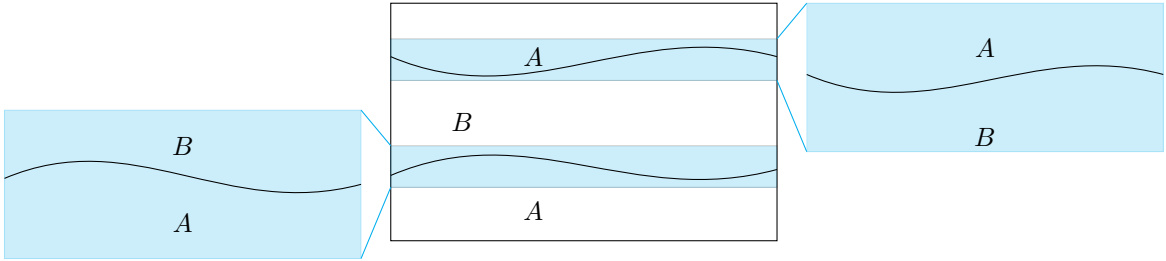
\begin{figure}[h]
\centering
\begin{tikzpicture}[scale=0.79]
\coordinate (A) at (-3,3);
 \coordinate (B) at (3.5,3);
 \coordinate (E) at (3.5,2.1);
 \coordinate (F) at (3.5,1.4);
 \coordinate (G) at (3.5,0.6);
  \coordinate (O) at (-3,2.2);
 \coordinate (P) at (-3,1.4);
 \coordinate (Q) at (-3,0.6);
 \coordinate (S) at (-3,-1); 
 \coordinate (T) at (3.5,-1); 


 \draw (A)--(B)--(T)--(S)--cycle;
 \draw[fill=cyan, opacity=0.2] (-3,2.4)--(3.5,2.4)--(3.5,1.7)--(-3,1.7)--cycle;
 \draw [black] (-3,2.1)  to [out=-23,in=165] (3.5,2.1);
 
  \draw[fill=cyan, opacity=0.2] (-3,0.6)--(3.5,0.6)--(3.5,-0.1)--(-3,-0.1)--cycle;
 \draw [black] (-3,0.1)  to [out=23,in=-165] (3.5,0.2);
 
\node at (-1.8,1) {$B$};
\node at  (-0.6,2.1) {$A$};
\node at  (-0.6,-0.5) {$A$};

 \coordinate (C) at (4,3);
  \coordinate (D) at (10,3);
   \coordinate (H) at (4,0.5);
    \coordinate (I) at (10,0.5);
    \draw[cyan, fill=cyan, opacity=0.2] (C)--(D)--(I)--(H)--cycle;
    \draw[cyan] (3.5,2.4)--(C);
     \draw[cyan] (3.5,1.7)--(H);
     \draw[black] (4,1.8)  to [out=-23,in=165] (10,1.8);
     \node at (7,2.25) {$A$};
      \node at (7,0.75) {$B$};
      
 \coordinate (U) at (-3.5,1.2);
  \coordinate (V) at (-3.5,-1.3);
   \coordinate (W) at (-9.5,-1.3);
    \coordinate (X) at (-9.5,1.2);
    \draw[cyan, fill=cyan, opacity=0.2] (U)--(V)--(W)--(X)--cycle;
    \draw[cyan] (-3,0.6)--(U);
     \draw[cyan] (-3,-0.1)--(V);
    \draw[black] (-9.5,0.05)  to [out=23,in=-165] (-3.5,-0.05);
     \node at (-6.5,0.6) {$B$};
    \node at (-6.5,-0.7) {$A$};      


\end{tikzpicture}
\caption{By `zooming in' one can identify two regions in which phase transitions occur: the interfaces between the $A$ and $B$-phase regions become asymptotically flat as $i\to \infty$.}
\label{fig:zoom-in}
\end{figure}

 Loosely speaking, the result shows that, under assumption \eqref{eq: converg assu}, there are at least two interfaces and  the interfaces between the $A$ and $B$-phase  regions  become asymptotically flat, where the nonflatness is asymptotically small compared to the sequence $\lbrace \tau_i \rbrace_i$. An analogous result for a single interface between the $A$ and $B$-phase region has been derived in \cite[Lemma 4.9]{davoli.friedrich}.

We postpone the proofs of these two lemmas  and proceed with the proof of Proposition \ref{eq: KundKdo}.

\begin{proof}[Proof of Proposition \ref{eq: KundKdo}]
Let $M \in \lbrace A,B\rbrace$. First, the inequality $K^M_{\rm dp} \ge \mathcal{F}^M_{\rm dp}(Q',1)$ follows immediately from the definitions in \eqref{eq: our-k2} and  \eqref{eq: k2-intro}. We now show  $\mathcal{F}^M_{\rm dp}(Q',1) \ge 2K$. We again let $Q = (-\frac{1}{2},\frac{1}{2})^d$. Given $\delta>0$, we choose sequences $\lbrace \eps_i\rbrace_i$, $\lbrace w_i\rbrace_i \in \mathcal{W}_d$, $u \in \mathcal{U}_{\rm dp}(Q)$, and  $\lbrace y^i \rbrace_i \subset H^2(Q;\R^d)$ such that
$w_i^{-1} (y^i - Mx) \to u$ in measure in $Q$, and
\begin{align}\label{eq: relation1}
\limsup_{i \to \infty} \mathcal{E}_{\eps_i} (y^i, Q) \le \mathcal{F}^M_{\rm dp}(Q',\tfrac{1}{2}) + \delta = \mathcal{F}^M_{\rm dp}(Q',1) + \delta,
\end{align}
where the last step follows from Proposition \ref{prop:cell-form-new}(iii). By Lemma \ref{lemma: intefacefind2} applied for $\mathcal{Q}' = Q'$, $h=1/2$, and $\tau_i = w_i$ we find $\mu>0$ and pairwise disjoint sets $D^j_i := \alpha^j_i e_d+D_{Q',\mu w_i}$, $j=1,2$, with $D^1_i,D^2_i \subset Q$, and isometries $\lbrace I_i\rbrace_i$ such that the  maps  $v^i \in H^2(D^1_i \cup D^2_i;\R^d)$, defined by 
$v^i(x) = I_i \circ  y^i  (x)$ for $x \in  D^1_i \cup D^2_i$ satisfy \eqref{eq: to confirm} (after extraction of a subsequence). Possibly after a transformation of the form $x \mapsto -v^i(-x)$, we may suppose that  $w_i^{-1}\Vert \nabla v^i(\cdot + \alpha^j_i e_d) - \nabla y^+_0\Vert^2_{L^2(D_{Q',\mu w_i})} \to 0 $ for $j=1,2$. Then H2.\ and Lemma \ref{lemma: lower energy bound} for $\tau_i = w_i$ (note that $\eps_i/\sqrt{\tau_i} \to 0$ by \eqref{ew: W sequence}) imply
\begin{align*}
\liminf_{i \to \infty} \mathcal{E}_{\eps_i} (y^i, Q) \ge \sum\nolimits_{j=1,2} \liminf_{i \to \infty} \mathcal{E}_{\eps_i}(v^i(\cdot + \alpha^j_i e_d), D_{Q', \mu w_i} ) \ge 2K.
\end{align*}  
This along with \eqref{eq: relation1} and the fact that $\delta>0$ was arbitrary concludes the proof. 
\end{proof}

We continue with the proofs of  Lemma \ref{lemma: lower energy bound} and Lemma \ref{lemma: intefacefind2}.

\begin{proof}[Proof of Lemma \ref{lemma: lower energy bound}]
First, suppose that $\tau_i \ge h>0$ for all $i \in \N$ for some $h>0$. Then, up to translations  we have $v^i \to y_0^+$ in $L^1(D_{\omega, {h }};\R^d)$, and we immediately get 
$$\liminf_{i \to \infty} \mathcal{E}_{\eps_i}(v^i,D_{\omega,\tau_i}) \ge \liminf_{i \to \infty} \mathcal{E}_{\eps_i}(v^i,D_{\omega,h})\ge \mathcal{F}(\omega;h)$$
 by \eqref{eq: k-intro}. The result now follows from Proposition \ref{prop:cell-form}.

We can therefore concentrate on the case $\tau_i \to 0$. We prove the statement first for $\omega = \mathcal{Q}'$, where $\mathcal{Q}' \subset \R^{d-1}$ is a cube. For notational convenience we set $\gamma_i := \tau^{-1}_i$.  We  define $y^i \in H^2(D_{\gamma_i\mathcal{Q}',1};\R^d)$ by  $y^i(x) =   v^i(\tau_i x) / \tau_i$.    By using \eqref{eq: transformation preparation} with $\alpha_i = \tau_i$, we get
\begin{align}\label{eq: l-e-b1}
\mathcal{E}_{\ep_i}(v^i, D_{\mathcal{Q}',\tau_i}) = \mathcal{E}_{\sqrt{\tau_i} \sqrt{\gamma_i}\ep_i}(v^i, D_{\mathcal{Q}',\tau_i})   &\ge  \tau_i^{d-1}  \mathcal{E}_{\sqrt{\gamma_i}\ep_i}(y^i,D_{\gamma_i\mathcal{Q}',1}). 
\end{align}
Let $\delta>0$. We can (almost) cover $D_{\gamma_i\mathcal{Q}',1}$ by $\lfloor \gamma_i \rfloor^{d-1}$  pairwise disjoint translated copies of $D_{\mathcal{Q}',1}$. This implies that   we can find   $z_i \in \R^{d-1} \times \lbrace 0 \rbrace$  such that, by a  classical De Giorgi argument  (see the explanation at the beginning of the proof of  \cite[Lemma 4.3]{conti.schweizer2} for details on this technique),  for $i \in \N$ sufficiently large   we get  by \eqref{eq: l-e-b1} and a change of variables  that
\begin{align}\label{eq: l-e-b2}
{\rm {\rm (i)}} & \ \ \mathcal{E}_{\sqrt{\gamma_i}\ep_i}(y^i, z_i  + D_{\mathcal{Q}',1}) \le  \frac{(1+\delta)}{ \lfloor \gamma_i \rfloor^{d-1} } \, \mathcal{E}_{\sqrt{\gamma_i}\ep_i}(y^i,D_{\gamma_i\mathcal{Q}',1}) \le  \frac{(1+\delta)}{(\lfloor \gamma_i \rfloor  \tau_i)^{ d-1}}  \,\mathcal{E}_{\ep_i}(v^i,D_{\mathcal{Q}',\tau_i}),\notag \\
{\rm {\rm (ii)}} & \ \  \Vert  \nabla y^i-  \nabla y_0^+ \Vert^{ 2 }_{L^{2}(z_i+ D_{\mathcal{Q}',1})} \le \frac{C}{\delta}\tau_i^{d-1}\Vert  \nabla y^i- \nabla  y_0^+ \Vert^{ 2 }_{L^{2}( D_{\gamma_i\mathcal{Q}',1})} = \frac{C}{\delta\tau_i}\Vert  \nabla v^i- \nabla  y_0^+ \Vert^{ 2 }_{L^{2}( D_{\mathcal{Q}',\tau_i})}.
\end{align}
Since $\tau_i \to 0$,  there holds  $\tau_i \lfloor \gamma_i \rfloor \to 1$. This along with \eqref{eq: l-e-b2}(i) yields
\begin{align}\label{eq: l-e-b3}
\liminf_{i \to \infty}\mathcal{E}_{\sqrt{\gamma_i}\ep_i}(y^i,  z_i  + D_{\mathcal{Q}',1}) \le  (1+\delta) \, \liminf_{i \to \infty}\mathcal{E}_{\ep_i}(v^i,D_{\mathcal{Q}',\tau_i}).
\end{align}
Moreover,  by \eqref{eq: to confirm-old1} (with $\omega = \mathcal{Q}'$) and \eqref{eq: l-e-b2}(ii) we obtain   $\Vert  \nabla y^i- \nabla y_0^+ \Vert^{ 2 }_{L^{2}(z_i  + D_{\mathcal{Q}',1})}  \to 0$. Since $\sqrt{\gamma_i}\ep_i \to 0$ by assumption on $\lbrace \tau_i\rbrace_i$, \eqref{eq: k-intro}, \eqref{eq: l-e-b3}, and the translational invariance of $\mathcal{E}_\eps$ imply
$$\mathcal{F}(\mathcal{Q}',1) \le \liminf_{i \to \infty}\mathcal{E}_{\sqrt{\gamma_i}\ep_i}(y^i,  z_i  + D_{\mathcal{Q}',1}) \le (1+\delta) \, \liminf_{i \to \infty}\mathcal{E}_{\ep_i}(v^i,D_{\mathcal{Q}',\tau_i}). $$
Since $\delta>0$ was arbitrary, in view of Proposition \ref{prop:cell-form}, the statement follows for $\omega = \mathcal{Q}'$.

Now we consider a general bounded Lipschitz domain $\omega \subset \R^{d-1}$. Given $\delta>0$, we can choose pairwise disjoint cubes $\mathcal{Q}_j' \subset \omega$, $j=1,\ldots,N$, contained in $\omega$ such that $\mathcal{H}^{d-1}(\omega \setminus \bigcup_{j=1}^N \mathcal{Q}_j') \le  \delta$. Then by applying \eqref{eq: to confirm-old2} on each cube $\mathcal{Q}'_j$ we get
\begin{align*}
\liminf_{i \to \infty}\mathcal{E}_{\eps_i}(v^i,D_{\omega,\tau_i}) \ge \sum\nolimits_{j=1}^N \liminf_{i \to \infty}\mathcal{E}_{\eps_i}(v^i,D_{\mathcal{Q}'_j,\tau_i}) \ge K  \sum\nolimits_{j=1}^N\mathcal{H}^{d-1}(\mathcal{Q}'_j) \ge K \big(\mathcal{H}^{d-1}(\omega) -\delta \big).
\end{align*}    
Since $\delta>0$ was arbitrary, \eqref{eq: to confirm-old2} holds. 
\end{proof}

\begin{proof}[Proof of Lemma \ref{lemma: intefacefind2}]
We prove the result only in the case $M= A$. The case $M=B$ is the same, up to a different notational realization. The proof is similar to the one of \cite[Lemma 4.9]{davoli.friedrich} where the problem with one interface only has been addressed.

\emph{Step 1:  Subdivision into  phases.} As $\lbrace \tau_i\rbrace_i \in \mathcal{W}_d$, see \eqref{ew: W sequence}, and $\alpha(d) = 1/(2d)$, we can choose   $ \lambda_i= \eps_i^{1+1/(4d)}\subset (0,1/4)$  such that 
\begin{align}\label{eq: cond on h2}
\tau_i^{-1}\lambda_i \to 0, \ \ \ \ \ \   \eps_i^{-2 + \alpha(d)}\tau_i  \, \lambda_i^{(d-1)/d} \to \infty.
\end{align}
We use  Proposition \ref{lprop: phases} for $ y^{i} \in H^2(D_{\mathcal{Q}',h};\R^d)$ to find a corresponding set $T_i$ with properties \eqref{eq: propertiesT}. Recall that $T_i$ corresponds to the $A$-phase regions and $D_{\mathcal{Q}',h} \setminus T_i$ to the $B$-phase regions of the function $y^{i} $. Let 
\begin{align}\label{eq: good layer5}
\mathcal{T}^i_A& = \big\{ t \in (-h,h)\colon \ \mathcal{H}^{d-1}( (\mathcal{Q}' \times \lbrace t\rbrace) \cap T_i) \ge (1- \lambda_i) \mathcal{H}^{d-1}(\mathcal{Q}')\big\}, \notag \\ 
\mathcal{T}^i_B& = \big\{ t \in (-h,h)\colon \ \mathcal{H}^{d-1}( (\mathcal{Q}' \times \lbrace t\rbrace) \setminus T_i) \ge  (1- \lambda_i) \mathcal{H}^{d-1}(\mathcal{Q}') \big\}.
\end{align}
 Define the indicator function $\psi_i\colon (-h,h) \to \lbrace A,B\rbrace$ by $\psi_i(t) = A$ if $\sup \lbrace t' \le t, t' \in \mathcal{T}^i_A \cup \mathcal{T}^i_B \rbrace \in \overline{\mathcal{T}^i_A}$ and $\psi_i(t) = B$ else.  We get that  
\begin{align}\label{eq: good layer3}
\mathcal{H}^{1}\big( (-h,h) \setminus (\mathcal{T}^i_A \cup \mathcal{T}^i_B)  \big) \le cC_0  \eps_i^{2-\alpha(d)} \,    \lambda_i^{{\frac{1-d}{d}}}  \big(\mathcal{H}^{d-1}(\mathcal{Q}')\big)^{\frac{2-d}{d-1}}, 
\end{align}
and that the function $\psi_i$ jumps at most
\begin{align}\label{eq: good layer1}
 N_i \le 2cC_0  \, (\mathcal{H}^{d-1}(\mathcal{Q}'))^{-1} + 1
\end{align}
times, where $c>0$ is the constant from Proposition \ref{lprop: phases}, and $C_0>0$ is such that $\mathcal{E}_{\eps_i}(y^{i}, D_{\mathcal{Q}', h}) \le C_0$ for all $i \in \N$. We point out that the above estimates are obtained by performing analogous arguments to the ones in the proof of \cite[Lemma 4.9; (4.39)-(4.43)]{davoli.friedrich}. The expert reader can thus skip the remaining part of this step and move directly to Step 2. To keep the presentation self-contained, we include here  a short proof   of \eqref{eq: good layer3} and \eqref{eq: good layer1}. 

For $i$ sufficiently large (i.e., $\lambda_i$ small), the relative isoperimetric inequality on $\mathcal{Q}' \times \lbrace t\rbrace$ in dimension $d{-}1$, cf.\ \cite[Theorem 2, Section 5.6.2]{EvansGariepy92}, shows that
\begin{align}\label{eq: if}
\mathcal{H}^{d-2}\big((\mathcal{Q}' \times \lbrace t \rbrace) \cap \partial^* T_i \big) \le \lambda_i^{\frac{d-1}{d}} (\mathcal{H}^{d-1}(\mathcal{Q}'))^{\frac{d-2}{d-1}}\quad \quad \Rightarrow \quad   \quad t \in \mathcal{T}^i_A \cup \mathcal{T}^i_B.
\end{align}
Indeed, by the relative isoperimetric inequality we get
$$\min\big\{ \mathcal{H}^{d-1} \big((\mathcal{Q}'\times \{t\})\cap   T_i   \big),\, \mathcal{H}^{d-1}\big((\mathcal{Q}'\times \{t\})\setminus   T_i   \big)\big\}  \le C\big(\lambda_i^{\frac{d-1}{d}} (\mathcal{H}^{d-1}(\mathcal{Q}'))^{\frac{d-2}{d-1}}\big)^{\frac{d-1}{d-2}} \le \lambda_i \mathcal{H}^{d-1}(\mathcal{Q}')$$
for $i$ large enough, where we used $(d-1)^2/(d(d-2))>1$. (For $d=2$, the term after the first inequality has to be interpreted as zero.)   This gives \eqref{eq: if}. Thus, by \eqref{eq: propertiesT}{\rm (iii)},  \eqref{eq: if},  and $\mathcal{E}_{\eps_i}(y^{i}, D_{\mathcal{Q}',h}) \le C_0$   we obtain \eqref{eq: good layer3}.

To prove \eqref{eq: good layer1}, we use the coarea formula to  get for $\mathcal{H}^1$-a.e.\ $t_A \in \mathcal{T}^i_A$, $t_B \in \mathcal{T}^i_B$
\begin{align*}
\mathcal{H}^{d-1}\big(\partial^*T_i \cap (\mathcal{Q}' \times (t_A,t_B))\big) &\ge  \int_{\partial^*T_i \cap (\mathcal{Q}' \times (t_A,t_B))} |\langle \nu_{T_i},   e_d   \rangle| \, {\rm d}\mathcal{H}^{d-1} \\
&= \int_{\Pi_d} \mathcal{H}^{0}\big( (  z + (t_A,t_B) e_d) \cap \partial^* T_i \cap (\mathcal{Q}'\times (t_A,t_B))\big)  \, {\rm d}\mathcal{H}^{d-1}(z),
\end{align*}
where $\Pi_d := \R^{d-1} \times \lbrace 0 \rbrace$, and $\nu_{T_i}$ denotes the outer unit normal to $T_i$. In view of \eqref{eq: good layer5}  and $\lambda_i \le \frac{1}{4}$, we get
\begin{align*}
\int_{\Pi_d} \mathcal{H}^{0}\big( ( z + (t_A,t_B) e_d) \cap \partial^* T_i \cap   (\mathcal{Q}'\times (t_A,t_B))\big)  \, {\rm d}\mathcal{H}^{d-1}( z )\geq \frac12 \mathcal{H}^{d-1}(\mathcal{Q}').
\end{align*} 
Property \eqref{eq: good layer1} follows then by \eqref{eq: propertiesT}{\rm (i)}.

\emph{Step 2: Rigidity estimates.} 
Theorem \ref{thm:rigiditythm}   and Proposition \ref{lprop: phases} yield rotations $R_i \in SO(d)$ such that 
\begin{align}\label{eq: good layer7}
\Vert\nabla  y^{i}  -R_iA \Vert_{L^{ 2 }(  D_{\mathcal{Q}',h}   \cap T_i)} + \Vert\nabla  y^{i}  -R_iB \Vert_{L^{ 2 }(  D_{\mathcal{Q}',h}  \setminus T_i)}\leq  C\eps_i,
\end{align}
where $C$ depends on the uniform energy bound $C_0$  and on $D_{\mathcal{Q}',h}$. (Note that the estimate holds in the entire set $D_{\mathcal{Q}',h}$ since it is a paraxial cuboid.) For later purposes, we estimate integrals on sets $D = \alpha e_d + D_{\mathcal{Q}',\sigma} \subset D_{\mathcal{Q}',h}$ for $\alpha \in \R$ and $\sigma>0$. Let $L\ge \sqrt{d}$  sufficiently large such that  $\dist(F,SO(d)\lbrace A,B\rbrace) \ge  |F-R M|/2 $ for all $F \in \M^{d \times d}$ with $|F| \ge L$,  $R \in SO(d)$, and $ M  \in \lbrace A,B\rbrace$. We now show that for  every $q \in \lbrace 1, 2 \rbrace$ there holds
\begin{align}\label{eq: abschluss-rigidty}
{\rm (i)} & \ \ \int_{D} |R_i^T \nabla  y^{i}  -A|^q \, {\rm d}x \le  C \big(\mathcal{L}^d(D)\big)^{1-q/2}   \eps_i^q  +  (2L)^q\mathcal{L}^d({D \setminus T_i}), \notag \\ 
{\rm (ii)} & \ \  \int_{D} |R_i^T \nabla  y^{i}  -B|^q \, {\rm d}x \le  C \big(\mathcal{L}^d(D)\big)^{1-q/2}   \eps_i^q  +  (2L)^q\mathcal{L}^d({D \cap T_i}).
\end{align}
To see this, define $E_i = D \cap \lbrace |\nabla  y^{i} | \le L \rbrace$. First, by using H4.\  we observe that  
\begin{align}\label{eq: bad set}
\Vert\nabla  y^{i}  -R_iA \Vert^2_{L^2(  D\setminus E_i  ) } + \Vert\nabla  y^{i}  -R_iB \Vert^2_{L^2(  D\setminus E_i  )} \le C\int_{  D  } W(\nabla  y^{i} )  \, {\rm d}x  \le C\eps^2_i,
\end{align}
where $C$ depends on $c_1$ and $C_0$. For the integral on  $E_i$, we calculate  
\begin{align*}
\int_{E_i} |R_i^T \nabla  y^{i}  -A|^q \, {\rm d}x & = \int_{E_i\cap T_i} | \nabla  y^{i}  -R_i|^q \, {\rm d}x +  \int_{E_i \setminus T_i} |\nabla  y^{i}  -R_i|^q \, {\rm d}x\\
& \le  \big(\mathcal{L}^d(D)\big)^{1-q/2} \Big(\int_{  D \cap T_i} | \nabla  y^{i}  -R_i|^{2} \, {\rm d}x\Big)^{q/2} +  (2L)^q\mathcal{L}^d({D \setminus T_i})
\end{align*}
for $q \in \lbrace 1, 2 \rbrace$, where in the second step we have used H\"older's inequality. This along with \eqref{eq: good layer7},  \eqref{eq: bad set}, and H\"older's inequality shows \eqref{eq: abschluss-rigidty}(i). In a similar fashion, one can show \eqref{eq: abschluss-rigidty}(ii).

\emph{Step 3: Asymptotic behavior of phases.} We now use \eqref{eq: abschluss-rigidty} to show the properties 
\begin{align}\label{eq: good layer2}
{\rm (i)} \  \liminf_{i \to\infty} \frac{1}{\tau_i}\mathcal{H}^1\Big(\mathcal{T}^i_B \cap (-\tfrac{h}{2},\tfrac{h}{2})  \Big)>0  \ \ \ \ \text{and}  \ \ \ {\rm (ii)}  \  \lim_{i \to \infty} \mathcal{H}^1\Big( \mathcal{T}^i_B\cap \big((-h,h) \setminus (-\tfrac{h}{2},\tfrac{h}{2}) \big)\Big) \to 0.
\end{align}
Suppose by contradiction that \eqref{eq: good layer2}(i) were false. Let $D^\sigma := D_{\mathcal{Q}',\sigma}$ for $0 < \sigma< \frac{h}{2}$. Then by \eqref{eq: cond on h2}--\eqref{eq: good layer3} we get (for a subsequence, not relabeled) 
\begin{align}\label{eq: needs to be repeated1}
\frac{1}{\tau_i}\mathcal{L}^d(D^\sigma\setminus T_i)\le \frac{1}{\tau_i} \mathcal{H}^{d-1}(\mathcal{Q}') \Big( \lambda_i \mathcal{H}^1\big((-\sigma,\sigma) \cap \mathcal{T}^i_A\big)  +  \mathcal{H}^1\big((-\sigma,\sigma) \cap \mathcal{T}^i_B\big)  + \mathcal{H}^1\big((-\sigma,\sigma) \setminus (\mathcal{T}^i_A \cup \mathcal{T}^i_B)\big)   \Big) \to 0.
\end{align}
By \eqref{eq: abschluss-rigidty}(i) for $q=1$ and the fact that $\limsup_{i\to \infty} \ (\eps_i/\tau_i) < + \infty$, see \eqref{ew: W sequence}, this implies
\begin{align*}
\limsup_{i\to \infty} \frac{1}{\tau_i}\int_{D^\sigma} |R_i^T \nabla  y^{i}  -A| \, {\rm d}x \le C \big(2\sigma \mathcal{H}^{d-1}(\mathcal{Q}')\big)^{1/2} \, \limsup_{i\to \infty} \ (\eps_i/\tau_i)\le c_\sigma
\end{align*}
for a constant $c_\sigma$ with $c_\sigma \to 0 $ as $\sigma \to 0$. By Poincare's inequality and a $BV$ compactness result, we find $\lbrace b_i \rbrace_i \subset \R^d$ such that the sequence 
$$f^\sigma_i (x):= \tau_i^{-1} (y^i - (R_i\,x + b_i)) \ \text{ for $x \in D^\sigma$}$$
converges weakly* in $BV$ to some $f^\sigma \in BV(D^\sigma;\R^d)$ with $|Df^\sigma|(D^\sigma) \le c_\sigma$. In view of \eqref{eq: converg assu}, it is not hard to check that $f^\sigma(x) =  u(x)+ S  x+b$ for some $S \in \mathbb{M}^{d \times d}_{\rm skew}$ and $b \in \R^d$. On the other hand, by \eqref{eq: UUU}, for $\sigma$ sufficiently small we find $c_\sigma < |D^ju|(\mathcal{Q}' \times \lbrace 0 \rbrace)$, where $D^ju$ denotes the jump part of the distributional derivative. This contradicts the fact that $|D^ju|(D^\sigma) = |D^jf^\sigma|(D^\sigma) \le c_\sigma$.

Now suppose by contradiction that \eqref{eq: good layer2}(ii) were false. In view of \eqref{eq: good layer1}, by passing to a subsequence, we find $ h>\sigma>0$ and $\alpha \in (-h+\sigma,h-\sigma)$   such that  $\mathcal{H}^1((\alpha-\sigma,\alpha+\sigma) \cap \mathcal{T}^i_A)=0$  for all $i$ sufficiently large. Define  $ D := \alpha e_d + D_{\mathcal{Q}',\sigma}$. Repeating the argument in \eqref{eq: needs to be repeated1}, in particular using \eqref{eq: cond on h2}--\eqref{eq: good layer3}, we find $\tau_i^{-1}\mathcal{L}^d( D \cap T_i) \to 0$. Then, by \eqref{eq: abschluss-rigidty}(ii) and the fact that $\limsup_{i\to \infty} \ (\eps_i/\tau_i) < + \infty$ we get that
$$\limsup_{i\to \infty} \frac{1}{\tau_i}  \int_{D} |R_i^T \nabla  y^{i}  -B| \, {\rm d}x < + \infty. $$
By Poincare's inequality and a $BV$ compactness result, we find $\lbrace b_i \rbrace_i\subset  \R^d$ such that the sequence $f_i (x):= \tau_i^{-1} ( y^i - (R_i\, B\,x + b_i))$ for $x \in D$ converges pointwise a.e.\ to some $f \in BV(D;\R^d)$ (up to passing to a subsequence). By \eqref{eq: converg assu}, this implies that $\tau_i^{-1} (( R_i\, B\, - A)x + b_i)$ converges a.e.\ on $D$ to a finite limit. This, however, is impossible, and therefore \eqref{eq: good layer2}(ii) holds.

\emph{Step 4: Definition of cylindrical sets.}  In the following, we denote by $s^i_1 < s^i_2 \ldots < \ldots < s^i_{N_i}$ the jump points of the function $\psi_i$ defined below \eqref{eq: good layer5}. Let $\mathcal{J}_i =\lbrace 0\le j \le N_i\colon \,  (s^i_{j},s^i_{j+1}) \cap \mathcal{T}^i_A = \emptyset \rbrace$, where we set $s^i_0 = -h$ and $s^i_{N_i+1} = h$. Note that for $j \in \mathcal{J}_i \setminus \lbrace 0 \rbrace$ there holds $(s^i_{j-1},s^i_j) \cap \mathcal{T}^i_B = \emptyset$.  Recalling \eqref{eq: good layer1}, up to passing to a subsequence, we can assume that $\mathcal{J}_i$ and $N_i$ are independent of $i$, which we denote by $\mathcal{J}$ and $N$, respectively, for simplicity. Moreover, we can suppose that $\lbrace s^i_{j} \rbrace_i$ converges for all $1 \le j \le N$. In view of \eqref{eq: good layer2}(i), possibly by selecting a further subsequence, we find an index $k \in \mathcal{J}$ and a constant $\bar{c}>0$ independently of $i$ such that   $s_k^i,s_{k+1}^i \in (-\frac{h}{2},\frac{h}{2})$ and  
\begin{align}\label{eq: middle-length}
s_{k+1}^i - s_{k}^i \ge \bar{c}\,\tau_i.
\end{align}
 We now show that there exist $1 \le j_1 \le k$ and $k+1 \le j_2 \le N$, as well as $\mu_1,\mu_2>0$ such that
\begin{align}\label{eq: intervals}
{\rm (i)}& \ \ \lim_{i\to\infty}\tau_i^{-1} \mathcal{H}^1\big((s^i_{j_1} -\mu_1 \tau_i, s^i_{j_1})  \cap  \mathcal{T}^i_B\big) =0, \ \ \ \ \lim_{i\to\infty}\tau_i^{-1} \mathcal{H}^1\big((s^i_{j_1}, s^i_{j_1} + \mu_1 \tau_i)  \cap  \mathcal{T}^i_A\big) =0, \notag \\
{\rm (ii)}& \ \ \lim_{i\to\infty}\tau_i^{-1} \mathcal{H}^1\big((s^i_{j_2} -\mu_2 \tau_i, s^i_{j_2})  \cap  \mathcal{T}^i_A\big) =0, \ \ \ \ \lim_{i\to\infty}\tau_i^{-1} \mathcal{H}^1\big((s^i_{j_2}, s^i_{j_2} + \mu_2 \tau_i)  \cap  \mathcal{T}^i_B\big) =0.
\end{align} 
 Indeed, choose $j_1 \in \mathcal{J}$, $j_1 \le k$, as the largest index such that $\liminf_{i \to \infty} \tau_i^{-1}(s^i_{j_1} - s^i_{j_1-1}) >0$ and set 
$$\mu_1 := \min\big\{ \liminf_{i \to \infty} \tau_i^{-1}(s^i_{j_1} - s^i_{j_1-1}), \bar{c}/2\rbrace >0,$$ 
where $\bar{c}$ is the constant from  \eqref{eq: middle-length}.   Note that such an index exists by \eqref{eq: cond on h2}, \eqref{eq: good layer3}, \eqref{eq: good layer2}(ii), and the fact that $(s^i_{j-1}, s^i_{j})\cap \mathcal{T}^i_B= \emptyset$ for each $j \in \mathcal{J} \setminus \lbrace0 \rbrace$ by the definition of $\mathcal{J}$. 
 This immediately implies the first part of \eqref{eq: intervals}(i). The second part of \eqref{eq: intervals}(i) follows from the fact that  $\liminf_{i \to \infty} \tau_i^{-1}(s^i_{j} - s^i_{j-1}) =0$ for all $j \in \mathcal{J}$ with $j_1 < j \le k$, $(s^i_{j}, s^i_{j+1})\cap \mathcal{T}^i_A= \emptyset$ for $j \in \mathcal{J}$, \eqref{eq: middle-length}, and the fact that $\mu_1 \le \bar{c}/2$.   The index $j_2 \ge k+1$, $j_2 \notin \mathcal{J}$, and $\mu_2 \in (0,\bar{c}/2]$  in \eqref{eq: intervals}(ii) can be chosen in a similar fashion: let $j_2 \ge k+1$, $j_2 \notin \mathcal{J}$,  be the smallest index such that $\liminf_{i \to \infty} \tau_i^{-1}(s^i_{j_2+1} - s^i_{j_2}) >0$ and let $\mu_2 = \min\lbrace \liminf_{i \to \infty} \tau_i^{-1}(s^i_{j_2+1} - s^i_{j_2}), \bar{c}/2\rbrace$.     
 
 We define $\mu = \min\lbrace \mu_1,\mu_2 \rbrace$, \EEE  $\alpha^1_i = s^i_{j_1}$, and $\alpha^2_i = s_{j_2}^i$. Then, the sets $D^1_i := \alpha^1_i e_d + D_{\mathcal{Q}',\mu \tau_i}$ and $D^2_i := \alpha^2_i e_d + D_{\mathcal{Q}',\mu \tau_i}$ satisfy $D^1_i \cap D^2_i = \emptyset$ by \eqref{eq: middle-length} and the fact that $\mu \le \bar{c}/2$.   Moreover, there holds 
\begin{align}\label{eq: good layer6}
{\rm (i)} & \ \ \tau_i^{-1}\big(\mathcal{L}^d(D^1_i \cap \lbrace x_d \le \alpha^1_i  \rbrace \setminus T_i) + \mathcal{L}^d(D^1_i \cap \lbrace x_d \ge \alpha^1_i  \rbrace \cap T_i) \big) \to 0, \notag\\
{\rm (ii)} & \ \ \tau_i^{-1}\big(\mathcal{L}^d(D^2_i \cap \lbrace x_d \le \alpha^2_i  \rbrace \cap T_i) + \mathcal{L}^d(D^2_i \cap \lbrace x_d \ge \alpha^2_i  \rbrace \setminus T_i)  \big) \to 0
\end{align}
as $i \to \infty$. Indeed, e.g., for the first term in \eqref{eq: good layer6}(i), we compute by  \eqref{eq: cond on h2}--\eqref{eq: good layer3}  and \eqref{eq: intervals}(i)  that
\begin{align*} 
&\tau_i^{-1}\mathcal{L}^d( \lbrace x \in D^1_i\colon \  x_d \le \alpha^1_i  \rbrace\setminus T_i) \\
& \ \ \ \ \le  \tau_i^{-1} \mathcal{H}^{d-1}(\mathcal{Q}') \Big( \mathcal{H}^{1}\big( (-h,h) \setminus (\mathcal{T}^i_A \cup \mathcal{T}^i_B)  \big)  + \mathcal{H}^1\big((s^i_{j_1} -\mu_1 \tau_i, s^i_{j_1})  \cap  \mathcal{T}^i_B\big) + \mu \tau_i  \, \lambda_i\Big) \to 0
\end{align*}
as $i \to \infty$. The other three terms can be treated in a similar fashion.

\emph{Step 5: Proof of \eqref{eq: to confirm}.} We define $v^i$ as in \eqref{eq: vi}    for  isometries $I_i$ whose derivative is given by $R^T_i$.  To see \eqref{eq: to confirm},  we apply \eqref{eq: abschluss-rigidty}(i) for $q=2$ on $D=D_i^1 \cap \lbrace x_d \le\EEE \alpha^1_i\rbrace$ and $D=D_i^2 \cap \lbrace x_d \ge\EEE \alpha^2_i\rbrace$, as well as \eqref{eq: abschluss-rigidty}(ii) for $q=2$ on $D=D_i^1 \cap \lbrace x_d \ge \EEE \alpha^1_i\rbrace$ and $D=D_i^2 \cap \lbrace x_d  \le \EEE \alpha^2_i\rbrace$. This along with \eqref{eq: good layer6} and $\tau_i^{-1}\eps_i^2 \to 0$ (see \eqref{ew: W sequence}) shows the desired estimate.  This concludes the proof.  
\end{proof}

  We conclude this subsection with the proof of Proposition \ref{prop:kMdo}.  

\begin{proof}[Proof of Proposition \ref{prop:kMdo}]
Let $M \in \lbrace A,B\rbrace$.  First, it is clear that the left hand side in \eqref{eq: added-equ} is not smaller than $\mathcal{F}^M_{\rm dp}(\omega,h)$, see \eqref{eq: k2-intro}. We also note by Proposition \ref{eq: KundKdo-new} that 
\begin{align}\label{eq: onemore}
\mathcal{F}^M_{\rm dp}(\omega,h) \ge 2K \, \mathcal{H}^{d-1}(\omega) = K^M_{\rm dp}  \, \mathcal{H}^{d-1}(\omega), 
\end{align} 
 where in the last step we used the assumption  $K^M_{\rm dp}  =  2K$. To prove the reverse inequality, we argue by contradiction: if the statement  were   false, there would exist $\delta>0$, a Lipschitz domain $\omega \subset \R^{d-1}$, $h>0$, and    a sequence $\{w_\ep\}_\ep\in \mathcal{W}_d$ such that
\begin{align}
\label{eq:contr}
\inf\left\{\limsup_{\ep\to 0}\mathcal{E}_\ep(y^\ep;D_{\omega,h}):\  \frac{y^\eps -  Mx}{w_\eps} \to y_{\rm dp}^M  \text{  in measure  in $D_{\omega,h}$ as $\eps \to 0$}\right\} \ge (K^M_{\rm dp} +2\delta)\mathcal{H}^{d-1}(\omega).
\end{align}
Up to translations of $\omega$, we can select a cube $\mathcal{Q}' \subset \R^{d-1}$ containing both  $\omega$ and $Q'=(-\frac{1}{2},\frac{1}{2})^{d-1}$ such that $\alpha \mathcal{Q}' = Q'$ for some $0 < \alpha < 1$.  In view of \eqref{eq: our-k2},  we can find a sequence of functions $\lbrace y^{\ep} \rbrace_\ep \subset H^2(D_{Q',\alpha h};\R^d)$ such that $ (w_{\eps}\alpha)^{-1}(y^{\ep} -  M x)  \to y_{\rm dp}^M$  in measure  in $D_{Q',\alpha h}$ and 
\begin{align}\label{eq: for cubes only3}
\limsup_{\ep \to 0} \mathcal{E}_{\sqrt{\alpha}\ep}(y^{\ep},D_{Q',\alpha h}) \le  K^M_{\rm dp}+ \delta\alpha^{d-1}\mathcal{H}^{d-1}(\omega).
\end{align}
Then the functions $\lbrace \bar{y}^{\ep}\rbrace_\ep\subset H^2(D_{\mathcal{Q}',h};\R^d)$ defined by $\bar{y}^{\ep}(x) = y^{\ep}(\alpha x)/\alpha$ satisfy $ w_{\ep}^{-1}(\bar{y}^{\ep} -  Mx) = (w_{\ep}\alpha)^{-1}(y^{\ep}(\alpha x) -  M(\alpha x)) \to y_{\rm dp}^M$ in measure in  $D_{\mathcal{Q}',h}$. In particular, as $D_{\omega,h} \subset D_{\mathcal{Q}',h}$, by \eqref{eq:contr} we find an infinitesimal sequence $\{\ep_i\}_i$ such that 
\begin{align}\label{eq: for cubes only2}
\liminf_{i \to \infty} \mathcal{E}_{\ep_i}( \bar{y}^{\ep_i} ,D_{\omega,h}) \ge (K^M_{\rm dp} + 2\delta) \, \mathcal{H}^{d-1}(\omega).
\end{align}
Then, using  \eqref{eq: k2-intro},  \eqref{eq: transformation preparation}, \eqref{eq: onemore}, and   \eqref{eq: for cubes only2},  we derive
\begin{align*}
\liminf_{i \to \infty}  \alpha^{1-d} \mathcal{E}_{\sqrt{\alpha}\ep_i}( {y}^{\ep_i} ,D_{Q',\alpha h}) &\ge \liminf_{i \to \infty}  \mathcal{E}_{\ep_i}( \bar{y}^{\ep_i} ,D_{\mathcal{Q}',h}) \\
& \ge \liminf_{i \to \infty}\mathcal{E}_{\ep_i}( \bar{y}^{\ep_i} ,D_{\mathcal{Q}',h} \setminus D_{\omega,h}) + \liminf_{i \to \infty} \mathcal{E}_{\ep_i}(\bar{y}^{\ep_i} ,D_{\omega,h}) \\
& \ge  \mathcal{F}^M_{\rm dp}(\mathcal{Q}'\setminus \omega;h) +   (K_{\rm dp}^M + 2\delta) \mathcal{H}^{d-1}(\omega) \\
& \ge   K^M_{\rm dp} \,  \mathcal{H}^{d-1}(\mathcal{Q}'\setminus \omega) + ( K^M_{\rm dp} + 2\delta) \mathcal{H}^{d-1}(\omega) = \alpha^{1-d}  K^M_{\rm dp}  +2\delta \mathcal{H}^{d-1}(\omega).
\end{align*}
 In the last step, we used $\alpha \mathcal{Q}' = Q'$. This estimate, however, contradicts \eqref{eq: for cubes only3}.
\end{proof}

\subsection{Construction of local recovery sequences}
\label{subs:local}
This subsection is devoted to the proofs of Propositions  \ref{lemma: local1}  and  \ref{lemma: local2}, namely to the construction of local recovery sequences performing single and double phase transitions, respectively, in an energetically optimal way. The crucial point is that the sequences coincide with isometries far from the interfaces as this allows to `glue together' different sequences, as done in the proof of Theorem \ref{thm:limsup-new}. We begin with the proof of Proposition \ref{lemma: local1}. 

\begin{proof}[Proof of Proposition \ref{lemma: local1}]
The result has been proved in \cite[Proposition 4.7]{davoli.friedrich} in the special case in which $\Omega = D_{\omega',h}$. We briefly explain how to obtain the result for strictly star-shaped sets $\Omega$  and cylindrical sets $D_{\omega',h}$ such that  $(\partial \omega' \times (-h,h)) \cap \Omega = \emptyset$. Choose $\omega\subset \R^{d-1}$ such that $\omega \times \lbrace 0\rbrace = (\omega' \times \lbrace 0 \rbrace) \cap \Omega$.  As $\Omega$ is strictly star-shaped, we can find sequences  $\lbrace h_i \rbrace_i, \lbrace \alpha_i\rbrace_i \subset \R$, with $h_i\to 0 $ and $\alpha_i \to 0$ as $i \to \infty$, and a sequence of decreasing Lipschitz sets $\lbrace \omega_i \rbrace_i$ with $\omega \subset \subset \omega_i \subset \subset \omega'     $   for all $i \in \N$  and 
\begin{align}\label{eq: always better}
\mathcal{H}^{d-1}(\omega_i) \le \mathcal{H}^{d-1}(\omega) + 1/i
\end{align}
such that $\alpha_i e_d + D_{\omega_i,h_i} \subset D_{\omega',h}$, and $(\partial \omega_i \times (-h_i + \alpha_i, \alpha_i + h_i)) \cap \Omega = \emptyset$.

We apply \cite[Proposition 4.7]{davoli.friedrich} on $D_i := \alpha_i e_d + D_{\omega_i,h_i}$, to obtain a recovery sequence $v^{\pm,i}_\eps \subset H^2(D_i;\R^d)$ and isometries $\{I^{\pm,i}_{1,\eps}\}_{ \eps}$, $\{I^{\pm,i}_{2,\eps}\}_{ \eps}$ such that \eqref{eq:local1-1} holds  for $D_i$ in place of $D_{\omega',h} \cap \Omega$ and for  $y_0^\pm(\cdot - \alpha_i e_d )$  in place of $y_0^\pm$, and  \eqref{eq:local1-3} holds for   $h_i$ in place of $h$, up to a translation by $\alpha_i e_d$. Moreover, instead of \eqref{eq:local1-3a} we get 
\begin{align}\label{eq: always better2}
 \lim_{\eps \to 0} \mathcal{E}_{\ep} (v^{\pm,i}_\eps, D_i)  =  K \, \mathcal{H}^{d-1}( \omega_i ).
 \end{align}
 In view of \eqref{eq:local1-3} for $v^{\pm,i}_\eps$ and the fact that $(\partial \omega_i \times (-h_i + \alpha_i, \alpha_i + h_i)) \cap \Omega = \emptyset$, we can extend $v^{\pm,i}_\eps$ to an $H^2$-function on $D_{\omega',h} \cap \Omega$ by setting $v^{\pm,i}_\eps = I^{\pm,i}_{1,\eps} \circ y_0^\pm$ on $\lbrace    \alpha_i  + \EEE  3h_i/4 \le x_d < h\rbrace$ and $v^{\pm,i}_\eps = I^{\pm, { i }}_{2,\eps} \circ y_0^\pm$ on $\lbrace   - h < x_d \le  \alpha_i \EEE   -3h_i/4 \rbrace$, respectively. Note that the extensions (not relabeled) still satisfy \eqref{eq:local1-1}  (for  $y_0^\pm(\cdot - \alpha_i e_d )$  in place of $y_0^\pm$). \EEE Now we obtain a sequence satisfying \eqref{eq:local1-1}--\eqref{eq:local1-3} by choosing a suitable diagonal sequence in $\lbrace v^{\pm,i}_{\eps}\rbrace_{\eps,i}$ as $\eps \to 0 $ and $i \to \infty$  via Attouch's diagonalization lemma \cite[Lemma 1.15 and Corollary 1.16]{attouch},  and by taking  \eqref{eq: always better}--\eqref{eq: always better2} into account. 
\end{proof}

The remaining part of this subsection is devoted to the proof of Proposition \ref{lemma: local2}. The argument hinges upon applying some careful transformations to maps locally attaining the double-profile energy in Proposition \ref{prop:kMdo}, so that the modified maps satisfy \eqref{eq:local2-2}. As a first step, we show that the energy of optimal  sequences   concentrates near the interface. We recall  the definitions of $\mathcal{W}_d$ and $y^M_{\rm dp}$ in \eqref{ew: W sequence} and \eqref{eq: step functions}, respectively.

\begin{lemma}[Concentration of the energy near the interface]\label{cor: layer energy}
Let  $h>\tau>0$,  and let  $\omega \subset \R^{d-1}$ be a bounded Lipschitz domain.  Let $M \in \lbrace A,B\rbrace$ and suppose that $K_{\rm dp}^M   =  2K$. Let $\lbrace \eps_i\rbrace_i$ be an infinitesimal sequence and  let $\lbrace w_{\eps_i}\rbrace_i \in \mathcal{W}_d$.   Then,   there exists   $\lbrace y^{\eps_i}\rbrace_i \subset H^2(D_{\omega,h};\mathbb{R}^d)$ satisfying  $\lim_{i \to \infty} \Vert  y^{\eps_i}-  Mx \Vert_{H^1(D_{{\omega},h})} = 0$, and, as $i \to \infty$, we have 
$${\mathcal{E}_{\eps_i}(y^{\eps_i}, D_{\omega,h})\to 2K\mathcal{H}^{d-1}(\omega), \ \ \   \mathcal{E}_{\eps_i}(y^{\eps_i}, D_{\omega,h} \setminus D_{\omega,  \tau}) \to 0, \ \   \ \ \frac{y^{\ep_i}-Mx}{w_{\ep_i}}\to y^M_{\rm dp} \ \ \text{ in measure in }D_{{\omega},h}.}$$
\end{lemma}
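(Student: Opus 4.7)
The plan is to obtain the desired sequence directly from Proposition \ref{prop:kMdo}, exploiting the hypothesis $K_{\rm dp}^M = 2K$, and then to deduce the four required properties from a combination of the double-profile characterization and the sharp lower bound in Proposition \ref{eq: KundKdo-new}; no new construction is needed.

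First, I would apply Proposition \ref{prop:kMdo} to the Lipschitz domain $\omega$, the height $h$, the phase $M$, and the given sequence $\lbrace w_{\eps_i} \rbrace_i \in \mathcal{W}_d$. This yields $\lbrace y^{\eps_i} \rbrace_i \subset H^2(D_{\omega,h};\R^d)$ with $(y^{\eps_i} - Mx)/w_{\eps_i} \to y_{\rm dp}^M$ in measure in $D_{\omega,h}$ and
\begin{equation*}
\limsup_{i\to\infty} \mathcal{E}_{\eps_i}(y^{\eps_i}, D_{\omega,h}) \,=\, K^M_{\rm dp}\,\mathcal{H}^{d-1}(\omega) \,=\, 2K\,\mathcal{H}^{d-1}(\omega),
\end{equation*}
so that after extracting a (non-relabeled) subsequence this $\limsup$ becomes a $\lim$. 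This already takes care of the in-measure convergence stated in the last claim of the lemma.

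Next, for the concentration of the energy, I would restrict the in-measure convergence to the smaller cylinder $D_{\omega,\tau} \subset D_{\omega,h}$, which is admissible in \eqref{eq: k2-intro}, and invoke Proposition \ref{eq: KundKdo-new} (with $\tau$ playing the role of $h$) together with the hypothesis $K_{\rm dp}^M = 2K$ to obtain
\begin{equation*}
\liminf_{i\to\infty} \mathcal{E}_{\eps_i}(y^{\eps_i}, D_{\omega,\tau}) \,\ge\, \mathcal{F}^M_{\rm dp}(\omega,\tau) \,\ge\, 2K\,\mathcal{H}^{d-1}(\omega).
\end{equation*}
Since $\mathcal{E}_{\eps_i}(y^{\eps_i}, D_{\omega,\tau}) \le \mathcal{E}_{\eps_i}(y^{\eps_i}, D_{\omega,h}) \to 2K\,\mathcal{H}^{d-1}(\omega)$, both inequalities above are equalities in the limit. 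Subtracting, $\mathcal{E}_{\eps_i}(y^{\eps_i}, D_{\omega,h}\setminus D_{\omega,\tau}) \to 0$, which gives the second and third statements of the lemma.

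It remains to upgrade the in-measure convergence $y^{\eps_i}\to Mx$ (which follows from $w_{\eps_i}\to 0$) to strong convergence in $H^1(D_{\omega,h};\R^d)$. The uniform energy bound together with assumption H4.\ gives $\Vert \dist(\nabla y^{\eps_i}, SO(d)\lbrace A,B\rbrace)\Vert_{L^2(D_{\omega,h})} = O(\eps_i)$, so Lemma \ref{lemma:comp-def} (applied on the cylinder $D_{\omega,h}$) furnishes, along a further subsequence, some $y' \in \mathcal{Y}_R(D_{\omega,h})$ for which
\begin{equation*}
y^{\eps_i} - \fint_{D_{\omega,h}} y^{\eps_i}\,{\rm d}x \,\to\, y' \quad \text{strongly in } H^1(D_{\omega,h};\R^d).
\end{equation*}
A Poincar\'e inequality guarantees that $\lbrace y^{\eps_i} - \fint y^{\eps_i}\rbrace_i$ is uniformly bounded in $L^2$; combined with $y^{\eps_i} \to Mx$ in measure, Vitali's theorem implies $\fint y^{\eps_i}\,{\rm d}x \to \fint Mx\,{\rm d}x$, and by uniqueness of the in-measure limit $y'=Mx - \fint Mx\,{\rm d}x$. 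Hence $y^{\eps_i} \to Mx$ in $H^1(D_{\omega,h};\R^d)$; by the Urysohn subsequence principle, the convergence holds for the whole extracted sequence.

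The only genuinely delicate step is the identification of the $H^1$-limit in the final paragraph, where the $\eps$-dependent constants coming from Lemma \ref{lemma:comp-def} must be tied to the mean of $Mx$ using the quantitative in-measure decay coming from the $w_{\eps_i}$-rescaled convergence; once this is in place, all other claims reduce to arithmetic manipulations of the upper and lower bounds supplied by Propositions \ref{prop:kMdo} and \ref{eq: KundKdo-new}.
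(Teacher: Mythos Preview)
Your proof is correct and follows essentially the same route as the paper: obtain the sequence from Proposition~\ref{prop:kMdo} (with a diagonal argument to realize the infimum), use the lower bound from Proposition~\ref{eq: KundKdo-new} on the inner cylinder $D_{\omega,\tau}$ to force energy concentration, and upgrade the in-measure convergence $y^{\eps_i}\to Mx$ to $H^1$ via Lemma~\ref{lemma:comp-def}. Your treatment of the last step (matching the means via Poincar\'e and Vitali) spells out a detail that the paper leaves implicit; the subsequence extraction you perform in the first paragraph is unnecessary, since the matching lower bound on $D_{\omega,\tau}$ already forces $\liminf = \limsup$ along the full sequence.
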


\begin{proof}
First, by Proposition \ref{prop:kMdo},  $K_{\rm dp}^M =  2K$, and a standard diagonal argument we find a sequence $\lbrace y^{\eps_i}\rbrace_i \subset  H^2(D_{\omega,h};\R^d)$ with 
$$
{ \limsup_{i\to \infty} \mathcal{E}_{\eps_i}(y^{\eps_i}, D_{\omega,h}) \le 2K\mathcal{H}^{d-1}(\omega), \ \ \ \ \ \ \ \ \ \ \frac{y^{\ep_i}-Mx}{w_{\ep_i}}\to y^M_{\rm dp} \ \ \text{ in measure in }D_{{\omega},h}.}
$$
By \eqref{eq: k2-intro} and  Proposition \ref{eq: KundKdo-new}, we also  get $\liminf_{i\to \infty} \mathcal{E}_{\eps_i}(y^{\eps_i}, D_{\omega, \tau}) \ge 2K\mathcal{H}^{d-1}(\omega)$. This in turn implies $\mathcal{E}_{\eps_i}(y^{\eps_i}, D_{\omega,h} \setminus D_{\omega, \tau}) \to 0$ and $\mathcal{E}_{\eps_i}(y^{\eps_i}, D_{\omega,h})\to 2K\mathcal{H}^{d-1}(\omega)$. The convergence in measure to $y^M_{\rm dp}$ along with $w_{\ep_i} \to 0$ implies that $y^{\ep_i} \to Mx$ in measure on $D_{{\omega},h}$. Then, by Lemma \ref{lemma:comp-def} we deduce $\lim_{i \to \infty} \Vert  y^{\eps_i}-  Mx \Vert_{H^1(D_{{\omega},h})} = 0$.  
\end{proof}

Motivated by  Lemma \ref{cor: layer energy}, for $0<\tau\le h/4$ we introduce the notion of \emph{$\ep$-closeness  of $y$ to $Mx$}, defined as 
\begin{equation}
\label{eq:eta2}
\delta_{\ep}^M(y;\omega,h,\tau): = \mathcal{E}_{\eps}(y,  D_{\omega,h}\setminus D_{\omega,\tau})  +     (\mathcal{L}^d(D_{\omega,4\tau}))^{-1}  \Vert \nabla y -  M  \Vert^2_{L^2(D_{\omega,4\tau})},
\end{equation}
for $M\in \{A,B\}$. In the following, we will use that, for given $\omega \subset \R^{d-1}$, $0<\tau\le h/4$, and $\lbrace \eps_i \rbrace_i$ converging to zero, there exists a  sequence   $\lbrace y^{\eps_i}\rbrace_i \subset H^2(D_{\omega,h};\mathbb{R}^d)$ of deformations attaining asymptotically the double-profile energy $K_{\rm dp}^M =2K$ such that  
\begin{align*}
\delta_{\ep_i}^M(y^{\eps_i};\omega,h, \tau) \to 0 \ \ \ \ \text{ as } i \to \infty.
\end{align*}
Owing to the quantitative rigidity estimate in Theorem \ref{thm:rigiditythm}, it is possible to find $(d-1)$-dimensional  slices on which the energy of $y$  and the $L^{2}$-distance  of $\nabla y$  from suitable rotations of $M\in \{A,B\}$ can be quantified  in terms of $\delta_{\ep}^M(y;\omega,h, \tau)$. Recall $\kappa = |A-B|$, and $c_1$ in H4.  In addition, define 
$$p_d:=\begin{cases} 2&\text{if }d=2\\2(d-1)/d&\text{if }d>2. \end{cases}$$

\begin{proposition}[Properties  of $(d-1)$-dimensional slices]\label{lemma: optimal profile}
Let $d\in \mathbb{N}$, $d\geq 2$, and let $M\in \{A,B\}$. Let $h>0$,  $0 < \tau \le h/4$,   and let  $\omega, \hat{\omega} \subset \mathbb{R}^{d-1}$ be bounded Lipschitz domains such that $\omega \subset \subset \hat{\omega}$. Then there  exist $\ep_0=\eps_0(\omega,\hat{\omega},h,\kappa,c_1,\tau) \in (0,1)$ and $C=C(\omega,\hat{\omega},h,\kappa,c_1)>0$  with the following properties: \\  
For all $0<\eps \le \eps_0$ and for each  $y\in H^2(D_{\hat{\omega},h};\mathbb{R}^d)$ with $\delta_{\ep}^M(y;\hat{\omega},h,\tau) \le   (\kappa/64)^2  $ we can find two rotations $R^+,R^- \in SO(d)$  and two constants   $s^+ \in (\tau,2\tau)$, $s^- \in (-2\tau, - \tau)$ such that  
\begin{align*}
 {\rm (i)}&\ \    \int_{\Gamma^+} | \nabla y -  R^+ M|^{p} \, {\rm d}\mathcal{H}^{d-1} +  \int_{\Gamma^-} | \nabla y -  R^- M|^{p} \, {\rm d}\mathcal{H}^{d-1} \leq  \frac{C}{\tau}(\delta_{\ep}^M(y;\hat{\omega},h, \tau))^{ {p}/  2} \,\eps^{p} \  \, \text{for all }\, 1\le p \le p_d, \notag\\
 {\rm (ii)} & \ \ \Vert \nabla y - M \Vert^2_{L^2(  s^+ e_d  + D_{\omega,\eps^2} )}   +   \Vert \nabla y -M \Vert^2_{L^2(  s^- e_d  + D_{\omega,\eps^2} )} \le C\eps^2\delta^{M}_{\ep}(y;\hat{\omega},h, \tau),\notag\\
{\rm (iii)}& \ \   \eps^2 \int_{\Gamma^+  \cup \Gamma^-} | \nabla^2 y|^2 \, {\rm d}\mathcal{H}^{d-1} + \bar{\eta}_{\eps,d}^2   \int_{\Gamma^+  \cup \Gamma^-} (|\nabla^2 y|^2-|\partial^2_{dd}y|^2) \, {\rm d}\mathcal{H}^{d-1} \le  \frac{C}{\tau}\delta_{\ep}^M(y;\hat{\omega},h, \tau)  \notag,\\
{\rm (iv)}&  \ \   \mathcal{E}_{\ep}\big(y,  s^+ e_d  + D_{\omega,\eps^2} \big) + \mathcal{E}_{\ep}\big(y,  s^- e_d  + D_{\omega,\eps^2}  \big) \le  \frac{C\eps^2}{\tau}   \delta_{\ep}^M(y;\hat{\omega},h, \tau), \\
{\rm (v)} & \ \  |R^+ -  {\rm Id}|^2  + |R^- - {\rm Id}|^2 \le C\delta_{\ep}^M(y;\hat{\omega},h, \tau),\\ 
\end{align*}
where we set  $\Gamma^\pm=\omega \times \lbrace s^\pm \rbrace$  for brevity.  

\end{proposition}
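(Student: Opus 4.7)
The proof rests on the two-well rigidity estimate of Theorem~\ref{thm:rigiditythm} combined with the auxiliary phase-region properties in Proposition~\ref{lprop: phases}, and a De Giorgi--Fubini slicing argument in the $e_d$-direction.

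First I would apply Theorem~\ref{thm:rigiditythm}, in the quantitative form from \cite{davoli.friedrich} where the constants scale linearly with the actual energy value, on the two paraxial slabs $D^\pm := \omega'\times I^\pm$ with $I^+ := (\tau/2, h)$, $I^- := (-h, -\tau/2)$, and an intermediate Lipschitz domain $\omega \subset\subset \omega' \subset\subset \hat\omega$. Since $\mathcal{E}_\eps(y, D^\pm) \le \delta := \delta^M_\eps(y;\hat\omega,h,\tau)$, this produces rotations $R^\pm\in SO(d)$ and phase indicators $\Phi^\pm \in BV(D^\pm;\{A,B\})$ with
\[
\|\nabla y - R^\pm \Phi^\pm\|_{L^2(D^\pm)}^2 \le C\eps^2 \delta \quad \text{and} \quad |D\Phi^\pm|(D^\pm) \le C\delta.
\]
The finer bounds of Proposition~\ref{lprop: phases} yield in addition that the non-horizontal component of the phase-interface inside $D^\pm$ has $\mathcal{H}^{d-1}$-measure at most $c\eps^{2-\alpha(d)}\delta$, so that $\Phi^\pm$ is constant on $\omega'\times\{t\}$ for $t$ outside an exceptional set of measure $O(\eps^{2-\alpha(d)})$. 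Combining with the overlap bound $\|\nabla y - M\|_{L^2(D_{\hat\omega,4\tau})}^2 \le \delta\,\mathcal{L}^d(D_{\hat\omega,4\tau})$ from the definition of $\delta^M_\eps$, the triangle inequality on the overlap slab $\omega' \times (\tau/2, 4\tau)$ identifies the dominant constant phase as $M$, and, after dividing by the overlap volume (bounded below in terms of $\omega'$ and $\tau$), yields $|R^\pm - \mathrm{Id}|^2 \le C\delta$, proving (v).

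Second, I would pick $s^+$ and $s^-$ in the respective intervals by a De Giorgi/Fubini argument on the bulk estimates
\begin{align*}
\int_{\omega \times (\tau, 2\tau)} |\nabla y - R^+ M|^2 \,dx &\le C\eps^2 \delta,\\
\int_{\omega \times (\tau, 2\tau)} \bigl(\eps^2 |\nabla^2 y|^2 + \bar\eta_{\eps,d}^2 (|\nabla^2 y|^2 - |\partial_{dd}^2 y|^2)\bigr)\,dx &\le \delta,\\
\int_{\tau}^{2\tau} \Bigl( \mathcal{E}_\eps(y, s e_d + D_{\omega,\eps^2}) + \|\nabla y - M\|_{L^2(s e_d + D_{\omega,\eps^2})}^2 \Bigr)\,ds &\le C\eps^2 \delta,
\end{align*}
where the first comes from Step~1 (using that $\Phi^+ \equiv M$ outside a set of small measure), the second is immediate from the definition of $\delta^M_\eps$, and the third follows from a further Fubini argument. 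Averaging, for each bulk quantity there is a subset of $(\tau, 2\tau)$ of measure at least $3\tau/4$ on which the corresponding slice/strip integral is bounded by $C/\tau$ times the bulk bound. Choosing $s^+$ in the intersection of these good sets produces (ii), (iii), (iv), and (i) for $p = 2$ simultaneously. For $1 \le p < 2$ the naive Hölder bound on the slice is too weak by a factor $\tau^{1-p/2}$ when $\tau < 1$; instead, one interpolates the slice $L^2$-bound on $\nabla y - R^+M$ with the slice $H^1$-regularity of $\nabla y$ coming from (iii), via a Gagliardo--Nirenberg inequality on the $(d-1)$-dimensional slice $\Gamma^+$, which yields the sharp exponent $p_d = 2(d-1)/d$ in (i).

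The principal technical difficulty is the identification of $\Phi^\pm$ as essentially constantly equal to $M$: this requires both the quantitative, energy-linear form of the rigidity estimate (so that $|D\Phi^\pm|$ is bounded by $\delta$, not merely by a constant) and the finer phase-region properties of Proposition~\ref{lprop: phases} (forcing the interface to be nearly horizontal). The threshold $(\kappa/64)^2$ is precisely what is needed for the triangle-inequality bootstrap to close in the phase/rotation identification. A secondary technical point is the interpolation on the slice yielding the optimal exponent $p_d$ in (i).
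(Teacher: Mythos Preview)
Your outline is essentially a correct reconstruction of the argument behind \cite[Proposition~4.12]{davoli.friedrich}, which is exactly what the paper invokes: the paper's own proof does not repeat the details but simply observes that the cited result (stated there for the closeness quantity $\delta_\eps$ built on $y_0^+$ rather than on a single well $M$) carries over verbatim because the selection of $(R^+,s^+)$ and of $(R^-,s^-)$ are done on the two half-slabs independently of each other (cf.\ \cite[Remark~4.21]{davoli.friedrich}). Your two-step scheme --- rigidity plus phase identification on each half-slab to get $(R^\pm,\Phi^\pm)$ and (v), then Fubini/averaging in $x_d$ over $(\tau,2\tau)$ and $(-2\tau,-\tau)$ to select good slices --- is precisely that argument.

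Two small points. First, the range $1\le p\le p_d$ in (i) is inherited directly from the $L^p$-version of the two-well rigidity estimate in \cite[Theorem~3.1]{davoli.friedrich}, not from a Gagliardo--Nirenberg interpolation on the slice; under the simplified penalization $\bar\eta_{\eps,d}$ used in this paper the case $p=2$ is in fact available as well (this is exactly the content of the Remark following the proposition). Second, your concern that the naive H\"older bound on a slice is ``too weak by a factor $\tau^{1-p/2}$'' is inverted: since $\tau\le h/4$ and $C$ may depend on $h$, for $p<2$ one has $\tau^{-p/2}\le C_h\,\tau^{-1}$, so the H\"older estimate from the $L^2$ slice bound already matches (i). No interpolation is needed.
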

\begin{proof}
The statement has been proven in \cite[Proposition 4.12]{davoli.friedrich} in the case in which the bound on $\delta_\ep^M(y;\omega,h,\tau)$ is replaced by a smallness assumption on 
\begin{equation}
\label{eq:eta}
\delta_{\ep}(y;\omega,h,\tau): = \mathcal{E}_{\eps}(y,  D_{\omega,h}\setminus D_{\omega,\tau})  +     (\mathcal{L}^d(D_{\omega,4\tau}))^{-1}  \Vert \nabla y -  \nabla y^+_0  \Vert^2_{L^2(D_{\omega,4\tau})},
\end{equation}
where $y_0^+$ is the map defined right after \eqref{eq: conti-schweizer-k} (see also \cite[Subsection 4.5]{davoli.friedrich}). Since the identifications of $R^\pm$ and $s^\pm$ are completely independent from each other (see also \cite[Remark 4.21]{davoli.friedrich}), the proof of Proposition \ref{lemma: optimal profile} follows by analogous arguments. 
\end{proof}

\begin{remark}[Integrability exponent]
Note that the results in \cite{davoli.friedrich} are proved using the most general formulation of the quantitative rigidity estimate in \cite[Theorem 3.1]{davoli.friedrich}, thus allowing for different integrability exponents $p$, as well as for a smaller penalization $\eta_{\ep,d}<\bar{\eta}_{\ep,d}$ (see \eqref{eq:alphad}). The proposition is stated in its generality in order to ease the reference to \cite{davoli.friedrich}. Under suitable simplifications (see \cite[Remark 4.17]{davoli.friedrich}), analogous estimates hold for $p=2$.  
\end{remark}

The following lemma deals with the transition between a  $(d-1)$-dimensional slice  and a rigid movement.  Recall the  definition  of $c_2$ in  H6.

\begin{lemma}[Transition to a rigid movement]\label{lemma: transition1}
 Let $d\in \mathbb{N}$, $d\geq 2$, and let $M\in \{A,B\}$.  Let $h, \tau,\ep>0$ and $\omega \subset \subset \hat{\omega} \subset \R^{d-1}$ satisfy the assumptions of Proposition \ref{lemma: optimal profile}.  Assume that the elastic energy density $W$ satisfies assumptions H1.--H4.\ and H6.  Let $y\in H^2(D_{\hat{\omega},h};\mathbb{R}^d)$ with $\delta_{\ep}^M(y;\hat{\omega},h,\tau) \le  (\kappa/64)^{2} $  and let $R^+,R^- \in SO(d)$, $s^+ \in (\tau,2\tau)$, $s^- \in (-2\tau, - \tau)$ be the associated rotations and  constants  provided by Proposition \ref{lemma: optimal profile}.  
 Then there exist a map $y^M_+ \in H^2(\omega \times (0,\infty);\R^d )$ and a constant $b^M_+ \in \R^d$ such that 
\begin{align}\label{eq: trans-equ}
{\rm (i)} & \ \  y^M_+ =  y \ \ \text{on} \ \ \omega \times (0, s^+),   \ \ \ \ \
y^M_+(x) = R^+ Mx   + b^M_+ \ \  \text{ for all $x \in \omega \times (s^+ + \tau,\infty)$}, \notag\\
{\rm (ii)} & \ \  \Vert \nabla y^M_+ - R^+M \Vert^2_{L^2(\omega \times (s^+,\infty) )}  \le 
C \eps^2  \delta_{\ep}^M(y;\hat{\omega},h, \tau),   \notag\\
{\rm (iii)} & \ \   \mathcal{E}_{\eps}(y^M_+, \omega \times (s^+,\infty))  \le 
C \delta_{\ep}^M(y;\hat{\omega},h, \tau)
\end{align}
 where $C=C(\omega,\hat{\omega},h,\tau,\kappa,c_1, c_2)>0$.  Analogously, there exist a map ${y}^M_- \in H^2(\omega \times (-\infty,0);  \R^d)$ and a constant $b^M_- \in \R^d$ for which  \eqref{eq: trans-equ} holds  with $s^-$, and $R^-$ in place of $s^+$, and $R^+$, respectively. 
\end{lemma}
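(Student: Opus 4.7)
The idea is to glue $y$ to a suitable rigid motion via a one-dimensional cutoff acting in the $x_d$-direction across the slab $L := \omega \times (s^+, s^+ + \tau)$. Fix a smooth cutoff $\psi \in C^\infty(\R;[0,1])$ with $\psi \equiv 1$ on $(-\infty,s^+]$, $\psi \equiv 0$ on $[s^+ + \tau,\infty)$, and $|\psi^{(k)}|\le C_k\tau^{-k}$. Choose the translation
\[
b^M_+ := \fint_{\Gamma^+} \bigl(y(x) - R^+M\,x\bigr)\, {\rm d}\mathcal H^{d-1}(x),
\]
and define $y^M_+(x) := \psi(x_d)\,y(x) + (1-\psi(x_d))(R^+Mx + b^M_+)$ on $\omega\times(0,s^++\tau)$, extended by the affine function for $x_d\ge s^++\tau$. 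Property~(i) is then immediate by construction, and the $H^2$-regularity follows since the two ingredients are joined smoothly in the $x_d$-variable on a region where both are $H^2$. Setting $g := y - R^+Mx - b^M_+$, a direct computation gives $\nabla y^M_+ - R^+M = \psi(\nabla y - R^+M) + (g\otimes e_d)\psi'$, so that the problem reduces to estimating $\|\nabla y - R^+M\|_{L^2(L)}$ and $\|g\|_{L^2(L)}$ with the correct scaling.

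The slice estimates from Proposition~\ref{lemma: optimal profile}(i),(iii) together with the Poincar\'e inequality on $\Gamma^+$ (recall $\fint_{\Gamma^+} g = 0$) yield, for $d=2$ directly and for $d>2$ via Sobolev embedding $W^{1,p_d}(\Gamma^+)\hookrightarrow L^2(\Gamma^+)$ applied to $\nabla y - R^+M$, the bound $\|g\|_{L^2(\Gamma^+)}^2 + \|\nabla y - R^+M\|_{L^2(\Gamma^+)}^2 \le C\eps^2\delta/\tau$, where $\delta := \delta_\eps^M(y;\hat\omega,h,\tau)$. To pass from the slice to the layer $L$, we first observe that the bulk bound $\mathcal E_\eps(y,L) \le \delta$ combined with H4.\ gives $\int_L {\rm dist}^2(\nabla y, SO(d)\lbrace A,B\rbrace) \le C\eps^2\delta$, while Proposition~\ref{lemma: optimal profile}(ii), (v) and the $H^2$-bound on $y$ force $\nabla y$ to remain in a neighbourhood of the single well $SO(d)M$ throughout $L$, once $\delta \le (\kappa/64)^2$ is small. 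The Friesecke-James-M\"uller rigidity theorem applied on $L$ then yields a rotation $\tilde R$ with $\|\nabla y - \tilde R M\|_{L^2(L)}^2 \le C\int_L W(\nabla y) \le C\eps^2\delta$; comparing $\tilde R$ and $R^+$ on $\Gamma^+$ via the slice bound shows $|\tilde R - R^+|^2 \le C\delta$, and therefore $\|\nabla y - R^+M\|_{L^2(L)}^2 \le C\eps^2\delta$. A fundamental-theorem-of-calculus argument in $x_d$ starting from $\|g\|_{L^2(\Gamma^+)}$ now yields $\|g\|_{L^2(L)}^2 \le C\eps^2\delta$, and (ii) follows from the pointwise expression for $\nabla y^M_+ - R^+M$.

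For~(iii), the elastic contribution is controlled using H6., which gives $\frac{1}{\eps^2}\int_L W(\nabla y^M_+) \le \frac{c_2}{\eps^2}\|\nabla y^M_+ - R^+M\|_{L^2(L)}^2 \le C\delta$. Differentiating $y^M_+$ once more, $\nabla^2 y^M_+$ decomposes into four contributions weighted by $\psi$, $\psi'$ and $\psi''$, respectively acting on $\nabla^2 y$, $\nabla y - R^+M$ and $g$; each term is estimated by $\eps^2\int_L|\nabla^2 y|^2\le \delta$, by $\tau^{-2}\|\nabla y-R^+M\|_{L^2(L)}^2$ and by $\tau^{-4}\|g\|_{L^2(L)}^2$, so that all pieces are bounded by $C\delta$ after multiplication by $\eps^2$, and the anisotropic penalty is handled analogously (using $\bar\eta_{\eps,d}\ll\eps^{-1}$). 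The construction of $y^M_-$ is identical after reflecting in $x_d$. The principal technical difficulty is the sharp $L^2$-layer estimate $\|\nabla y - R^+M\|_{L^2(L)}^2 \le C\eps^2\delta$: a naive propagation from $\Gamma^+$ via the fundamental theorem of calculus would cost a factor $\tau^2\cdot\delta/\eps^2$ from the $H^2$-norm, which destroys the scaling; it is essential instead to combine a single-phase rigidity argument on $L$ with the slice estimate to identify the rotation $R^+$ correctly.
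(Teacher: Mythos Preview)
The paper gives no self-contained argument here: it simply invokes \cite[Lemma~4.20]{davoli.friedrich} and remarks that the construction near $s^+$ is independent of that near $s^-$, so the proof written there for the quantity $\delta_\eps$ of \eqref{eq:eta} transfers verbatim to $\delta_\eps^M$. Your cutoff interpolation is a natural candidate and the overall scheme is plausible, but the step you yourself flag as the ``principal technical difficulty'' --- the sharp layer bound $\|\nabla y-R^+M\|_{L^2(L)}^2\le C\eps^2\delta$ --- does not close as written, and without it both (ii) and (iii) fail.

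The single-well FJM argument on $L$ is not justified. The information available on $L=\omega\times(s^+,s^++\tau)$ is $\int_L\mathrm{dist}^2(\nabla y,SO(d)\{A,B\})\le C\eps^2\delta$ from the energy, together with $\|\nabla y-M\|_{L^2(L)}^2\le C\delta$ from the definition of $\delta_\eps^M$. The second bound allows $\nabla y$ to visit the \emph{other} well on a set of measure of order $\delta$, so $\int_L\mathrm{dist}^2(\nabla y,SO(d)M)$ is a priori only $\le C\delta$; Proposition~\ref{lemma: optimal profile}(ii) controls merely a slab of width $2\eps^2$, and the crude bound $\int_L|\nabla^2 y|^2\le\delta/\eps^2$ does not exclude such excursions. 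Even granting single-well rigidity with some $\tilde R$, your comparison with the \emph{prescribed} $R^+$ (via the thin slab or via (v)) yields only $|\tilde R-R^+|^2\le C\delta$, hence $\|\nabla y-R^+M\|_{L^2(L)}^2\le C\eps^2\delta+C\tau\delta$, i.e.\ $C\delta$. This is fatal: the elastic part of (iii) becomes $\eps^{-2}\cdot C\delta$, and the anisotropic cross term $\bar\eta_{\eps,d}^2\,\tau^{-2}\|\nabla y-R^+M\|_{L^2(L)}^2\sim\eps^{-2+2\alpha(d)}\tau^{-2}\delta$ blows up. A smaller but related issue: for $d>2$ your Sobolev claim $\|\nabla y-R^+M\|_{L^2(\Gamma^+)}^2\le C\eps^2\delta/\tau$ fails because the $W^{1,p_d}$-norm of $\nabla y-R^+M$ requires $\|\nabla^2 y\|_{L^{p_d}(\Gamma^+)}$, which by (iii) scales like $\sqrt\delta/\eps$; by contrast your bound on $\|g\|_{L^2(\Gamma^+)}$ \emph{is} correct, since Sobolev applied to $g$ only needs $\nabla g=\nabla y-R^+M\in L^{p_d}$. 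The construction in \cite{davoli.friedrich} presumably sidesteps the layer-rigidity issue altogether by building $y^M_+$ on $(s^+,s^++\tau)$ from the slice data $y|_{\Gamma^+}$, $\partial_d y|_{\Gamma^+}$ controlled in Proposition~\ref{lemma: optimal profile}, rather than by interpolating with $y$ itself across the full layer.
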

\begin{proof}
The result follows directly by \cite[Lemma 4.20]{davoli.friedrich}. Indeed, in \cite[Lemma 4.20]{davoli.friedrich} an analogous result is proven in the case in which the $\eps$-closeness $\delta_\ep^M$ is replaced by the quantity defined in \eqref{eq:eta}. The thesis follows by observing that the constructions around the slices $s^+$ and $s^-$ are independent (see also \cite[Remark 4.21]{davoli.friedrich}).
\end{proof}

After these preparations, we are now in a position to exhibit local recovery sequences performing a double phase transition in an energetically optimal way. 
 
 \begin{proof}[Proof of Proposition \ref{lemma: local2}]
We will prove the result only in the special case that  $\Omega = D_{\omega',h}$. In fact, to treat the general case of  strictly star-shaped sets $\Omega$  and cylindrical sets $D_{\omega',h}$  with $(\partial \omega' \times (-h,h)) \cap \Omega = \emptyset$ one can apply the diagonal argument explained in the proof of Proposition \ref{lemma: local1} in a similar fashion and therefore we omit the details. For simplicity, we will write $\omega$ in place of $\omega'$  in the following.

  Let $M \in \lbrace A,B\rbrace$, let $h>0$, let $\omega \subset \R^{d-1}$ be a bounded Lipschitz domain, and  let $\lbrace w_\eps \rbrace_\eps \in \mathcal{W}_d$. Fix $\rho>0$ and choose a Lipschitz domain $\tilde{\omega}$ such that $\omega\subset\subset \tilde{\omega}$, with $\mathcal{H}^{d-1}(\tilde{\omega}\setminus \omega)\leq \rho$. We first observe that by  Lemma \ref{cor: layer energy}  there exists a sequence $\lbrace y^\eps \rbrace_\eps \subset  H^2(D_{ \tilde{\omega}  ,h}; \R^d)$ such that 
   \begin{equation}
   \label{eq:concentrate}
{\lim_{\eps \to 0} \Vert  y^\eps-  Mx \Vert_{H^1(D_{\tilde{\omega},h})} = 0 , \ \   \ \ \frac{y^{\ep}-Mx}{w_{\ep}}\to y^M_{\rm dp}\quad \text{ in measure on } D_{\tilde{\omega},h}},
 \end{equation}
where $y^M_{\rm dp}$ is the function defined in \eqref{eq: step functions}, as well as
\begin{equation}
   \label{eq:concentrate2}
\lim_{\eps \to 0}{\mathcal{E}_{\eps}(y^{\eps}, D_{\tilde{\omega},h}) =  2K\mathcal{H}^{d-1}(\tilde{\omega}), \ \ \  \quad   \lim_{\eps \to 0}\mathcal{E}_{\eps}(y^{\eps}, D_{\tilde{\omega},h} \setminus D_{\tilde{\omega}, h/16}) = 0.}
 \end{equation}
In view of  Lemma \ref{cor: layer energy}, the existence of a sequence $\{y^{\ep_i}\}_i$ satisfying \eqref{eq:concentrate}--\eqref{eq:concentrate2} is guaranteed for every $\{\ep_i\}_i$ with $\ep_i\to 0$. Hence, in what follows, for notational simplicity we directly work with the continuous parameter $\ep$.

 Fix  $\tau=h/8$.  By \eqref{eq:eta2} and  \eqref{eq:concentrate}--\eqref{eq:concentrate2} we find that $\delta_\ep^M(y^\ep;\tilde{\omega},h,\tau)\to 0$ as $\ep\to 0$. Without loss of generality we can assume that $\ep<\ep_0$ (see Proposition \ref{lemma: optimal profile}) and $\delta_\ep^M(y^\ep;\tilde{\omega},h,\tau)\leq (\kappa/64)^2$.  Applying Proposition \ref{lemma: optimal profile}  to $\{y^\ep\}_\ep$ for $\hat{\omega} = \tilde{\omega}$, we find sequences of rotations $\{R^+_\ep\}_\ep,\,\{R^-_\ep\}_\ep\subset SO(d)$, and of slices $\{s^+_\ep\}_\ep\subset ( \tau  ,2\tau)$, and $\{s^-_\ep\}_\ep\subset (-2\tau, - \tau)$. 
 Let now $\{y_{\ep,\pm}^{ M}\}_\ep$ be the maps provided by Lemma \ref{lemma: transition1}. We define $v_\ep^{M} \in H^2(D_{\omega,h};\R^d)$ by
 \begin{equation}
 \label{eq:vaep}
 v^M_\ep(x):=\begin{cases}y^{M}_{\ep,+}&\text{if }x_d\geq s^+_\ep,\\y^\ep&\text{if }s^-_\ep\leq x_d\leq s^+_\ep,\\ y^{M}_{\ep,-}&\text{if }x_d\leq s^-_\ep,\end{cases}
 \end{equation}
 for every $x\in D_{{\omega},h}$.  We proceed by checking that $\{v^M_{\ep}\}_\ep$ satisfies \eqref{eq:local2-1}--\eqref{eq:local2-2}. First, since $|s^{\pm}_\ep|\le 2\tau$ and $\tau=  h/8 $, by Lemma \ref{lemma: transition1} we have that $v^M_\ep= I^M_{1,\ep} \circ Mx$ and $v^M_\ep= I^M_{2,\ep}\circ Mx$ for $x_d\geq 3h/  8$ and $x_d\leq -3h/  8$, respectively, for two suitable sequences of isometries $\{I^M_{1,\ep}\}_\ep, \{I^M_{2,\ep}\}_\ep$. This yields the second part of \eqref{eq:local2-2}. For brevity, we define the sets $F_{\omega,h}^+ = \omega \times (h/16,h)$ and  $F_{\omega,h}^- = \omega \times (-h,-h/16)$. A key step will be to show that for $\eps \to 0$
\begin{align}\label{eq: last thing to show}
w_\eps^{-1} (v^M_\eps - Mx)   \to y^M_{\rm dp} \quad \text{in measure on  $F_{\omega,h}^- \cup F_{\omega,h}^+$.}
\end{align} 
 This along with \eqref{eq:concentrate} and the fact that $v^M_\eps = y^\eps$ on $D_{\omega,h/8}$ then shows \eqref{eq:local2-1}. Moreover, note that \eqref{eq: last thing to show} also implies that the isometries $\{I^M_{1,\ep}\}_\ep$ and $\{I^M_{2,\ep}\}_\ep$ converge to the identity as $\eps \to 0$.
 
Let us now show \eqref{eq: last thing to show}. We only show the result on $F_{\omega,h}^+$ as the argument on $F_{\omega,h}^-$ is analogous. Moreover, it clearly suffices to prove the property for any subsequence as then convergence holds for the whole sequence by Urysohn's property.  First, we note that  $\mathcal{E}_\eps(v^M_\eps,F_{\omega,h}^+) \to 0$ as $\eps \to 0$ by  Lemma  \ref{lemma: transition1}(iii), \eqref{eq:concentrate2},  \eqref{eq:vaep}, and the fact that $\delta_\ep^M(y^\ep;\tilde{\omega},h, \tau )\to 0$. Then, applying the compactness result and the lower bound for $\Omega = F_{\omega,h}^+$ (see Theorem \ref{thm:compactness} and Theorem \ref{thm:liminf}) we find a subsequence (not relabeled) and $(y,u,\mathcal{P})\in \mathcal{A}$ such that $v^M_\eps \to (y,u,\mathcal{P})$  and  $\mathcal{E}_0^{\mathcal{A}}(y,u,\mathcal{P}) = 0$, where here the limiting energy $\mathcal{E}_0^{\mathcal{A}}$ defined in \eqref{eq: limiting energy}  has to be understood with respect to the set $F_{\omega,h}^+$. 

In view of \eqref{eq: limiting energy} and $\mathcal{E}_0^{\mathcal{A}}(y,u,\mathcal{P}) = 0$, we find that $\mathcal{P}$ is trivial, consisting just of the component $F_{\omega,h}^+$. Moreover,  $\nabla y$ is constant, and then $\nabla y = M$ by  \eqref{eq:comp-y},  \eqref{eq:concentrate}, and the fact that $v^M_\eps = y^\eps$ on $G_{\omega,h}^+  := \omega \times (h/16,h/8)$. (Recall that $s_\eps^+ \ge \tau = h/8$.) As $\mathcal{E}_0^{\mathcal{A}}(y,u,\mathcal{P}) = 0 $ and $F \mapsto \mathcal{Q}_{\rm lin}(M,FM)$ is positive definite on $\mathbb{M}^{d \times d}_{\rm sym}$ (see \eqref{eq: only symmetric}) we also get that $u$ is affine on $F_{\omega,h}^+$ and has the form $u(x) = SMx + s $ for each $x \in F_{\omega,h}^+$, where $S \in \mathbb{M}^{d \times d}_{\rm skew}$ and $s \in \R^d$. Moreover, in view of \eqref{eq: rescaled disp}--\eqref{eq:comp-u},  we find $\lbrace t^\eps\rbrace_\eps \subset \R^d$ and $\lbrace\bar{R}^\eps\rbrace_\eps \subset SO(d)$ such that
\begin{align}\label{eq: measure1}
\eps^{-1} \big(v^M_\eps - (\bar{R}^\eps M x + t^\eps)\big) \to u \quad \text{ in measure in $F_{\omega,h}^+$.}
\end{align}
On the other hand,  by \eqref{eq:concentrate} and the fact that $v^M_\eps = y^\eps$ on $G_{\omega,h}^+   = \omega \times (h/16,h/8)$  we have 
\begin{align}\label{eq: measure2}
w_\eps^{-1} (v^M_\eps -M x) \to y^M_{\rm dp}  \quad \text{ in measure in $G_{\omega,h}^+$}.
\end{align}
Passing to another subsequence (not relabeled) we can assume that $\lambda: = \lim_{\eps \to 0} \eps/w_\eps$ exists, cf.\ \eqref{ew: W sequence}. By multiplying \eqref{eq: measure1}  with $\eps/w_\eps$  and by subtracting \eqref{eq: measure2}     we get
\begin{align*}
w_\eps^{-1} \big(Mx - (\bar{R}^\eps Mx + t^\eps)    \big) \to \lambda u  - y^M_{\rm dp} \quad \text{ in measure in $G_{\omega,h}^+$.}
\end{align*} 
As the mappings  in the left-hand side, as well as  $u$ and $y^M_{\rm dp}$  are affine, this convergence holds also on the larger set $F_{\omega,h}^+$. This along with \eqref{eq: measure1} yields  
$$w_\eps^{-1} (v^M_\eps -M x) \to \lambda u - (\lambda u -  y^M_{\rm dp}) =  y^M_{\rm dp} \quad   \text{ in measure on $F_{\omega,h}^+$.} $$ 
This concludes the proof of \eqref{eq: last thing to show}. To conclude, it remains to show the asymptotic behavior of the energies in \eqref{eq:local2-2}. Using \eqref{eq: k2-intro}, \eqref{eq:local2-1}, and   Proposition \ref{eq: KundKdo-new}, it follows that $\liminf_{\ep\to 0}\mathcal{E}_\ep(v_\ep^{M},D_{\omega,h})\geq 2K\mathcal{H}^{d-1}(\omega)$. To prove the opposite inequality, we observe that by \eqref{eq:vaep} and Lemma \ref{lemma: transition1}(iii) there holds
 \begin{align*}
 \mathcal{E}_\ep\big(v^M_\ep,D_{\omega,h}\big)&\leq \mathcal{E}_\ep\big(y^M_{\ep,+},\omega\times (s^+_\ep,h)\big)+\mathcal{E}_\ep\big(y^M_{\ep,-},\omega\times (-h, s^-_\ep)\big)+\mathcal{E}_\ep\big(y^\ep,\omega\times (s^-_\ep, s^+_\ep)\big)\\
&\leq C\delta_\ep^M(y^\ep,\tilde{\omega}, h, \tau)+\mathcal{E}_\ep(y^\ep, D_{\tilde{\omega},h}).
\end{align*}
Thus, by  \eqref{eq:concentrate2}, the fact that $\delta_\ep^M(y^\ep;\tilde{\omega},h,\tau)\to 0$, and $\mathcal{H}^{d-1}(\tilde{\omega}\setminus \omega)\leq \rho$,  we have
$$\limsup_{\ep\to 0}\mathcal{E}_\ep(v_\ep^{M},D_{\omega,h})\leq 2K\mathcal{H}^{d-1}(\tilde{\omega})\leq 2K \mathcal{H}^{d-1}({\omega})+2K\rho.$$
The convergence in \eqref{eq:local2-2} follows then by the arbitrariness of $\rho$ and by a diagonal argument. 
 \end{proof}

 \subsection{One-dimensional profiles and compatibility condition}
 \label{subs:1d}
 
In this subsection we assume that the density $W$ satisfies  \eqref{eq: isotropy}. We will show that in this case optimal profiles for single transitions are one-dimensional in a sense to make precise below. Moreover, we show that the {compatibility condition}  $K^A_{\rm dp} = K^B_{\rm dp} = 2K$  holds. Let us start by discussing a model case for \eqref{eq: isotropy}, see \eqref{eq: modelli}. Suppose that $W$ is of the form 
$$
W(F) =  \phi\big(  {\rm dist}(F, SO(d)A),   {\rm dist}(F, SO(d)B) \big) \quad \quad \quad \text{for all } F \in \mathbb{M}^{d \times d},
$$ 
where $\phi\colon ([0,\infty))^2 \to [0,\infty)$ is a smooth function with  $c_1 (\min \lbrace t_1,t_2 \rbrace)^2 \le \phi(t_1,t_2)  \le c_2 (\min \lbrace t_1,t_2 \rbrace)^2$ for all $t_1,t_2 \in [0,\infty)$, and  is increasing in both entries. Then, we can check that H1.--H6.\ hold. Moreover, also H7.\ is satisfied if $\phi$ fulfills a corresponding local Lipschitz condition. We can also confirm  \eqref{eq: isotropy}. Indeed, for  each $F \in \mathbb{M}^{d \times d}$,  by H3., the monotonicity assumptions on $\phi$, and the triangle inequality we compute
\begin{align}\label{eq: model case for details}
W(F)  & = \phi\big(  {\rm dist}(F, SO(d)A),   {\rm dist}(F, SO(d)B) \big) = \phi\Big(  \min_{R \in SO(d)}|F - RA|,   \min_{R \in SO(d)}|F - RB|\Big)\notag \\ 
& \ge \phi\Big(  \min_{R \in SO(d)}|Fe_d - RA e_d|,   \min_{R \in SO(d)}|F e_d - RB e_d|\Big)  \ge \phi\Big(  \big||Fe_d| - |A e_d|\big|,   \big||F e_d| - |B e_d|\big|\Big)\notag \\ 
& = \phi\Big(  \big||Fe_d| - 1\big|,   \big||F e_d| - (1+\kappa)\big|\Big) =  \phi\Big(  \big|{\rm Id} + (|F e_d| - 1)e_{dd} - A\big|,   \big|{\rm Id} + (|F e_d| - 1)e_{dd} - B\big|\Big)\notag\\
& \ge \phi\Big(  {\rm dist}\big({\rm Id} + (|F e_d| - 1)e_{dd}, SO(d)A\big),   {\rm dist}\big({\rm Id} + (|F e_d| - 1)e_{dd}, SO(d)B\big) \Big)\notag \\ 
&= W\big({\rm Id} + (|F e_d| - 1)e_{dd}\big).
\end{align}
We now check that under condition \eqref{eq: isotropy} optimal profiles for single transitions are one-dimensional.

\begin{lemma}[One-dimensional profiles]\label{lemma: 1d}
Under condition  \eqref{eq: isotropy}, there holds
\begin{align}\label{new-K}
K =\inf\Big\{&\liminf_{\ep\to 0} \mathcal{E}_{\ep}(y^{\ep},Q)\colon \ y^\eps(x) = (x',\psi^\eps(x_d)) \text{ {\rm for} $x =(x',x_d)  \in Q$},  \ \ \   \lim_{\eps \to 0}  \Vert  y^\eps -  y_0^+ \Vert_{L^1(Q)}  = 0\Big\},
\end{align}
where $K$ is defined in \eqref{eq: our-k1}.
\end{lemma}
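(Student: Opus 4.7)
The inequality ``$K \leq$ (RHS of \eqref{new-K})'' is trivial, since the one-dimensional profiles in \eqref{new-K} form a strictly smaller class of competitors than those in \eqref{eq: our-k1}. For the reverse inequality, the plan is to take an optimal sequence $\{y^\ep\}$ for $K$, i.e., $y^\ep \to y_0^+$ in $L^1(Q)$ with $\lim_\ep \mathcal{E}_\ep(y^\ep, Q) = K$, and via slicing to build an essentially equi-energetic sequence of one-dimensional profiles. Boundedness of the energy together with Lemma~\ref{lemma:comp-def} and the $L^1$-limit upgrade the convergence to $y^\ep \to y_0^+$ in $H^1(Q;\R^d)$; in particular $\nabla y^\ep \to \nabla y_0^+$ in $L^2(Q;\M^{d\times d})$. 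Fix $\delta > 0$. Using $\mathcal{H}^{d-1}(Q') = 1$, Fubini, and Chebyshev's inequality applied to $f^\ep(x') := \int_{-1/2}^{1/2} [\ep^{-2}W(\nabla y^\ep(x',t)) + \ep^2|\nabla^2 y^\ep(x',t)|^2]\,{\rm d}t$ and $g^\ep(x') := \|\nabla y^\ep(x',\cdot) - \nabla y_0^+\|_{L^2(-1/2,1/2)}^2$, I select slices $x'^\ep \in Q'$ simultaneously satisfying $f^\ep(x'^\ep) \leq (1+\delta)\mathcal{E}_\ep(y^\ep, Q)$ and $g^\ep(x'^\ep) \to 0$: the $f^\ep$-bad set has measure at most $(1+\delta)^{-1} < 1$, while the $g^\ep$-bad set has vanishing measure, so their union does not cover $Q'$ for $\ep$ small.

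I define $\psi^\ep(t) := \int_0^t |\partial_d y^\ep(x'^\ep, s)|\,{\rm d}s$ and $\tilde y^{\ep,\delta}(x) := (x', \psi^\ep(x_d))$, so that $(\psi^\ep)'(t) = |\partial_d y^\ep(x'^\ep, t)| \geq 0$ and $\nabla \tilde y^{\ep,\delta}(x) = {\rm Id} + ((\psi^\ep)'(x_d) - 1)e_{dd}$. Two observations drive the estimate. First, since $\nabla \tilde y^{\ep,\delta}$ depends only on $x_d$, all mixed second derivatives vanish and $|\nabla^2 \tilde y^{\ep,\delta}|^2 = |\partial^2_{dd}\tilde y^{\ep,\delta}|^2$, so the anisotropic penalization in $\mathcal{E}_\ep(\tilde y^{\ep,\delta}, Q)$ contributes zero. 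Second, applying \eqref{eq: isotropy} to $F = \nabla y^\ep(x'^\ep, x_d)$ yields $W(\nabla \tilde y^{\ep,\delta}) = W({\rm Id}+(|\partial_d y^\ep(x'^\ep,x_d)|-1)e_{dd}) \leq W(\nabla y^\ep(x'^\ep, x_d))$, while the classical chain-rule bound $\bigl|\tfrac{d}{dt}|v|\bigr| \leq |v'|$ for $H^1$-vector fields gives $|(\psi^\ep)''(t)| \leq |\partial^2_{dd} y^\ep(x'^\ep, t)| \leq |\nabla^2 y^\ep(x'^\ep, t)|$. Combining these with the slice selection yields $\mathcal{E}_\ep(\tilde y^{\ep,\delta}, Q) \leq f^\ep(x'^\ep) \leq (1+\delta)\mathcal{E}_\ep(y^\ep, Q)$, whence $\limsup_{\ep \to 0} \mathcal{E}_\ep(\tilde y^{\ep,\delta}, Q) \leq (1+\delta)K$.

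For the $L^1$-convergence, $(\psi^\ep)' = |\partial_d y^\ep(x'^\ep,\cdot)| \to |\partial_d y_0^+|$ in $L^2(-1/2,1/2)$ by the choice of slice, and since $\psi^\ep(0) = 0 = \psi_0^+(0)$ with $\psi_0^+(t) := t$ for $t>0$ and $(1+\kappa)t$ for $t<0$, integration upgrades this to uniform convergence $\psi^\ep \to \psi_0^+$ on $(-1/2, 1/2)$. Hence $\tilde y^{\ep,\delta} \to y_0^+$ uniformly on $Q$, and in particular in $L^1(Q)$, so $\tilde y^{\ep,\delta}$ is admissible in the right-hand side of \eqref{new-K} and delivers RHS $\leq (1+\delta)K$. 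An Attouch diagonal extraction \cite[Lemma~1.15 and Corollary~1.16]{attouch} along $\delta \to 0$ then yields RHS $\leq K$, completing the proof. The main technical obstacle is the coupled slice selection: the same $x'^\ep$ must capture both the slice-integral of the energy (within a factor $1+\delta$) and the $L^2$-convergence of gradients, which is handled by the two Chebyshev estimates above whose complementary bad sets have total measure strictly less than $1$ for $\ep$ small, combined with the final diagonalization in $\delta$.
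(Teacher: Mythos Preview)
Your proof is correct and follows essentially the same route as the paper: pick a good $(d{-}1)$-dimensional slice, define $\psi^\eps$ via $(\psi^\eps)' = |\partial_d y^\eps(x',\cdot)|$, and combine the isotropy condition~\eqref{eq: isotropy} with the Kato-type bound $\bigl|\tfrac{d}{dt}|v|\bigr|\le |v'|$. The only cosmetic difference is that the paper uses Fatou's lemma to fix a single slice $x'$ (independent of $\eps$) directly yielding $\liminf\le K$, whereas you pick an $\eps$-dependent slice via Chebyshev with a $(1+\delta)$-loss --- and note that the final diagonalization you invoke is unnecessary, since $K_{1d}\le (1+\delta)K$ for every $\delta>0$ already gives $K_{1d}\le K$.
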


\begin{proof}
We denote the right-hand side of \eqref{new-K} by $K_{1d}$. Clearly, we get $K_{1d} \ge K$. To see the reverse inequality, by a standard diagonal argument,  we choose a sequence $\lbrace y^\eps \rbrace_\eps \subset H^2(Q;\R^d)$ with $\lim_{\eps \to 0}  \Vert  y^\eps -  y_0^+ \Vert_{L^1(Q)}  = 0$ and   
$$\liminf_{\ep\to 0} \mathcal{E}_{\ep}(y^{\ep},Q)  = K. $$
Then, by Fatou's lemma, and by Lemma \ref{lemma:comp-def}, we can find $x' \in (-\frac{1}{2},\frac{1}{2})^{d-1}$ such that 
\begin{align}\label{eq: first-Fatou}   
\liminf_{\eps \to 0}\int_{-\frac{1}{2}}^{\frac{1}{2}}  \Big( \frac{1}{\ep^2}W(\nabla y^\eps(x',t))+\ep^2|\nabla^2 y^\eps(x',t)|^2+\bar{\eta}^2_{\ep,d} (|\nabla^2 y^\eps(x',t)|^2-|\partial^2_{dd} y^\eps(x',t)|^2) \Big) \,{\rm d}t \le K  
\end{align}
as well as 
\begin{align}\label{eq: second-Fatou}  
\lim_{\eps \to 0 } \Big(  \int_{-\frac{1}{2}}^{0}  |\nabla y^\eps(x',t)  - B |^2 \, {\rm d}t +\int_{0}^{\frac{1}{2}}  |\nabla y^\eps(x',t)  - A |^2 \, {\rm d}t  \Big) = 0.  
\end{align}
We let $\tau^\eps:= \partial_d y^\eps(x',\cdot) \in H^1((-\frac{1}{2},\frac{1}{2});\R^d)$ and we choose the unique function $\psi^\eps\colon (-\frac{1}{2},\frac{1}{2}) \to \R$ with  $\psi^\eps(0) = 0$ and $(\psi^\eps)' = |\tau^\eps|$. Then, we define the sequence $\lbrace v^\eps\rbrace_\eps \subset H^2(Q;\R^d)$ by $v^\eps(x',x_d) = (x',\psi^\eps(x_d))$ for $(x',x_d) \in Q$. We observe that 
\begin{align}\label{eq: v-grad}
\nabla v^\eps (x) = \sum\nolimits_{i=1}^{d-1} e_{ii}  +  |\tau^\eps(x_d)| e_{dd}.
\end{align}
We note that $\lbrace v^\eps\rbrace_\eps$ is an admissible sequence in the definition of $K_{1d}$. Indeed, by  H3., \eqref{eq: second-Fatou}, \eqref{eq: v-grad},   and the triangle inequality we find  
\begin{align*}
\int_Q |\nabla v^\eps- \nabla y_0^+|^2\, {\rm d}x & = \int_{Q \cap \lbrace x_d \le 0\rbrace} |\partial_d v^\eps   -  B  e_d   |^2 \, {\rm d}t  + \int_{Q \cap \lbrace x_d \ge 0 \rbrace} |\partial_d v^\eps  -  A e_d   |^2 \, {\rm d}t  \\
&= \int_{-\frac{1}{2}}^{0} \big| |\partial_d y^\eps(x',t) |  -  |B e_{d}|  \big|^2 \, {\rm d}t  + \int_{0}^{\frac{1}{2}} \big| |\partial_d y^\eps(x',t)| -  |A e_{d}| \big  |^2 \, {\rm d}t \\
& \le   \int_{-\frac{1}{2}}^{0}  \big| (\nabla y^\eps(x',t)  - B ) e_d \big|^2 \, {\rm d}t +\int_{0}^{\frac{1}{2}}  \big|(\nabla y^\eps(x',t)  - A)e_d \big|^2 \, {\rm d}t    \to 0,   
\end{align*}
and therefore also $v^\eps \to y_0^+$ in $L^1(Q;\R^d)$ since $v^\eps(0) = 0$ for all $\eps$. Consequently, in view of \eqref{eq: isotropy}, \eqref{new-K}, \eqref{eq: first-Fatou},  \eqref{eq: v-grad}, and the fact that $\frac{\rm d}{{\rm d}t}|\tau^\eps|(t) \le |\partial_{dd} y^\eps(x',t)| $ for $t \in (-\frac{1}{2},\frac{1}{2})$  we get
\begin{align*}
K_{1d} &\le \liminf_{\eps \to 0} \mathcal{E}_\eps(v^\eps,Q)  = \liminf_{\eps \to 0} \int_{Q}  \Big( \frac{1}{\ep^2}W(\nabla v^\eps)+\ep^2|\partial^2_{dd} v^\eps|^2 \Big) \,{\rm d}x  \\
 &  \le \liminf_{\eps \to 0} \int_{-\frac{1}{2}}^{\frac{1}{2}}  \Big( \frac{1}{\ep^2}W\Big( {\rm Id} +  (|\nabla y^\eps(x',t)  e_d|-1) e_{dd}  \Big)+\ep^2\Big|\frac{\rm d}{{\rm d}t}|\tau^\eps|(t)  \Big|^2 \Big) \,{\rm d}t \\ 
 & \le  \liminf_{\eps \to 0} \int_{-\frac{1}{2}}^{\frac{1}{2}}  \Big( \frac{1}{\ep^2}W\big(\nabla y^\eps(x',t) \big)+\ep^2|\partial^2_{dd}y^\eps(x',t)|^2 \Big) \,{\rm d}t  \le K.  
\end{align*}
This concludes the proof. 
\end{proof}

We point out that without an additional assumption, such as \eqref{eq: isotropy},  optimal profiles for single transitions are in general not one-dimensional, see \cite[Remark 6.2]{conti.schweizer2} for an example in a linearized setting. We are now in a position to prove Proposition \ref{prop: compat}. 

\begin{proof}[Proof of Proposition \ref{prop: compat}]
We start with a consequence of Lemma  \ref{lemma: 1d}. Define $\tilde{W}\colon \R \to \R$ by $\tilde{W}(t) = W({\rm Id} + (t-1)    e_{dd}  )$ for $t \in \R$. Note that $\tilde{W}$ is a two-well potential with $\tilde{W}(t) = 0$ if and only if $t \in \lbrace 1, 1+\kappa\rbrace$, see H3.    In view of \eqref{eq: nonlinear energy} and \eqref{new-K}, we find
\begin{align*}
 K =\inf\Big\{&\liminf_{\ep\to 0} \int_{-\frac{1}{2}}^{\frac{1}{2}} \Big(\frac{1}{\eps^2}  \tilde{W}\big(\psi'_\eps\big) + \eps^2|\psi_\eps''|^2\Big) \, {\rm d}t    \colon \  \psi^\eps \in H^2((-\tfrac{1}{2},\tfrac{1}{2});\R),  \ \ \   \lim_{\eps \to 0}  \Vert   \psi^\eps  -  \tilde{y}_0^+ \Vert_{L^2(-\tfrac{1}{2},\tfrac{1}{2})}  = 0\Big\},
\end{align*}
where $\tilde{y}_0^+(t) := t \chi_{\lbrace t > 0 \rbrace} + (1+\kappa)t \chi_{\lbrace t < 0\rbrace}$ for $t \in (-\frac{1}{2},\frac{1}{2})$. By a cut-off argument one can further show that (see e.g. \cite[Proof of Proposition 5.3]{conti.fonseca.leoni}  for details)  
\begin{align}\label{eq: strangeKdef}
 K =\inf\Big\{\liminf_{\ep\to 0} \int_{-\frac{1}{2}}^{\frac{1}{2}} &\Big(\frac{1}{\eps^2} \tilde{W}\big(\psi'_\eps\big) + \eps^2|\psi_\eps''|^2\Big)\,  {\rm d}t    \colon \  \psi^\eps \in H^2((-\tfrac{1}{2},\tfrac{1}{2});\R),  \ \   \lim_{\eps \to 0}  \Vert \psi^\eps  -  \tilde{y}_0^+ \Vert_{L^2(-\tfrac{1}{2},\tfrac{1}{2})}  = 0, \notag\\  & \quad \quad  \quad \quad  \quad \quad  \quad \quad   \psi_\eps'(t) = 1 + \kappa \text{ near $t = -\tfrac{1}{2}$}, \ \psi_\eps'(t) = 1 \text{ near $t = \tfrac{1}{2}$}                \Big\}.
\end{align}
We now start with the proof. We prove the result only for $M=A$. The arguments for $M=B$ are similar up to a different notational realization. Let $Q' = (-\frac{1}{2},\frac{1}{2})^{d-1}$.  Fix $\delta >0$. In view of \eqref{eq: our-k2}, we choose  $h>0$ and $\lbrace w_\eps\rbrace_\eps \in \mathcal{W}_d$ such that 
\begin{align}\label{eq: our-k2-NNN}
 K^A_{\rm dp} -\delta \le \inf\Big\{&\limsup_{\ep\to 0}  \mathcal{E}_{\ep}\big(y^{\ep}, D_{Q',h}\big)\colon \,             w_\eps^{-1}(y^\eps -  x) \to   y_{\rm dp}^A  \text{ in measure in $D_{Q',h}$}\Big\},
\end{align} 
where we recall the notations in \eqref{eq: step functions} and  \eqref{eq:def-dlh}. \EEE We start by observing that it suffices to show that there exists a sequence $\lbrace z_\eps \rbrace_\eps \subset H^2((-h,h);\R)$ such that 
\begin{align}\label{eq: oned-goal}
{\rm (i)} & \ \ w_\eps^{-1}\big( z_\eps  -  {\rm id}) \to \chi_{\lbrace t>0\rbrace} \quad \quad \text{ in measure in  $(-h, h)$}, \notag \\
{\rm (ii)}& \ \    \limsup_{\eps \to 0} \int_{-h}^{h} \Big(\frac{1}{\eps^2}  \tilde{W}\big(z_\eps'\big) + \eps^2|z_\eps''|^2 \Big) \,  {\rm d}t  \le 2K + \delta.       
\end{align}
In fact, then the sequence $y^\eps \in H^2(D_{Q',h};\R^d)$ defined by $y^\eps(x',x_d) = (x',z_\eps(x_d))$ clearly satisfies 
$$w_\eps^{-1} ( y^\eps - x)   \to  y_{\rm dp}^A \quad \quad \text{in measure in $D_{Q',h}$}$$
 by \eqref{eq: oned-goal}(i). Therefore, it is  an admissible sequence in \eqref{eq: our-k2-NNN}, and thus $ \limsup_{\eps \to 0} \mathcal{E}_\eps(y^\eps,D_{Q',h}) \ge  K^A_{\rm dp} -\delta$. By \eqref{eq: nonlinear energy},    \eqref{eq: oned-goal}(ii), and the definition of $\tilde{W}$, we also have  $\limsup_{\eps \to 0} \mathcal{E}_\eps(y^\eps,D_{Q',h})\le 2K+\delta$. Thus,  $K_{\rm dp}^{A}-\delta \le 2K+\delta$ and therefore  $K_{\rm dp}^{A} \le 2K$ holds by passing to $\delta \to 0$. The other inequality $K_{\rm dp}^{A}  \ge 2K$ follows from Proposition \ref{prop:cell-form-chaper2}.

We now construct a sequence $\lbrace z_\eps \rbrace_\eps \subset H^2((-h,h);\R)$ satisfying \eqref{eq: oned-goal}. 
Given $\delta>0$,  we use \eqref{eq: strangeKdef} to find $\eps_0>0$ and  a function $\psi \in H^2((-\frac{1}{2},\frac{1}{2});\R)$ such that
\begin{align}\label{eq: 1den}
\int_{-\frac{1}{2}}^{\frac{1}{2}} \Big(\frac{1}{\eps_0^{2}} \tilde{W}\big(\psi'\big) + \eps^2_0|\psi''|^2\Big) \, {\rm d}t \le K + \delta/2, 
\end{align}
as well as $\psi'(t) = 1+ \kappa $ near $t =-\frac{1}{2}$ and $\psi'(t) = 1 $ near $t =\frac{1}{2}$.  Let $\eps>0$ sufficiently small and let  $\epsilon := \eps /\eps_0$ for brevity. We define $z_\eps \in H^2((-h,h);\R)$ as the continuous function with $z_\eps(0)=0$ and the derivative
\begin{align*}
z_\eps' (t) := \begin{cases}
1 & \text{ if }  t  \in   (-h,  - \epsilon^2) \\      
\psi'\big( \tfrac{1}{\epsilon^2} (-\tfrac{1}{2}\epsilon^2-t)  \big)    & \text{ if }  t  \in   ( - \epsilon^2, 0) \\ 
1 + \kappa  & \text{ if }  t  \in   (0, w_\eps^\kappa) \\ 
\psi'\big( \tfrac{1}{\epsilon^2} (t- w_\eps^\kappa -  \tfrac{1}{2}\epsilon^2)  \big)    & \text{ if }  t  \in   (w_\eps^\kappa, w_\eps^\kappa + \epsilon^2)\\ 
1 & \text{ if }  t  \in   (w_\eps^\kappa + \epsilon^2, h)
\end{cases}
\end{align*}
for $t \in (-h,h)$, where for shorthand we write $w_\eps^\kappa = w_\eps/\kappa$. Indeed, we note that $z_\eps'$  is continuous since $\psi'$ is constant near $t =-\frac{1}{2}$ and $t =\frac{1}{2}$.
By using $\tilde{W}(t) = 0$ for $t \in \lbrace 1, 1+\kappa \rbrace$ and \eqref{eq: 1den}, it is not hard to check that
\begin{align*}
\int_{-h}^{h} \Big(\frac{1}{\eps^2}  \tilde{W}\big(z_\eps'\big) + \eps^2|z_\eps''|^2\Big) \,  {\rm d}t 
 &= 2 \int_{-\epsilon^2/2}^{\epsilon^2/2}   \Big( \frac{1}{\eps^{2}} \tilde{W}\big(  \psi' (t / \epsilon^2  )     \big) + \frac{\eps^2}{\epsilon^{4}} \big|\psi''(t/\epsilon^2)\big|^2\Big) \,  {\rm d}t  \\
 &  = 2 \int_{-\frac{1}{2}}^{\frac{1}{2}}   \Big( \frac{1}{\eps_0^{2}} \tilde{W}\big(  \psi' (s)    \big) + \eps^2_0 |\psi''(s)|^2\Big) \,  {\rm d}s  \le 2K+\delta,
\end{align*} 
where in the second step we used a change of variables and $\epsilon = \eps /\eps_0$. This shows \eqref{eq: oned-goal}(ii). We now prove \eqref{eq: oned-goal}(i).  As by a scaling argument we have
$$\Vert z_\eps'\Vert_{L^1((-\epsilon^2,0))} +   \Vert z_\eps'\Vert_{L^1((w_\eps^\kappa, w_\eps^\kappa + \epsilon^2))}  \le 2\epsilon^2 \int_{-\frac{1}{2}}^{\frac{1}{2}}| \psi'| \, {\rm d}t \le C\eps^2,        $$
we get that
$$ \Vert z_\eps' - \tilde{z}_\eps' \Vert_{L^1((-h,h))} \le C\eps^2, $$
where $\tilde{z}_\eps$ denotes the continuous piecewise affine function with  $\tilde{z}_\eps(0) = 0$,  $\tilde{z}_\eps' = 1$ on $(-h,0) \cup (w_\eps^\kappa, h)$, and    $\tilde{z}_\eps' = 1+\kappa$ on $(0,w_\eps^\kappa)$. By Poincar\'e's inequality  and  ${z}_\eps(0) = \tilde{z}_\eps(0) = 0$ this also yields 
\begin{align}\label{eq: after Poinc}
 \Vert z_\eps - \tilde{z}_\eps \Vert_{L^1((-h,h))} \le C\eps^2.
 \end{align} 
Since $w_\eps \to 0$ as $\eps\to 0$ and $w_\eps^\kappa = w_\eps/\kappa$, it is easy to check that $w_\eps^{-1} (\tilde{z}_\eps - {\rm id}) \to \chi_{\lbrace t>0\rbrace}$ in measure in $(-h,h)$. This along with \eqref{eq: after Poinc} and the fact that $\eps^2 /w_\eps \to 0$ as $\eps \to 0$ (see \eqref{ew: W sequence}) implies \eqref{eq: oned-goal}(i). This concludes the proof.       
\end{proof}

 \appendix

\section{$SBV$ functions and Caccioppoli partitions}
\label{sec:appendix}
 Let $d\in \N$, and let $\Omega\subset \R^d$ be a bounded open set. In the whole paper we use standard notations for the space $BV(\Omega)$. We refer the reader to \cite{Ambrosio-Fusco-Pallara:2000}  for definitions and main properties. We discuss here only some basic properties of special functions of bounded variation $(SBV)$ and Caccioppoli partitions. 

\subsection*{Special functions of bounded variation}\label{sec: SBV}
Let $m \in \N$. We say that a function $u\in BV(\Omega; \R^m)$ is a \textit{special function of bounded variation}, i.e., $u\in SBV(\Omega;  \R^m  )$, if the Cantor part of its gradient (see \cite[Definition 3.91]{Ambrosio-Fusco-Pallara:2000}) satisfies
$$D^c u=0.$$
In particular, for every $u\in SBV(\Omega;  \R^m  )$ there holds
$$Du=\nabla u\, \mathcal{L}^d+(u^+-u^-)\otimes \nu_u\mathcal{H}^{d-1}\lfloor J_u,$$
where $\nabla u$ is the approximate differential, $u^+$ and $u^-$ are the approximate one-sided limits, $J_u$ is the jump set of $u$, and $\nu_u$ is the normal to $J_u$ (see \cite[Chapter 3]{Ambrosio-Fusco-Pallara:2000}).

A function $u\in L^1_{\rm loc}(\Omega;\R^m)$ (namely $u\in L^1(K;\R^m)$ for every compact set $K\subset \Omega$) is a \textit{special function of locally bounded variation}, i.e., $u\in SBV_{\rm loc}(\Omega;\R^m)$, if $u\in SBV(O;\R^m)$ for every open set $O \subset \subset \Omega$. 

We stress that  $SBV(\Omega;\R^m)$ is a proper subset of $BV(\Omega;\R^m)$, see \cite[Corollary 4.3]{Ambrosio-Fusco-Pallara:2000}.   The set $SBV^2(\Omega;\R^m)$ is defined as the collection of maps $u\in SBV(\Omega;\R^m)$ such that $\nabla u\in L^2(\Omega;\R^{m \times d})$ and $\mathcal{H}^{d-1}(J_u)<+\infty$. 

\subsection*{Sets of finite perimeter and Caccioppoli partitions}\label{sec: caccio}

 For every $\mathcal{L}^d$-measurable set $E\subset \R^d$ and every $t\in [0,1]$, we denote by $E^t$ the set of points of $E$ having density $t$, namely
$$E^t:=\big\{x\in E\colon\,\lim_{\rho\to 0} \mathcal{L}^d (E\cap B_\rho(x))/\mathcal{L}^d(B_\rho(x))=t\big\}.$$
The \emph{essential boundary of} $E$, denoted by $\partial^\ast E$, is defined as $\partial^\ast E:=\R^d\setminus (E^0\cup E^1)$. We say that $E$ has finite perimeter if $\mathcal{H}^{d-1}(\partial^* E) < + \infty$. We refer the reader to \cite[Subsections 3.3 and 3.5]{Ambrosio-Fusco-Pallara:2000} for basic properties. Moreover, a partition $\mathcal{P} = \{P_j\}_j$ of $\Omega$ is called a \textit{Caccioppoli partition} if 
$$
\sum\nolimits_j \mathcal{H}^{d-1}(\partial^* P_j) < + \infty.
$$
We say that a partition is \textit{ordered} if $\mathcal L^d\left(P_i\right) \ge \mathcal L^d\left(P_j\right)$ for $i \le j$, and recall that every Caccioppoli partition of a bounded domain induces an ordered one just by a permutation of the indices.  \

We say that a set of finite perimeter $E$ is \emph{indecomposable} if it cannot be written as $E^1 \cup E^2$ with $E^1 \cap E^2 = \emptyset$, $\mathcal{L}^d(E^1), \mathcal{L}^d(E^2) >0$ and $\mathcal{H}^{d-1}(\partial^* E) = \mathcal{H}^{d-1}(\partial^* E^1) + \mathcal{H}^{d-1}(\partial^* E^2)$. Note  that this notion generalizes the concept of  connectedness to sets of finite perimeter. By \cite[Theorem 1]{Ambrosio-Morel}  for each set of finite perimeter $E$  there exists a unique finite or countable family of pairwise disjoint indecomposable sets $\lbrace E_i\rbrace_i$ such that $\mathcal{H}^{d-1}(\partial^* E) = \sum_i \mathcal{H}^{d-1}(\partial^* E_i)$. The set   $\lbrace E_i\rbrace_i$ are called the \emph{connected components} of $E$.

We conclude this section by stating a compactness result for ordered Caccioppoli partitions (see \cite[Theorem 4.19, Remark 4.20]{Ambrosio-Fusco-Pallara:2000}).

\begin{theorem}[Compactness for Caccioppoli partitions]\label{th: comp cacciop}
Let $\Omega \subset \R^d$ be a bounded open set with Lipschitz boundary. Let $\mathcal{P}_i = \{P_{j,i}\}_j$, $i \in \N$, be a sequence of ordered Caccioppoli partitions of $\Omega$ with 
$$\sup\nolimits_{ i \in \N} \Big\{\sum\nolimits_{j}\mathcal{H}^{d-1}(\partial^* P_{j,i})\Big\} < + \infty.$$
Then, there exists a Caccioppoli partition $\mathcal{P} = \{P_j\}_j$ and a not relabeled subsequence such that $P_{j,i} \to  P_j $ in measure for all $j \in \N$   as $i \to \infty$.
\end{theorem}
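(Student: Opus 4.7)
The approach is via $BV$ compactness applied componentwise, followed by a diagonal extraction and a verification that the limiting family is a Caccioppoli partition. First I would note that for each fixed $j\in\N$ the indicator functions $\chi_{P_{j,i}}$ form a bounded family in $BV(\Omega)$: the $L^1$-norm is trivially bounded by $\mathcal{L}^d(\Omega)$, while the total variation satisfies $|D\chi_{P_{j,i}}|(\Omega) = \mathcal{H}^{d-1}(\partial^*P_{j,i}\cap\Omega) \le \sup_i \sum_k \mathcal{H}^{d-1}(\partial^* P_{k,i}) < \infty$ by the hypothesis.

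Next I would invoke the standard $BV$ compactness theorem to extract, for each fixed $j$, a subsequence along which $\chi_{P_{j,i}}\to \chi_{P_j}$ strongly in $L^1(\Omega)$ for some $P_j\subset\Omega$ of finite perimeter (equivalently, $P_{j,i}\to P_j$ in measure). A standard diagonal argument then yields a single subsequence along which convergence holds simultaneously for every $j\in\N$. Passing to a further subsequence, we may even arrange pointwise $\mathcal{L}^d$-a.e.\ convergence of all $\chi_{P_{j,i}}$.

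The main content lies in checking that the family $\{P_j\}_j$ is itself a Caccioppoli partition of $\Omega$. The disjointness $P_j\cap P_k=\emptyset$ (up to $\mathcal{L}^d$-null sets) for $j\ne k$ follows from passing the identity $\chi_{P_{j,i}}\chi_{P_{k,i}}=0$ to the pointwise limit. For the covering property $\bigcup_j P_j = \Omega$ modulo null sets, the ordering hypothesis is essential: since $\mathcal{L}^d(P_{1,i})\ge \mathcal{L}^d(P_{2,i})\ge\cdots$ and $\sum_j \mathcal{L}^d(P_{j,i}) = \mathcal{L}^d(\Omega)$, one obtains the uniform tail bound $\mathcal{L}^d(P_{j,i})\le \mathcal{L}^d(\Omega)/j$, which lets us interchange $\lim_i$ and $\sum_j$ by dominated convergence, giving $\sum_j\mathcal{L}^d(P_j)=\mathcal{L}^d(\Omega)$. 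The finiteness $\sum_j\mathcal{H}^{d-1}(\partial^*P_j)<\infty$ is then a consequence of the lower semicontinuity of the perimeter under $L^1$-convergence, combined with Fatou's lemma applied over the index $j$.

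The principal obstacle is precisely this exchange of the countable sum with the $L^1$-limit: without the ordering assumption, mass could in principle escape to infinity in the index $j$, and one would not obtain $\sum_j\mathcal{L}^d(P_j)=\mathcal{L}^d(\Omega)$. The ordering produces the uniform decay $\mathcal{L}^d(P_{j,i})\lesssim 1/j$ which furnishes a summable majorant, and this is why the statement is phrased for ordered partitions.
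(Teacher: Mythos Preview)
Your proof sketch is correct and follows the standard route. Note, however, that the paper does not supply its own proof of this statement: it is stated in the appendix with a direct citation to \cite[Theorem~4.19, Remark~4.20]{Ambrosio-Fusco-Pallara:2000}, and used as a black box in the compactness arguments of Section~\ref{sec:compactness}. Your argument---componentwise $BV$ compactness, diagonal extraction, and the use of the ordering to obtain the uniform tail bound $\mathcal{L}^d(P_{j,i})\le \mathcal{L}^d(\Omega)/j$ needed to interchange limit and sum---is precisely the approach taken in that reference, so there is nothing further to compare.
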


In the proofs, we also sometimes use the fact that $P_{i,j} \to P_j$ in measure for  all $j \in \N$ is equivalent to $\sum_j \mathcal{L}^d(P_{i,j}\triangle P_j) \to 0$.

\section*{Acknowledgements}
We would like to thank {\sc Roberto Alicandro}, {\sc Giuliano Lazzaroni}, and {\sc Ben Schweizer} for interesting discussions on the topics of this paper. The support by the Alexander von Humboldt Foundation is gratefully acknowledged. This work was supported by the DFG projects FR 4083/1-1,  FR 4083/3-1,  and  by the Deutsche Forschungsgemeinschaft (DFG, German Research Foundation) under Germany's Excellence Strategy EXC 2044 -390685587, Mathematics M\"unster: Dynamics--Geometry--Structure. This work has been funded by  the Vienna Science and Technology Fund (WWTF) through Project MA14-009,  as well as by the Austrian Science Fund (FWF) projects F65, V 662 N32, and I 4052 N32, and by BMBWF through the OeAD-WTZ project CZ04/2019.

\end{document}